\newcommand{\Ker}{\mathrm{Ker}}
\newcommand{\Hom}{\mathrm{Hom}}
\newcommand{\End}{\mathrm{End}}
\newcommand{\GL}{\mathrm{GL}}
\newcommand{\kd}{\mathrm{Kdim}}
\newcommand{\rk}{\mathrm{rk}}
\newcommand{\Coker}{\mathrm{Coker}}
\newcommand{\gr}{\mathrm{gr}^{\cdot}}
\newcommand{\bg}{(\hspace{-0.06cm}(}
\newcommand{\jg}{)\hspace{-0.06cm})}
\newcommand{\bs}{[\hspace{-0.04cm}[}
\newcommand{\js}{]\hspace{-0.04cm}]}
\newtheorem{thm}{Theorem}[section]
\newtheorem{pro}[thm]{Proposition}
\newtheorem{lem}[thm]{Lemma}
\newtheorem{cor}[thm]{Corollary}
\newtheorem{que}{Question}
\theoremstyle{definition}
\newtheorem*{rem}{Remark}
\newtheorem*{rems}{Remarks}
\begin{document}
\title{Generalized Robba rings}
\author{Gergely Z\'abr\'adi\\ with an appendix by Peter Schneider}
\maketitle
\begin{abstract}
We prove that any projective coadmissible module over the locally
analytic distribution algebra of a compact $p$-adic Lie group is
finitely generated. In particular, the category of coadmissible
modules does not have enough projectives. In the Appendix a
``generalized Robba ring'' for uniform pro-$p$ groups is
constructed which naturally contains the locally analytic
distribution algebra as a subring. The construction uses the
theory of generalized microlocalization of quasi-abelian normed
algebras that is also developed there. We equip this generalized
Robba ring with a self-dual locally convex topology extending the
topology on the distribution algebra. This is used to show some 
results on coadmissible modules.
\end{abstract}

\begin{tableofcontents}

\end{tableofcontents}

\section{Introduction}

The theory of locally analytic representations of $p$-adic
analytic groups has recently been developed by Schneider and
Teitelbaum \cite{ST3,ST4,ST5,ST6,foundations,ST2}. The category
$\mathcal{C}_{D(G,K)}$ of coadmissible modules over the $K$-valued
locally analytic distribution algebra $D(G,K)$ plays a crucial
role in the theory. Here $\mathbb{Q}_p\subseteq
K\subseteq\mathbb{C}_p$ is a complete and discretely valued field
and $G$ a compact $p$-adic Lie group. A locally analytic
representation of $G$ over the field $K$ is called
\emph{admissible} if its strong dual is a coadmissible module over
$D(G,K)$. (Locally analytic representations of non-compact
$p$-adic Lie groups are called admissible if their restriction to
a---or equvialently, to any---compact open subgroup is
admissible.) The distribution algebra $D(G,K)$ is a
Fr\'echet-Stein algebra as it is the projective limit of the
noetherian Banach algebras $D_{r_l}(G,K)$ ($l\geq 1$) with flat
connecting maps. Here the algebras $D_{r_l}(G,K)$ are certain
completions of $D(G,K)$ with respect to norms $r_l$ (for details
see section \ref{z11}).

The finitely presented modules are always coadmissible, but the
coadmissible modules over $D(G,K)$ need not even be finitely
generated, and there exist finitely generated $D(G,K)$-modules
that are not coadmissible. However, in this paper we show that all
the \emph{projective} objects in $\mathcal{C}_{D(G,K)}$ are
finitely generated (and then, of course, also finitely presented)
generalizing earlier results in the commutative case by Gruson
\cite{G}. In particular, the category $\mathcal{C}_{D(G,K)}$ does
not have enough projective objects. As a by-product we also
compute the Grothendieck group $K_0$ of the finitely generated
projective $D_r(G,K)$-modules whenever $G$ is uniform.

However, the main aim of this paper is to investigate the properties of the
so-called ``generalized Robba ring'' that is constructed in the Appendix of
this paper. The ring $\mathcal{R}(G,K)$ (the generalized Robba ring) is
defined as follows. Using the theory of generalized microlocalization (see
the Appendix) one obtains ``closed noncommutative annuli'' for uniform
pro-$p$ groups, ie.\ rings $D_{[\rho,\rho']}(G,K)$ (with
$p^{-1}<\rho<\rho'<1$) whose elements are interpreted as noncommutative
Laurent series converging on a closed $p$-adic polyannulus $\rho\leq
|T_1|=\dots=|T_d|\leq\rho'$ (with $d$ being the dimension of the uniform
pro-$p$ group $G$). As it is shown in section 4, these are noetherian Banach
algebras (whenever $\rho$ and $\rho'$ are rational powers of $p$). One then
takes the projective limit with respect to the natural flat (cf.\ Thm.\ 4.3)
connecting maps $D_{[\rho,\rho_2]}(G,K)\hookrightarrow D_{[\rho,\rho_1]}(G,K)$
(for $p^{-1}<\rho<\rho_1<\rho_2<1$) to obtain the Fr\'echet-Stein algebras
$D_{[\rho,1)}(G,K)$. The generalized Robba ring is then the inductive 
limit of these Fr\'echet-Stein algebras when $\rho$ tends to $1$. The ring
$\mathcal{R}(G,K)$ admits a self-dual ring topology such that
$\Hom_{K}^{ct}(\mathcal{R}(G,K),K)\cong\mathcal{R}(G,K)$ as topological $\mathcal{R}(G,K)$-modules.  
The self-dual topology on $\mathcal{R}(G,K)$ is unfortunately not
equal to the inductive limit topology of the Fr\'echet topologies
on $ D_{[\rho,1)}(G,K)$ as the latter is not self-dual (see Corollary
  \ref{r1}) with respect to the pairing constructed in section 5. However,
the topology we construct (the ``nice'' topology) has the
following properties:

\begin{enumerate}[$(i)$]
\item The ring $\mathcal{R}(G,K)$ is self-dual in the nice
topology with respect to the pairing
\begin{eqnarray*}
\mathcal{R}(G,K)\times\mathcal{R}(G,K)&\rightarrow& K\\
(x,y)&\mapsto&``\hbox{the constant term of }xy."
\end{eqnarray*}
\item On the classical Robba ring $\mathcal{R}(\mathbb{Z}_p,K)$
the nice topology coincides with the inductive limit topology.
\item The subspace topology on the distribution algebra $D(G,K)$
equals the usual Fr\'echet topology.
\end{enumerate}

In the last section of this paper we use the Fr\'echet-Stein
structure of the algebras $ D_{[\rho,1)}(G,K)$ to construct an
exact functor $R$ from the category of coadmissible modules over
$D(G,K)$ to the category of modules over the generalized Robba
ring $\mathcal{R}(G,K)$. We call the image of this functor the
category of coadmissible modules over $\mathcal{R}(G,K)$. Our main
result (Theorem \ref{z39}) in this section states that whenever $M$ is a finitely
generated coadmissible module over $D(G,K)$ then
$\Hom_K^{ct}(R(M),K)$ is also coadmissible where the topology on $R(M)$ is a
canonical topology coming from the nice topology of $\mathcal{R}(G,K)$. By Proposition
\ref{z10} this includes all the projective objects in the
category.

\subsection*{Acknowledgements}

I gratefully acknowledge the hospitality of the Westf\"alische
Wilhelmsuniversit\"at M\"unster where the most of this paper was
written. During this research I was supported by the German
Israeli Foundation for Research and Development and by the
Deutsche Forschungsgemeinschaft (DFG). I am mostly indebted to
Peter Schneider for introducing me to the representation theory of
$p$-adic groups, for drawing my attention to Robba rings, for his
valuable comments, and for reading earlier versions of the
manuscript so carefully. I would also like to thank Tobias Schmidt
for reading parts of the manuscript and for various discussions
about the topic. I am grateful to the referee for his careful reading of the
manuscript, for his remarks on the presentation of results, and for
simplifying the proof of Thm.\ 5.5.

\section{Preliminaries and basic notations}\label{z11}

Let $p$ be a prime number and let $\kappa$ be $1$ if $p$ is odd
and $2$ if $p=2$. Let $\mathbb{Q}_p\subseteq K$ be a complete and
discretely valued field, and $k$ denotes the residue field of $K$.
Further, let $G$ be a compact locally $\mathbb{Q}_p$-analytic
group which we will assume for the most of the paper to be
uniform.

For any compact locally $\mathbb{Q}_p$-analytic group $G$ we
denote by $D(G,K)$ the algebra of $K$-valued locally analytic
distributions on $G$. Recall that $D(G,K)$ is equal to the strong
dual of the locally convex vector space $C^{an}(G,K)$ of
$K$-valued locally $\mathbb{Q}_p$-analytic functions on $G$ with the
convolution product.

Now if $G$ is uniform then it has a bijective global chart
\begin{eqnarray*}
\mathbb{Z}_p^{d}&\rightarrow&G\\
(x_1,\dots,x_{d})&\mapsto&h_1^{x_1}\cdots h_{d}^{x_{d}}
\end{eqnarray*}
where $h_{1},\dots,h_{d}$ is a fixed (ordered) minimal set of
topological generators of $G$. Putting
$b_i:=h_{i}-1\in\mathbb{Z}[G]$, ${\bf
b}^{\alpha}:=b_{1}^{\alpha_{1}}\cdots b_{d}^{\alpha_{d}}$ for
$\alpha\in\mathbb{N}_0^{d}$ we can identify $D(G,K)$ with the ring
of all formal series
\begin{equation*}
\lambda=\sum_{\alpha\in\mathbb{N}_0^{d}}d_{\alpha}\textbf{b}^\alpha
\end{equation*}
with $d_\alpha$ in $K$ such that the set
$\{|d_\alpha|\rho^{\kappa|\alpha|}\}_\alpha$ is bounded for all
$0<\rho<1$. Here the first $|\cdot|$ is the normalized absolute
value on $K$ and the second one denotes the degree of $\alpha$,
that is $\sum_{i}\alpha_{i}$. Note that this is different from the convention
in the Appendix, where $|\alpha|$ is defined as $\sum_{i}|\alpha_i|$. For any $\rho$ in $p^{\mathbb{Q}}$
with $p^{-1}<\rho<1$, we have a multiplicative norm
$\|\cdot\|_{\rho}$ on $D(G,K)$ \cite{foundations} given by
\begin{equation*}
\|\lambda\|_{\rho}:=\sup_\alpha|d_\alpha|\rho^{\kappa|\alpha|}.
\end{equation*}
The family of norms $\|\cdot\|_{\rho}$ defines the Fr\'echet
topology on $D(G,K)$. The completion with respect to the norm
$\|\cdot\|_{\rho}$ is denoted by $D_{\rho}(G,K)$. (Whenever $G$ is
not uniform then we choose an open normal uniform subgroup $H$ and
representatives $g_1,\dots,g_{G:H}$ for the cosets in $G/H$ we can
define the norms $\|\cdot\|_{\rho}$ on $D(G,K)=\bigoplus_i
D(H,K)g_i$ by
$\|\sum_i\lambda_ig_i\|_{\rho}:=\max_i\|\lambda_i\|_{\rho}$ and
denote by $D_{\rho}(G,K)$ the corresponding completions.)

Now if we fix real numbers $p^{-1}<\rho_1\leq \cdots\leq
\rho_l\leq\cdots<1$ in $p^{\mathbb{Q}}$ such that
$\rho_l\rightarrow 1$ then the distribution algebra $D(G,K)$ is
the projective limit of the noetherian Banach-algebras
$D_{\rho_l}(G,K)$. This gives the Fr\'echet-Stein algebra
structure on $D(G,K)$. A \emph{coherent sheaf} for the projective
system $(D_{\rho_l}(G,K),l)$ of algebras is a projective system of
finitely generated modules $M_l$ over $D_{\rho_l}(G,K)$ together
with isomorphisms
\begin{equation*}
D_{\rho_l}(G,K)\otimes_{D_{\rho_{l+1}}(G,K)}M_{l+1}\overset{\cong}{\rightarrow}
M_l
\end{equation*}
for any positive integer $l$. Following \cite{foundations} we call
a $D(G,K)$-module $M$ \emph{coadmissible} if it is isomorphic to
the $D(G,K)$-module of ``global sections''
\begin{equation*}
\Gamma(M_l):=\varprojlim_l M_l
\end{equation*}
of a coherent sheaf $(M_l)_l$ for $(D(G,K),\rho_l)$. A
coadmissible $D(G,K)$-module need not be finitely generated,
neither is a finitely generated module always coadmissible. For example if one
takes an ideal $I$ in $D(\mathbb{Z}_p,K)$ that is not closed (or equivalently,
not finitely generated) then the quotient
$D(\mathbb{Z}_p,K)/I$ is finitely generated (even cyclic), but not
coadmissible. On the other hand put
$M:=\prod_{i=1}^{\infty}D(\mathbb{Z}_p,K)/\left(\frac{\log(1+x)}{(x+1)^{p^i}-1}\right)$
as a module over $D(\mathbb{Z}_p,K)$. Since the zeroes of $\log(1+x)$ are
exactly of the form $\zeta-1$ with $\zeta\in\mu_{p^{\infty}}$, the restriction
of $M$ to any closed disc of radius $\rho<1$ is finitely generated. Hence $M$ is coadmissible as
it is clearly the projective limit of its restrictions to closed discs of
radius $\rho<1$. Although, $M$ is not finitely generated, since its
localisation $M_{\zeta_{p^m}-1}$ at the
point corresponding to the Galois orbit of $\zeta_{p^m}-1$ has rank $m-1$
for any $m\geq1$ with $\zeta_{p^m}$ being a primitive $p^m$th root of unity.
However, any finitely presented module is coadmissible. For
further details see \cite{foundations}.

\section{Projective coadmissible modules}

\subsection{Filtrations and the Grothendieck group}

The ring $D_{\rho}(G,K)$ is a filtered left and right noetherian
$K$-Banach algebra realizing the Fr\'echet-Stein structure on
$D(G,K)$. The filtration is given by the additive subgroups
\begin{eqnarray}
F_{\rho}^sD_{\rho}(G,K)&:=&\{\lambda\in D_{\rho}(G,K)\colon\|\lambda\|_{\rho}\leq p^{-s}\},\notag\\
F_{\rho}^{s+}D_{\rho}(G,K)&:=&\{\lambda\in
D_{\rho}(G,K)\colon\|\lambda\|_{\rho}< p^{-s}\}\notag
\end{eqnarray}
for $s$ in $\mathbb{R}$ with associated graded ring
\begin{equation*}
\gr
D_{\rho}(G,K):=\bigoplus_{s\in\mathbb{R}}F_{\rho}^s(D_{\rho}(G,K))/F_{\rho}^{s+}D_{\rho}(G,K).
\end{equation*}
It is proven in \cite{foundations} that $\gr D_{\rho}(G,K)$ is
isomorphic to the polynomial ring $(\gr K)[X_{1},\dots,X_{d}]$ and
the principal symbols $\sigma(b_{i})$ of $b_{i}$ are the variables
$X_{i}$ provided that $\rho$ lies in $p^{\mathbb{Q}}$. The graded ring of the field $K$ is naturally isomorphic
to the Laurent polynomial ring $k[X_0,X_0^{-1}]$ over the residue
field $k$ of $K$. The uniformizer maps to the variable $X_0$ under
$\gr$.

\begin{lem}\label{z9}
Any graded projective module over the ring $\gr D_{\rho}(G,K)$ is
stably graded free.
\end{lem}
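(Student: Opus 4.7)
The plan is to reduce the lemma to a graded analog of Hilbert's syzygy theorem and then run the standard ``alternating-sum'' argument. As is noted immediately before the statement, we have an identification $\gr D_\rho(G,K)\cong F[X_1,\dots,X_d]$ where $F:=\gr K\cong k[X_0,X_0^{-1}]$. The crucial observation is that $F$ is a \emph{graded field}: every nonzero homogeneous element is of the form $cX_0^n$ with $c\in k^\times$ and $n\in\mathbb{Z}$, hence invertible. In particular, every graded module over $F$ admits a graded basis, so is graded free, exactly as in the case of an ordinary field.

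Next I would invoke the graded Hilbert syzygy theorem: over the polynomial ring $R=F[X_1,\dots,X_d]$ in finitely many variables over a graded field $F$, every finitely generated graded $R$-module has a graded free resolution of length at most $d$. This is proved by the same inductive procedure as in the classical setting: one uses that the Koszul complex $K_\bullet(X_1,\dots,X_d)$ is a graded free resolution of $F$ over $R$, applies $-\otimes_R M$, and tracks the degree shifts; exactness is inherited from the classical (ungraded) statement because $R$ and $F$ are both concentrated in a free way over $k$. This delivers, for any finitely generated graded projective $P$, a finite graded free resolution $0\to F_n\to\cdots\to F_0\to P\to 0$.

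The lemma now follows by induction on the length $n$ of this resolution. When $n=0$, $P$ is itself graded free. For $n\ge1$, projectivity of $P$ splits the surjection $F_0\twoheadrightarrow P$, so $F_0\cong P\oplus K$ with $K:=\ker(F_0\to P)$. Then $K$ is a finitely generated graded projective fitting into a graded free resolution of length $n-1$, so by induction there exist graded free $H,H'$ with $K\oplus H\cong H'$. Combining,
\[
P\oplus H'\;\cong\;P\oplus K\oplus H\;\cong\;F_0\oplus H,
\]
and the right-hand side is graded free, so $P$ is stably graded free.

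The main (only real) obstacle is verifying the graded syzygy theorem with the $\mathbb{Z}$-grading coming from $F=k[X_0,X_0^{-1}]$, rather than the usual situation of a field concentrated in degree zero. No new idea is required once one treats $F$ as a graded field and formulates everything in the category of graded modules with graded shifts; the Koszul resolution argument and Schanuel-type splittings are insensitive to whether the base is a graded or ungraded field. If the paper in fact needs the statement for arbitrary, not necessarily finitely generated, graded projectives, one can pass to finitely generated summands via Kaplansky's theorem and apply the above to each summand, since stably graded free is preserved by direct sums.
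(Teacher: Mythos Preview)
Your argument for the finitely generated case is correct and is essentially the content of the result the paper cites: the paper does not give a proof at all but simply refers to Gruson \cite{G} (\S1, Theorem~1), and what you have sketched---graded Hilbert syzygy over a polynomial ring in $d$ variables over the graded field $\gr K\cong k[X_0,X_0^{-1}]$, followed by the standard Schanuel-type induction on the length of a finite graded free resolution---is precisely the mechanism behind that theorem. So your approach and the paper's are the same, except that you supply the argument the paper outsources.

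One small caveat: your final paragraph, reducing the general case to the finitely generated one via Kaplansky, is a bit loose. Kaplansky gives a decomposition into \emph{countably} generated projective summands, not finitely generated ones, so there is still a step missing before you can invoke the finitely generated syzygy argument. Your observation that an arbitrary direct sum of stably graded free modules is stably graded free is fine, but you would still need to know that countably generated graded projectives over $F[X_1,\dots,X_d]$ are stably graded free. This is not a real obstacle for the paper's purposes: Lemma~\ref{z9} is only ever applied (in the proof of Proposition~\ref{z7}) to $\gr P$ with $P$ finitely generated, so the finitely generated case suffices.
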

\begin{proof}
This is a classical statement proven in \cite{G} (§1, Theorem 1).
\end{proof}

Now we claim that $D_{\rho}(G,K)$ satisfies the invariant basis
property. Indeed, $D_{\rho}(G,K)$ is left and right Noetherian
ring without zero divisors so it has a full skew field of
fractions (by Goldie's Theorem). Hence the rank of a finitely
generated free module is well-defined.

For any real number $s$ such that $F_{\rho}^sD_{\rho}(G,K)$ is not
equal to $F_{\rho}^{s+}D_{\rho}(G,K)$ we denote by
$D_{\rho}(G,K)(s)$ the module $D_{\rho}(G,K)$ over itself together
with the shifted filtration
\begin{eqnarray}
F_{\rho}^tD_{\rho}(G,K)(s)&:=&F_{\rho}^{t+s}D_{\rho}(G,K)\notag\\
F_{\rho}^{t+}D_{\rho}(G,K)(s)&:=&F_{\rho}^{(t+s)+}D_{\rho}(G,K).\notag
\end{eqnarray}
We call a finitely generated module \emph{filtered free} if it is
the (filtered) direct sum of modules of the form
$D_{\rho}(G,K)(s)$. Let $\mathcal{C}_0$ be the category of those
finitely generated filtered projective $D_{\rho}(G,K)$-modules $P$
that are direct summands of some filtered free modules of finite
rank as filtered modules. Further we denote by $\mathcal{C}$ the
category of admissibly filtered finitely generated
$D_{\rho}(G,K)$-modules. We call a filtration on a module $M$
\emph{admissible} if it is the image of the filtration of a
filtered free module under a surjective homomorphism onto $M$.
Note that any finitely generated $D_{\rho}(G,K)$-module admits an
admissible filtration. Moreover, the category $\mathcal{C}$ is
abelian by the following lemma.

\begin{lem}\label{z3}
The induced filtration on any submodule of a finitely generated filtered free module is admissible.
\end{lem}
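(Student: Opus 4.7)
The plan is to use the standard graded-to-filtered lift for a complete filtered Noetherian ring. Let $F$ be a finitely generated filtered free $D_{\rho}(G,K)$-module and let $N\subseteq F$ carry the induced filtration $F_{\rho}^tN:=N\cap F_{\rho}^tF$. Since $D_{\rho}(G,K)$ is a Noetherian Banach algebra, every submodule of a finitely generated module is closed in the canonical Banach topology; in particular $N$ is closed in $F$, hence separated, exhaustive and complete for the induced filtration.

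I would then pass to associated graded modules. By Hilbert's basis theorem the ring $\gr D_{\rho}(G,K)\cong (\gr K)[X_1,\dots,X_d]$ is Noetherian, so $\gr N$ is a finitely generated graded submodule of the finitely generated graded free module $\gr F$. Pick homogeneous generators $\bar n_1,\dots,\bar n_k$ of $\gr N$ of degrees $s_1,\dots,s_k$ and lift them to elements $n_i\in F_{\rho}^{s_i}N$ with principal symbols $\sigma(n_i)=\bar n_i$. The candidate strict surjection is
$$\phi\colon P:=\bigoplus_{i=1}^k D_{\rho}(G,K)(-s_i)\longrightarrow N,\qquad e_i\longmapsto n_i,$$
and what must be shown is that $\phi(F_{\rho}^tP)=F_{\rho}^tN$ for every $t\in\mathbb{R}$; this is exactly the admissibility of the induced filtration on $N$.

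To establish the claim, given $x\in F_{\rho}^tN$, write its principal symbol as $\sigma(x)=\sum_i\bar a_i\bar n_i$ in degree $t$ with $\bar a_i$ homogeneous of degree $t-s_i$, lift to $a_i\in F_{\rho}^{t-s_i}D_{\rho}(G,K)$, and get $x-\sum_i a_in_i\in F_{\rho}^{t+}N$. Iterating produces sequences $a_i^{(j)}\in F_{\rho}^{t_j-s_i}D_{\rho}(G,K)$ with strictly increasing $t_j$. Setting $\tilde a_i:=\sum_j a_i^{(j)}$, the bound $\|a_i^{(j)}\|_{\rho}\leq p^{-(t_j-s_i)}$ forces convergence in the Banach algebra $D_{\rho}(G,K)$ with $\tilde a_i\in F_{\rho}^{t-s_i}D_{\rho}(G,K)$, the tails $x-\sum_i\sum_{j<J}a_i^{(j)}n_i$ converge to zero in $F$ by completeness, and $x=\phi((\tilde a_i)_i)$, giving both the surjectivity of $\phi$ and the strictness of the filtration.

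The one subtle point I expect is the convergence $t_j\to\infty$: in general the ``strictly greater'' step $t_{j+1}>t_j$ could give a sequence with a finite limit. This is where the assumption $\rho\in p^{\mathbb{Q}}$ pays off, because then the set of norm values on $D_{\rho}(G,K)$ is a discrete subgroup of $\mathbb{R}_{>0}$, so the jumps $t_{j+1}-t_j$ are uniformly bounded below by a positive constant and $t_j\to\infty$ automatically. Apart from this discreteness observation, the argument is routine filtered-Nakayama bookkeeping in a complete filtered ring.
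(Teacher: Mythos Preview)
Your proof is correct and follows essentially the same route as the paper's: pass to the associated graded, lift a finite homogeneous generating set of $\gr N$, and use completeness to show the resulting filtered-free surjection is strict. Your explicit discussion of why $t_j\to\infty$ (via discreteness of the value set when $\rho\in p^{\mathbb{Q}}$) is a point the paper leaves implicit, and your shift convention $D_{\rho}(G,K)(-s_i)$ differs in sign from the paper's $D_{\rho}(G,K)(s_i)$, but neither affects the argument.
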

\begin{proof}
Let $M$ be a submodule of the filtered free module $F$. Then $\gr
M$ is a submodule of the finitely generated graded free module
$\gr F$. So $\gr M$ is generated by a finite set of homogeneous
elements in $\gr F$. We can lift up these homogeneous elements to
$M$ and denote the lifts by $m_1,\dots,m_l$. We claim first that
the elements $m_i$ generate the module $M$. Indeed,
$D_{\rho}(G,K)$ is complete with respect to the filtration and for
any element $m$ in $M$ we can choose a sum $\sum_ia_im_i$ such
that the projections to the graded module $\sigma(m)$ and
$\sigma(\sum_ia_im_i)$ are equal. On the other hand since
$D_{\rho}(G,K)$ is a noetherian Banach algebra, any submodule of a
finitely generated module is closed and the claim follows. Now it
suffices to prove that the filtration on $M$ is the induced by the
surjection from $F_1:=\bigoplus_iD_r(G,K)(s_i)$ sending the
generator of $D_{\rho}(G,K)(s_i)$ to $m_i$. Here $s_i$ is the
degree of $m_i$. By construction we have for any real number $s$
that the image of $F_{\rho}^s F_1$ is contained in $F_r^sM$. On
the other hand if $m$ is in $F_r^sM\setminus F_r^{s+}M$ then
$\sigma(m)$ is a homogeneous element of $\gr M$ of degree $s$. So
it can be written in the form
\begin{equation*}
\sigma(m)=\sum_ib_i\sigma(m_i)
\end{equation*}
such that $b_i\sigma(m_i)$ is also homogeneous of degree $s$. By a
similar argument as above using the completeness of $M$ with
respect to the filtration the result follows.
\end{proof}

\begin{lem}\label{z4}
Let $F$ be a finitely generated filtered free
$D_{\rho}(G,K)$-module and $\pi$ be a homogeneous projection of
$\gr F$ of degree $0$. Then $\pi$ can be lifted to a projection of
$F$.
\end{lem}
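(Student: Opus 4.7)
The plan is to lift $\pi$ to a projection of $F$ by a Newton-type iteration inside the endomorphism ring $\End_{D_\rho(G,K)}(F)$, which inherits a complete filtration from $F$. Writing $F=\bigoplus_{i=1}^n D_\rho(G,K)(s_i)$, we identify $\End(F)$ with an $n\times n$ matrix ring over $D_\rho(G,K)$ in which the $(i,j)$-entry is shifted by $s_i-s_j$; since $D_\rho(G,K)$ is $\|\cdot\|_\rho$-complete, this filtered ring is complete.

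First I would pick a filtered lift $e_0\in F^0\End(F)$ of $\pi$: each homogeneous matrix entry of $\pi$ lies in a graded piece of $\gr D_\rho(G,K)$ that comes from the corresponding filtration piece of $D_\rho(G,K)$, so one lifts entrywise. Setting $\varepsilon_0:=e_0^2-e_0$, the relation $\pi^2=\pi$ forces $\gr\varepsilon_0=0$, which is equivalent to $\varepsilon_0\in F^{t_0}\End(F)$ for some real $t_0>0$.

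Next I iterate via $e_{n+1}:=3e_n^2-2e_n^3$. Since $e_n$ commutes with $\varepsilon_n:=e_n^2-e_n$, substituting $e_n^2=e_n+\varepsilon_n$ into this formula yields the two clean identities
\[
e_{n+1}-e_n=(1-2e_n)\varepsilon_n,\qquad \varepsilon_{n+1}=(4\varepsilon_n-3)\varepsilon_n^2.
\]
Inductively $\varepsilon_n\in F^{2^n t_0}\End(F)$ and $e_{n+1}-e_n\in F^{2^n t_0}\End(F)$, so $(e_n)$ is Cauchy and converges to some $\tilde\pi\in F^0\End(F)$. Passing to the limit in the first identity gives $\tilde\pi^2=\tilde\pi$, and $\tilde\pi-e_0\in F^{0+}\End(F)$ implies $\gr\tilde\pi=\gr e_0=\pi$.

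The only delicate point is bookkeeping: verifying the two displayed identities and checking that the induced filtration on $\End(F)$ is indeed complete in the sense required for the Cauchy sequence to converge. No obstruction from non-invertibility of $2e_n-1$ intervenes, since the polynomial $3t^2-2t^3$ is tailored precisely so that quadratic convergence works without inverting any such factor.
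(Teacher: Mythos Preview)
Your proof is correct and follows essentially the same approach as the paper: lift $\pi$ to an endomorphism and then use idempotent lifting in the complete filtered ring $\End_{D_\rho(G,K)}(F)$. The paper cites Bourbaki (Chapter~III, \S4, Lemma~2) for the lifting step after passing to the closed commutative subring generated by the lift, whereas you spell out the Newton iteration $e_{n+1}=3e_n^2-2e_n^3$ explicitly; since all your $e_n$ are polynomials in $e_0$, you are implicitly working in that same commutative subring, so the two arguments coincide.
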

\begin{proof}
It is easy to see that one can lift $\pi$ up to an endomorphism
$g$ of the module $F$ as it suffices to give $g$ on the free
generators $e_i$ $(i=1,\dots,\rk (F))$ such that
$\sigma(g(e_i))=\pi(\sigma(e_i))$. Now let $B$ be the closed
subring of $\End_{D_{\rho}(G,K)}(F)$ generated by $g$. Then $B$ is
a commutative positively filtered ring that is complete with
respect to the filtration. Moreover, the element $g-g^2$ has
positive degree. Hence taking the ideal $I$ of elements of
positive degree and applying Chapter III, §4, Lemma 2 in \cite{Bo}
one obtains an element $g_1$ in $B$ with $g_1=g_1^2$ and $g-g_1$
is in $I$.
\end{proof}

\begin{lem}\label{z5}
The natural inclusion from $\mathcal{C}_0$ to $\mathcal{C}$
induces an isomorphism $K_0(\mathcal{C}_0)\rightarrow K_0(\mathcal{C})$ on the Grothendieck groups.
\end{lem}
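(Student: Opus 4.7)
The plan is to show that every object $M$ of $\mathcal{C}$ admits a finite resolution by objects of $\mathcal{C}_0$, and then that the associated Euler characteristic in $K_0(\mathcal{C}_0)$ is well-defined and additive on short exact sequences, hence furnishes an inverse to the map induced by inclusion.

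For surjectivity I would proceed as follows. Start with $M \in \mathcal{C}$; by admissibility of the filtration there is a filtered surjection $\varphi_0 : F_0 \twoheadrightarrow M$ from a finite filtered free module $F_0$. By Lemma \ref{z3} the kernel inherits an admissible filtration, and one iterates to build a (possibly infinite) resolution $\cdots \to F_1 \to F_0 \to M \to 0$ with each $F_i$ filtered free. Applying $\gr$ yields a resolution of $\gr M$ over $\gr D_{\rho}(G,K) \cong k[X_0^{\pm 1}, X_1, \ldots, X_d]$, a polynomial ring over a principal ideal domain and hence of finite graded global dimension by Hilbert's syzygy theorem. Therefore for $n$ sufficiently large the kernel $K_n := \ker(F_n \to F_{n-1})$ has graded projective associated graded. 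Lemma \ref{z9} then makes $\gr K_n$ stably graded free, and after adding a filtered free summand one has a graded isomorphism $\gr(K_n \oplus F') \cong \gr F''$ with $F''$ filtered free. Lifting the corresponding graded projection of $\gr F''$ onto $\gr K_n$ via Lemma \ref{z4} realises $K_n$ (up to adding a filtered free summand) as a filtered direct summand of a filtered free module, i.e.\ an object of $\mathcal{C}_0$. The resulting finite resolution yields $[M] = \sum_{i=0}^{n}(-1)^i[F_i] + (-1)^{n+1}[K_n]$ in $K_0(\mathcal{C})$, which proves surjectivity.

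For injectivity I would define a map $\chi : K_0(\mathcal{C}) \to K_0(\mathcal{C}_0)$ by sending $[M]$ to the alternating sum of classes of such a finite resolution. Independence of the chosen resolution follows from a filtered Schanuel lemma: given two filtered surjections onto $M$ from objects of $\mathcal{C}_0$, the fibred product inherits an admissible filtration by Lemma \ref{z3}, and repeated comparison with Lemmas \ref{z9} and \ref{z4} shows the two Euler characteristics agree in $K_0(\mathcal{C}_0)$. Additivity on a short exact sequence $0 \to M' \to M \to M'' \to 0$ in $\mathcal{C}$ is obtained through a filtered horseshoe construction, again applying Lemma \ref{z3} to ensure all induced filtrations remain admissible. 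Granted well-definedness and additivity, $\chi$ is a two-sided inverse to the inclusion-induced map on generators, establishing the isomorphism.

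The main obstacle is carrying the classical Schanuel and horseshoe comparison arguments through in the filtered category: one must verify at each step that the filtrations involved are admissible and that lifting of graded data to the filtered level is available. These points are exactly what Lemmas \ref{z3} and \ref{z4} are designed to supply, so the technical heart of the proof lies in organising these lifts coherently across an entire resolution rather than in any single new idea.
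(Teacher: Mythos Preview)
Your overall strategy --- produce a finite $\mathcal{C}_0$-resolution of every $M \in \mathcal{C}$ --- is exactly the paper's, and the paper then simply invokes the resolution theorem of Bass--Heller--Swan (theorems~4 and~5 in \cite{BHS}) to conclude that $K_0(\mathcal{C}_0) \to K_0(\mathcal{C})$ is an isomorphism. Your sketch of an explicit inverse via Euler characteristics, filtered Schanuel, and horseshoe is a valid alternative, but it is extra work that the cited result absorbs.

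There is, however, a genuine gap in your surjectivity step. Once $\gr K_n$ is graded projective you pass through Lemma~\ref{z9} to stable graded freeness, write $\gr(K_n \oplus F') \cong \gr F''$ with $F''$ filtered free, and then propose to lift the projection of $\gr F''$ onto $\gr K_n$ via Lemma~\ref{z4}. But Lemma~\ref{z4} lifts idempotents on a \emph{filtered free} module: applying it to $F''$ produces a filtered direct summand $P \subseteq F''$ with $\gr P \cong \gr K_n$, not $K_n$ itself. You have no filtered identification of $K_n \oplus F'$ with $F''$; they merely share the same associated graded. So the conclusion that $K_n$ (or something filtered-isomorphic to it) lies in $\mathcal{C}_0$ does not follow from what you wrote.

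The paper avoids this by never invoking Lemma~\ref{z9}. Instead it uses that $\gr d_{j+1}\colon \gr F_{j+1} \twoheadrightarrow \gr\Ker(d_j)$ admits a section $\beta$ (since the target is projective), so $\beta \circ \gr d_{j+1}$ is a homogeneous idempotent on the filtered free module $\gr F_{j+1}$. Lemma~\ref{z4} now applies legitimately and lifts it to an idempotent $g$ on $F_{j+1}$; then $g(F_{j+1}) \in \mathcal{C}_0$ and $d_{j+1}$ restricts to a filtered isomorphism $g(F_{j+1}) \xrightarrow{\sim} \Ker(d_j)$, which is what terminates the resolution. The point is to produce the idempotent on a module you already know to be filtered free, namely $F_{j+1}$, rather than on an auxiliary $F''$.
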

\begin{proof}
It is easy to see (theorem 4 and 5 in \cite{BHS}) that it suffices
to show that each module in $\mathcal{C}$ has a finite resolution
by elements in $\mathcal{C}_0$. Now using Lemma \ref{z3} any
finitely generated admissibly filtered $D_{\rho}(G,K)$-module $M$
has a filtered-free resolution
\begin{equation}
\dots\rightarrow F_j\overset{d_j}{\rightarrow}\dots\rightarrow
F_0\rightarrow M\rightarrow0\label{z2}
\end{equation}
by finitely generated modules as $D_{\rho}(G,K)$ is noetherian.
Moreover, the functor $\gr$ is exact and gives a free resolution
\begin{equation}
\dots\rightarrow \gr F_j\overset{\gr
d_j}{\rightarrow}\dots\rightarrow \gr F_0\rightarrow \gr
M\rightarrow0.\label{z1}
\end{equation}
The global dimension of $\gr D_{\rho}(G,K)$ is finite and bounded
by its Krull dimension. This means that there is a positive
integer $j$ such that $\Ker(\gr d_j)=\gr\Ker(d_j)$ is projective.
Hence we have a section $\beta$ from $\Ker(\gr d_j)$ to $\gr
F_{j+1}$. By Lemma \ref{z4} the projection $\beta\circ\gr d_{j+1}$
of $\gr F_{j+1}$ can be lifted to a projection $g$ of $F_{j+1}$.
Therefore $g(F_{j+1})$ is in the category $\mathcal{C}_0$ and it
maps bijectively onto $\Ker(d_j)$ under the map $d_{j+1}$ and we
are done.
\end{proof}

\begin{pro}\label{z7}
Let $1/p<\rho<1$ be in $p^{\mathbb{Q}}$. Then
$K_0(D_{\rho}(G,K))\cong\mathbb{Z}$, in other words all the
projective $D_{\rho}(G,K)$-modules are stably free.
\end{pro}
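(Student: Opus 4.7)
The plan is to show that every finitely generated projective $D_{\rho}(G,K)$-module is stably free; combined with the invariant basis property established in the paragraph preceding the proposition (via Goldie's theorem), the rank map will then identify $K_0(D_{\rho}(G,K))$ with $\mathbb{Z}$, generated by $[D_{\rho}(G,K)]$.

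First I would reduce to the subcategory $\mathcal{C}_0$. Any finitely generated projective $P$ admits an admissible filtration that places it in $\mathcal{C}$. Running the construction from the proof of Lemma \ref{z5} (truncate a filtered-free resolution at a stage where, by Lemma \ref{z4}, the kernel is cut out by a lifted idempotent and hence lies in $\mathcal{C}_0$), one obtains objects $Q_0,\dots,Q_n\in\mathcal{C}_0$ with $[P]=\sum_{i=0}^{n}(-1)^{i}[Q_i]$ in $K_0(\mathcal{C})$. The forgetful map $K_0(\mathcal{C})\to K_0(D_{\rho}(G,K))$ is well defined (short exact sequences in $\mathcal{C}$ are a fortiori exact as sequences of modules) and surjective (every finitely generated module admits an admissible filtration), and it carries this identity into $K_0(D_{\rho}(G,K))$. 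It therefore suffices to prove that $[Q]\in\mathbb{Z}\cdot[D_{\rho}(G,K)]$ for every $Q\in\mathcal{C}_0$.

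Next I would invoke Lemma \ref{z9}. For $Q\in\mathcal{C}_0$, being a filtered direct summand of some filtered free module $F$ forces $\gr Q$ to be a graded direct summand of $\gr F$, hence graded projective over $\gr D_{\rho}(G,K)$. Lemma \ref{z9} then yields graded free modules $\overline{G}$ and $\overline{H}$ (with appropriate shifts inherited from a complementary summand) and a graded isomorphism $\gr Q\oplus\overline{G}\cong\overline{H}$. Choosing filtered free lifts $G$ and $H$ with $\gr G=\overline{G}$ and $\gr H=\overline{H}$, the key step is to promote this to a filtered isomorphism $Q\oplus G\cong H$. To do so, pick homogeneous graded-free generators $\bar{e}_1,\dots,\bar{e}_k$ of $\gr(Q\oplus G)$, of degrees $s_1,\dots,s_k$, lift each to $e_j\in Q\oplus G$ with $\sigma(e_j)=\bar{e}_j$, and consider the filtered homomorphism $\bigoplus_{j}D_{\rho}(G,K)(-s_j)\to Q\oplus G$ sending the $j$-th generator to $e_j$. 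It induces the prescribed graded isomorphism on associated graded; since $Q\oplus G$ is finitely generated over the Banach algebra $D_{\rho}(G,K)$, it is complete for the induced filtration, and the completeness argument already exploited in Lemma \ref{z3} upgrades a graded isomorphism between complete, exhaustively filtered modules to a filtered isomorphism. Hence $Q\oplus G\cong H$ as filtered modules, and forgetting filtration yields $[Q]=(\rk H-\rk G)\cdot[D_{\rho}(G,K)]$ in $K_0(D_{\rho}(G,K))$.

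Combining the two steps, every class in $K_0(D_{\rho}(G,K))$ is an integer multiple of $[D_{\rho}(G,K)]$; the invariant basis property then ensures injectivity of the map $n\mapsto n[D_{\rho}(G,K)]$, giving the desired isomorphism with $\mathbb{Z}$. The main obstacle is the graded-to-filtered lifting: one must carefully match the shifts in Lemma \ref{z9} by choosing filtered free covers $G$ and $H$, and then invoke completeness to promote the graded isomorphism to a filtered one. Both are standard manipulations once set up, and in spirit they are the same mechanism employed in Lemmas \ref{z3} and \ref{z4}.
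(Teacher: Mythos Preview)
Your proposal is correct and follows essentially the same approach as the paper: reduce to $\mathcal{C}_0$ via Lemma~\ref{z5}, apply Lemma~\ref{z9} to see that $\gr Q$ is stably graded free, and lift this to conclude that $Q$ is stably filtered free. The paper compresses your lifting argument into the single phrase ``Since $\gr$ is exact, $P$ is also stably filtered free,'' whereas you spell out the construction of $G$, $H$ and the completeness argument---but the underlying logic is identical.
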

\begin{proof}
By Lemma \ref{z5} it is enough to prove that any object in
$\mathcal{C}_0$ is stably filtered free as the map
\begin{equation*}
K_0(\mathcal{C})\rightarrow K_0(D_{\rho}(G,K))
\end{equation*}
induced by the forgetful functor is clearly surjective and
$D_{\rho}(G,K)$ has the invariant basis property. Now if $P$ is in
$\mathcal{C}_0$ then the associated graded module $\gr P$ is also
projective, so by Lemma \ref{z9} it is stably graded free. Since
$\gr$ is exact, $P$ is also stably filtered free and we are done.
\end{proof}

\subsection{The Krull dimension and stable range}

Whenever $R$ is a (not necessarily commutative) ring then
following \cite{RG} its \emph{right Krull dimension} $\kd R$ is
defined by the deviation of the poset of right ideals in $R$. For
the deviation of a poset see Chapter 6 in \cite{MR}. By Corollary
4.8 in \cite{MR} this definition of Krull dimension coincides with
the usual one whenever $R$ is commutative. Since distribution
algebras of groups are isomorphic to their opposite ring via the
map induced by the anti-involution on the group sending the
elements to their inverse, the right and left Krull dimensions of
$D_{\rho}(G,K)$ are the same and we just call this the Krull
dimension $\kd D_{\rho}(G,K)$ of $D_{\rho}(G,K)$. A priori this is
just an ordinal, but we shall show that $\kd D_{\rho}(G,K)$ is
finite.

\begin{lem}\label{z6}
The Krull dimension of the ring $D_{\rho}(G,K)$ is less than or
equal to $d+1=\kd(\gr D_{\rho}(G,K))$.
\end{lem}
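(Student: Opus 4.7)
The plan is to exploit the associated graded ring $\gr D_{\rho}(G,K)\cong k[X_0,X_0^{-1}][X_1,\ldots,X_d]$ via a passage-to-graded map on right ideals. Since the graded ring is a polynomial ring in $d$ variables over the localization $k[X_0,X_0^{-1}]$ of $k[X_0]$, its (commutative, hence by Corollary 4.8 in \cite{MR} also noncommutative) Krull dimension is $d+1$. Writing $R:=D_{\rho}(G,K)$, I would show that the map $\Phi\colon I\mapsto\gr I$, from the poset $L$ of right ideals of $R$ (each equipped with the filtration induced from $R$) to the poset $L'$ of graded right ideals of $\gr R$, is strictly order-preserving in the sense that $I\subsetneq J$ implies $\gr I\subsetneq\gr J$. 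Combined with the purely poset-theoretic fact that any strictly order-preserving map $\Phi\colon P\to Q$ satisfies $\delta(P)\leq\delta(Q)$ for the deviations, this yields $\kd R\leq\kd\gr R=d+1$.

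For the strict order-preservation, the Banach algebra structure is essential. Because $R$ is a noetherian Banach algebra, every right ideal $I$ is finitely generated, hence closed in the Banach topology (which coincides with the filtration topology), and therefore complete with respect to the induced filtration. Suppose $I\subsetneq J$ with $\gr I=\gr J$, and pick $x\in J\setminus I$ of filtration degree $s$. Then $\sigma(x)\in(\gr I)_s$, so there exists $y_1\in I\cap F_{\rho}^s R$ with $\sigma(y_1)=\sigma(x)$, whence $x-y_1\in F_{\rho}^{s+}R$. Iterating yields a series $\sum_ky_k$ in $I$ whose partial sums form a Cauchy sequence converging to $x$ in the filtration topology; completeness of $I$ forces $x\in I$, a contradiction.

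The poset inequality $\delta(P)\leq\delta(Q)$ for strictly order-preserving $\Phi$ is proved by transfinite induction on $\delta(Q)$. The base case (DCC) is immediate: a descending chain $a_0\geq a_1\geq\cdots$ in $P$ maps to a descending chain in $Q$, and DCC in $Q$ together with strict preservation forces the $P$-chain to stabilize. For the inductive step, by the definition of deviation the image chain in $Q$ eventually has intervals $[\Phi(a_{i+1}),\Phi(a_i)]$ of deviation strictly less than $\delta(Q)$; the restriction of $\Phi$ to each $P$-interval $[a_{i+1},a_i]$ remains strictly order-preserving into the corresponding $Q$-interval, so the inductive hypothesis bounds its deviation by the same smaller ordinal. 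Specializing to $P=L$ and $Q=L'$ closes the proof.

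The main obstacle is the strict-preservation claim, and within it the crucial fact that finitely generated submodules are complete in the induced filtration --- a property ultimately supplied by the noetherian Banach structure of $D_{\rho}(G,K)$ rather than any filtered-algebra generality. The other ingredients --- the dimension count for $\gr R$ and the deviation induction --- are routine.
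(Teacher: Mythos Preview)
Your argument is correct and is essentially a self-contained reconstruction of what the paper obtains by citation: the paper simply invokes \cite[Ch.~I, Prop.~7.1.2]{HO} (which is exactly the statement that $\kd R \leq \kd(\gr R)$ for a complete filtered ring, proved via the strict order-preservation of $I\mapsto\gr I$) together with \cite[Cor.~6.4.8]{MR}, noting only that $D_\rho(G,K)$ is complete with respect to its filtration. Your proof unpacks precisely these ingredients---the completeness-based lift showing $\gr I\subsetneq\gr J$ whenever $I\subsetneq J$, and the transfinite induction on deviations---so the route is the same, just made explicit.
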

\begin{proof}
This follows from Proposition 7.1.2 of Chapter I of \cite{HO} and
Corollary 6.4.8 of \cite{MR} as $D_{\rho}(G,K)$ is complete with
respect to the filtration.
\end{proof}

Now we aim at studying the projective objects in the category of
coadmissible $D(G,K)$-modules. For this we introduce the notion of
\emph{stable range} following \cite{B}. Let $R$ be a ring and we
denote by $\GL(R)$ the infinite dimensional general linear group
$\varinjlim_n\GL_n(R)$ and by $\mathrm{E}(R)$ its subgroup
generated by elementary matrices. Then we have
\begin{eqnarray*}
\mathrm{E}(R)&\cong&[\GL(R),\GL(R)]\hbox{ and}\\
K_1(R)&\cong&\GL(R)/\mathrm{E}(R).
\end{eqnarray*}
Let $a=(a_1,\dots,a_l)$ be an element in the right module $R^l$.
We call $a$ \emph{unimodular} if there is a homomorphism $f\colon
R\rightarrow R$ such that $f(a)=1$ (this is equivalent in this special case to
that the left ideal $Ra_1+\dots+Ra_l$ equals $R$). We say that the positive
integer $n$ defines a \emph{stable range} for $\GL(R)$ if, for all
$l>n$, given $(a_1,\dots,a_l)$ unimodular in $R^l$, there exist
$b_1,\dots,b_{l-1}$ in $R$ such that
$(a_1+b_1a_l,\dots,a_{l-1}+b_{l-1}a_l)$ is unimodular in
$R^{l-1}$.

\begin{cor}\label{z8}
The number $d+2$ defines a stable range for $\GL(D_{\rho}(G,K))$.
Any projective $D_{\rho}(G,K)$-module of rank at least $d+2$ is
free. In particular if $P$ is a finitely generated projective
module then it can be generated by $\max(\rk(P),d+2)$ elements.
\end{cor}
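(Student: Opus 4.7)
The plan is to deduce both parts of Corollary \ref{z8} from Proposition \ref{z7} together with Lemma \ref{z6} via the classical K-theoretic machinery of Bass (cf.\ \cite{B}). The first assertion follows from the standard bound that the (right) stable range of a noetherian ring $R$ is at most $\kd(R)+1$; combined with Lemma \ref{z6} this gives immediately that $d+2$ defines a stable range for $\GL(D_{\rho}(G,K))$.

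For the second assertion I would invoke Bass's cancellation lemma: if the integer $n$ defines a stable range for $\GL(R)$, then any stably free projective right $R$-module of rank at least $n$ is already free. Since by Proposition \ref{z7} every finitely generated projective $D_{\rho}(G,K)$-module is stably free, it follows at once that any such module of rank at least $d+2$ is free.

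Finally, to obtain the generation count, let $P$ be a finitely generated projective $D_{\rho}(G,K)$-module and set $r:=\rk(P)$. If $r\geq d+2$, then $P$ is free of rank $r$ by the previous step, so $P$ is generated by $r=\max(r,d+2)$ elements. Otherwise, $P\oplus D_{\rho}(G,K)^{d+2-r}$ is projective of rank $d+2$, hence free by the previous step, so $P$ is a direct summand of $D_{\rho}(G,K)^{d+2}$ and can be generated by $d+2=\max(r,d+2)$ elements.

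The only real obstacle is locating a version of the stable range theorem valid in the noncommutative noetherian setting (rather than the commutative one usually stated in textbooks); but this is available in Bass's original treatment, and once cited the rest of the argument is essentially bookkeeping.
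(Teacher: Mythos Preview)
Your proposal is correct and follows essentially the same route as the paper: Lemma \ref{z6} plus a stable range bound gives the first claim, then Proposition \ref{z7} (stably free) plus Bass cancellation gives freeness in rank $\geq d+2$, and the generation count follows by padding with a free summand. One correction on the reference you flag as the ``only real obstacle'': the bound $\mathrm{sr}(R)\leq \kd(R)+1$ for general right noetherian rings (with Krull dimension in the Gabriel--Rentschler sense) is not in Bass's original paper but is due to Stafford \cite{St}, which is exactly what the paper cites; Bass's version applies to module-finite algebras over commutative noetherian rings, which does not directly cover $D_\rho(G,K)$.
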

\begin{proof}
The first statement follows from Lemma \ref{z6} and the Theorem in
\cite{St} stating that if the Krull dimension of a right
noetherian ring $R$ is at most $n-1$ then $n$ defines a stable
range for $\GL(R)$. Now by Theorem 4.2 in \cite{B} the group
$\mathrm{E}_{d+3}(D_{\rho}(G,K))$ generated by elementary
$(d+3)\times(d+3)$ matrices is surjective on the unimodular rows
in $D_{\rho}(G,K)^{d+3}$. So if $P$ is a projective module such
that
\begin{equation*}
P\oplus F\cong D_{\rho}(G,K)^t
\end{equation*}
for some integer $t\leq d+3$ and $F\cong D_{\rho}(G,K)$ then there
is an automorphism $g$ of $D_{\rho}(G,K)^t$ sending the generator
of $F$ to the first basis vector in $D_{\rho}(G,K)^t$ as the
generator of $F$ is unimodular. So we obtain
\begin{equation*}
P\cong(P\oplus
F)/F\overset{g}{\cong}D_{\rho}(G,K)^t/D_{\rho}(G,K)\cong
D_{\rho}(G,K)^{t-1}.
\end{equation*}
The result follows by Proposition \ref{z7}.
\end{proof}

\begin{lem}\label{z44}
Let $G$ be a uniform pro-$p$ group of dimension $d$. Then for any
positive integer $m$ the subgroup
$\mathrm{E}_{m+d+2}(D_{\rho}(G,K))$ of
$\GL_{m+d+2}(D_{\rho}(G,K))$ generated by elementary matrices acts
transitively on the surjections from $D_{\rho}(G,K)^{m+d+2}$ to
$D_{\rho}(G,K)^m$.
\end{lem}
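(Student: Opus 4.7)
The plan is induction on $m$. The base case $m=1$ is exactly the application of Theorem 4.2 of \cite{B} that already appears in the proof of Corollary \ref{z8}: surjections from $D_{\rho}(G,K)^{d+3}$ onto $D_{\rho}(G,K)$ are the same data as unimodular rows of length $d+3$, and Bass's result says $\mathrm{E}_{d+3}(D_{\rho}(G,K))$ acts transitively on such rows.

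For the inductive step, write $R=D_{\rho}(G,K)$ and let $\phi\colon R^{m+d+2}\to R^m$ be a surjection. First I would compose $\phi$ with the projection of $R^m$ onto its first coordinate to get a surjection $R^{m+d+2}\to R$, i.e.\ a unimodular row of length $m+d+2\geq d+3$. The base case then produces $g_1\in\mathrm{E}_{m+d+2}(R)$ after whose action the first component of $\phi$ is the projection onto $e_1$. Concretely, $\phi(e_1)$ has first coordinate $1$ while $\phi(e_j)$ has vanishing first coordinate for all $j\geq 2$.

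Next, $\phi$ restricts to a map $\phi'\colon\bigoplus_{j\geq 2}Re_j\to\{0\}\times R^{m-1}\cong R^{m-1}$, which is readily surjective: given $v\in R^{m-1}$, any $\phi$-preimage of $(0,v)$ must have vanishing first coordinate. Since $m+d+1=(m-1)+d+2$, the inductive hypothesis produces $g_2\in\mathrm{E}_{m+d+1}(R)$ making $\phi'$ standard; extending $g_2$ via the natural embedding $\mathrm{E}_{m+d+1}(R)\hookrightarrow\mathrm{E}_{m+d+2}(R)$ fixing the first coordinate and applying it to $\phi$, I may further assume $\phi(e_j)=e_j$ for $j=2,\ldots,m$, $\phi(e_j)=0$ for $j>m$, and $\phi(e_1)=(1,a_2,\ldots,a_m)$ for some $a_i\in R$. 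Subtracting $a_i$ times column $i$ from column $1$ for $i=2,\ldots,m$ (each an elementary operation on the source) clears these entries and reduces $\phi$ to the standard projection $[\id_{R^m}\mid 0]$.

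The main obstacle is that matrices in $\mathrm{E}_{m+d+2}(R)$ act only on the source $R^{m+d+2}$, so naive ``row reductions'' on the matrix of $\phi$ are unavailable. Bass's stable range theorem \cite{B} is precisely the workaround: in the base case, and again at the start of the inductive step, it lets us normalize one component of $\phi$ using only column operations. The inductive scheme is set up so that, once the first component of $\phi$ has been normalized, the columns that the inductive hypothesis straightens out become pivots against which the remaining entries of $\phi(e_1)$ can be cleared by genuine elementary column operations, so no row-side manipulation is ever needed.
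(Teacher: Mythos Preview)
Your proof is correct and follows essentially the same route as the paper's own argument: induction on $m$, with the base case and the normalization of the first row both handled by Bass's transitive action on unimodular rows (Theorem 4.2 in \cite{B}, via the stable range bound of Corollary \ref{z8}), then the inductive hypothesis applied to the $(m-1)\times(m+d+1)$ block, followed by elementary column operations against the resulting pivots to clear the remaining entries of the first column. The paper phrases everything in matrix language while you work with the map $\phi$ directly, but the steps and their justifications are the same.
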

\begin{proof}
Fix a basis of both $D_{\rho}(G,K)^{m+d+2}$ and $D_{\rho}(G,K)^m$.
Let $A=(a_{ij})_{1\leq i\leq m,1\leq j\leq m+d+2}$ be the matrix
of a surjection. We need to show that there is a matrix $B$ in
$\mathrm{E}_{m+d+2}(D_{\rho}(G,K))$ such that we have
\begin{equation}
AB=
\begin{pmatrix}
1&0&0&\cdots&0&0&\cdots&0\\
0&1&0&\cdots&0&0&\cdots&0\\
\vdots&\ddots&\ddots&\ddots&\vdots&\vdots&&\vdots\\
0&\cdots&0&1&0&0&\cdots&0\\
0&\cdots&\cdots&0&1&0&\cdots&0\\
\end{pmatrix}\label{z43}
\end{equation}
is the standard surjection. By Theorem 4.2a) in \cite{B}
$\mathrm{E}_{m+d+2}(D_{\rho}(G,K))$ acts transitively on
unimodular rows since $d+2$ defines a stable range by Corollary \ref{z8}. On the other hand the first (or in fact any) row
of $A$ is unimodular as $A$ is surjective. This means that we may
assume without loss of generality that the first row of $A$ is
\begin{equation}
\begin{pmatrix}
1&0&\cdots&0\label{z42}
\end{pmatrix}.
\end{equation}
Using this above remark we prove the statement by induction on
$m$. If $m=1$ then we are done by the above argument. Now let
$m>1$ and suppose the statement is true for $m-1$. Since the first
row of $A$ is as in $(\ref{z42})$ we have that the matrix
$A'=(a_{ij})_{2\leq i\leq m,2\leq j\leq m+d+2}$ is also a
surjection. By the inductional hypothesis there is a matrix $B$ in
$\mathrm{E}_{m+d+1}(D_{\rho}(G,K))$ such that
\begin{equation*}
A\begin{pmatrix} 1&0\\
0&B
\end{pmatrix}
=\begin{pmatrix}
1&0&0&\cdots&0&0&\cdots&0\\
*&1&0&\cdots&0&0&\cdots&0\\
\vdots&0&\ddots&\ddots&\vdots&\vdots&&\vdots\\
*&\vdots&\ddots&1&0&0&\cdots&0\\
*&0&\cdots&0&1&0&\cdots&0\\
\end{pmatrix}.
\end{equation*}
Now $A$ is almost in the required form and the nonzero entries in
the first column can be easily removed by multiplication by
elementary matrices.
\end{proof}

\begin{lem}\label{z45}
The natural maps
\begin{equation*}
\mathrm{E}_{l}(D_{\rho_1}(G,K))\rightarrow
\mathrm{E}_{l}(D_{\rho_2}(G,K))
\end{equation*}
have dense image for any $p^{-1}<\rho_2<\rho_1<1$ and positive
integer $l$.
\end{lem}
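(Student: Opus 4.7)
The plan is to exploit the density of $D_{\rho_1}(G,K)$ inside $D_{\rho_2}(G,K)$ (with respect to the norm $\|\cdot\|_{\rho_2}$) and the continuity of matrix multiplication in the Banach algebra $M_l(D_{\rho_2}(G,K))$. First I would record the density: since the norms satisfy $\|\cdot\|_{\rho_2}\leq\|\cdot\|_{\rho_1}$ (because $\rho_2<\rho_1<1$ and every coefficient is weighted by $\rho^{\kappa|\alpha|}$), the natural inclusion $D_{\rho_1}(G,K)\hookrightarrow D_{\rho_2}(G,K)$ is continuous, and it contains the polynomial algebra (or more generally $D(G,K)$) which is already dense in the completion $D_{\rho_2}(G,K)$ by construction.

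Next, I would take an arbitrary $M\in\mathrm{E}_l(D_{\rho_2}(G,K))$ and write it as a finite product $M=E_1E_2\cdots E_N$ of elementary matrices $E_k=I+a_kE_{i_kj_k}$ with $a_k\in D_{\rho_2}(G,K)$ and $i_k\neq j_k$. Given any $\varepsilon>0$ and equipping $M_l(D_{\rho_2}(G,K))$ with the (submultiplicative) norm $\|(x_{ij})\|:=\max_{i,j}\|x_{ij}\|_{\rho_2}$, I would choose $a_k'\in D_{\rho_1}(G,K)$ with $\|a_k-a_k'\|_{\rho_2}$ sufficiently small (this is possible by the density just recorded) and set $E_k':=I+a_k'E_{i_kj_k}\in\mathrm{E}_l(D_{\rho_1}(G,K))$. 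Then $M':=E_1'E_2'\cdots E_N'$ lies in $\mathrm{E}_l(D_{\rho_1}(G,K))$ by construction, and by continuity of multiplication
\begin{equation*}
\|M-M'\|\leq\sum_{k=1}^{N}\Bigl(\prod_{j<k}\|E_j'\|\Bigr)\|E_k-E_k'\|\Bigl(\prod_{j>k}\|E_j\|\Bigr)
\end{equation*}
can be made smaller than $\varepsilon$ by picking the individual errors $\|a_k-a_k'\|_{\rho_2}$ small enough (the factors $\|E_j\|,\|E_j'\|$ are controlled uniformly once the $a_k'$ are close to the $a_k$).

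There is no substantive obstacle here; the proof is an entirely formal ``density of generators plus joint continuity of a fixed finite number of multiplications in a Banach algebra'' argument. The only minor point requiring attention is that the number $N$ of factors and the norms $\|E_k\|$ are fixed by $M$ before the approximations are chosen, so the product does not accumulate unbounded error: one fixes $M$, then picks $\varepsilon$, and only then selects the $a_k'$ close enough to the $a_k$ to drive the error below $\varepsilon$.
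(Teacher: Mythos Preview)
Your proof is correct and follows exactly the same approach as the paper: write an element of $\mathrm{E}_l(D_{\rho_2}(G,K))$ as a finite product of elementary matrices and approximate each factor using the density of $D_{\rho_1}(G,K)$ in $D_{\rho_2}(G,K)$. The paper compresses this into a single sentence, whereas you have written out the telescoping estimate explicitly.
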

\begin{proof}
Since any element in $\mathrm{E}_{l}(D_{\rho_2}(G,K))$ is the
product of finitely many elementary matrices the result follows
from the density of the image of the map
$D_{\rho_1}(G,K)\rightarrow D_{\rho_2}(G,K)$.
\end{proof}

The main result of this section is the following generalization of
Theorem 1 in V.\ of \cite{G}.

\begin{thm}\label{z10}
Let $G$ be a compact locally $\mathbb{Q}_p$-analytic group of
dimension $d$. All the projective objects in the category of
coadmissible modules over $D(G,K)$ are finitely generated. In
particular any projective object is a projective module over
$D(G,K)$ and the category does not have enough projectives.
\end{thm}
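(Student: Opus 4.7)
The plan is to show that a projective coadmissible $P$ is in fact free of finite rank, by compatibly trivializing its associated coherent sheaf $(P_l)_l$ with $P_l = D_{\rho_l}(G,K) \otimes_{D(G,K)} P$ and $P \cong \varprojlim_l P_l$. I would first establish that each $P_l$ is a projective $D_{\rho_l}(G,K)$-module: given any surjection $\pi \colon N \twoheadrightarrow P_l$ of finitely generated $D_{\rho_l}$-modules, one extends it to a surjection of coadmissible modules $\widetilde{N} \twoheadrightarrow P$ inducing $\pi$ at level $l$ (using base change downwards and the Fréchet--Stein machinery to fill in the higher levels), whereupon projectivity of $P$ in $\mathcal{C}_{D(G,K)}$ produces a splitting that base-changes to a splitting of $\pi$.

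By Proposition \ref{z7} each $P_l$ is then stably free, and base change preserves the stably-free rank (via the invariant basis property of $D_{\rho_l}(G,K)$), so $r := \mathrm{rk}(P_l)$ is independent of $l$. Replacing $P$ by $P \oplus D(G,K)^{d+2}$---which is still projective and coadmissible, and whose finite generation implies that of $P$---we may assume $r \geq d+2$, and Corollary \ref{z8} then upgrades each $P_l$ to a \emph{free} $D_{\rho_l}(G,K)$-module of rank $r$. Choose isomorphisms $\phi_l \colon P_l \xrightarrow{\cong} D_{\rho_l}(G,K)^r$; the coherent-sheaf transitions correspond under the $\phi_l$ to matrices $T_l \in \GL_r(D_{\rho_l}(G,K))$, and the remaining task is to find modifications $A_l \in \GL_r(D_{\rho_l}(G,K))$ with $A_l T_l \bar{A}_{l+1}^{-1} = I$, where $\bar{A}_{l+1}$ is the image of $A_{l+1}$ under $\GL_r(D_{\rho_{l+1}}(G,K)) \to \GL_r(D_{\rho_l}(G,K))$. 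Since this transition map is not surjective, we cannot solve the equation in one pass; instead one uses the transitivity of elementary matrices on surjections (Lemma \ref{z44}) to reduce the obstruction to an element of $\mathrm{E}_{r+d+2}(D_{\rho_l}(G,K))$, and the density result (Lemma \ref{z45}) to approximate it arbitrarily well by the image of $\mathrm{E}_{r+d+2}(D_{\rho_{l+1}}(G,K))$. Iterating with careful control of the errors in the Banach norm $\|\cdot\|_{\rho_l}$, one obtains compatible $\phi_l$ in the limit, yielding $P \cong \varprojlim_l D_{\rho_l}(G,K)^r = D(G,K)^r$.

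The main obstacle is the gluing step: turning the non-surjectivity of the $\GL_r$ transition maps into a convergent iterative approximation requires both the stable-range bound of Corollary \ref{z8} (which is why we pre-stabilized to $r \geq d+2$) and the density of Lemma \ref{z45}, and one has to track how the corrections at each level propagate to ensure the limiting system is genuinely an isomorphism. The first step, producing the coherent-sheaf extension of a level-$l$ module, is also delicate and is where the Fréchet--Stein formalism from \cite{foundations} does the work. Once $P$ is known to be finitely generated, the concluding assertions are automatic: $P$ is a summand of some $D(G,K)^n$, hence projective as a $D(G,K)$-module; and the existence in Section \ref{z11} of coadmissible modules that are not finitely generated precludes the existence of enough projectives in $\mathcal{C}_{D(G,K)}$.
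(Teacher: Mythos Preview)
Your proposal has the right overall architecture and invokes the relevant lemmas, but the first step contains a genuine gap. To show each $P_l$ is projective, you propose to extend a surjection $N \twoheadrightarrow P_l$ to a surjection of coadmissible modules $\widetilde N \twoheadrightarrow P$. This is circular: you do not yet know that $P$ admits a surjection from any finitely generated (or otherwise tractable) coadmissible module---that is essentially what the theorem asserts. Even taking $N = D_{\rho_l}(G,K)^n$ free and lifting generators to obtain $\varphi\colon D(G,K)^n \to P$, this $\varphi$ need not be surjective; its cokernel $C$ is coadmissible with $C_{\rho_l} = 0$ but a priori $C \neq 0$. Projectivity of $P$ in $\mathcal C_{D(G,K)}$ only splits \emph{epimorphisms}, and $\varphi$ is not one. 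The paper circumvents this by keeping the cokernel: from $0 \to \Ker\varphi \to D(G,K)^n \to P \to \Coker\varphi \to 0$, projectivity of $P$ gives exactness of the four-term $\Hom(P,-)$ sequence, and only after base-changing to level $\rho_l$ (via \cite[Lemma~8.4]{foundations}) does $\Hom(P_{\rho_l},(\Coker\varphi)_{\rho_l})$ vanish, producing the splitting of $P_{\rho_l}$.

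Your gluing step is also loosely specified, and as written mixes two distinct strategies. You aim to trivialize the transition cocycle $(T_l)$ in $\GL_r$ by iterative approximation, but density is only available for $\mathrm E_r$ (Lemma~\ref{z45}), and the image of $\GL_r(D_{\rho_{l+1}}(G,K))$ in $\GL_r(D_{\rho_l}(G,K))$ is not obviously dense---there is a $K_1$-type obstruction. Lemma~\ref{z44} concerns the action of $\mathrm E_{m+d+2}$ on \emph{surjections} $D_\rho^{m+d+2}\twoheadrightarrow D_\rho^m$, not on $\GL_r$, so invoking it to ``reduce the obstruction to $\mathrm E_{r+d+2}$'' does not fit the square-matrix setup you have described. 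The paper avoids this by working directly with the projective system $E_\rho$ of such surjections: transitivity (Lemma~\ref{z44}) plus density of elementary matrices (Lemma~\ref{z45}) give the Mittag--Leffler condition, hence $\varprojlim E_\rho \neq \emptyset$, which yields a single surjection $D(G,K)^{m+d+2}\twoheadrightarrow P\oplus D(G,K)^{m-\rk P}$. Note this gives only finite generation of $P$, not the stable freeness over $D(G,K)$ that your argument targets.
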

\begin{rem}
Whenever $G$ is uniform the rank of a projective coadmissible
$D(G,K)$-module $P$ is defined by the rank of
$D_{\rho}(G,K)\otimes P$. This is clearly independent of $\rho$.
\end{rem}
\begin{proof}
It suffices to prove the statement when $G$ is uniform as any
compact $p$-adic Lie group has a finite index uniform subgroup.

Now assume that $G$ is uniform and let $P$ be a projective object
in the category of coadmissible $D(G,K)$-modules. We claim first
that for any $\rho$ in $p^{\mathbb{Q}}$ with $1/p<\rho<1$ the
$D_{\rho}(G,K)$-module $P_{\rho}:=D_{\rho}(G,K)\otimes_{D(G,K)}P$
is also projective (and finitely generated since $P$ is
coadmissible). Let $m_1,\dots,m_l$ be elements in $P$ such that
$1\otimes m_1,\dots,1\otimes m_l$ generate
$P_{\rho}=D_{\rho}(G,K)\otimes P$. Take the map
\begin{eqnarray}
\varphi\colon D(G,K)^l&\rightarrow& P\notag\\
e_j&\mapsto& m_j\notag
\end{eqnarray}
where $e_j$ is the $j$th standard basis vector of $D(G,K)^l$. Since
$P$ is a projective object we obtain an exact sequence
\begin{equation*}
0\rightarrow\Hom(P,\Ker(\varphi))\rightarrow\Hom(P,D(G,K)^l)\rightarrow\Hom(P,P)\rightarrow\Hom(P,\Coker(\varphi))\rightarrow0
\end{equation*}
of coadmissible modules. By Lemma 8.4 in \cite{foundations} the
following sequence is also exact for any $p^{-1}<\rho<1$ in
$p^{\mathbb{Q}}$.
\begin{eqnarray}
0\rightarrow\Hom_{D_{\rho}(G,K)}(P_{\rho},\Ker(\varphi)_{\rho})\rightarrow\Hom_{D_{\rho}(G,K)}(P_{\rho},D_{\rho}(G,K)^l)\rightarrow\notag\\
\rightarrow\Hom_{D_{\rho}(G,K)}(P_{\rho},P_{\rho})
\rightarrow\Hom_{D_{\rho}(G,K)}(P_{\rho},\Coker(\varphi)_{\rho})\rightarrow0\notag
\end{eqnarray}
On the other hand $\Coker(\varphi)_{\rho}=0$ as $1\otimes
m_1,\dots,1\otimes m_l$ generate $P_\rho$. So the map
\begin{equation*}
\Hom(P_{\rho},D_{\rho}(G,K)^l)\rightarrow\Hom(P_{\rho},P_{\rho})
\end{equation*}
is surjective which means that $P_{\rho}$ is a direct summand of
$D_{\rho}(G,K)^l$ and hence projective.

By Corollary \ref{z8} the $D_{\rho}(G,K)$-module $P_{\rho}\oplus
D_{\rho}(G,K)^{\max(\rk(P),d+2)-\rk(P)}$ is free of rank
$\max(\rk(P),d+2)$. So we have
\begin{eqnarray}
P\oplus D(G,K)^{\max(\rk(P),d+2)-\rk(P)}&\cong&\varprojlim_{\rho}
P_{\rho}\oplus\varprojlim_{\rho}
D_{\rho}(G,K)^{\max(\rk(P),d+2)-\rk(P)}\notag\\
&\cong&\varprojlim_{\rho} (P_{\rho}\oplus
D_{\rho}(G,K)^{\max(\rk(P),d+2)-\rk(P)})\notag\\
&\cong& \varprojlim_{\rho} D_{\rho}(G,K)^{\max(\rk(P),d+2)}.\notag
\end{eqnarray}
Note that the connecting maps in $\varprojlim_{\rho}
D_{\rho}(G,K)^{\max(\rk(P),d+2)}$ may not map the standard basis
vectors to each other. This is the reason why we need Lemma
\ref{z44}. We are going to show that $D(G,K)^{m+d+2}$ surjects
onto $P\oplus D(G,K)^{m-\rk(P)}$ where $m=\max(\rk(P),d+2)$. This
will show that $P$ is finitely generated. Let $E_{\rho}$ be the
set of surjections from $D_{\rho}(G,K)^{m+d+2}$ to
$D(G,K)_{\rho}^{m}$. The sets $E_{\rho}$ form a projective system
with continuous connecting maps and it suffices to show that the
projective limit of this system is nonempty. By Lemma \ref{z44} we
have a transitive and continuous action of
$\mathrm{E}_{m+d+2}(D_{\rho}(H,K))$ on $E_{\rho}$ for each
$p^{-1}\leq \rho<1$ in $p^{\mathbb{Q}}$. Applying Lemma \ref{z45}
with $l=m+d+2$ we obtain that the image of $E_{\rho_1}$ in
$E_{\rho_2}$ is also dense. Therefore the projective system
$(E_\rho)_\rho$ satisfies the Mittag-Leffler condition so its
projective limit is nonempty.

Once we know that $P$ is finitely generated, it is easy to see that $P$ is a projective module over $D(G,K)$ because it is the direct
summand of a free module (of finite rank) as the finitely
generated free modules over $D(G,K)$ are coadmissible. The last
statement follows from the fact that there are coadmissible
modules over $D(G,K)$ which are not finitely generated.
\end{proof}

\begin{rems}\begin{enumerate}
\item The conversion of Proposition \ref{z10} is also true. If $P$ is a
finitely generated projective module over $D(G,K)$ then it is of
course finitely presented and so coadmissible by Corollary 3.4$v$
in \cite{foundations}.
\item It would be interesting to have a generalization of
Proposition \ref{z10} to any noncommutative Fr\'echet-Stein
algebra. However, general noetherian Banach algebras might have
infinite Krull dimension. Moreover, even in the case of
\emph{commutative} locally $L$-analytic groups where $L\supsetneq
\mathbb{Q}_p$ the Grothendieck group $K_0(D_\rho(G,K))$ is more
complicated. For example when $o_L$ is the additive group of the
ring of integers in $L$ we have
$K_0(D_\rho(o_L,K))\neq\mathbb{Z}$ (see \cite{S1}).
\end{enumerate}
\end{rems}

\section{Fr\'echet-Stein structure}\label{sec:Frechet-Stein}

Let $G$ be a uniform pro-$p$ group. In this section we are going
to prove that the topological $K$-algebras $D_{[\rho,1)}(G,K)$
constructed in the Appendix are Fr\'echet-Stein.

Let $p^{-1}<\rho_1<\rho_3<\rho_2<1$ be real numbers in $p^{\mathbb{Q}}$. Let $D_{[\rho_1,\rho_2)^{bd}}(G,K)$ and
$D_{(\rho_1,\rho_2]^{bd}}(G,K)$ be the subspace of $D_{[\rho_1,\rho_3]}(G,K)$ and
$D_{[\rho_3,\rho_2]}(G,K)$, respectively, consisting of those Laurent series
that are bounded in the norm
$\|\cdot\|_{\rho_1,\rho_2}:=\max(\|\cdot\|_{\rho_1},\|\cdot\|_{\rho_2})$. This
definition is clearly independent of the choice of $\rho_3$. These
are a priori Banach spaces, however, since the norm
$\|\cdot\|_{\rho_1,\rho_2}$ is submultiplicative on monomials they form
subalgebras of $D_{[\rho_1,\rho_3]}(G,K)$ and
$D_{[\rho_3,\rho_2]}(G,K)$, respectively. 
Moreover, this norm induces a filtration on each algebra given by
\begin{eqnarray*}
F_{\rho_1,\rho_2}^s D_{*}(G,K)&:=&\{x\in D_{*}(G,K)\colon\|x\|_{\rho_1,\rho_2}\leq p^{-s}\},\\
F_{\rho_1,\rho_2}^{s+}D_*(G,K)&:=&\{x\in
D_*(G,K)\colon\|x\|_{\rho_1,\rho_2}< p^{-s}\}
\end{eqnarray*}
where $*$ denotes either $[\rho_1,\rho_2]$, $(\rho_1,\rho_2]^{bd}$, or
$[\rho_1,\rho_2)^{bd}$.

\begin{pro}
If $\rho_1$ and $\rho_2$ are both in $p^{\mathbb{Q}}$ then the
above algebras $D_*(G,K)$ are all noetherian.
\end{pro}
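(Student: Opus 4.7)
The plan is to apply the standard filtered-ring technique: prove that the associated graded ring $\gr D_*(G,K)$ with respect to the filtration $F^\bullet_{\rho_1,\rho_2}$ is noetherian, then invoke Chapter I, Proposition 7.1.2 of \cite{HO} (already used in the proof of Lemma \ref{z6}) to lift noetherianity to the complete filtered ring $D_*(G,K)$. The hypothesis $\rho_1,\rho_2\in p^{\mathbb{Q}}$ enters precisely here, ensuring the filtration values sit in a discrete subgroup of $p^{\mathbb{R}}$ so that the usual graded formalism applies.

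Completeness of the filtration is essentially built into the construction: $D_{[\rho_1,\rho_2]}(G,K)$ is a Banach algebra by the Appendix, and the two $\mathrm{bd}$-versions are closed Banach subalgebras of the ambient $D_{[\rho_1,\rho_3]}(G,K)$ and $D_{[\rho_3,\rho_2]}(G,K)$, being cut out by the closed condition of $\|\cdot\|_{\rho_1,\rho_2}$-boundedness. To identify $\gr D_*(G,K)$, I would study the principal symbols of the Laurent monomials $\textbf{b}^\alpha$: these are pairwise orthogonal for $\|\cdot\|_{\rho_1,\rho_2}$, so their symbols generate $\gr D_*(G,K)$ over $\gr K$, and commutativity of the graded product should follow as in \cite{foundations} from a Campbell--Hausdorff-type estimate showing that $[b_i,b_j]$ has strictly smaller filtration degree than $b_ib_j$. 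The key observation particular to the annulus setting is that when $\rho_1<\rho_2$ one has $\|b_ib_i^{-1}\|_{\rho_1,\rho_2}=1$ while $\|b_i\|_{\rho_1,\rho_2}\cdot\|b_i^{-1}\|_{\rho_1,\rho_2}>1$, so the principal symbols $\sigma(b_i)$ and $\sigma(b_i^{-1})$ multiply to zero in $\gr$. I expect this to yield a presentation
\begin{equation*}
\gr D_{[\rho_1,\rho_2]}(G,K)\cong(\gr K)[X_1,Y_1,\dots,X_d,Y_d]\big/(X_iY_i\colon 1\le i\le d),
\end{equation*}
which is noetherian as a quotient of a polynomial ring over the noetherian graded ring $\gr K$.

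The main obstacle will be the detailed verification of this graded ring in each of the three cases, in particular proving the claimed monomial orthogonality of $\|\cdot\|_{\rho_1,\rho_2}$ on the full Banach algebra (this presumably requires unpacking the microlocalization construction of the Appendix) and confirming that the graded rings attached to the two $\mathrm{bd}$-variants have essentially the same form as in the closed-annulus case. A secondary technical point is checking that for the $\mathrm{bd}$-subalgebras the induced filtration is indeed complete and that the passage to the subalgebra does not introduce new graded phenomena that could spoil noetherianity; the fact that the ambient Banach norms $\|\cdot\|_{\rho_1}$ and $\|\cdot\|_{\rho_2}$ on the polynomial subring are the classical Schneider--Teitelbaum norms, whose associated graded objects are well understood from \cite{foundations}, should make this tractable.
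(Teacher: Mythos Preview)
Your overall strategy --- pass to the associated graded ring for the filtration by $\|\cdot\|_{\rho_1,\rho_2}$ and invoke Proposition I.7.1.2 of \cite{HO} --- is exactly what the paper does. The gap is in your identification of the graded ring.

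Your computation that $\sigma(b_i)\sigma(b_i^{-1})=0$ is correct, but these are not the only relations. The paper shows that the zero-product rule in $\gr_{\rho_1,\rho_2}D_{[\rho_1,\rho_2]}(G,K)$ depends on the \emph{total} degree: one has $\sigma(\mathbf{b}^\alpha)\sigma(\mathbf{b}^\beta)=0$ whenever $|\alpha|$ and $|\beta|$ are nonzero of opposite sign, and $\sigma(\mathbf{b}^{\alpha+\beta})$ otherwise. In particular $\sigma(b_1)\sigma(b_2^{-1})=0$ as well when $d\geq 2$. This means that the degree-zero symbols $\sigma(b_ib_j^{-1})$ are \emph{not} in the $\gr K$-subalgebra generated by the $\sigma(b_i^{\pm 1})$, so your proposed ring $(\gr K)[X_i,Y_i]/(X_iY_i)$ does not even surject onto the true graded ring, and you cannot deduce noetherianity of the latter from noetherianity of the former.

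The paper handles the correct graded ring by a different decomposition: the projection $\pi$ onto the span of symbols with $|\alpha|\geq 0$ is a ring homomorphism onto a genuine Laurent polynomial ring
\[
B=k[X_0^{\pm 1},X_1,X_2X_1^{-1},\dots,X_dX_1^{-1},X_1X_2^{-1},\dots,X_1X_d^{-1}],
\]
and $\Ker(\pi)$ sits inside a companion Laurent polynomial ring in $X_1^{-1}$ and the same degree-zero generators. Both pieces are noetherian, so any ideal $I$ is finitely generated. For the $\mathrm{bd}$-variants the positive part becomes a formal power series ring $C_0$ in $X_1$ over the degree-zero Laurent polynomial ring, and one needs a second application of the filtered-ring criterion (now filtering $C_0$ by lowest degree in $X_1$) to establish its noetherianity. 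So the shape of the argument you outlined is right, but the structure of the graded ring is more delicate than the quotient you wrote down, and the extra degree-zero generators $X_iX_j^{-1}$ are essential.
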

\begin{proof}
We may extend scalars without loss of generality to a finite
extension of $K$. So we assume that both $\rho_1$ and $\rho_2$ are
integral powers of the absolute value of the uniformizer of $K$.
At first we prove the statement for $D_{[\rho_1,\rho_2]}(G,K)$.
The other cases are dealt with in a similar way. The underlying
space of the graded ring
$\gr_{\rho_1,\rho_2}D_{[\rho_1,\rho_2]}(G,K)$ is the set of
Laurent polynomials in variables $X_1,\dots,X_d$ over the graded
ring $\gr K=k[X_0,X_0^{-1}]$ of $K$. Here $X_0$ is the principal
symbol of the uniformizer of the field $K$ and
$X_i=\sigma(b_i)/X_0^s$ for each $1\leq i\leq d$ where $\rho_2$ is
the $s$th power of the absolute value of the unformizer of $K$.
The $\gr K$-module structure is clear the question is only the
multiplication of monomials ${\bf X}^\alpha=X_1^{\alpha_1}\cdots
X_d^{\alpha_d}$. However, this is non-standard, the product of two
monomials ${\bf X}^\alpha$ and ${\bf X}^\beta$ is defined for any
$\alpha$ and $\beta$ in $\mathbb{Z}^d$ by
\begin{equation}
{\bf X}^\alpha{\bf X}^\beta=\begin{cases} 0&\hbox{if
}|\alpha|<0\hbox{
and }|\beta|>0\hbox{ or if }|\alpha|>0\hbox{ and }|\beta|<0;\\
{\bf X}^{\alpha+\beta}&\hbox{otherwise.}
\end{cases}\label{z29}
\end{equation}
Indeed, we have $\|{\bf b}^{\alpha}{\bf
b}^{\beta}\|_{\rho_1,\rho_2}=\|{\bf
b}^{\alpha}\|_{\rho_1,\rho_2}\|{\bf b}^{\beta}\|_{\rho_1,\rho_2}$
if and only if
\begin{equation}
\begin{cases}
\hbox{either }&\|{\bf b}^{\alpha}\|_{\rho_1,\rho_2}=\|{\bf
b}^{\alpha}\|_{\rho_1}\hbox{ and }\|{\bf
b}^{\beta}\|_{\rho_1,\rho_2}=\|{\bf b}^{\beta}\|_{\rho_1}\\
\hbox{or }&\|{\bf b}^{\alpha}\|_{\rho_1,\rho_2}=\|{\bf
b}^{\alpha}\|_{\rho_2}\hbox{ and }\|{\bf
b}^{\beta}\|_{\rho_1,\rho_2}=\|{\bf b}^{\beta}\|_{\rho_2}.
\end{cases}\label{z30}
\end{equation}
Since $\rho_1<\rho_2$ the two norms of the term ${\bf b}^\alpha$
coincide if and only if the degree $|\alpha|$ of $\alpha$ is zero.
So the condition $(\ref{z30})$ is equivalent to that at least one
of $|\alpha|$ and $|\beta|$ is zero or they have the same sign and
claim $(\ref{z29})$ follows.

Now the filtration on $D_{[\rho_1,\rho_2]}(G,K)$ is complete and
indexed by the integer multiples of a rational number, so by Prop.
I.7.1.2 in \cite{HO} it suffices to show that
$\gr_{\rho_1,\rho_2}D_{[\rho_1,\rho_2]}(G,K)$ is noetherian. For
this take an ideal $I$ in
$\gr_{\rho_1,\rho_2}D_{[\rho_1,\rho_2]}(G,K)$. Let $\pi$ denote
the projection of $\gr_{\rho_1,\rho_2}D_{[\rho_1,\rho_2]}(G,K)$ to
the terms $\sum c_{\alpha}{\bf X}^{\alpha}$ with $c_{\alpha}$ in
$k[X_0,X_0^{-1}]$ and $\alpha$ in $\mathbb{Z}^d$ with
$|\alpha|\geq 0$. This projection $\pi$ is in fact a ring
homomorphism onto the Laurent polynomial ring
\begin{equation}
B:=k[X_0,X_0^{-1},X_1,X_2X_1^{-1},\dots,X_dX_1^{-1},X_1X_2^{-1},\dots,X_1X_d^{-1}]\label{z33}
\end{equation}
which is noetherian. So $\pi(I)$ is finitely generated. On the
other hand $\Ker(\pi)\cap I$ is also an ideal in the Laurent
polynomial ring
\begin{equation*}
k[X_0,X_0^{-1},X_1^{-1},X_2X_1^{-1},\dots,X_dX_1^{-1},X_1X_2^{-1},\dots,X_1X_d^{-1}]
\end{equation*}
which is viewed as a subring of
$\gr_{\rho_1,\rho_2}D_{[\rho_1,\rho_2]}(G,K)$. This is also
noetherian and therefore $\Ker(\pi)\cap I$ is also finitely
generated. Putting together these generators with (any) lifts of
the generators of $\pi(I)$ we get a finite generating system of
$I$ and the result follows for $D_{[\rho_1,\rho_2]}(G,K)$.

The proof of the statement for the rings
$D_{(\rho_1,\rho_2]^{bd}}(G,K)$ and $D_{[\rho_1,\rho_2)^{bd}}(G,K)$ is
similar. We only show it for $D_{[\rho_1,\rho_2)^{bd}}(G,K)$ and leave
the other case to the reader. The underlying space of the graded
ring $\gr_{\rho_1,\rho_2}D_{[\rho_1,\rho_2)^{bd}}(G,K)$ is the Laurent
polynomial ring $C[X_0,X_0^{-1}]$ over the ring
\begin{equation*}
C:=k[X_2X_1^{-1},\dots,X_dX_1^{-1},X_1X_2^{-1},\dots,X_1X_d^{-1}]\bg
X_1\jg.
\end{equation*}
The multiplication is still given by $(\ref{z29})$. Hence it
remains to show that the ring
\begin{equation}
C_0:=k[X_2X_1^{-1},\dots,X_dX_1^{-1},X_1X_2^{-1},\dots,X_1X_d^{-1}]\bs
X_1\js\label{z34}
\end{equation}
is noetherian as this is the `positive part' of
$\gr_{\rho_1,\rho_2}D_{[\rho_1,\rho_2)^{bd}}(G,K)$. However, $C_0$ is
also completely filtered by the lowest degree and the
corresponding graded ring is the Laurent polynomial ring
\begin{equation*}
k[X_1,X_2X_1^{-1},\dots,X_dX_1^{-1},X_1X_2^{-1},\dots,X_1X_d^{-1}]
\end{equation*}
which is noetherian. The result follows again by applying
Proposition I.7.1.2 of \cite{HO} twice.
\end{proof}

\begin{lem}\label{z35}
The ring $C_0[X_0,X_0^{-1}]$ is flat over the ring $B$. (For the
definition of $B$ and $C_0$ see $(\ref{z33})$ and $(\ref{z34})$,
respectively.)
\end{lem}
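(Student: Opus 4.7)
The plan is to reduce the claim to the classical fact that the completion of a Noetherian ring at an ideal is flat, after rewriting both $B$ and $C_0[X_0,X_0^{-1}]$ in a cleaner form.

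First I would exploit the relation $(X_iX_1^{-1})(X_1X_i^{-1}) = 1$ for each $2 \leq i \leq d$, which is valid in $B$ and $C_0$: both factors have total degree $0$, so the nonstandard rule $(\ref{z29})$ specialises to the ordinary product on these elements. Introducing $Y_i := X_iX_1^{-1}$ (so that $Y_i^{-1} = X_1X_i^{-1}$), one obtains the identifications
$$B = R[X_1], \qquad C_0[X_0, X_0^{-1}] = R[[X_1]],$$
where $R := k[X_0^{\pm 1}, Y_2^{\pm 1}, \dots, Y_d^{\pm 1}]$ is a Laurent polynomial ring in $d$ variables over $k$, and hence Noetherian. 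Under these identifications, the inclusion $B \hookrightarrow C_0[X_0, X_0^{-1}]$ becomes the natural inclusion $R[X_1] \hookrightarrow R[[X_1]]$. Along the way one should verify that the multiplications inherited from the graded ring restrict to ordinary polynomial (resp.\ formal power series) multiplication, but this is immediate from $(\ref{z29})$: the only case where that formula differs from ordinary multiplication needs one factor of strictly positive and one of strictly negative total degree, whereas all monomials in $B$ and $C_0$ have non-negative total degree.

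Once these identifications are in place, I would observe that since $R$ is Noetherian, so is $R[X_1]$, and that $R[[X_1]]$ is precisely the $X_1$-adic completion of $R[X_1]$. The standard theorem that the completion of a Noetherian ring at an ideal is a flat ring map (e.g.\ Matsumura, \emph{Commutative Ring Theory}, Thm.\ 8.8, or Bourbaki, \emph{Commutative Algebra}, Chap.\ III, \S3) then yields the desired flatness of $C_0[X_0, X_0^{-1}]$ over $B$.

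The main subtlety is really only Step 1, namely the clean rewriting of the two rings as $R[X_1]$ and $R[[X_1]]$; once that bookkeeping is done, the flatness is a direct appeal to well-known commutative algebra and no genuine obstacle arises.
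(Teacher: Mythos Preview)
Your identification $C_0[X_0,X_0^{-1}] = R[[X_1]]$ with $R = k[X_0^{\pm 1}, Y_2^{\pm 1},\dots,Y_d^{\pm 1}]$ is incorrect, and this is a genuine gap. An element of $C_0[X_0,X_0^{-1}]$ is a \emph{finite} $k[X_0^{\pm 1}]$-linear combination of elements of $C_0$, so the set of powers of $X_0$ occurring in it is bounded. By contrast, an element of $R[[X_1]]$ is a power series $\sum_{n\geq 0} r_n X_1^n$ with $r_n \in R$, and nothing prevents the degree of $r_n$ in $X_0$ from growing with $n$; for instance $\sum_{n\geq 0} X_0^n X_1^n$ lies in $R[[X_1]]$ but not in $C_0[X_0,X_0^{-1}]$. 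In general $A[[X_1]][X_0^{\pm 1}]$ is a proper subring of $A[X_0^{\pm 1}][[X_1]]$, and you have confused the two.

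The repair is immediate and is precisely what the paper does: keep $X_0$ out of the coefficient ring. Set $R' := k[Y_2^{\pm 1},\dots,Y_d^{\pm 1}]$; then $C_0 = R'[[X_1]]$ and $B = R'[X_1][X_0^{\pm 1}]$, whence $C_0[X_0^{\pm 1}] = R'[[X_1]][X_0^{\pm 1}]$. Flatness of $R'[[X_1]]$ over $R'[X_1]$ (say via Noetherian completion, as you suggested) then gives flatness after adjoining $X_0^{\pm 1}$ on both sides, since flatness is preserved under the base change $-\otimes_k k[X_0^{\pm 1}]$ (equivalently, under passage to Laurent polynomials). Your digression on the nonstandard product rule~(\ref{z29}) is also unnecessary here: $B$ and $C_0$ are defined in (\ref{z33}) and (\ref{z34}) as ordinary commutative Laurent-polynomial and power-series rings, not as pieces of the graded ring with the twisted multiplication.
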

\begin{proof}
The ring of formal power series (in the variable $X_1$) over any
ring $R$ is well-known to be flat over the ring of polynomials.
Moreover, if the ring $R_2$ is flat over the ring $R_1$ then so is
the Laurent polynomial ring $R_2[X_0,X_0^{-1}]$ over
$R_1[X_0,X_0^{-1}]$. Here we use these statements with
\begin{eqnarray*}
R&:=&k[X_2X_1^{-1},\dots,X_dX_1^{-1},X_1X_2^{-1},\dots,X_1X_d^{-1}]\\
R_1&:=&k[X_1,X_2X_1^{-1},\dots,X_dX_1^{-1},X_1X_2^{-1},\dots,X_1X_d^{-1}]\\
R_2&:=&C_0
\end{eqnarray*}
and obtain the statement of the Lemma.
\end{proof}

Now we can state our main Theorem in this section which is a
slight generalization of Theorem 4.9 of \cite{foundations}.

\begin{thm}\label{z37}
The topological $K$-algebra $ D_{[\rho,1)}(G,K)$ is a
Fr\'echet-Stein $K$-algebra for any $p^{-1}<\rho<1$ in
$p^{\mathbb{Q}}$.
\end{thm}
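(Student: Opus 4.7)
Fix a sequence $\rho<\rho_1<\rho_2<\cdots<1$ in $p^{\mathbb{Q}}$ with $\rho_l\to 1$. By construction $D_{[\rho,1)}(G,K)=\varprojlim_l D_{[\rho,\rho_l]}(G,K)$, so the Fr\'echet structure is immediate, and noetherianness of each $D_{[\rho,\rho_l]}(G,K)$ is the content of the previous Proposition. Hence the whole substance of the theorem lies in the flatness of the connecting maps $D_{[\rho,\rho_2]}(G,K)\hookrightarrow D_{[\rho,\rho_1]}(G,K)$ for $\rho<\rho_1<\rho_2<1$ in $p^{\mathbb{Q}}$.

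My plan is to factor each transition map through the bounded subalgebra introduced before the previous Proposition,
\[
D_{[\rho,\rho_2]}(G,K)\hookrightarrow D_{[\rho,\rho_2)^{bd}}(G,K)\hookrightarrow D_{[\rho,\rho_1]}(G,K),
\]
and to verify flatness of each factor by reducing it to a flatness statement on the associated graded rings via the standard filtered-to-graded principle (Proposition I.7.1.2 of \cite{HO}), which applies since all of the algebras in sight are complete with respect to $\mathbb{Q}$-indexed filtrations. For the first inclusion, the natural filtration is the one induced by $\|\cdot\|_{\rho,\rho_2}$ on both sides; passing to associated graded rings amounts to completing the Laurent polynomial ring computed in the previous Proposition in the direction of the positive-degree variable $X_1$, which is flat by Lemma \ref{z35} on the positive-degree part and by the symmetric statement on the negative-degree part (obtained by exchanging the roles of $X_1$ and $X_1^{-1}$). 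For the second inclusion one instead uses the $\|\cdot\|_{\rho,\rho_1}$-filtration on both sides (observing that elements of $D_{[\rho,\rho_2)^{bd}}(G,K)$ are a fortiori bounded in $\|\cdot\|_{\rho,\rho_1}$), and Lemma \ref{z35} again delivers the graded flatness.

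The main obstacle is the bookkeeping required to make both inclusions into filtered maps with explicitly identifiable graded counterparts, and then to glue the flatness statements on the positive- and negative-degree parts---which are swapped variants of Lemma \ref{z35}---into a single statement of flatness for the full graded map. This should be manageable using the decomposition via the projection $\pi$ onto the positive-degree part introduced in the noetherianness proof. Once this is in place, Proposition I.7.1.2 of \cite{HO} lifts graded flatness to flatness of the filtered ring maps, completing the verification of the Fr\'echet-Stein axioms.
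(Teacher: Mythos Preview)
Your overall strategy---factor the transition map through the bounded subalgebra $D_{[\rho,\rho_2)^{bd}}(G,K)$ and lift graded flatness to filtered flatness---is exactly the paper's approach, and your treatment of the first inclusion is essentially correct (the ``symmetric statement on the negative-degree part'' is superfluous, since the negative-degree parts of the two graded rings coincide; only the positive part is being completed, which is precisely what Lemma~\ref{z35} handles).

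There is, however, a genuine gap in your treatment of the second inclusion $D_{[\rho,\rho_2)^{bd}}(G,K)\hookrightarrow D_{[\rho,\rho_1]}(G,K)$. You assert that ``all of the algebras in sight are complete with respect to $\mathbb{Q}$-indexed filtrations,'' but the $\|\cdot\|_{\rho,\rho_1}$-filtration on $D_{[\rho,\rho_2)^{bd}}(G,K)$ is \emph{not} complete: completing in this weaker norm would give all of $D_{[\rho,\rho_1]}(G,K)$, so the filtered-to-graded lifting principle does not apply directly. The paper's fix is to restrict to the subring $F^0_{\rho,\rho_2}D_{[\rho,\rho_2)^{bd}}(G,K)$ (the unit ball for the \emph{stronger} norm $\|\cdot\|_{\rho,\rho_2}$), on which the $\|\cdot\|_{\rho,\rho_1}$-filtration \emph{is} complete; one then recovers the full ring by tensoring with $\mathbb{Q}_p$ over $\mathbb{Z}_p$. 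Once this restriction is made, the associated graded map is not another instance of Lemma~\ref{z35} but rather the localization of the source at $X_0$, which is of course flat. So both the completeness workaround and the correct identification of the graded map are missing from your plan for the second step. (A minor point: the reference you want for lifting graded flatness is Proposition~1.2 of \cite{foundations}, not I.7.1.2 of \cite{HO}, which concerns noetherianness.)
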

\begin{proof}
We certainly have
\begin{equation*}
 D_{[\rho,1)}(G,K)=\varprojlim_{\rho_2\rightarrow1}D_{[\rho,\rho_2]}(G,K)
\end{equation*}
is a limit of noetherian Banach algebras. So it suffices to show
that whenever $\rho<\rho_2<\rho_3<1$ are all in $p^{\mathbb{Q}}$
then $D_{[\rho,\rho_2]}(G,K)$ is flat as a (left or right)
$D_{[\rho,\rho_3]}(G,K)$-module. As in the proof of Theorem 4.9 of
\cite{foundations} we do this in two steps. We show that both of
the maps
\begin{eqnarray}
D_{[\rho,\rho_3]}(G,K)&\hookrightarrow&
D_{[\rho,\rho_3)^{bd}}(G,K)\hbox{ and}\label{z31}\\
D_{[\rho,\rho_3)^{bd}}(G,K)&\hookrightarrow&
D_{[\rho,\rho_2]}(G,K)\label{z32}
\end{eqnarray}
are flat and then so is their composite. Both of the statements
can be checked over a finite extension of $K$ as the tensor
product with a finite extension is faithfully flat.

We begin with showing that the map $(\ref{z31})$ is flat. By
Proposition 1.2 in \cite{foundations} (see also \cite{HO}) it
suffices to verify that the map of graded rings
\begin{equation*}
\gr_{\rho,\rho_3}D_{[\rho,\rho_3]}(G,K)\hookrightarrow
\gr_{\rho,\rho_3}D_{[\rho,\rho_3)^{bd}}(G,K)
\end{equation*}
is flat. However, the functors
\begin{equation*}
\gr_{\rho,\rho_3}D_{[\rho,\rho_3)}(G,K)\otimes_{\gr_{\rho,\rho_3}D_{[\rho,\rho_3]}(G,K)}\cdot\hspace{1cm}\hbox{
and }\hspace{1cm}C_0[X_0,X_0^{-1}]\otimes_B\cdot
\end{equation*}
coincide on $\gr_{\rho,\rho_3}D_{[\rho,\rho_3]}(G,K)$-modules and
the assertion follows from Lemma \ref{z35}.

Now we prove the flatness of $(\ref{z32})$. For this we are going
to use the filtration $F^s_{\rho,\rho_2}$ induced by the norm
$\|\cdot\|_{\rho,\rho_2}$ on both $D_{[\rho,\rho_3)^{bd}}(G,K)$ and
$D_{[\rho,\rho_2]}(G,K)$. This filtration is not complete on
$D_{[\rho,\rho_3)^{bd}}(G,K)$, however, it is complete on its subring
$F_{\rho,\rho_3}^0D_{[\rho,\rho_3)^{bd}}(G,K)$. It suffices to prove
the flatness of $D_{[\rho,\rho_2]}(G,K)$ over
$F_{\rho,\rho_3}^0D_{[\rho,\rho_3)^{bd}}(G,K)$ as we have
\begin{equation*}
D_{[\rho,\rho_3)^{bd}}(G,K)=\mathbb{Q}_p\otimes_{\mathbb{Z}_p}F_{\rho,\rho_3}^0D_{[\rho,\rho_3)^{bd}}(G,K).
\end{equation*}
As the filtration is complete on both sides we are reduced to show
by Proposition 1.2 of \cite{foundations} that the map of graded
rings
\begin{equation}
\gr_{\rho,\rho_2}F_{\rho,\rho_3}^0D_{[\rho,\rho_3)^{bd}}(G,K)\hookrightarrow\gr_{\rho,\rho_2}D_{[\rho,\rho_2]}(G,K)\label{z36}
\end{equation}
is flat. This is clear as the right hand side of \eqref{z36} is the
localization of the left hand side at $X_0$.
\end{proof}

\section{Topologies and self-duality}

Recall that the classical Robba ring
$\mathcal{R}=\mathcal{R}(\mathbb{Z}_p,K)$ can be identified with
the ring of formal Laurent-series over $K$ that are convergent in
some open annulus with outer radius $1$. $\mathcal{R}$ is a
self-dual topological $K$-algebra with respect to the perfect
pairing \cite{R}
\begin{eqnarray}
\mathcal{R}\times\mathcal{R}&\rightarrow&K\notag\\
(x,y)&\mapsto&``\hbox{constant term of the power series
}x(T)y(T)''.\notag
\end{eqnarray}

In this section we are going to show that the Robba ring for more
general uniform pro-$p$ groups defined in the Appendix is also
self-dual in an appropriate topology and pairing.

\subsection{The weak topology}

An immediate consequence of Proposition \ref{expansion} in the
Appendix is that $\mathcal{R}(G,K)$ is also a topological
$K$-algebra with the inductive limit topology. However, we need to
introduce yet another topology on each $ D_{[\rho,1)}(G,K)$ and on
$\mathcal{R}(G,K)$ so that they all become self-dual. Let $R$
denote any of the rings $ D_{[\rho,1)}(G,K)$ or
$\mathcal{R}(G,K)$. Then we define the ``weak topology'' on $R$ as
the coarsest locally convex topology such that for any (fixed)
$x_0$ in $R$ the map
\begin{eqnarray}
R&\rightarrow& K\label{z19}\\
x&\mapsto& (x_0,x):=``\hbox{the constant term of }x_0x''\notag
\end{eqnarray}
is continuous. We shall see later on that these topologies are
different from the usual (Fr\'echet/inductive limit) topology on
each $R$, the weak topology is strictly coarser. Let us first
remark that the multiplication on $R$ is separately continuous in
the weak topology. Now we need the following lemmata.

\begin{lem}\label{z21}
For any index $\alpha$ in $\mathbb{Z}^d$ the following statements
hold.
\begin{enumerate}[$(i)$]
\item For any index $\beta\neq-\alpha$ we have $|({\bf
b}^{\alpha},{\bf b}^{\beta})|\leq \min(1,p^{-|\alpha|-|\beta|})$.
Moreover, if we assume further that $|\alpha|+|\beta|=0$ then we
have $|({\bf b}^{\alpha},{\bf b}^{\beta})|\leq p^{-1}$.

\item For any given real number $\varepsilon>0$ and integer $m$
there are only finitely many indices $\beta$ in $\mathbb{Z}^d$
such that $|\beta|=m$ and $|({\bf b}^{\alpha},{\bf
b}^{\beta})|>\varepsilon$.
\end{enumerate}
\end{lem}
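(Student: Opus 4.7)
The pairing $(\mathbf{b}^\alpha, \mathbf{b}^\beta)$ equals the coefficient of $\mathbf{b}^0$ in the noncommutative Laurent expansion of $\mathbf{b}^\alpha \mathbf{b}^\beta$. I plan to control this coefficient using the submultiplicative norms $\|\cdot\|_\rho$ on the Robba ring, together with the elementary fact that $\|\mathbf{b}^0\|_\rho = 1$ implies $|d_0| \le \|x\|_\rho$ for every expansion $x = \sum_\gamma d_\gamma \mathbf{b}^\gamma$.

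For part $(i)$, submultiplicativity on monomials (noted at the start of Section \ref{sec:Frechet-Stein}) yields
\[
|(\mathbf{b}^\alpha, \mathbf{b}^\beta)| \le \|\mathbf{b}^\alpha\|_\rho \|\mathbf{b}^\beta\|_\rho = \rho^{\kappa(|\alpha|+|\beta|)}
\]
for every $\rho \in (p^{-1}, 1) \cap p^{\mathbb{Q}}$. Letting $\rho \to 1^-$ gives $\le 1$, and letting $\rho \to (p^{-1})^+$ gives $\le p^{-\kappa(|\alpha|+|\beta|)} \le p^{-|\alpha|-|\beta|}$ whenever the latter is $\le 1$; combined, this is $\le \min(1, p^{-|\alpha|-|\beta|})$. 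For the refined $\le p^{-1}$ statement when $|\alpha|+|\beta| = 0$ but $\beta \ne -\alpha$, I would pass to the associated graded ring: the principal symbol $\sigma(\mathbf{b}^\alpha \mathbf{b}^\beta) = \sigma(\mathbf{b}^\alpha) \sigma(\mathbf{b}^\beta)$ is either the monomial $\mathbf{X}^{\alpha+\beta}$ (nonzero, but with vanishing $\mathbf{X}^0$-coefficient because $\alpha+\beta \ne 0$) or, in the annulus setting with a sign mismatch, vanishes by the rule (\ref{z29}). Either way the $\mathbf{X}^0$-coefficient of $\sigma(\mathbf{b}^\alpha \mathbf{b}^\beta)$ is zero, which forces $|d_0| < 1$; discreteness of $|K^\times|$ then upgrades this to $|d_0| \le p^{-1}$.

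Part $(ii)$ is the real obstacle, because the estimates in $(i)$ depend only on $|\alpha|+|\beta|$ and therefore cannot separate $\beta$'s of the same total degree $m$. My plan is to exploit the explicit multiplication formula for $\mathbf{b}^\alpha \mathbf{b}^\beta$ in the generalized Robba ring of the Appendix: rearranging the noncommuting factors $(h_i - 1)$ into standard order uses the uniform pro-$p$ commutator relation $[h_i, h_j] \in G^p$, which translates to $[b_i, b_j] \in p D(G,K)$, so every rearrangement contributes a factor of $p$ while shifting the exponent tuple by a bounded amount. Hence, for $|\beta| = m$ fixed, producing a non-negligible $\mathbf{b}^0$-coefficient from a $\beta$ whose individual components $|\beta_i|$ are very large requires many rearrangements and accumulates many factors of $p$, yielding an estimate of the shape $|(\mathbf{b}^\alpha, \mathbf{b}^\beta)| \le C(\alpha)\, p^{-f(\beta)}$ with $f(\beta) \to \infty$ as $\max_i |\beta_i| \to \infty$ at fixed $|\beta| = m$; this leaves only finitely many such $\beta$ with pairing exceeding $\varepsilon$. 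The main difficulty is making this commutator-counting precise via the expansion formulas of the Appendix — a componentwise (rather than total-degree) estimate that genuinely uses the pro-$p$ structure.
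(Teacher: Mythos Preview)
Your argument for part~$(i)$ is correct and is exactly how the paper proceeds (via Lemma~\ref{coefficients} in the Appendix). The only tweak: the upgrade from $|d_0|<1$ to $|d_0|\le p^{-1}$ comes from the fact that the structure constants $c_{\alpha\beta,0}$ lie in $\mathbb{Q}_p$, not merely from discreteness of $|K^\times|$.

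For part~$(ii)$ your overall plan---commutator bookkeeping along a rewriting of $\mathbf{b}^\alpha\mathbf{b}^\beta$ into standard order---is precisely the paper's strategy, but one of your inputs is wrong and would derail the counting. It is \emph{not} true that $[b_i,b_j]\in pD(G,K)$: if for instance $[h_i,h_j]=h_1^{p}$ then $b_ib_j-b_jb_i=h_jh_i\big((1+b_1)^p-1\big)$, whose expansion contains the monomial $b_1^{p}$ with unit coefficient. What the uniform hypothesis actually gives is the norm estimate $\|b_ib_j-b_jb_i\|_{p^{-1/2}}<\|b_ib_j\|_{p^{-1/2}}$ from the quasi-abelian property (Prop.~\ref{qa}): each swap gains at least a factor of $p^{1/2}$ in the $\|\cdot\|_{p^{-1/2}}$-norm, while the error terms (having exponent vectors in $\mathbb{N}_0^d$) lower each coordinate degree by at most~$1$. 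The paper packages exactly this trade-off into the integer-valued function
\[
f(cw)\;:=\;\log_{p^{-1/2}}\|cw\|_{p^{-1/2}}\;+\;\max_{1\le j\le d} m_j(w),
\]
shows that $f$ is nondecreasing through the rewriting algorithm, and reads off the quantitative bound
\[
|(\mathbf{b}^\alpha,\mathbf{b}^\beta)|\;\le\;p^{-\frac{|\alpha|+|\beta|+N}{2}},\qquad N=\max_{1\le j\le d}(\alpha_j+\beta_j),
\]
from which $(ii)$ follows immediately. So the ``main difficulty'' you flagged is real, and the correct fix is to replace your $p$-divisibility heuristic by the $\|\cdot\|_{p^{-1/2}}$-norm combined with this tracking function.
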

\begin{proof}
The first assertion follows immediately from Lemma
\ref{coefficients} in the Appendix.

For $(ii)$ we are going to prove the following quantitative
estimate. Put
\begin{equation*}
N:=\max(\alpha_1+\beta_1,\dots,\alpha_d+\beta_d).
\end{equation*}

Then we have
\begin{equation}
|({\bf b}^{\alpha},{\bf b}^{\beta})|\leq
p^{-\frac{|\alpha|+|\beta|+N}{2}}.\label{z20}
\end{equation}
Indeed, the result follows from this noting that for any fixed
degree and fixed maximum of the coordinates there are only
finitely many $\beta$.

Now we fix $\alpha$ and $\beta$ in order to prove $(\ref{z20})$.
We may assume without loss of generality for the moment that
$K=\mathbb{Q}_p$ as the product ${\bf b}^{\alpha}{\bf b}^{\beta}$
is already defined over $\mathbb{Q}_p$ (and even over
$\mathbb{Z}_p$). Now we analyze further the Laurent-series
expansion of ${\bf b}^{\alpha}{\bf b}^{\beta}$ by giving an
(infinite) algorithm computing it. The algorithm is the following:

Before the $n$th step of the algorithm we have a formal sum
\begin{equation}\label{z47}
\sum_{i=1}^{\infty}c_{n,i}w_{n,i}
\end{equation}
where $c_{n,i}$ is in $\mathbb{Z}_p$ and $w_{n,i}$ is a word in
the free group $F(b_1,\dots,b_d)$ on the generators
$b_1,\dots,b_d$ such that the sum satisfies the convergence
condition
\begin{equation}\label{z46}
\lim_{i\rightarrow\infty}\|c_{n,i}w_{n,i}\|_{p^{-1/2}}=0.
\end{equation}
Since the norm $\|\cdot\|_{p^{-1/2}}$ is multiplicative on
$D_{[p^{-1/2},1)}(G,K)$, we have
\begin{equation*}
\|c_{n,i}w_{n,i}\|_{p^{-1/2}}=|c_{n,i}|p^{-\frac{|w_{n,i}|}{2}}
\end{equation*}
where $|w_{n,i}|$ is the image of $w_{n,i}$ under the homomorphism
from $F(b_1,\dots,b_d)$ to $\mathbb{Z}$ sending all the generators
to $1$. We also assume for the sake of simplicity that the terms
$c_{n,i}w_{n,i}$ are in decreasing order with respect to the norm
$\|\cdot\|_{p^{-1/2}}$. Note that granted the convergence
criterion $(\ref{z46})$ the above sum defines an element in the
ring $D_{[p^{-1/2},1)}(G,K)$, however it is not in the canonical
form. Indeed, the sum $(\ref{z47})$ is also convergent in any
other norm $\|\cdot\|_{\rho}$ with $p^{-1/2}\leq\rho<1$ because
the coefficients are in $\mathbb{Z}_p$. We call a word $w$ in the
free group $F(b_1,\dots,b_d)$ \emph{bad} if it is not of the form
\begin{equation*}
w=b_1^{\alpha_1}b_2^{\alpha_2}\cdots b_d^{\alpha_d}.
\end{equation*}
Before the first step we have $w_{1,1}={\bf b}^{\alpha}{\bf
b}^{\beta}$, $c_{1,1}=1$, and all the other terms are $0$ in the
sum. Now we describe the $n$th step of the algorithm producing the
sum $\sum_{i=1}^{\infty}c_{n+1,i}w_{n+1,i}$ from
$\sum_{i=1}^{\infty}c_{n,i}w_{n,i}$ such that in
$D_{[p^{-1/2},1)}(G,\mathbb{Q}_p)$ they converge to the same
element. Let $s_n$ denote the smallest positive integer such that
$w_{n,s_n}$ is a bad word. If there is no such an integer then the
algorithm terminates and our sum defines an element of
$D_{[p^{-1/2},1)}(G,\mathbb{Q}_p)$ in the canonical form (and in
fact of $ D_{[\rho,1)}(G,\mathbb{Q}_p)$ for any $p^{-1}<\rho<1$,
but we do not need this). We are going to replace $w_{n,s_n}$ by
an infinite sum of the form
\begin{equation}
w_0+\sum_{i=1}^{\infty}c_iw_i\label{z17}
\end{equation}
such that we have
\begin{eqnarray}
w_{n,s_n}&=&w_0+\sum_{i=1}^{\infty}c_iw_i\hbox{ in }D_{[p^{-1/2},1)}(G,\mathbb{Q}_p);\notag\\
\|w_{n,s_n}\|_{p^{-1/2}}&=&\|w_0\|_{p^{-1/2}}>\|c_iw_i\|_{p^{-1/2}}
\hbox{ for any }i>0;\hspace{0.5cm} \hbox{ and}\label{z50}\\
\lim_{i\rightarrow\infty}\|c_iw_i\|_{p^{-1/2}}&=&0.\notag
\end{eqnarray}
For this we write $w_{n,s_n}$ in the form
\begin{equation}
b_{x_1}^{e_1}b_{x_2}^{e_2}...b_{x_z}^{e_z}\label{z51}
\end{equation}
where $1\leq x_j\leq d$ and $e_j$ are $+1$ or $-1$ for each $1\leq
j\leq z$. Now let $t$ be the smallest positive integer such that
$x_{t}>x_{t+1}$. There exists such a $t$ since the word
$w_{n,s_n}$ is bad. We would like to swap $b_{x_t}^{e_t}$ and $b_{x_{t+1}}^{e_{t+1}}$ in the word $w$ so that the word becomes `less bad'. To do this we need the following identities that hold in any
associative ring in which $b_{x_t}$ and $b_{x_{t+1}}$ are
invertible.
\begin{eqnarray}
b_{x_t}b_{x_{t+1}}&=&b_{x_{t+1}}b_{x_t}+(b_{x_t}b_{x_{t+1}}-b_{x_{t+1}}b_{x_t}),\notag\\
b_{x_t}^{-1}b_{x_{t+1}}&=&b_{x_{t+1}}b_{x_t}^{-1}+b_{x_t}^{-1}(b_{x_{t+1}}b_{x_t}-b_{x_t}b_{x_{t+1}})b_{x_t}^{-1},\notag\\
b_{x_t}b_{x_{t+1}}^{-1}&=&b_{x_{t+1}}^{-1}b_{x_t}+b_{x_{t+1}}^{-1}(b_{x_{t+1}}b_{x_t}-b_{x_t}b_{x_{t+1}})b_{x_{t+1}}^{-1}\hbox{, and}\label{z14}\\
b_{x_t}^{-1}b_{x_{t+1}}^{-1}&=&b_{x_{t+1}}^{-1}b_{x_t}^{-1}+b_{x_t}^{-1}b_{x_{t+1}}^{-1}(b_{x_t}b_{x_{t+1}}-b_{x_{t+1}}b_{x_t})b_{x_{t+1}}^{-1}b_{x_t}^{-1}\notag.
\end{eqnarray}
On the other hand $b_{x_t}b_{x_{t+1}}-b_{x_{t+1}}b_{x_t}$ has a
canonical expansion $\sum_\gamma a_\gamma{\bf b}^\gamma$ in the
distribution algebra $D(G,K)$ with $a_{\gamma}$ in $\mathbb{Z}_p$.
So by collecting all the identities in $(\ref{z14})$ and plugging
in the canonical expansion of
$b_{x_t}b_{x_{t+1}}-b_{x_{t+1}}b_{x_t}$ we obtain
\begin{equation}
b_{x_t}^{e_t}b_{x_{t+1}}^{e_{t+1}}=b_{x_{t+1}}^{e_{t+1}}b_{x_t}^{e_t}+\sum_{\gamma\in\mathbb{N}_0^{d}}
a_{\gamma}e_te_{t+1}b_{x_t}^{\frac{e_t-1}{2}}b_{x_{t+1}}^{\frac{e_{t+1}-1}{2}}{\bf
b}^{\gamma}b_{x_{t+1}}^{\frac{e_{t+1}-1}{2}}b_{x_t}^{\frac{e_t-1}{2}}\label{z15}
\end{equation}
in $D_{[p^{-1/2},1)}(G,K)$. Now we multiply both sides of the
equations $(\ref{z15})$ formally by $b_{x_1}^{e_1}\cdots
b_{x_{t-1}}^{e_{t-1}}$ from the left and by
$b_{x_{t+2}}^{e_{t+2}}\cdots b_{x_z}^{e_z}$ from the right and get
an equation of the form $(\ref{z17})$ with
\begin{eqnarray}
w_0&:=&b_{x_1}^{e_1}\cdots
b_{x_{t-1}}^{e_{t-1}}b_{x_{t+1}}^{e_{t+1}}b_{x_t}^{e_t}b_{x_{t+2}}^{e_{t+2}}\cdots
b_{x_z}^{e_z}\hspace{1cm}\hbox{ and}\label{z52}\\
c_i&:=&a_{\gamma_i}e_te_{t+1}\notag\\
w_i&:=&b_{x_1}^{e_1}\cdots b_{x_{t-1}}^{e_{t-1}}
b_{x_t}^{\frac{e_t-1}{2}}b_{x_{t+1}}^{\frac{e_{t+1}-1}{2}}{\bf
b}^{\gamma_i}b_{x_{t+1}}^{\frac{e_{t+1}-1}{2}}b_{x_t}^{\frac{e_t-1}{2}}
b_{x_{t+2}}^{e_{t+2}}\cdots b_{x_z}^{e_z}\label{z53}
\end{eqnarray}
for an arbitrary (but fixed) ordering $(\gamma_i)_{i\geq 1}$ of
the set $\mathbb{N}_0^d$. The properties $(\ref{z50})$ follow from
Proposition \ref{qa} and the convergence of the expansion of
$b_{x_t}b_{x_{t+1}}-b_{x_{t+1}}b_{x_t}$.

We define $\sum_{i=1}^{\infty}c_{n+1,i}w_{n+1,i}$ by rearranging the
sum
\begin{equation*}
\sum_{i=1}^{s_n-1}c_{n,i}w_{n,i}+c_{n,s_n}w_0+\sum_{i=1}^{\infty}c_{n,s_n}c_iw_i+\sum_{i=s_n+1}^{\infty}c_{n,i}w_{n,i}
\end{equation*}
in norm decreasing order. Moreover, we may assume that
$c_{n+1,i}w_{n+1,i}=c_{n,i}w_{n,i}$ for any $1\leq i< s_n$ and
$c_{n+1,s_n}w_{n+1,s_n}=c_{n,s_n}w_0$ as these terms are already
in norm decreasing order. The description of the $n$th step of the algorithm ends here.

We claim that the formal sum
\begin{equation}
\sum_{i=1}^{\infty}c_{\infty,i}w_{\infty,i}:=\sum_{i=1}^{\infty}\lim_{n\rightarrow\infty}c_{n,i}w_{n,i}\label{z48}
\end{equation}
makes sense and it equals the Laurent series expansion of ${\bf
b}^{\alpha}{\bf b}^{\beta}$ in $D_{[p^{-1/2},1)}(G,K)$ (and hence
in any of the rings $ D_{[\rho,1)}(G,K)$). By construction one of
the following could happen after the $n$th step:
\begin{enumerate}
\item $w_0$ is bad, $s_n=s_{n+1}$, and $w_{n+1,s_{n+1}}=w_0$.
\item $s_{n+1}>s_n$.
\end{enumerate}
The first case can only happen finitely many times in a line. Indeed, if we
define
\begin{equation*}
D(w_{s_n}):=``\hbox{the number of pairs }1\leq i<j\leq z\hbox{
such that }x_i>x_j\hbox{''}
\end{equation*}
then $D(w_{n,s_n})>D(w_{n+1,s_{n+1}})$ in the first case. However,
$D(w_{n+1,s_{n+1}})=0$ is equivalent to that $w_{n+1,s_{n+1}}$ is
not a bad word. It follows that the second case appears infinitely
many times, in other words $s_n$ tends to infinity. This shows
that $\lim_{n\rightarrow\infty}c_{n,i}w_{n,i}$ exists for any $i$
since $c_{n,i}w_{n,i}=c_{n+1,i}w_{n+1,i}$ whenever $i<s_n$.
Moreover, in the sum $\sum_ic_{n,i}w_{n,i}$ for any fixed $n$
there are only finitely many terms with the same norm. On the
other hand, the new terms $c_{n,s_n}c_iw_i$ satisfy
\begin{equation*}
\|c_{n,s_n}c_iw_i\|_{p^{-1/2}}<\|c_{n,s_n}w_{n,s_n}\|_{p^{-1/2}}
\end{equation*}
by $(\ref{z50})$. This implies that for any integer $n_0$ there is
an $n>n_0$ such that
$\|c_{n,s_n}w_{n,s_n}\|_{p^{-1/2}}>\|c_{n+1,s_{n+1}}w_{n+1,s_{n+1}}\|_{p^{-1/2}}$
and hence the sum $(\ref{z48})$ is convergent in
$D_{[p^{-1/2},1)}(G,K)$ because the range of the norm
$\|\cdot\|_{p^{-1/2}}$ is discrete and the terms in each sum
$\sum_{i=1}^{\infty}c_{n,i}w_{n,i}$ are in norm-decreasing order.
Moreover, none of the terms in this sum are bad (by construction)
therefore this is the Laurent series expansion of ${\bf
b}^{\alpha}{\bf b}^{\beta}$ since by construction we have the
equality
\begin{equation*}
\sum_{i=1}^{\infty}c_{n,i}w_{n,i}=\sum_{i=1}^{\infty}c_{n+1,i}w_{n+1,i}
\end{equation*}
in $D_{[p^{-1/2},1)}(G,K)$ for any positive integer $n$.

Now we show $(\ref{z20})$ by using the above algorithm. Put
\begin{eqnarray*}
f(cw)&:=&\log_{p^{-1/2}}\|cw\|_{p^{-1/2}}+\max_{1\leq
j\leq d} m_j(w)\in\mathbb{Z},\\
f_n&:=&\inf_{i\geq 1}f(c_{n,i}w_{n,i})\in \mathbb{Z}\cup\{-\infty\},\hspace{0.5cm}\hbox{ and}\\
f_{\infty}&:=&\inf_{i\geq 1}f(c_{\infty,i}w_{\infty,i})\in \mathbb{Z}\cup\{-\infty\}
\end{eqnarray*}
where $m_j$ is the homomorphism from the free group
$F(b_1,\dots,b_d)$ to $\mathbb{Z}$ sending $b_j$ to $1$ and all
the other generators to $0$ (in other words $m_j(w)$ is the degree
of $w$ in the variable $b_j$). We claim that for any $n$ we have
$f_n\leq f_{n+1}$. In particular all these $f_n$ are finite. For
this it suffices to show that we have
\begin{equation}
f(c_{n,s_n}w_{n,s_n})=f(c_{n,s_n}w_0)\leq
f(c_{n,s_n}c_iw_i).\label{z49}
\end{equation}
as the new terms in $\sum_{i=1}^{\infty}c_{n+1,i}w_{n+1,i}$ are
exactly $c_{n,s_n}w_0$ and $c_{n,s_n}c_iw_i$ ($i\geq 1$). The
equality part of $(\ref{z49})$ is obvious as
$m_j(w_{n,s_n})=m_j(w_0)$ for any $j$ and
\begin{equation*}
\|c_{n,s_n}w_{n,s_n}\|_{p^{-1/2}}=\|c_{n,s_n}w_0\|_{p^{-1/2}}\hspace{1cm}(\hbox{see
equations }(\ref{z51})\hbox{ and }(\ref{z52})).
\end{equation*}
The inequality follows from the facts that (see equations
$(\ref{z50})$ and $(\ref{z53})$)
\begin{eqnarray*}
\|c_{n,s_n}c_iw_i\|_{p^{-1/2}}<\|c_{n,s_n}w_{n,s_n}\|_{p^{-1/2}},\\
m_j(w_i)=m_j\left(b_{x_1}^{e_1}\cdots b_{x_{t-1}}^{e_{t-1}}
b_{x_t}^{\frac{e_t-1}{2}}b_{x_{t+1}}^{\frac{e_{t+1}-1}{2}}{\bf
b}^{\gamma_i}b_{x_{t+1}}^{\frac{e_{t+1}-1}{2}}b_{x_t}^{\frac{e_t-1}{2}}
b_{x_{t+2}}^{e_{t+2}}\cdots b_{x_z}^{e_z}\right)\geq
m_j(w_{n,s_n})-1
\end{eqnarray*}
(since $\gamma_i$ is in $\mathbb{N}_0^d$), and the range of $f$ is
in $\mathbb{Z}$. We also obtain $f_n\leq f_{\infty}$ for any
$n\geq 1$ by taking the limit. In particular we have
\begin{eqnarray*}
|\alpha|+|\beta|+N=f_1\leq f_{\infty}&\leq& f\left(({\bf b}^{\alpha},{\bf b}^{\beta})\right)=\log_{p^{-1/2}}|({\bf b}^{\alpha},{\bf b}^{\beta})|\hspace{0.5cm}\hbox{ whence}\\
p^{-\frac{|\alpha|+|\beta|+N}{2}}&\geq&p^{-\frac{f\left(({\bf b}^{\alpha},{\bf b}^{\beta})\right)}{2}}=|({\bf b}^{\alpha},{\bf b}^{\beta})|
\end{eqnarray*}
and we are done.
\end{proof}

\begin{rem}
The above algorithm computing the Laurent series expansion of
${\bf b}^\alpha{\bf b}^\beta$ can actually be used to
\emph{define} the rings $ D_{[\rho,1)}(G,K)$ (for $p^{-1}<\rho<1$
in $p^{\mathbb{Q}}$) by the underlying vector space of formal
Laurent series together with the multiplication given by this
algorithm. One, of course, has to check the associative and
distributive laws. However, if we think of $ D_{[\rho,1)}(G,K)$ as
a subring of the completion $Q_\rho(G,K)$ of the field of
fractions $Q(D_{\rho}(G,K))$ of the algebra $D_\rho(G,K)$ then we
obtain immediately that $ D_{[\rho,1)}(G,K)$ satisfies the ring
axioms. Indeed, $D_\rho(G,K)$ is a noetherian domain, so it has a
field of fractions. Moreover, the norm $\|\cdot\|_\rho$ is
multiplicative on $D_\rho(G,K)$, so it can be extended
multiplicatively to the field $Q(D_{\rho}(G,K))$ of fractions and
we can take the completion $Q_\rho(G,K)$ with respect to
$\|\cdot\|_\rho$. The Laurent series expansions of elements in $
D_{[\rho,1)}(G,K)$ will all be convergent in $Q_\rho(G,K)$, so $
D_{[\rho,1)}(G,K)$ is naturally a subring of $Q_\rho(G,K)$.
\end{rem}

\begin{lem}\label{z22}
For any index $\alpha$ in $\mathbb{Z}^d$ and any $p^{-1}<\rho<1$
there is an element $f^{(\alpha)}$ in $ D_{[\rho,1)}(G,K)$ such
that
\begin{equation*}
(f^{(\alpha)},{\bf b}^{\beta})=\delta_{\alpha\beta}
\end{equation*}
for all $\beta$ in $\mathbb{Z}^d$. Here $\delta_{\alpha\beta}$
denotes the Kronecker delta, ie.\ $\delta_{\alpha\beta}=1$ if
$\alpha=\beta$ and $0$ otherwise.
\end{lem}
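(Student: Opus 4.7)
The statement amounts to representing the coefficient functional $e_\alpha^{*}\colon x\mapsto[\mathbf{b}^\alpha\text{-coefficient of }x]$ via the constant-term pairing. Writing $f^{(\alpha)}=\sum_\gamma c_\gamma \mathbf{b}^\gamma$, the required identity $(f^{(\alpha)},\mathbf{b}^\beta)=\delta_{\alpha\beta}$ becomes the infinite linear system $\sum_\gamma c_\gamma M_{\gamma\beta}=\delta_{\alpha\beta}$ for all $\beta\in\mathbb{Z}^d$, where $M_{\gamma\beta}:=(\mathbf{b}^\gamma,\mathbf{b}^\beta)$. By Lemma \ref{z21}(i), $M$ is concentrated near the antidiagonal $\gamma=-\beta$: off the antidiagonal the entries satisfy $|M_{\gamma\beta}|\leq\min(1,p^{-|\gamma|-|\beta|})$, sharpened to $\leq p^{-1}$ in the borderline case $|\gamma|+|\beta|=0$; while on the antidiagonal the commutation algorithm from the proof of Lemma \ref{z21} shows $\mathbf{b}^{-\beta}\mathbf{b}^\beta=1+(\text{terms of }\|\cdot\|_{p^{-1/2}}\text{-norm strictly less than }1)$, so the constant term $M_{-\beta,\beta}$ lies in $1+\mathfrak{m}_K$ and is in particular a unit.

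The plan is a Neumann-series / iterative-correction construction. Take the natural first approximation $f_0:=M_{-\alpha,\alpha}^{-1}\mathbf{b}^{-\alpha}$, which satisfies $(f_0,\mathbf{b}^\alpha)=1$ exactly while producing errors $\epsilon_\beta^{(0)}:=(f_0,\mathbf{b}^\beta)$ for $\beta\neq\alpha$ bounded by Lemma \ref{z21}(i). Recursively define
\[
f_{n+1}:=f_n-\sum_{\beta}\epsilon_\beta^{(n)}\,M_{-\beta,\beta}^{-1}\,\mathbf{b}^{-\beta},\qquad\epsilon_\beta^{(n+1)}:=(f_{n+1},\mathbf{b}^\beta)-\delta_{\alpha\beta}.
\]
Since $M_{-\beta,\beta}^{-1}M_{-\beta,\beta}=1$, the subtraction cancels $\epsilon_\beta^{(n)}$ exactly along the antidiagonal, so the new errors $\epsilon_\gamma^{(n+1)}=-\sum_{\beta\neq\gamma}\epsilon_\beta^{(n)}M_{-\beta,\beta}^{-1}M_{-\beta,\gamma}$ are purely off-antidiagonal contributions. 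Using the decay of $|M_{-\beta,\gamma}|$---crucially the improved bound $\leq p^{-1}$ when $|\beta|=|\gamma|$---together with the local finiteness from Lemma \ref{z21}(ii), an inductive book-keeping argument establishes that $|\epsilon_\beta^{(n)}|$ is dominated by an envelope that shrinks geometrically in $n$, so the formal limit $f^{(\alpha)}:=\lim_n f_n$ satisfies $(f^{(\alpha)},\mathbf{b}^\beta)=\delta_{\alpha\beta}$ for all $\beta$.

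The technically delicate step---and the main obstacle---is showing that this limit lies in $D_{[\rho,1)}(G,K)=\varprojlim_{\rho'\to 1^-}D_{[\rho,\rho']}(G,K)$, i.e.\ that $\sum_n(f_{n+1}-f_n)$ converges in each Banach norm $\|\cdot\|_{\rho'}$. The dangerous regime is $\beta_i\to+\infty$, where $\|\mathbf{b}^{-\beta}\|_{\rho,\rho'}$ grows like $\rho^{-\kappa|\beta|}$ (the complementary regime $\beta_i\to-\infty$ is harmless because the monomial norm then stays bounded). The crude bound of Lemma \ref{z21}(i) is insufficient here; one must invoke the sharper estimate \eqref{z20}, namely $|(\mathbf{b}^{-\alpha},\mathbf{b}^\beta)|\leq p^{-(|\beta|-|\alpha|+N)/2}$ with $N:=\max_i(\beta_i-\alpha_i)$, which provides exponential decay perfectly aligned with the growth of $\|\mathbf{b}^{-\beta}\|_{\rho,\rho'}$. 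Propagating this refined estimate through the recursion and using Lemma \ref{z21}(ii) to bound the number of significant contributions in each degree stratum yields the required finiteness of $\|f_{n+1}-f_n\|_{\rho'}$ and summability in $n$.
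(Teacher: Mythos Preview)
Your strategy---iterative correction starting from $\mathbf{b}^{-\alpha}$ using the near-antidiagonal structure of $M_{\gamma\beta}=(\mathbf{b}^\gamma,\mathbf{b}^\beta)$---is the same as the paper's, but the paper organizes the iteration in a way that makes convergence transparent and avoids the ``book-keeping'' you leave open. Rather than subtracting the full correction $\sum_\beta \epsilon_\beta^{(n)}M_{-\beta,\beta}^{-1}\mathbf{b}^{-\beta}$ at every step, the paper at step $n$ reduces only those $\gamma$ where the residual has size exactly $p^{-(n-1)}$, processing them in decreasing degree. The point (from Lemma~\ref{z21}(i)) is that an error of size $p^{-(n-1)}$ can only sit at degree $|\gamma|\leq|\alpha|+n-1$, and correcting at such a $\gamma$ pushes new errors of the same size to strictly smaller degree, or else down by a factor of $p$. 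Hence the $n$th correction has coefficients in $p^{n-1}\mathbb{Z}_p$ and monomials $\mathbf{b}^{-\gamma}$ with $|\gamma|\leq|\alpha|+n-1$, giving directly
\[
\|f_n^{(\alpha)}-f_{n-1}^{(\alpha)}\|_\rho\leq p^{-(n-1)}\rho^{-|\alpha|-n+1}=\rho^{-|\alpha|}(p\rho)^{-(n-1)},
\]
summable for every $\rho>p^{-1}$. No appeal to \eqref{z20} is needed beyond its role in proving Lemma~\ref{z21}(ii).

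Your simultaneous Neumann step can also be made to work, but two points in your sketch are off. First, the ``dangerous regime'' discussion is garbled: $\|\mathbf{b}^{-\beta}\|_{\rho,\rho'}$ depends only on the total degree $|\beta|=\sum_i\beta_i$, not on individual $\beta_i$, and for $|\beta|\to+\infty$ the bound $|\epsilon_\beta^{(0)}|\leq p^{|\alpha|-|\beta|}$ from Lemma~\ref{z21}(i) already beats the growth $\rho^{-|\beta|}$ (the product is $\sim(p\rho)^{-|\beta|}$), so the refined estimate \eqref{z20} is not what saves you there---it matters only for within-stratum finiteness, which is exactly Lemma~\ref{z21}(ii). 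Second, the actual contraction estimate---that the off-antidiagonal part of $M$ acts with weighted operator norm $<1$---is asserted but not carried out; this is the substance of your ``inductive book-keeping'' and is where all the work lies in your formulation. The paper's level-by-level scheme sidesteps it entirely.
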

\begin{proof}
We may assume without loss of generality that $K=\mathbb{Q}_p$. We
are going to give the power series expansion of $f^{(\alpha)}$ by
approximating it in the ring $D_{(p^{-1},1)}(G,K)$. We start with
the term $f_0^{(\alpha)}:={\bf b}^{-\alpha}$. As the coefficients
of ${\bf b}^{-\alpha}{\bf b}^{\beta}$ are in $\mathbb{Z}_p$ we
have
\begin{equation*}
|(f_0^{(\alpha)},{\bf b}^{\beta})-\delta_{\alpha\beta}|\leq 1.
\end{equation*}
Let us denote by $C_0$ the set of $\gamma$ such that
\begin{equation*}
|(f_0^{(\alpha)},{\bf b}^{\gamma})-\delta_{\alpha\gamma}|=1.
\end{equation*}
Note that this set may well be nonempty and even infinite when $G$
is noncommutative. However, by Lemma \ref{z21} $(i)$ all these
$\gamma$ have degree $|\gamma|<|\alpha|$, and by $(ii)$ for any
fixed degree there are only finitely many of them. Hence we can
remove
\begin{equation}
\frac{(f_0^{(\alpha)},{\bf b}^{\gamma})}{({\bf b}^{-\gamma},{\bf
b}^{\gamma})}{\bf b}^{-\gamma}\label{z77}
\end{equation}
from $f_0^{(\alpha)}$ at first for all the $\gamma$ in $C_0$ with
maximal degree. Since $({\bf b}^{-\gamma},{\bf b}^{\gamma})$ is in
$\mathbb{Z}_p^{\times}$ the coefficient of $(\ref{z77})$ is in
$\mathbb{Z}_p$. Note that other ``bad'' $\gamma$ may as well
arise, but all of them have strictly smaller degree and for any
fixed degree there are only finitely many of them. Henceforth we
get a convergent Laurent series in $ D_{[\rho,1)}(G,K)$ (for any
$p^{-1}<\rho<1$)
\begin{eqnarray}
f_1^{(\alpha)}&:=&f_0^{(\alpha)}-\sum_{i=1}^{\infty}
a_{\gamma_{1,i}}{\bf
b}^{-\gamma_{1,i}}\hspace{0.5cm}\hbox{ such that}\notag\\
|(f_1^{(\alpha)},{\bf b}^\beta)-\delta_{\alpha\beta}|&\leq&
p^{-1}\hspace{0.5cm}\hbox{ for any }\beta\in\mathbb{Z}^d\hspace{0.5cm}\hbox{ and }\notag\\
\|f_1^{(\alpha)}-f_0^{(\alpha)}\|_{\rho}&\leq&\rho\|f_0^{(\alpha)}\|_{\rho}\label{z76}
\end{eqnarray}
since $|\gamma_{1,i}|<|\alpha|$ tends to $-\infty$ with
$i\rightarrow\infty$. Here we choose recursively $\gamma_{1,i}$
with maximal degree so that
\begin{equation*}
|(f_0^{(\alpha)}-\sum_{j=1}^{i-1} a_{\gamma_{1,j}}{\bf
b}^{-\gamma_{1,j}},{\bf
b}^{\gamma_{1,i}})-\delta_{\alpha\gamma_{1,i}}|=1
\end{equation*}
and put
\begin{equation*}
a_{\gamma_{1,i}}:=\frac{(f_0^{(\alpha)}-\sum_{j=1}^{i-1}
a_{\gamma_{1,j}}{\bf b}^{-\gamma_{1,j}},{\bf
b}^{\gamma_{1,i}})}{({\bf b}^{-\gamma_{1,i}},{\bf
b}^{\gamma_{1,i}})}\in\mathbb{Z}_p.
\end{equation*}
Now we use the above method in order to modify
$f_{n-1}^{(\alpha)}$ in the $n$th step ($n\geq 2$) by a Laurent
series with coefficients in $p^{n-1}\mathbb{Z}_p$ and minimal
degree $-|\alpha|-n+1$, such that
\begin{eqnarray}
|(f_n^{(\alpha)},{\bf b}^\beta)-\delta_{\alpha\beta}|&\leq&
p^{-n}\hbox{ and}\notag\\
\|f_n^{(\alpha)}-f_{n-1}^{(\alpha)}\|_{\rho}&\leq&\frac{1}{p^{n-1}}\rho^{-|\alpha|-n+1}.\label{z75}
\end{eqnarray}
Indeed, we choose $\gamma_{n,i}$ with maximal degree so that
\begin{equation*}
|(f_{n-1}^{(\alpha)}-\sum_{j=1}^{i-1} a_{\gamma_{n,j}}{\bf
b}^{-\gamma_{n,j}},{\bf
b}^{\gamma_{n,i}})-\delta_{\alpha\gamma_{n,i}}|=p^{-n+1}
\end{equation*}
and put
\begin{eqnarray}
f_n^{(\alpha)}&:=&f_{n-1}^{(\alpha)}-\sum_{i=1}^{\infty}a_{\gamma_{n,i}}{\bf
b}^{-\gamma_{n,i}}\hbox{ where}\notag\\
a_{\gamma_{n,i}}&:=&\frac{(f_{n-1}^{(\alpha)}-\sum_{j=1}^{i-1}
a_{\gamma_{n,j}}{\bf b}^{-\gamma_{n,j}},{\bf
b}^{\gamma_{n,i}})}{({\bf b}^{-\gamma_{n,i}},{\bf
b}^{\gamma_{n,i}})}\in p^{n-1}\mathbb{Z}_p.\notag
\end{eqnarray}
So by $(\ref{z75})$ the sequence $f_n^{(\alpha)}$ is Cauchy in
each norm $\|\cdot\|_{\rho}$ and therefore defines an element
\begin{equation*}
f^{(\alpha)}:=\lim_{n\rightarrow\infty}f_n^{(\alpha)}
\end{equation*}
in $ D_{[\rho,1)}(G,K)$ with the required property for any
$\rho>p^{-1}$. Moreover, one has
\begin{equation*}
\|f^{(\alpha)}-{\bf b}^{-\alpha}\|_{\rho}<\|{\bf
b}^{-\alpha}\|_{\rho}
\end{equation*}
for any $p^{-1}<\rho<1$ by $(\ref{z76})$ and $(\ref{z75})$. Note
that the difference between $(\ref{z76})$ and $(\ref{z75})$ for
$n=1$ comes from the fact that we have a better estimate in Lemma
\ref{z21} $(i)$ whenever $|\alpha|+|\beta|=0$.
\end{proof}

\begin{lem}\label{z23}
The ``Laurent polynomials'' are dense in each $ D_{[\rho,1)}(G,K)$
(in both the Fr\'echet and the weak topologies). In particular
they form a dense subset of $\mathcal{R}(G,K)$ in both the
inductive limit topology and the weak topology.
\end{lem}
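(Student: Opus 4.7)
My plan is to prove Fréchet density in each $D_{[\rho,1)}(G,K)$ first and then transfer this to the other three cases essentially for free, using the fact that the weak topology is coarser than the Fréchet topology and that the inductive limit topology on $\mathcal{R}(G,K)$ is coarser than (more precisely: continuously received from) each Fréchet topology on $D_{[\rho,1)}(G,K)$.

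For Fréchet density, I would invoke Proposition \ref{expansion} of the Appendix: every $x \in D_{[\rho,1)}(G,K)$ has a convergent Laurent series expansion $x = \sum_{\alpha\in\mathbb{Z}^d}d_\alpha\mathbf{b}^\alpha$. Since $D_{[\rho,1)}(G,K)=\varprojlim_{\rho_2\to 1} D_{[\rho,\rho_2]}(G,K)$, the Fréchet topology is determined by the Banach-algebra norms $\|\cdot\|_{\rho,\rho_2}$, and the very fact that the expansion defines $x$ in each $D_{[\rho,\rho_2]}(G,K)$ means that the finite truncations (which are Laurent polynomials) converge to $x$ in every $\|\cdot\|_{\rho,\rho_2}$; hence they converge to $x$ in the Fréchet topology.

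To pass from the Fréchet topology to the weak topology, I would show that the weak topology on $D_{[\rho,1)}(G,K)$ is coarser than the Fréchet topology, so that Fréchet-density automatically yields weak-density. For this, fix $x_0\in D_{[\rho,1)}(G,K)$ and any rational $\rho_2\in(\rho,1)$. Submultiplicativity of $\|\cdot\|_{\rho,\rho_2}$ gives $\|x_0 x\|_{\rho,\rho_2}\leq\|x_0\|_{\rho,\rho_2}\|x\|_{\rho,\rho_2}$, so left multiplication by $x_0$ is continuous in $\|\cdot\|_{\rho,\rho_2}$. The extraction of the constant term $d_0$ of a Laurent series is dominated by $\|\cdot\|_{\rho,\rho_2}$ (since $|d_0|\leq\|\,\cdot\,\|_{\rho,\rho_2}$ on the monomial $\mathbf{b}^0=1$). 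Composing, the functional $x\mapsto(x_0,x)$ is continuous in the norm $\|\cdot\|_{\rho,\rho_2}$ and a fortiori in the Fréchet topology, proving the desired comparison of topologies.

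For the two statements about $\mathcal{R}(G,K)$, I would simply note that any $y\in\mathcal{R}(G,K)$ lies in some $D_{[\rho,1)}(G,K)$. The inclusion $D_{[\rho,1)}(G,K)\hookrightarrow\mathcal{R}(G,K)$ is continuous for the inductive limit topology by definition, so Laurent-polynomial approximations of $y$ in the Fréchet topology automatically approximate $y$ in the inductive limit topology. An argument identical to the previous paragraph, applied now with $x_0\in\mathcal{R}(G,K)$ and using the fact that $x_0$ and $x$ both land in a common $D_{[\rho,1)}(G,K)$, shows that the weak topology on $\mathcal{R}(G,K)$ is coarser than its inductive limit topology, completing the last case. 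I do not expect any real obstacle: everything is formal once Proposition \ref{expansion} provides the Laurent expansion and one exploits submultiplicativity of $\|\cdot\|_{\rho,\rho_2}$ together with the trivial bound $|d_0|\leq\|x\|_{\rho,\rho_2}$ for the constant-term functional.
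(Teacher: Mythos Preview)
Your proposal is correct and follows essentially the same approach as the paper's proof, just with more detail spelled out. The paper's argument is very terse: it asserts that Fr\'echet (resp.\ inductive limit) density ``follows easily from the definition'' and then observes that the defining functionals \eqref{z19} of the weak topology are continuous in the Fr\'echet (resp.\ inductive limit) topology, so weak-open sets are Fr\'echet-open and density transfers---which is exactly your strategy of showing the weak topology is coarser.
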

\begin{proof}
The density in the Fr\'echet, resp.\ the inductive limit topology
follows easily from the definition. On the other hand the maps
$(\ref{z19})$ are continuous in the Fr\'echet, resp.\ the
inductive limit topology. So any subset of $R$ that is open in the
weak topology is also open in the Fr\'echet, resp. the inductive
limit topology hence the result.
\end{proof}

\begin{cor}\label{z25}
Let $R$ denote one of the rings $ D_{[\rho,1)}(G,K)$ or
$\mathcal{R}(G,K)$. A series $\sum_{\alpha}c_{\alpha}f^{(\alpha)}$
is convergent in $R$ (in either of the topologies) if and only if
$\sum_{\alpha} c_{\alpha}{\bf b}^{-\alpha}$ defines an element of
$R$. Moreover, any element of $R$ can be written in this form. In
particular, the weak topology on $R$ is symmetric, in other words
it can also be defined by right multiplication by $x_0$.
\end{cor}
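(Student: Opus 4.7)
The plan is to reduce the whole statement to a single norm identity. The key observation is that Lemma \ref{z22} gives $\|f^{(\alpha)} - {\bf b}^{-\alpha}\|_{\rho} < \|{\bf b}^{-\alpha}\|_{\rho}$ for each $p^{-1}<\rho<1$ in $p^{\mathbb{Q}}$, and the non-archimedean triangle inequality upgrades this to $\|f^{(\alpha)}\|_{\rho} = \|{\bf b}^{-\alpha}\|_{\rho}$, and therefore $\|c_{\alpha} f^{(\alpha)}\|_{\rho} = \|c_{\alpha} {\bf b}^{-\alpha}\|_{\rho}$ for any $c_{\alpha}\in K$. Since non-archimedean convergence amounts to the terms going to zero in every seminorm, this immediately yields the convergence equivalence in the Fr\'echet and inductive limit topologies.

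For the surjectivity statement that any element of $R$ can be written in this form, I would revisit the estimates $(\ref{z76})$ and $(\ref{z75})$ in the proof of Lemma \ref{z22} to extract a \emph{uniform} contraction bound $\|f^{(\alpha)} - {\bf b}^{-\alpha}\|_{\rho} \leq c_{\rho}\|{\bf b}^{-\alpha}\|_{\rho}$ with $c_{\rho} := \max(\rho,(p\rho)^{-1}) < 1$ (here one crucially uses $\rho > p^{-1}$). Hence the continuous linear endomorphism $\Phi$ of $R$ sending $\sum c_{\alpha} {\bf b}^{-\alpha}$ to $\sum c_{\alpha} f^{(\alpha)}$ (well-defined by the norm identity) decomposes as $\id$ plus a strict contraction in each seminorm, uniformly in $\alpha$, and a Neumann series argument shows that $\Phi$ is a topological automorphism. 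Applying $\Phi^{-1}$ to the Laurent expansion of any $y\in R$ produces the desired $f^{(\alpha)}$-expansion.

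For the weak topology the implication ``defines an element $\Rightarrow$ weakly convergent'' follows from the Fr\'echet case because the weak topology is coarser. For the converse, given $\sum c_{\alpha} f^{(\alpha)}$ weakly convergent to some $y \in R$, I would use the Fr\'echet-convergent expansion $y=\sum c'_{\alpha} f^{(\alpha)}$ produced above and apply the pairing $(f^{(\beta)}, \cdot)$, which is continuous in the weak topology, to both series. Since $(f^{(\beta)},{\bf b}^{\gamma})=\delta_{\beta\gamma}$, this pairing extracts the $\beta$-th Laurent coefficient of $y$, so $\sum_{\alpha}(c_{\alpha}-c'_{\alpha})(f^{(\beta)},f^{(\alpha)})=0$ for every $\beta$. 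The matrix $M_{\beta,\alpha}:=(f^{(\beta)},f^{(\alpha)})$ is the coefficient of ${\bf b}^{\beta}$ in the Laurent expansion of $f^{(\alpha)}$; after reindexing $\beta\mapsto-\beta$ it becomes upper triangular with unit diagonal (by the shape of $f^{(\alpha)}$ in Lemma \ref{z22}), which forces $c_{\alpha}=c'_{\alpha}$ and completes the equivalence. The symmetry of the weak topology finally follows from a mirror construction: running Lemma \ref{z22}'s argument with right multiplication produces a right-dual basis $\tilde{f}^{(\alpha)}\in R$ with $({\bf b}^{\beta},\tilde{f}^{(\alpha)})=\delta_{\alpha\beta}$, so the right-pairing seminorms $x\mapsto|(x,\tilde{f}^{(\alpha)})|$ extract Laurent coefficients exactly as the left-pairing seminorms $y\mapsto|(f^{(\alpha)},y)|$ do, and both weak topologies coincide with the coordinate topology on Laurent coefficients.

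The main technical obstacle is the uniform contraction estimate: Lemma \ref{z22} only records a strict inequality, and one must unwind its iterative construction, combining the first-step bound in $(\ref{z76})$ with the later-step bounds in $(\ref{z75})$ (using $\rho > p^{-1}$ to ensure geometric decay), to obtain a $c_{\rho}<1$ that is independent of $\alpha$. Once this is in place the Neumann series inversion and the triangular inversion of $M_{\beta,\alpha}$ are routine.
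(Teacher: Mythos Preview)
Your approach is essentially the same as the paper's, with a few differences in emphasis and detail. For the Fr\'echet convergence equivalence, you and the paper both use the norm identity $\|f^{(\alpha)}\|_r = \|{\bf b}^{-\alpha}\|_r$. For the surjectivity claim, the paper simply invokes the strict inequality $\|f^{(\alpha)} - {\bf b}^{-\alpha}\|_r < \|{\bf b}^{-\alpha}\|_r$ without further elaboration; your Neumann series argument makes this explicit and correctly identifies that the bounds $(\ref{z76})$ and $(\ref{z75})$ yield a uniform contraction constant $c_\rho = \max(\rho,(p\rho)^{-1})<1$. An alternative, which the paper likely has in mind, is that the strict inequality alone already shows $\Phi$ induces the identity on each associated graded $\gr_{\|\cdot\|_{\rho,r}}$, hence is bijective on each Banach completion $D_{[\rho,r]}(G,K)$ by completeness, and therefore on the projective limit; this avoids extracting the uniform constant altogether.

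For the symmetry of the weak topology, the paper takes a slightly different route: having shown that every element admits an $f^{(\alpha)}$-expansion, it observes that the vector-space identification $D_{[\rho,1)}(G,K)\cong D_{[\rho,1)}(\mathbb{Z}_p^d,K)$ is a homeomorphism for the left weak topologies, then invokes the analogous statement on the right, and finally uses that for the commutative group $\mathbb{Z}_p^d$ the left and right weak topologies trivially coincide. Your mirror construction of a right dual basis $\tilde f^{(\alpha)}$ amounts to the same thing unpacked, but the reduction to the commutative case is somewhat cleaner. Your extended treatment of the weak-topology ``only if'' direction via the matrix $M_{\beta,\alpha}$ goes beyond what the paper proves; note however that your triangularity claim is not quite right as stated, since the correction terms in $f^{(\alpha)}$ arising in step $n\geq 2$ of Lemma~\ref{z22} can have degree below $-|\alpha|$ (they are merely $p$-adically small, with coefficients in $p^{n-1}\mathbb{Z}_p$). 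This direction is not addressed in the paper's proof and is likely not intended as part of the statement.
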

\begin{proof}
If $\sum_{\alpha} c_{\alpha}{\bf b}^{-\alpha}$ is an element of
some $ D_{[\rho,1)}(G,K)$ then so is $\sum_{\alpha}
c_{\alpha}f^{(\alpha)}$ as we have
\begin{equation*}
\|f^{(\alpha)}\|_r=\|{\bf b}^{-\alpha}\|_r
\end{equation*}
for any $p^{-1}< r<1$ in $p^{\mathbb{Q}}$. So the series
$\sum_{\alpha} c_{\alpha}f^{(\alpha)}$ is convergent in either of
the topologies on $R$. On the other hand we have
\begin{equation*}
\|f^{(\alpha)}-{\bf b}^{-\alpha}\|_r<\|{\bf b}^{-\alpha}\|_r
\end{equation*}
therefore we can write any element in $ D_{[\rho,1)}(G,K)$ in the
required form. This shows that the natural maps
\begin{eqnarray*}
 D_{[\rho,1)}(G,K)&\rightarrow&  D_{[\rho,1)}(\mathbb{Z}_p^d,K)\\
\mathcal{R}(G,K)&\rightarrow& \mathcal{R}(\mathbb{Z}_p^d,K)
\end{eqnarray*}
defined by the identification of the two underlying vector spaces
on which the rings are defined is a homeomorphism in the weak
topology on both sides. The result follows by the similar
statement for the other weak topology defined by right
multiplication by $x_0$.
\end{proof}

Now we can state and prove our main theorem in this section.

\begin{thm}\label{z27}
The rings $ D_{[\rho,1)}(G,K)$ and $\mathcal{R}(G,K)$ are
self-dual topological $K$-algebras (in their own weak topology)
with respect to the perfect pairing
\begin{eqnarray}
R\times R&\rightarrow& K\notag\\
(x({\bf b}),y({\bf b}))&\mapsto&"\hbox{the constant term of
}x({\bf b})y({\bf b})."\notag
\end{eqnarray}
Here $R$ denotes either of the above rings.
\end{thm}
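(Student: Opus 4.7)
The plan is to show that the map $\Phi : R \to \Hom_K^{ct}(R,K)$, $x_0 \mapsto (x_0,-)$, is a bijection, where $R$ denotes either of the rings in question. That the pairing is separately continuous is immediate: by the very definition of the weak topology, $y \mapsto (x_0,y)$ is continuous for each fixed $x_0$, and by Corollary \ref{z25} the weak topology is symmetric, so $x \mapsto (x,y_0)$ is continuous as well. Hence $\Phi$ is well-defined and the pairing is a continuous bilinear form.

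For injectivity of $\Phi$ (non-degeneracy), I would assume $(x_0,y)=0$ for every $y\in R$ and use the expansion furnished by Corollary \ref{z25} to write $x_0=\sum_{\alpha\in\mathbb{Z}^d} c_\alpha f^{(\alpha)}$, convergent in the weak topology, with $f^{(\alpha)}$ the ``dual basis'' of Lemma \ref{z22}. Pairing with ${\bf b}^\beta$ and using continuity of $(-,{\bf b}^\beta)$ in the weak topology together with the identity $(f^{(\alpha)},{\bf b}^\beta)=\delta_{\alpha\beta}$, one interchanges the sum and the pairing to obtain
$$0=(x_0,{\bf b}^\beta)=\sum_\alpha c_\alpha(f^{(\alpha)},{\bf b}^\beta)=c_\beta$$
for every $\beta$, whence $x_0=0$. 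Non-degeneracy in the second argument follows by the symmetric form of the same argument.

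For surjectivity of $\Phi$, the key observation is that the weak topology is by definition the initial topology with respect to the family of $K$-linear forms $\{(x_0,-)\}_{x_0\in R}$. A standard fact about initial topologies in locally convex spaces then says that any continuous linear functional $\ell\colon R\to K$ is dominated by finitely many of these: there exist $x_1,\dots,x_n\in R$ and $C>0$ with $|\ell(y)|\leq C\max_j|(x_j,y)|$. Consequently $\bigcap_j\Ker(x_j,-)\subseteq\Ker\ell$, and elementary linear algebra yields scalars $\lambda_j\in K$ with
$$\ell=\sum_j\lambda_j(x_j,-)=\Bigl(\sum_j\lambda_jx_j,\,-\Bigr)=\Phi\Bigl(\sum_j\lambda_jx_j\Bigr),$$
proving $\Phi$ is surjective and hence bijective.

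The main obstacle lies in the non-degeneracy step: one must be able to ``read off'' an abstract element of $R$ by pairing against a well-chosen set of test elements, and this crucially depends on the existence of the dual basis $\{f^{(\alpha)}\}$ constructed in Lemma \ref{z22} as well as the fact (Corollary \ref{z25}) that the resulting series expansion converges in the weak topology, so that the pairing can be commuted past the infinite sum. By contrast, the surjectivity step is essentially formal once the weak topology has been arranged as an initial topology precisely along the family of functionals $(x_0,-)$; it automatically forces every continuous functional to be of this form.
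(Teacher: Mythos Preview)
Your proof is correct, and the surjectivity argument is essentially identical to the paper's: both exploit that the weak topology is the initial topology for the family $\{(x_0,-)\}$, so any continuous functional factors through finitely many of these and is a linear combination.

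The injectivity arguments differ. The paper is more direct: given $x = \sum_\alpha x_\alpha {\bf b}^\alpha \neq 0$, it picks a $\beta$ with $\|x\|_r = |x_\beta| r^{|\beta|}$ and shows explicitly that $y = {\bf b}^{-\beta}$ makes the constant term of $xy$ nonzero, via the estimate $\|xy - \sum_\alpha x_\alpha {\bf b}^{\alpha-\beta}\|_r < |x_\beta|$. Your route instead invokes the dual basis $\{f^{(\alpha)}\}$ of Lemma \ref{z22} and the expansion from Corollary \ref{z25} to read off each coefficient as $c_\beta = (x_0,{\bf b}^\beta)$. Both are valid; the paper's argument is self-contained and uses only the norm, whereas yours leans on the machinery of Lemma \ref{z22} and Corollary \ref{z25} (which is fine, since they are proved beforehand). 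Your approach has the conceptual advantage of making the role of the dual basis transparent, while the paper's has the virtue of exhibiting an explicit witness without appealing to the somewhat delicate construction of the $f^{(\alpha)}$.
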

\begin{proof}
The above pairing is clearly $K$-bilinear and separately
continuous. So it remains to prove that it induces a topological isomorphism
\begin{equation}
R\rightarrow\Hom_K^{ct}(R,K)\label{z16}
\end{equation}
with the weak topology on the dual space. Moreover, by the choice of the
topologies on both sides, it remains to show that
\eqref{z16} is a bijection.

First we need to prove that the map $(\ref{z16})$ is injective, in
other words for any $x\neq 0$ in $R$ there exists a $y$ in $R$
such that $xy$ has nonzero constant term. For this fix a
$p^{-1}<r<1$ in $p^{\mathbb{Q}}$ such that $x$ is in
$D_{[r,1)}(G,K)$. Write
\begin{equation*}
x({\bf b})=\sum_{\alpha\in\mathbb{Z}^d}x_{\alpha}{\bf b}^\alpha
\end{equation*}
and let $\beta$ be such that $\|x\|_r=|x_{\beta}|r^{|\beta|}$. We
claim that we may choose $y={\bf b}^{-\beta}$. Indeed, we have
\begin{equation*}
\|x({\bf b})y({\bf b})-\sum_{\alpha}x_{\alpha}{\bf
b}^{\alpha-\beta}\|_r<\|x({\bf b})y({\bf b})\|_r=|x_{\beta}|.
\end{equation*}
So in particular the constant term of $x({\bf b})y({\bf b})$
differs from $x_{\beta}$ by something with absolute value less
than $|x_{\beta}|$, hence it is nonzero.

For the surjectivity let $\lambda\colon R\rightarrow K$ be $K$-linear and
continuous with respect to the weak topology on $R$ and the natural topology
on $K$. Then $U:=\{y\in R\mid |\lambda(y)|_K<1\}$ is open in $R$. Thus, there
are $x_1,\dots,x_s\in R$, and there is $\varepsilon>0$ such that
\begin{equation*}
\bigcap_{i=1}^s\{y\in R\mid |''\hbox{constant term of
}x_iy''|_K<\varepsilon\}\subset U. 
\end{equation*}
Put $\lambda_{x_i}(y):=$''constant term of $x_iy$''. Consider $y\in
V:=\bigcap_{i=1}^s\Ker(\lambda_{x_i})$. Then $Ky\subset V\subset U$, and
$|\lambda(cy)|<1$ for any $c\in K$ implies $\lambda(y)=0$. Therefore $\lambda$
factors through $R/V$. Next, the map $R/V\rightarrow K^s$,
$y+V\mapsto(\lambda_{x_1}(y),\dots,\lambda_{x_s}(y))$, is injective showing
that $\lambda$ is a linear combination
$a_1\lambda_{x_1}+\dots+a_s\lambda_{x_s}$ of the $\lambda_{x_i}$, and thus
$\lambda=\lambda_x$ with $x=a_1x_1+\dots+a_sx_s$.
\end{proof}

\subsection{The nice topology}

The problem with the weak topology is that it is artificially
constructed forcing the ring to be self-dual. Moreover, its
restriction to $D(G,K)$---even in the Robba-ring case---does not
give back the Fr\'echet topology on $D(G,K)$, since any open
lattice in the weak topology contains a subspace of finite
codimension. However, we are going to construct a third topology
on the generalized Robba ring $\mathcal{R}(G,K)$, the ``nice''
topology which has the following properties.

\begin{enumerate}[$(i)$]
\item It is stronger than the weak topology, but weaker than the
inductive limit topology.
\item The ring $\mathcal{R}(G,K)$ is self-dual in the nice
topology with respect to the pairing
\begin{eqnarray}
\mathcal{R}(G,K)\times\mathcal{R}(G,K)&\rightarrow& K\label{z56}\\
(x,y)&\mapsto&``\hbox{the constant term of }xy."\notag
\end{eqnarray}
\item On the classical Robba ring $\mathcal{R}(\mathbb{Z}_p,K)$
the nice topology coincides with the inductive limit topology.
\item The subspace topology on the distribution algebra $D(G,K)$
equals the usual Fr\'echet topology.
\end{enumerate}

The construction is the following. For any fixed element
\begin{equation*}
x_0=\sum_{\alpha\in\mathbb{Z}^d} c_{0,\alpha}{\bf b}^{\alpha}
\end{equation*}
in $\mathcal{R}(G,K)$ we take the $o_K$-submodule
\begin{equation}
L_{x_0}:=\left\{\sum_{\alpha\in\mathbb{Z}^d} d_{\alpha}{\bf
b}^{\alpha} \hbox{ such that } |c_{0,-\alpha}||d_{\alpha}|\leq
1\hbox{ for any } \alpha\in\mathbb{Z}^d\right\}\label{z54}
\end{equation}
in $\mathcal{R}(G,K)$.
\begin{lem}
The set $L_{x_0}$ defined in $(\ref{z54})$ is a lattice in
$\mathcal{R}(G,K)$.
\end{lem}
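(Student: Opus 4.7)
The plan is to verify the two defining conditions of a lattice: that $L_{x_0}$ is an $o_K$-submodule of $\mathcal{R}(G,K)$, and that it is absorbent (equivalently, $K\cdot L_{x_0}=\mathcal{R}(G,K)$).

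The submodule property is routine. Closure under addition comes from the ultrametric inequality $|c_{0,-\alpha}(d_\alpha^{(1)}+d_\alpha^{(2)})|\le\max_i|c_{0,-\alpha}d_\alpha^{(i)}|\le 1$, applied coefficient by coefficient, and closure under scaling by $o_K$ only tightens the defining bound.

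The substance is absorbency. Given $y=\sum_\alpha d_\alpha {\bf b}^\alpha\in\mathcal{R}(G,K)$, the task reduces to producing $c\in K^\times$ with $|c_{0,-\alpha}||cd_\alpha|\le 1$ for every $\alpha\in\mathbb{Z}^d$; equivalently, to showing that
$$M:=\sup_{\alpha\in\mathbb{Z}^d}|c_{0,-\alpha}|\,|d_\alpha|<\infty,$$
after which any $c\in K^\times$ with $|c|\le M^{-1}$ absorbs $y$. To bound $M$, I would pick $\rho_0,\rho_1\in p^{\mathbb{Q}}\cap(p^{-1},1)$ with $x_0\in D_{[\rho_0,1)}(G,K)$ and $y\in D_{[\rho_1,1)}(G,K)$, then fix an auxiliary $\rho\in p^{\mathbb{Q}}$ with $\max(\rho_0,\rho_1)<\rho<1$. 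The four quantities $\|x_0\|_{\rho_0}$, $\|x_0\|_\rho$, $\|y\|_{\rho_1}$, $\|y\|_\rho$ are then all finite.

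The estimate splits on the sign of the signed degree $|\alpha|=\sum_i\alpha_i$. For $|\alpha|\ge 0$, applying $\|x_0\|_{\rho_0}$ at the index $-\alpha$ (which has $|-\alpha|\le 0$) yields $|c_{0,-\alpha}|\le\|x_0\|_{\rho_0}\rho_0^{\kappa|\alpha|}$, and $|d_\alpha|\le\|y\|_\rho\rho^{-\kappa|\alpha|}$, so the product is bounded by $\|x_0\|_{\rho_0}\|y\|_\rho(\rho_0/\rho)^{\kappa|\alpha|}\le\|x_0\|_{\rho_0}\|y\|_\rho$. For $|\alpha|\le 0$, symmetrically, $|c_{0,-\alpha}|\le\|x_0\|_\rho\rho^{\kappa|\alpha|}$ and $|d_\alpha|\le\|y\|_{\rho_1}\rho_1^{-\kappa|\alpha|}$ multiply to at most $\|x_0\|_\rho\|y\|_{\rho_1}(\rho/\rho_1)^{\kappa|\alpha|}\le\|x_0\|_\rho\|y\|_{\rho_1}$. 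Taking the maximum of the two constants bounds $M$. The only (minor) obstacle is the sign bookkeeping inherent in $|\alpha|$ being a signed degree rather than an $\ell^1$-norm: one has to pair each factor with the norm that makes the powers of $\rho$ cancel in the favourable direction, but once that pairing is right the inequalities are mechanical.
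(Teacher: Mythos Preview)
Your proof is correct. The route differs from the paper's, though the underlying content is the same.

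The paper argues absorbency in one stroke: since $\mathcal{R}(G,K)$ and the commutative Robba ring $\mathcal{R}(\mathbb{Z}_p^d,K)$ share the same underlying $K$-vector space, one can form the \emph{commutative} product $y\circ x_0$. Its constant term is exactly $\sum_{\alpha}d_{\alpha}c_{0,-\alpha}$, and the mere fact that this series converges in $K$ forces the individual terms $|c_{0,-\alpha}||d_\alpha|$ to tend to zero, hence to be bounded. So the paper outsources the estimate to the already-established ring structure of $\mathcal{R}(\mathbb{Z}_p^d,K)$.

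You instead unpack that estimate by hand: pick radii $\rho_0<\rho$ and $\rho_1<\rho$ witnessing $x_0\in D_{[\rho_0,1)}$ and $y\in D_{[\rho_1,1)}$, and split on the sign of the signed degree $|\alpha|$ so that in each case one factor is controlled by a small radius and the other by the larger radius $\rho$, making the product of the $\rho$-powers at most $1$. This is exactly the convexity argument that underlies the convergence of the commutative product, made explicit. What you gain is that your argument is self-contained and yields the explicit bound $M\le\max(\|x_0\|_{\rho_0}\|y\|_\rho,\ \|x_0\|_\rho\|y\|_{\rho_1})$; what the paper gains is brevity, at the cost of invoking the commutative Robba ring as a black box.
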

\begin{proof}
We need to show that for any nonzero element $x=\sum
c_{\alpha}{\bf b}^{\alpha}$ in $\mathcal{R}(G,K)$ there exists a
constant $c$ in the ring of integers $o_K$ such that $cx$ lies in
$L_{x_0}$. We can take the product $x\circ x_0$ in the commutative
ring $\mathcal{R}(\mathbb{Z}_p^d,K)$ which has the same underlying
set as $\mathcal{R}(G,K)$. The constant term of $x\circ x_0$ is by
definition
\begin{equation*}
\sum_{\alpha\in\mathbb{Z}^d}c_{\alpha}c_{0,-\alpha},
\end{equation*}
so this sum is convergent and hence bounded. Therefore there
exists a constant $c$ in $o_K$ such that the element $cx$ lies in
$L_{x_0}$. Note that this is a stronger assumption than just that
the constant term of $cx\circ x_0$ be integral.
\end{proof}
Now we define a lattice open in the nice topology if it contains a
lattice of the form $L_{x_0}$. This gives a locally convex
topology on $\mathcal{R}(G,K)$ since for any elements
$x_1,\dots,x_m$ in $\mathcal{R}(G,K)$ there exists an element
$x_0$ in $\mathcal{R}(G,K)$ such that we have
\begin{equation*}
L_{x_1}\cap\dots\cap L_{x_m}=L_{x_0}.
\end{equation*}
Indeed, we can take $x_0$ such that its coefficients satisfy
\begin{equation*}
|c_{0,\alpha}|=\max_{1\leq i\leq m}|c_{i,\alpha}|
\end{equation*}
where $c_{i,\alpha}$ is the coefficient of ${\bf b}^\alpha$ in
$x_i$.

\begin{lem}\label{z59}
The nice topology on $\mathcal{R}(G,K)$ is stronger than the weak
topology, but weaker than the inductive limit topology.
\end{lem}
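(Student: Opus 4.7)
The plan is to verify the two inclusions separately. On one hand I will show that every basic weak neighborhood of~$0$ contains a lattice of the form $L_{x_1}$; on the other hand that every lattice $L_{x_0}$ is open in the inductive limit topology.

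For the first inclusion (nice is finer than weak), a basic weak neighborhood of~$0$ has the form $U_{x_0,n}:=\{y\in\mathcal{R}(G,K) : |(x_0,y)|\leq p^{-n}\}$. Writing $y=\sum_{\beta}d_\beta\mathbf{b}^\beta$ and expanding
\begin{equation*}
(x_0,y)=\sum_{\beta\in\mathbb{Z}^d}(x_0,\mathbf{b}^\beta)\,d_\beta,
\end{equation*}
the idea is to pick $x_1=\sum_\gamma c_{1,\gamma}\mathbf{b}^\gamma\in\mathcal{R}(G,K)$ with $|c_{1,-\beta}|\geq p^n\,|(x_0,\mathbf{b}^\beta)|$ for every $\beta\in\mathbb{Z}^d$. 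Then $y\in L_{x_1}$ forces $|d_\beta(x_0,\mathbf{b}^\beta)|\leq p^{-n}$ term by term, and the ultrametric inequality gives $|(x_0,y)|\leq p^{-n}$, so $L_{x_1}\subseteq U_{x_0,n}$. The nontrivial point is that such an $x_1$ really lies in $\mathcal{R}(G,K)$: using the quantitative bound~(\ref{z20}) of Lemma~\ref{z21} together with the fact that $x_0\in D_{[\rho_0,1)}(G,K)$ for some $\rho_0>p^{-1}$, one checks that $|(x_0,\mathbf{b}^\beta)|$ decays exponentially in $|\beta|$ at a rate fast enough to place the required $c_{1,\gamma}$ into some $D_{[\rho_1,1)}(G,K)$.

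For the second inclusion (nice is coarser than inductive limit), by definition of the inductive limit topology it is enough, for each fixed $p^{-1}<\rho<1$ in $p^{\mathbb{Q}}$, to find a Fréchet neighborhood of~$0$ in $D_{[\rho,1)}(G,K)$ contained in $L_{x_0}$. Membership in $L_{x_0}$ amounts to $|d_\alpha|\leq|c_{0,-\alpha}|^{-1}$ for all $\alpha\in\mathbb{Z}^d$, while a ball $\{x:\|x\|_r\leq\delta\}$ controls $|d_\alpha|$ by $\delta$ times an explicit function of $\alpha$ and $r$. Since $x_0\in D_{[\rho_0,1)}(G,K)$ for some $\rho_0$, the $|c_{0,-\alpha}|$ satisfy the two-sided growth/decay bounds allowed on the annulus $[\rho_0,1)$. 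The plan is to choose $r\in[\max(\rho,\rho_0),1)$ sufficiently close to~$1$ and $\delta>0$ sufficiently small so that $\|x\|_r\leq\delta$ forces $|d_\alpha|\leq|c_{0,-\alpha}|^{-1}$ simultaneously for all $\alpha$.

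The main obstacle is the bookkeeping of these estimates. Since Laurent series on annuli carry two-sided growth conditions, the required inequalities must hold simultaneously on the positive and negative halves of the index set $\mathbb{Z}^d$, and the choice of~$r$ in direction~(2) has to balance both halves, which is why one must take $r$ strictly larger than $\rho_0$ rather than just $\geq\rho$. For direction~(1) the estimate~(\ref{z20}) must be sharp enough to guarantee that the constructed $x_1$ still converges on some proper subannulus $[\rho_1,1)$; granting this, the two containments follow, and the lemma is proved.
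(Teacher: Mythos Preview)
Your overall plan is right, and for the second inclusion (nice is coarser than the inductive limit topology) your argument is essentially the paper's: one shows that for each $\rho$ with $x_0\in D_{[\rho,1)}(G,K)$ the lattice $L_{x_0}\cap D_{[\rho,1)}(G,K)$ contains a ball for the norm $\|\cdot\|_{\rho,\rho_1}$. In fact a single radius $r\geq\max(\rho,\rho_0)$ already suffices, since the identity
\[
|c_{0,-\alpha}|\,|d_\alpha|=\bigl(|c_{0,-\alpha}|\,r^{-\deg\alpha}\bigr)\bigl(|d_\alpha|\,r^{\deg\alpha}\bigr)\leq\|x_0\|_r\,\|x\|_r
\]
handles both signs of $\deg\alpha$ at once; your worry about ``balancing both halves'' and needing $r$ strictly above $\rho_0$ is therefore unnecessary.

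For the first inclusion there is a genuine gap. Your claim that $|(x_0,\mathbf{b}^\beta)|$ ``decays exponentially in $|\beta|$'' is false: take $G=\mathbb{Z}_p$ and $x_0=\sum_{m\geq 1}b_1^m\in D(G,K)$; then $(x_0,b_1^{-m})=1$ for every $m\geq 1$, so there is no decay at all. What you actually need is the more delicate statement that the family $\beta\mapsto(x_0,\mathbf{b}^\beta)$, placed as coefficients of $\mathbf{b}^{-\beta}$, satisfies the convergence conditions of some $D_{[\rho_1,1)}(G,K)$. This is not a consequence of the pointwise bound~\eqref{z20}; it is precisely Corollary~\ref{z25}, which in turn rests on the dual-basis elements $f^{(\alpha)}$ of Lemma~\ref{z22}. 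The paper's proof uses this machinery directly and thereby avoids any estimate on $(x_0,\mathbf{b}^\beta)$: an arbitrary basic weak neighborhood is $\{x:|(f_0,x)|\leq 1\}$; by Corollary~\ref{z25} one writes $f_0=\sum_\alpha c_{0,-\alpha}f^{(\alpha)}$ and sets $x_0:=\sum_\alpha c_{0,\alpha}\mathbf{b}^\alpha$, after which $(f_0,\mathbf{b}^\beta)=c_{0,-\beta}$ makes the containment $L_{x_0}\subseteq\{x:|(f_0,x)|\leq 1\}$ immediate from the ultrametric inequality. Your route can be salvaged by invoking Corollary~\ref{z25} to produce the required $x_1$, but the appeal to~\eqref{z20} alone does not close the argument.
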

\begin{proof}
The lattice $L_{x_0}$ is certainly contained in the lattice
\begin{equation*}
\{x\in \mathcal{R}(G,K)\colon |(f_0,x)|\leq 1\}
\end{equation*}
where $f_0=\sum_{\alpha}c_{0,-\alpha}f^{(\alpha)}$ with
$x_0=\sum_{\alpha}c_{0,\alpha}{\bf b}^{\alpha}$ which form a base
of neighbourhood of the origin in the weak topology.

For the second statement choose real numbers $\rho<\rho_1<1$ such
that $x_0$ is in $ D_{[\rho,1)}(G,K)$. Then by definition
$(\ref{z54})$ $L_{x_0}\cap D_{[\rho,1)}(G,K)$ contains the lattice
\begin{equation*}
\left\{x\in \mathcal{R}(G,K)\colon \|x\|_{\rho,\rho_1}\leq
\frac{1}{\|x_0\|_{\rho,\rho_1}}\right\}
\end{equation*}
showing that $L_{x_0}$ is open in the inductive limit topology of
$\mathcal{R}(G,K)$.
\end{proof}

\begin{lem}\label{r2}
The map
\begin{equation}
\mathcal{R}(G,K)\rightarrow\Hom_K^{ct,nice}(\mathcal{R}(G,K),K)\label{z57}
\end{equation}
induced by the pairing $(\ref{z56})$ is surjective.
\end{lem}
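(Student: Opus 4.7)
My plan is to reverse-engineer $x$ from $\lambda$ via the ``dual basis'' $\{f^{(\beta)}\}$ of Lemma \ref{z22}. Given $\lambda \in \Hom_K^{ct,nice}(\mathcal{R}(G,K),K)$, continuity supplies an open lattice contained in $\lambda^{-1}(o_K)$, which by definition of the nice topology contains some $L_{x_0}$ with $x_0 = \sum c_{0,\alpha}{\bf b}^\alpha \in \mathcal{R}(G,K)$. Inspecting when $d\cdot{\bf b}^\alpha$ lies in $L_{x_0}$ via \eqref{z54} and letting $d \in K$ vary yields the estimate
\begin{equation*}
|\lambda({\bf b}^\alpha)| \leq |c_{0,-\alpha}| \qquad \text{for every }\alpha\in\mathbb{Z}^d,
\end{equation*}
with the convention that $\lambda({\bf b}^\alpha)=0$ when $c_{0,-\alpha}=0$.

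I then set $x := \sum_\beta \lambda({\bf b}^\beta) f^{(\beta)}$. By Corollary \ref{z25} this converges in $\mathcal{R}(G,K)$ if and only if $\sum_\beta \lambda({\bf b}^\beta){\bf b}^{-\beta}$ does; reindexing $\gamma=-\beta$, the coefficient of ${\bf b}^\gamma$ in the latter is $\lambda({\bf b}^{-\gamma})$, which the previous estimate bounds by $|c_{0,\gamma}|$. Since the norms defining each $D_{[\rho,1)}(G,K)$ depend only on absolute values of coefficients, coefficient-wise domination by $x_0 \in D_{[\rho_0,1)}(G,K)$ places $x$ in the same ring, so $x\in\mathcal{R}(G,K)$. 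Using $(f^{(\beta)},{\bf b}^\alpha)=\delta_{\alpha\beta}$ together with separate continuity of the pairing (pulling the continuous linear form ``constant term of $({-}){\bf b}^\alpha$'' inside the convergent sum), I obtain
\begin{equation*}
(x,{\bf b}^\alpha)=\sum_\beta \lambda({\bf b}^\beta)(f^{(\beta)},{\bf b}^\alpha)=\lambda({\bf b}^\alpha)\qquad \text{for every }\alpha.
\end{equation*}

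Finally, the functional $\mu := \lambda - (x,\cdot)$ vanishes on every monomial ${\bf b}^\alpha$, hence on the $K$-span of Laurent polynomials. Here the nice topology earns its keep via Lemma \ref{z59}: being finer than the weak topology it inherits separate continuity of the pairing (so $(x,\cdot)$ is nice-continuous), while being coarser than the inductive limit topology it inherits denseness of the Laurent polynomials (from Lemma \ref{z23}, since density transfers to weaker topologies). Thus $\mu$ is a nice-continuous functional vanishing on a nice-dense subspace, whence $\mu=0$ and surjectivity of \eqref{z57} follows.

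The main obstacle is correctly extracting the coefficient estimate from a single lattice $L_{x_0}$ and threading it through Corollary \ref{z25} with the right index conventions; once $x$ is constructed, the two-sided sandwich provided by Lemma \ref{z59} handles continuity of $(x,\cdot)$ and denseness of Laurent polynomials at the same time, so no separate analysis of the nice topology is needed.
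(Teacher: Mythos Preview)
Your proof is correct and follows essentially the same route as the paper: extract the coefficient bound $|\lambda(\mathbf{b}^\alpha)|\leq |c_{0,-\alpha}|$ from $L_{x_0}\subseteq\lambda^{-1}(o_K)$, and use it together with Corollary~\ref{z25} to show that $\sum_\beta \lambda(\mathbf{b}^\beta)f^{(\beta)}$ converges in $\mathcal{R}(G,K)$. The paper simply asserts that this element maps to $\lambda$, whereas you supply the extra verification (agreement on monomials plus density of Laurent polynomials via Lemma~\ref{z59}), which is a welcome clarification but not a different argument.
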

\begin{proof}
First of all one needs to verify that the pairing $(\ref{z56})$ is
separately continuous. Indeed, this pairing is separately
continuous even in the weak topology and the nice topology is
stronger.

The proof of the surjectivity is similar to the proof of Theorem
\ref{z27}. Take a continuous linear functional $\varphi$ from
$\mathcal{R}(G,K)$ to $K$. Since $\varphi$ is continuous in the
nice topology there exists an element $y$ in $\mathcal{R}(G,K)$
such that $\varphi^{-1}(o_K)\supseteq L_y$. Write $y$ in the form
\begin{equation*}
y=\sum_{\alpha\in\mathbb{Z}^d}y_{\alpha}{\bf b}^{\alpha}.
\end{equation*}
By definition of $L_y$ we have $|\varphi({\bf b}^{\alpha})|\leq
y_{-\alpha}$ for all $\alpha\in\mathbb{Z}^d$. This means that the
series
\begin{equation*}
\sum_{\alpha\in\mathbb{Z}^d}\varphi({\bf b}^{\alpha})f^{(\alpha)}
\end{equation*}
defines an element $f_{\varphi}$ in $\mathcal{R}(G,K)$ such that
$f_{\varphi}$ maps to the functional $\varphi$ under the map
$(\ref{z57})$.
\end{proof}

\begin{lem}\label{z70}
The subspace topology on $D(G,K)$ in the nice topology coincides
with its canonical Fr\'echet topology.
\end{lem}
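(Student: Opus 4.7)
The plan is to compare basic open lattices on the two sides. A neighbourhood basis of $0$ for the Fr\'echet topology on $D(G,K)$ is given by the sets $U_{\rho,\epsilon} := \{\lambda \in D(G,K) \colon \|\lambda\|_\rho \leq \epsilon\}$ with $\rho \in p^{\mathbb{Q}} \cap (p^{-1},1)$ and $\epsilon > 0$ (for $\lambda \in D(G,K)$ the norms $\|\cdot\|_\rho$ are increasing in $\rho$, so a single norm suffices), while the subspace topology from the nice topology has the basis $\{L_{x_0} \cap D(G,K) \colon x_0 \in \mathcal{R}(G,K)\}$. I will show, in each direction, that one kind of basic lattice contains one of the other kind. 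The whole argument is a direct bookkeeping of absolute values; the only mildly delicate point is the existence of field elements with prescribed valuations, handled, as elsewhere in the paper, by passing to a finite extension of $K$ over which $\rho^\kappa$ lies in $|K^\times|$.

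For the inclusion showing that $L_{x_0}\cap D(G,K)$ is Fr\'echet-open, pick $\rho \in p^{\mathbb{Q}} \cap (p^{-1},1)$ so that $x_0 = \sum_{\beta \in \mathbb{Z}^d} c_{0,\beta}\mathbf{b}^\beta$ lies in $D_{[\rho,1)}(G,K)$. For every $\alpha \in \mathbb{N}_0^d$, evaluating $\|x_0\|_\rho$ on the monomial $\mathbf{b}^{-\alpha}$ (whose exponent of $\rho$ is $-\kappa|\alpha|$) gives the bound $|c_{0,-\alpha}| \leq \|x_0\|_\rho\,\rho^{\kappa|\alpha|}$. Consequently, if $\lambda = \sum_{\alpha \in \mathbb{N}_0^d} d_\alpha \mathbf{b}^\alpha \in D(G,K)$ satisfies $\|\lambda\|_\rho \leq \|x_0\|_\rho^{-1}$, then for every $\alpha$
\[
|c_{0,-\alpha}|\,|d_\alpha| \leq \|x_0\|_\rho\,\rho^{\kappa|\alpha|} \cdot \|x_0\|_\rho^{-1}\rho^{-\kappa|\alpha|} = 1,
\]
so $U_{\rho,\|x_0\|_\rho^{-1}} \subseteq L_{x_0} \cap D(G,K)$.

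For the converse, given $U_{\rho,\epsilon}$ I construct an $x_0 \in \mathcal{R}(G,K)$ supported on non-positive indices by choosing (after the base change above if needed) $c_{0,-\alpha} \in K$ with $|c_{0,-\alpha}| = \epsilon^{-1}\rho^{\kappa|\alpha|}$ for every $\alpha \in \mathbb{N}_0^d$, and $c_{0,\beta} = 0$ otherwise. For any $\rho' \in (\rho,1) \cap p^{\mathbb{Q}}$ a direct computation yields
\[
\|x_0\|_{\rho'} = \epsilon^{-1}\sup_{\alpha \in \mathbb{N}_0^d}(\rho/\rho')^{\kappa|\alpha|} = \epsilon^{-1},
\]
since $\rho/\rho' < 1$ and $|\alpha| \geq 0$; the identical bound at each $\rho'' \in [\rho',1) \cap p^{\mathbb{Q}}$ shows $x_0 \in D_{[\rho',1)}(G,K) \subseteq \mathcal{R}(G,K)$. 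The defining inequality of $L_{x_0}$, applied to $\lambda = \sum d_\alpha \mathbf{b}^\alpha \in L_{x_0} \cap D(G,K)$, then becomes $|d_\alpha|\rho^{\kappa|\alpha|} \leq \epsilon$ for every $\alpha$, i.e.\ $\|\lambda\|_\rho \leq \epsilon$, so $L_{x_0}\cap D(G,K) \subseteq U_{\rho,\epsilon}$ and the two topologies agree on $D(G,K)$.
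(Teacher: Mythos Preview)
Your argument is correct and follows essentially the same route as the paper: both directions come down to matching the coefficient conditions defining $L_{x_0}$ against the $\|\cdot\|_\rho$-balls. For the first inclusion the paper simply quotes Lemma~\ref{z59} (nice $\leq$ inductive limit, and the inductive limit restricts to the Fr\'echet topology), whereas you compute the bound $|c_{0,-\alpha}|\leq\|x_0\|_\rho\,\rho^{\kappa|\alpha|}$ directly; these are the same estimate.

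One small point deserves care. Your base change ``to a finite extension of $K$ so that $\rho^\kappa\in|K^\times|$'' is not as innocent here as in the algebraic arguments of Section~4: the nice topology on $\mathcal{R}(G,K)$ is defined using test elements $x_0\in\mathcal{R}(G,K)$, not $\mathcal{R}(G,K')$, so producing $x_0$ over $K'$ does not immediately give a nice-open lattice over $K$. This is easily repaired---either observe that for any $x_0\in\mathcal{R}(G,K')$ one can round each $|c_{0,\alpha}|$ up to the nearest element of $|K^\times|$ (the ratio is bounded by $|\pi_K|^{-1}$, so the rounded series still lies in $\mathcal{R}(G,K)$ and gives a smaller $L$), or, as the paper does, avoid base change altogether by choosing $x_{-\alpha}\in\mathbb{Q}_p$ with $p^l\rho^{|\alpha|}\leq|x_{-\alpha}|<p^{l+1}\rho^{|\alpha|}$. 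Either way the rest of your computation goes through unchanged.
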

\begin{proof}
Since the restriction of the inductive limit topology of
$\mathcal{R}(G,K)$ on $D(G,K)$ equals its canonical topology it
suffices to show that whenever $L\subset D(G,K)$ is an open
lattice in the Fr\'echet topology then there exists an element
$x(L)$ in $\mathcal{R}(G,K)$ such that $L_{x(L)}\cap
D(G,K)\subseteq L$. Now since $L$ is open in the Fr\'echet
topology there exist a real numbers $p^{-1}<\rho<1$ and an integer
$l$ such that $y$ is in $L$ whenever $\|y\|_{\rho}\leq p^{-l}$ for
the element $y$ in $D(G,K)$. Now we define
\begin{eqnarray}
x&:=&\sum_{\alpha\in\mathbb{N}_0^d}x_{-\alpha}{\bf
b}^{-\alpha}\in\mathcal{R}(G,\mathbb{Q}_p)\hbox{ with}\label{z58}\\
p^l\rho^{|\alpha|}\leq&|x_{-\alpha}|&<p^{l+1}\rho^{|\alpha|}.\notag
\end{eqnarray}
Note that $(\ref{z58})$ determines $x_{-\alpha}$ in $\mathbb{Q}_p$
up to a unit in $\mathbb{Z}_p^{\times}$---otherwise we choose
$x_{-\alpha}$ arbitrarily. Now we see that $x$ is an element in
$D_{(\rho,1)}(G,K)$ and we have $L_{x(L)}\cap D(G,K)\subseteq L$
by construction.
\end{proof}

\begin{pro}
The multiplication of the ring $\mathcal{R}(G,K)$ is separately
continuous in the nice topology.
\end{pro}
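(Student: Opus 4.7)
The plan is to verify separate continuity directly from the definition of the nice topology in terms of the basic open lattices $L_{x_0}$. It suffices to show that for any fixed $z \in \mathcal{R}(G,K)$ and any $x_0 = \sum_{\alpha}c_{0,\alpha}\mathbf{b}^{\alpha} \in \mathcal{R}(G,K)$, there exists $x_1 \in \mathcal{R}(G,K)$ such that $z \cdot L_{x_1} \subseteq L_{x_0}$, and symmetrically on the right. By Corollary \ref{z25}, both versions of the nice topology (the ``left'' one defined by $L_{x_0}$ and the analogous ``right'' one) coincide, so we only need to treat, say, left multiplication.

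Write $z = \sum_{\gamma} z_\gamma \mathbf{b}^\gamma$, let $y = \sum_{\delta} y_\delta \mathbf{b}^\delta$ be an arbitrary element of $\mathcal{R}(G,K)$, and denote by $a_{\gamma,\delta,\alpha}$ the coefficient of $\mathbf{b}^\alpha$ in the Laurent expansion of $\mathbf{b}^\gamma \mathbf{b}^\delta$ (as computed, for instance, by the algorithm in the proof of Lemma \ref{z21}). Then the coefficient of $\mathbf{b}^\alpha$ in $zy$ is $e_\alpha = \sum_{\gamma,\delta} z_\gamma y_\delta a_{\gamma,\delta,\alpha}$, and the ultrametric inequality gives
\begin{equation*}
|c_{0,-\alpha}|\cdot |e_\alpha| \leq \sup_{\gamma,\delta} |z_\gamma||y_\delta||a_{\gamma,\delta,\alpha}||c_{0,-\alpha}|.
\end{equation*}
This motivates choosing $x_1 = \sum_{\delta} c_{1,\delta}\mathbf{b}^{\delta}$ with coefficients picked so that
\begin{equation*}
|c_{1,-\delta}| \geq \sup_{\alpha,\gamma} |z_\gamma||a_{\gamma,\delta,\alpha}||c_{0,-\alpha}|.
\end{equation*}
With such an $x_1$, the condition $y \in L_{x_1}$ (i.e.\ $|c_{1,-\delta}||y_\delta| \leq 1$ for all $\delta$) forces $|c_{0,-\alpha}||e_\alpha| \leq 1$ for all $\alpha$, i.e.\ $zy \in L_{x_0}$.

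The main obstacle is Step~3: showing that such an $x_1$ actually lies in $\mathcal{R}(G,K)$, i.e.\ that the right-hand side above defines a convergent Laurent series in some $D_{[\rho_1,1)}(G,K)$. Fix $\rho_z, \rho_0 \in p^{\mathbb{Q}}$ with $p^{-1} < \rho_z, \rho_0 < 1$ such that $z \in D_{[\rho_z,1)}(G,K)$ and $x_0 \in D_{[\rho_0,1)}(G,K)$, and set $\rho_1 := \max(\rho_z,\rho_0)$. For any $\rho_1 < r < r' < 1$ in $p^{\mathbb{Q}}$, the submultiplicativity of $\|\cdot\|_{r,r'}$ (from Section \ref{sec:Frechet-Stein}) together with the characterisation of norms of the monomials $\mathbf{b}^\gamma$ yields a bound of the form $|a_{\gamma,\delta,\alpha}| \leq \|\mathbf{b}^\gamma\mathbf{b}^\delta\|_{r,r'}\cdot (r')^{-\kappa\max(0,\alpha_\bullet)} r^{-\kappa\max(0,-\alpha_\bullet)}$ after combining with the coefficient bound from Lemma \ref{coefficients}. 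Summing over $\alpha,\gamma$ against $|z_\gamma|$ and $|c_{0,-\alpha}|$ (each of which decays at the rates controlled by $\|z\|_{r,r'}$ and $\|x_0\|_{r,r'}$) gives a finite quantity controlling $|c_{1,-\delta}|\cdot r^{\kappa\max(0,\delta_\bullet)}(r')^{\kappa\max(0,-\delta_\bullet)}$, which is exactly the convergence condition for $x_1 \in D_{[\rho_1,1)}(G,K)$.

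In other words, the technical heart of the proof is an honest norm estimate showing that the lattice $L_{x_0}$, when pulled back along left multiplication by $z$, still contains a lattice $L_{x_1}$ with $x_1$ in the same ``annulus'' $D_{[\rho_1,1)}(G,K)$ that already contained $z$ and $x_0$; the rest is bookkeeping. The right-multiplication case is handled by the symmetric construction, noting that by Corollary \ref{z25} the nice topology is invariant under the switch between left and right multiplication.
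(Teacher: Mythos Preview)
Your overall strategy---pull back $L_{x_0}$ along left multiplication by $z$ and show the result contains some $L_{x_1}$---is the same as the paper's, and your symmetry reduction is fine. The gap is in Step~3: the estimates you invoke are not strong enough to force your candidate $x_1$ to lie in $\mathcal{R}(G,K)$ when $\dim G>1$.

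The bounds coming from submultiplicativity of $\|\cdot\|_{r,r'}$ and from Lemma~\ref{coefficients} depend on $\delta$ only through its \emph{degree} $|\delta|=\sum_i\delta_i$. Hence your upper bound for $|c_{1,-\delta}|$ is constant over the infinitely many $\delta\in\mathbb{Z}^d$ with fixed degree, and the resulting formal series has no reason to converge on any annulus. Concretely, take $d=2$, $z=b_1$, $x_0=1$: then $|c_{1,-\delta}|\geq |(\mathbf{b}^{(1,0)},\mathbf{b}^\delta)|$, and your norm bound gives nothing better than a degree-dependent constant over all $\delta$ of a fixed degree. That the true quantities $|(\mathbf{b}^{(1,0)},\mathbf{b}^\delta)|$ do tend to $0$ at fixed degree is precisely the content of Lemma~\ref{z21}(ii), via the refined estimate $|(\mathbf{b}^\alpha,\mathbf{b}^\beta)|\leq p^{-(|\alpha|+|\beta|+N)/2}$ with $N=\max_j(\alpha_j+\beta_j)$; this is \emph{not} visible from $\|\cdot\|_{r,r'}$ or Lemma~\ref{coefficients}.

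The paper's proof handles the large-$|\delta|$ and small-$|\delta|$ regimes by norm arguments similar in spirit to yours, but then treats each fixed degree $n_2$ separately. There it reduces (via the decomposition $L_{x_0}=\bigcap_\gamma L_{c_{0,\gamma}\mathbf{b}^\gamma}$ and a parallel decomposition of $y_0$) to the case where both $x_0$ and $y_0$ are monomials, uses $f^{(-\gamma_0)}$ to push $x_0$ to a constant, and finally invokes Lemma~\ref{z21}(ii) to get the fixed-degree decay. Without that last ingredient your argument does not close; you would need either to prove an analogue of estimate~(\ref{z20}) for the general coefficients $a_{\gamma,\delta,\alpha}$, or to perform a reduction to monomials as the paper does.
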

\begin{proof}
By the left-right symmetry of the nice topology it suffices to
show that for any fixed elements $x_0$ and $y_0$ in
$\mathcal{R}(G,K)$ the pre-image $y_0^{-1}L_{x_0}$ of the lattice
$L_{x_0}$ under the left multiplication by $y_0$ is open.
$y_0^{-1}L_{x_0}$ is clearly a lattice in $\mathcal{R}(G,K)$.
Moreover, the Laurent polynomials are dense in $\mathcal{R}(G,K)$
in the inductive limit topology and $y_0^{-1}L_{x_0}$ is open (and
hence closed) in the inductive limit topology by the continuity of
the multiplication by $y_0$. Therefore it suffices to show that
there exists an element
\begin{equation*}
z_0=\sum_{\alpha\in\mathbb{Z}^d}a_{\alpha}{\bf b}^{\alpha}
\end{equation*}
in $\mathcal{R}(G,K)$ such that for any $\alpha$
\begin{eqnarray}
|a_{\alpha}|s_{\alpha}&\geq& 1\hspace{1cm}\hbox{where}\label{z55}\\
s_{\alpha}&:=&\sup\{|s|\colon s{\bf b}^{-\alpha}\in
y_0^{-1}L_{x_0}\}\in\mathbb{R}^{>0}\cup\{+\infty\}.\label{z74}
\end{eqnarray}
Indeed, if $z_0$ satisfies $(\ref{z55})$ then $L_{z_0}\subseteq
y_0^{-1}L_{x_0}$. This condition is equivalent to that there
exists a real number $p^{-1}<\rho<1$ such that the set
\begin{equation*}
\left\{\frac{\rho_1^{|\alpha|}}{s_{\alpha}}\right\}_{\alpha\in\mathbb{Z}^d}
\end{equation*}
converges to $0$ for any $1>\rho_1\geq\rho$ because this is the
convergence condition for the coefficients $a_{\alpha}$ so that
$z_0$ lies in $ D_{[\rho,1)}(G,K)$. We determine the real number
$\rho$ the following way. By Lemma \ref{z59} $L_{x_0}$ is open in
the inductive limit topology of $\mathcal{R}(G,K)$, hence for any
fixed $p^{-1}<\rho_2<1$ there exists a $\rho_2<\rho_3<1$ such that
$L_{x_0}\cap D_{[\rho_2,1)}(G,K)$ contains the lattice
\begin{equation}
\left\{x\in D_{[\rho_2,1)}(G,K)\colon \|x\|_{\rho_2,\rho_3}\leq
c(x_0,\rho_2,\rho_3)\right\}\label{z60}
\end{equation}
where $c(x_0,\rho_2,\rho_3)$ is a positive real number depending
on $x_0$, $\rho_2$, and $\rho_3$. We pick $\rho_2$ such that both
$y_0$ and $x_0$ lie in $D_{[\rho_2,1)}(G,K)$ and claim that we can
take any $\rho$ which is bigger than $\rho_3$. To show this fix
$\rho_1>\rho_3$ and choose a real number $\varepsilon>0$. We need
to verify that for all but finitely many $\alpha$ in
$\mathbb{Z}^d$ we have
\begin{equation*}
\frac{\rho_1^{|\alpha|}}{s_{\alpha}}<\varepsilon.
\end{equation*}
Equivalently, by definition $(\ref{z74})$ of $s_\alpha$ we need to
find a coefficient $c_{-\alpha}$ in $\mathbb{Q}_p$ for all but
finitely many $\alpha$ in $\mathbb{Z}^d$ with the properties
\begin{enumerate}[$(i)$]
\item $c_{-\alpha}y_0{\bf b}^{-\alpha}$ lies in $L_{x_0}$, and
\item $|c_{-\alpha}|>\frac{\rho_1^{|\alpha|}}{\varepsilon}$.
\end{enumerate}

At first we show that there exists an integer $n_0$ such that
whenever $|\alpha|<n_0$ then there is a $c_{-\alpha}$ with
properties $(i)$ and $(ii)$. Indeed, we choose $n_0<0$ small
enough so that
\begin{equation}
\left(\frac{\rho_1}{\rho_3}\right)^{n_0}\leq \frac{\varepsilon
c(x_0,\rho_2,\rho_3)}{p\|y_0\|_{\rho_2,\rho_3}}.\label{z61}
\end{equation}
Since $\rho_1>\rho_3$ there exists such an $n_0$. Moreover,
whenever $|\alpha|<n_0$ then we have
\begin{equation*}
p\frac{\rho_1^{|\alpha|}}{\varepsilon}\|y_0{\bf
b}^{-\alpha}\|_{\rho_2,\rho_3}\leq
p\frac{\rho_1^{|\alpha|}}{\varepsilon}\|y_0\|_{\rho_2,\rho_3}\|{\bf
b}^{-\alpha}\|_{\rho_2,\rho_3}=p\frac{\rho_1^{|\alpha|}}{\varepsilon}\|y_0\|_{\rho_2,\rho_3}\rho_3^{-|\alpha|}\leq
c(x_0,\rho_2,\rho_3),
\end{equation*}
whence we may take any $c_{-\alpha}$ such that
\begin{equation}
\frac{\rho_1^{|\alpha|}}{\varepsilon}<|c_{-\alpha}|\leq
p\frac{\rho_1^{|\alpha|}}{\varepsilon}\label{z62}
\end{equation}
and $c_{-\alpha}y_0{\bf b}^{-\alpha}$ will lie in $L_{x_0}$ by
$(\ref{z60})$. Indeed, there exists a $c_{-\alpha}$ in
$\mathbb{Q}_p$ with the required absolute value in $(\ref{z62})$.

On the other hand we now prove that there also exists an integer
$n_1>0$ such that there is a $c_{-\alpha}$ with properties $(i)$
and $(ii)$ whenever $|\alpha|>n_1$. For this we take a real number
$1>\rho_4>\rho_1$. The lattice $L_{x_0}\cap D_{[\rho_4,1)}(G,K)$
is open in the Fr\'echet topology of $ D_{[\rho_4,1)}(G,K)$ by
Lemma \ref{z59}. This means that there exist real numbers
$1>\rho_5>\rho_4$ and $c(x_0,\rho_4,\rho_5)$ such that the lattice
\begin{equation*}
\left\{x\in D_{[\rho_4,1)}(G,K)\colon \|x\|_{\rho_4,\rho_5}\leq
c(x_0,\rho_4,\rho_5)\right\}
\end{equation*}
is contained in $L_{x_0}\cap D_{[\rho_4,1)}(G,K)$. We choose
$n_1$---in a similar way as above in $(\ref{z61})$---such that
\begin{equation*}
\left(\frac{\rho_1}{\rho_4}\right)^{n_0}\leq \frac{\varepsilon
c(x_0,\rho_4,\rho_5)}{p\|y_0\|_{\rho_4,\rho_5}}.
\end{equation*}
Then by the same argument as above there exists a required
$c_{-\alpha}$ for any $\alpha$ with degree bigger than $n_1$.

Now take an integer $n_2$ with $n_0\leq n_2\leq n_1$. It suffices
to show that there exists a $c_{-\alpha}$ satisfying $(i)$ and
$(ii)$ for all but finitely many $\alpha$ with fixed degree
$|\alpha|=n_2$. At first we write
\begin{equation*}
x_0=\sum_{\gamma\in \mathbb{Z}^d}c_{0,\gamma}{\bf b}^{\gamma}
\end{equation*}
and put
\begin{equation*}
A(x_0,y_0,n_2):=\{\alpha\in\mathbb{Z}^d\colon |\alpha|=n_2\hbox{
and there does not exist a }c_{-\alpha}\hbox{ satisfying
}(i)\hbox{ and }(ii)\}.
\end{equation*}
By construction of $L_{x_0}$ we obtain that
\begin{eqnarray}
L_{x_0}&=&\bigcap_{\gamma\in\mathbb{Z}^d}L_{c_{0,\gamma}{\bf
b}^{\gamma}}\hspace{1cm}\hbox{and}\notag\\
A(x_0,y_0,n_2)&=&\bigcup_{\gamma\in\mathbb{Z}^d}A(c_{0,\gamma}{\bf
b}^{\gamma},y_0,n_2).\label{z64}
\end{eqnarray}
Now we show that for all but finitely many $\gamma$ the set
$A(c_{0,\gamma}{\bf b}^{\gamma},y_0,n_2)$ is empty. Indeed, by
Lemma \ref{z59} $A(c_{0,\gamma}{\bf b}^{\gamma},y_0,n_2)$ is empty
whenever we have
\begin{equation}
p\frac{\rho_1^{n_2}}{\varepsilon}\|y_0\|_{\rho_2,\rho_3}\max(\rho_2^{-n_2},\rho_3^{-n_2})\leq
\frac{1}{\|c_{0,\gamma}{\bf
b}^{\gamma}\|_{\rho_2,\rho_3}}\label{z63}
\end{equation}
because $L_{c_{0,\gamma}{\bf b}^{\gamma}}$ contains the lattice
\begin{equation*}
\left\{x\in D_{[\rho_2,1)}(G,K)\colon\|x\|_{\rho_2,\rho_3}\leq
\frac{1}{\|c_{0,\gamma}{\bf b}^\gamma\|_{\rho_2,\rho_3}}\right\}.
\end{equation*}
On the other hand $x_0$ is in $D_{[\rho_2,1)}(G,K)$ by the choice
of $\rho_2$, so $(\ref{z63})$ is satisfied for all but finitely
many $\gamma$ in $\mathbb{Z}^d$. Therefore we may assume without
loss of generality that $x_0$ is of the form
$x_0=c_{0,\gamma_0}{\bf b}^{\gamma_0}$ for some $\gamma_0$ in
$\mathbb{Z}^d$ and $c_{0,\gamma_0}$ in $K$ since $(\ref{z64})$ is
essentially a finite union. Moreover, we have
\begin{equation*}
(f^{(-\gamma_0)}y_0)^{-1}L_{c_{0,\gamma_0}}=y_0^{-1}L_{c_{0,\gamma_0}{\bf
b}^{\gamma_0}}
\end{equation*}
so we may, as well, assume that $\gamma_0=(0,\dots,0)$. Now we
write
\begin{equation*}
y_0=\sum_{\beta\in\mathbb{Z}^d}y_{0,\beta}{\bf b}^{\beta}.
\end{equation*}
By a similar argument as above for $y_0$ instead of $x_0$ we
get
\begin{equation*}
A(c_{0,\gamma_0},y_0,n_2)=\bigcup_{\beta\in\mathbb{Z}^d}A(c_{0,\gamma_0},y_{0,\beta}{\bf
b}^{\beta},n_2)
\end{equation*}
and for all but finitely many $\beta$ in $\mathbb{Z}^d$ the set
$A(c_{0,\gamma_0},y_{0,\beta}{\bf b}^{\beta},n_2)$ is empty. Hence
we may assume that $y_0=y_{0,\beta_0}{\bf b}^{\beta_0}$ for some
$\beta_0$ in $\mathbb{Z}^d$ and the result follows from Lemma
\ref{z21}$(ii)$.
\end{proof}

So putting everything together we get the following

\begin{cor}\label{r3}
The generalized Robba ring $\mathcal{R}(G,K)$ is a self-dual
topological $K$-algebra in the nice topology.
\end{cor}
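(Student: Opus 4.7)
The plan is to assemble the results already established in the section, since by this point the substantive work has been done. First, I would check that the pairing really does induce a well-defined map $\Phi\colon\mathcal{R}(G,K)\rightarrow\Hom_K^{ct,nice}(\mathcal{R}(G,K),K)$. For each fixed $x$, the functional $\lambda_x\colon y\mapsto$ ``constant term of $xy$'' factors as multiplication by $x$ followed by the constant-term projection. The first is continuous by the preceding Proposition (separate continuity of multiplication in the nice topology), and the second is continuous because the basic lattice $L_1$ corresponding to $x_0=1$ is precisely $\{y=\sum d_{\alpha}{\bf b}^{\alpha}\colon |d_0|\leq 1\}$, so the constant-term map takes $L_1$ into $o_K$.

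Next, $\Phi$ is injective by the algebraic argument already given in the proof of Theorem \ref{z27}: for $x\neq 0$ lying in some $D_{[r,1)}(G,K)$, choose $\beta$ with $\|x\|_r=|x_{\beta}|r^{|\beta|}$; then the constant term of $x\cdot{\bf b}^{-\beta}$ differs from $x_{\beta}\neq 0$ by something of strictly smaller absolute value. This argument uses no topology and so applies verbatim. Surjectivity of $\Phi$ is exactly Lemma \ref{r2}.

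It remains to promote this bijection to a homeomorphism (equivalently, an isomorphism of topological $\mathcal{R}(G,K)$-modules). Here I would equip $\Hom_K^{ct,nice}(\mathcal{R}(G,K),K)$ with the topology of uniform convergence on the open lattices $L_{x_0}$ (the strong dual topology), so that a base of neighbourhoods of $0$ is given by polars of the $L_{x_0}$. The key point is that $L_{x_0}$ was defined (see \eqref{z54}) precisely as the set of $y$ whose coefficients are dominated coordinatewise by the reciprocals of those of $x_0$, which is essentially the polar of $x_0$ itself under the pairing. Thus under $\Phi$ the lattices $L_{x_0}$ match up, in each direction, with basic dual neighbourhoods, giving continuity and openness of $\Phi$.

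The main obstacle, such as it is, lies in the last paragraph: one has to be careful about what ``self-dual topological algebra'' means here, and in particular to specify the correct (strong dual) topology on $\Hom_K^{ct,nice}(\mathcal{R}(G,K),K)$ so that the identification $\Phi$ is a homeomorphism rather than just a continuous bijection. Once the topology on the dual is pinned down, the matching of lattices is essentially tautological from the definition \eqref{z54}, and the corollary follows by combining separate continuity of multiplication, Lemma \ref{r2}, and the injectivity argument of Theorem \ref{z27}.
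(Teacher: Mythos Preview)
Your proposal is correct and matches the paper's approach: the paper states the corollary immediately after the proposition on separate continuity with the phrase ``So putting everything together we get the following'' and gives no further argument, so the intended proof is precisely the assembly you describe---separate continuity of the pairing (via the preceding Proposition), injectivity from the argument in Theorem~\ref{z27}, and surjectivity from Lemma~\ref{r2}. Your additional paragraph on pinning down the dual topology is a point the paper leaves implicit; your identification of the polars of the $L_{x_0}$ with basic dual neighbourhoods is the right way to make the homeomorphism explicit, though the paper does not spell this out.
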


\begin{pro}
The nice topology on the classical Robba ring
$\mathcal{R}(\mathbb{Z}_p,K)$ coincides with the inductive limit
topology.
\end{pro}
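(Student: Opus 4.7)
The plan is to establish the reverse inclusion to Lemma \ref{z59}: every lattice $U$ open in the inductive limit topology of $\mathcal{R}(\mathbb{Z}_p,K)$ contains some lattice of the form $L_{x_0}$.

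I would first fix an increasing sequence $p^{-1} < \rho_1 < \rho_2 < \cdots$ in $p^{\mathbb{Q}}$ with $\rho_l \to 1$. By hypothesis, $U \cap D_{[\rho_l,1)}(\mathbb{Z}_p,K)$ is open in the Fr\'echet topology of $D_{[\rho_l,1)}(\mathbb{Z}_p,K)$ for each $l$, and so contains a basic ball
$$B_l = \bigl\{y \in D_{[\rho_l,1)}(\mathbb{Z}_p,K) : \|y\|_{\rho_l,\sigma_l} \leq p^{-N_l}\bigr\}$$
for some $\sigma_l \in (\rho_l,\rho_{l+1}) \cap p^{\mathbb{Q}}$ and $N_l \in \mathbb{Z}$, with freedom in the precise choice of the pair $(\sigma_l,N_l)$. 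The aim is to construct a single $x_0 = \sum_{m\in\mathbb{Z}} c_{0,m}T^m$ in $\mathcal{R}(\mathbb{Z}_p,K)$ whose associated lattice $L_{x_0}$ is contained in $U$.

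Commutativity in the case $G=\mathbb{Z}_p$ causes the problem to split cleanly by the sign of the index. For $y = \sum_n d_nT^n$ one has $\|y\|_{\rho_l,\sigma_l} = \max\bigl(\sup_{n\geq 0}|d_n|\sigma_l^n,\, \sup_{n<0}|d_n|\rho_l^n\bigr)$, and the lattice condition $|d_n||c_{0,-n}|\leq 1$ couples positive-index coefficients of $y$ with non-positive-index coefficients of $x_0$ and conversely. Writing $x_0 = x_0^+ + x_0^-$, the two parts can be constructed by separate applications of the kind of argument used in Lemma \ref{z70}. The ``easy'' half produces $x_0^-$ from the bound $|c_{0,m}|\geq p^{N_l}\sigma_l^{-m}$ for $m\leq 0$, a lower bound which decays in $|m|$ and, with an appropriate choice of $\sigma_l$, gives a valid element of $\mathcal{R}(\mathbb{Z}_p,K)$.

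The main obstacle is the ``hard'' half, the construction of $x_0^+$. The naive requirement $|c_{0,m}|\geq p^{N_l}\rho_l^{-m}$ for $m>0$ taken uniformly in $l$ would force the coefficients to grow faster than $\rho^{-m}$ for every fixed $\rho<1$, contradicting $x_0 \in \mathcal{R}(\mathbb{Z}_p,K)$. I would resolve this by exploiting two forms of flexibility. First, $L_{x_0}\subseteq U$ does not require $L_{x_0}\cap D_{[\rho_l,1)}\subseteq B_l$ for every $l$; each element $y\in L_{x_0}$ need only be absorbed by one well-chosen witness $B_{l(y)}$. Secondly, letting $\sigma_l$ approach $\rho_l$ from above enlarges $B_l$ and allows $N_l$ to be kept controlled. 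A diagonal assignment $m\mapsto l(m)$, chosen so that $p^{N_{l(m)}}\rho_{l(m)}^{-m}$ remains bounded by $\tilde\rho^{-m}$ for some fixed $\tilde\rho<1$, then produces the required $x_0^+ \in \mathcal{R}(\mathbb{Z}_p,K)$. The final verification $L_{x_0}\subseteq U$ reduces, via the sign-split and the fact that each $B_l$ is itself a lattice, to checking that every monomial constituent of an element $y\in L_{x_0}$ lies in its corresponding chosen ball.
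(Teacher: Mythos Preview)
Your overall strategy is right, but the ``hard half'' contains a genuine error in the convergence criterion. You propose a diagonal assignment $m\mapsto l(m)$ so that $p^{N_{l(m)}}\rho_{l(m)}^{-m}$ is bounded by $\tilde\rho^{-m}$ for some \emph{fixed} $\tilde\rho<1$, and assert this yields $x_0^+\in\mathcal{R}(\mathbb{Z}_p,K)$. It does not: a power series $\sum_{m>0}c_{0,m}T^m$ lies in $\mathcal{R}(\mathbb{Z}_p,K)$ only if it converges on the full open disk $|z|<1$, i.e.\ $|c_{0,m}|r^m\to 0$ for \emph{every} $r<1$. A bound $|c_{0,m}|\leq C\tilde\rho^{-m}$ with a single $\tilde\rho<1$ gives convergence only for $|z|<\tilde\rho$, so $x_0^+$ would not be an element of the Robba ring. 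What you actually need is that $|c_{0,m}|r^m\to 0$ holds for each $r<1$ separately; your diagonal scheme would have to be arranged so that $p^{N_{l(m)}}(r/\rho_{l(m)})^m\to 0$ for every $r<1$, which is a considerably more delicate requirement than the one you state, and your sketch gives no indication of how to control $N_{l(m)}$ well enough to achieve it.

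The paper sidesteps all of this by a much cleaner device: rather than building $x_0$ explicitly from a choice of witnesses, it sets $s_n:=\sup\{|s|:sT^n\in L\}$ and simply shows that the sequence $(1/s_{-n})_n$ satisfies the Robba-ring growth condition, namely $\rho^n/s_{-n}\to 0$ as $n\to\pm\infty$ for all $\rho$ above a fixed $\rho_0$. The verification uses only two balls: one at a fixed inner radius handles $n\to -\infty$, and for each $\rho$ a ball at some $\rho_3>\rho$ handles $n\to +\infty$ (giving $s_{-n}\gtrsim \rho_3^n$ and hence $\rho^n/s_{-n}\lesssim(\rho/\rho_3)^n\to 0$). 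One then takes $|c_{0,m}|\approx 1/s_{-m}$; the lattice inclusion $L_{x_0}\subseteq L$ follows because each monomial $d_nT^n$ of an element of $L_{x_0}$ satisfies $|d_n|\leq s_n$ and hence lies in the closed lattice $L$. No diagonal argument is needed because the supremum defining $s_n$ already absorbs the contribution of balls at all radii.
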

\begin{proof}
One direction follows from Lemma \ref{z59}. For the other
direction take a lattice $L$ in $\mathcal{R}(\mathbb{Z}_p,K)$
which is open in the inductive limit topology. We need to show
that there exists an element $x_0$ in
$\mathcal{R}(\mathbb{Z}_p,K)$ such that $L_{x_0}\subseteq L$.
Since $L$ is open in the inductive limit topology, for any
$p^{-1}<\rho_1<1$ there exists a $\rho_2>\rho_1$ such that $L\cap
D_{[\rho_1,1)}(\mathbb{Z}_p,K)$ contains the lattice
\begin{equation}
\{x\in D_{[\rho_1,1)}(\mathbb{Z}_p,K)\colon
\|x\|_{\rho_1,\rho_2}\leq c_{L,\rho_1}\}\label{z68}
\end{equation}
for some positive real number $c_{L,\rho_1}$. Put
\begin{equation*}
s_n:=\sup(|s|\colon sT^n\hbox{ is in }
L)\in\mathbb{R}^{>0}\cup\{+\infty\}.
\end{equation*}
We need to verify that there exists a real number
$p^{-1}<\rho_0<1$ such that
\begin{eqnarray}
\lim_{n\rightarrow\infty}\frac{\rho^{n}}{s_{-n}}&=&0\hspace{1cm}\hbox{and}\label{z66}\\
\lim_{n\rightarrow-\infty}\frac{\rho^{n}}{s_{-n}}&=&0\label{z67}
\end{eqnarray}
for all $1>\rho>\rho_0$. We show that $\rho_0:=\rho_2$ will do.
Take a real number $\rho$ such that $1>\rho>\rho_0$. Choose
$1>\rho_3>\rho$ and $1>\rho_4>\rho_3$ such that $L\cap
D_{[\rho_3,1)}(\mathbb{Z}_p,K)$ contains the lattice
\begin{equation}
\{x\in D_{[\rho_3,1)}(\mathbb{Z}_p,K)\colon
\|x\|_{\rho_3,\rho_4}\leq c_{L,\rho_3}\}.\label{z69}
\end{equation}
So we obtain that
\begin{eqnarray}
s_{-n}&\geq&\frac{c_{L,\rho_1}}{p\max(\rho_1^{-n},\rho_2^{-n})}\hspace{1cm}\hbox{and}\label{z65}\\
s_{-n}&\geq&\frac{c_{L,\rho_3}}{p\max(\rho_3^{-n},\rho_4^{-n})}.\label{z13}
\end{eqnarray}
Indeed, there exist coefficients $c_{1,-n}$ and $c_{2,-n}$ in
$\mathbb{Q}_p$ such that
\begin{eqnarray*}
\frac{c_{L,\rho_1}}{\max(\rho_1^{-n},\rho_2^{-n})}>&c_{1,-n}&\geq\frac{c_{L,\rho_1}}{p\max(\rho_1^{-n},\rho_2^{-n})}\hspace{1cm}\hbox{and}\\
\frac{c_{L,\rho_3}}{\max(\rho_3^{-n},\rho_4^{-n})}>&c_{2,-n}&\geq\frac{c_{L,\rho_3}}{p\max(\rho_3^{-n},\rho_4^{-n})}
\end{eqnarray*}
whence both $c_{i,-n}T^{-n}$ are in $L$ for $i=1,2$ by
$(\ref{z68})$ and $(\ref{z69})$, resp. Now $(\ref{z66})$ follows
from $(\ref{z13})$, and $(\ref{z67})$ from $(\ref{z65})$.
\end{proof}

\begin{cor}\label{r1}
Whenever $\dim G>1$ the dual space of $\mathcal{R}(G,K)$ in the inductive limit
topology is strictly bigger than the dual in the nice topology. In
particular these topologies do not coincide.
\end{cor}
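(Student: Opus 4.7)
The plan is to exhibit a $K$-linear functional on $\mathcal{R}(G,K)$ that is continuous in the inductive limit topology but not in the nice topology. One containment of duals is automatic from Lemma \ref{z59}, and by the self-duality of Corollary \ref{r3} the nice-continuous dual is identified with $\mathcal{R}(G,K)$ itself via $x \mapsto \varphi_x$, where $\varphi_x(y)$ is the constant term of $xy$; so it suffices to produce an inductive-limit-continuous functional that is not of this form.

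Since $d := \dim G \geq 2$, I would exploit the existence of infinitely many multi-indices of signed degree zero. Set $\alpha_n := (n,-n,0,\dots,0) \in \mathbb{Z}^d$ for $n \in \mathbb{Z}$, each with $\sum_i(\alpha_n)_i = 0$; by the paragraph following \eqref{z30} (and the formula $\|w\|_\rho = \rho^{|w|}$ underlying the proof of Lemma \ref{z21}) one has $\|\mathbf{b}^{\alpha_n}\|_{\rho,\rho'} = 1$ for every $p^{-1} < \rho < \rho' < 1$ in $p^{\mathbb{Q}}$. Define $\varphi\colon \mathcal{R}(G,K) \to K$ on $y = \sum_\alpha c_\alpha \mathbf{b}^\alpha$ by $\varphi(y) := \sum_{n \in \mathbb{Z}} c_{\alpha_n}$. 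Any $y \in D_{[\rho,1)}(G,K)$ lies in some Banach algebra $D_{[\rho,\rho']}(G,K)$, whose defining non-Archimedean convergence condition forces $|c_{\alpha_n}| = |c_{\alpha_n}|\|\mathbf{b}^{\alpha_n}\|_{\rho,\rho'} \to 0$ as $|n| \to \infty$; hence the series converges in $K$ with $|\varphi(y)| \leq \sup_n |c_{\alpha_n}| \leq \|y\|_{\rho,\rho'}$. Thus $\varphi$ is Fr\'echet-continuous on every step of the inductive system, and so inductive-limit continuous on $\mathcal{R}(G,K)$.

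Now suppose for contradiction that $\varphi = \varphi_x$ for some $x \in \mathcal{R}(G,K)$. By Corollary \ref{z25} one may expand $x = \sum_\alpha c_\alpha f^{(\alpha)}$ in $\mathcal{R}(G,K)$; using the separate continuity of the constant-term pairing in the weak topology together with Lemma \ref{z22}, this gives $\varphi(\mathbf{b}^\beta) = (x,\mathbf{b}^\beta) = \sum_\alpha c_\alpha (f^{(\alpha)},\mathbf{b}^\beta) = c_\beta$. Comparison with the definition of $\varphi$ forces $c_{\alpha_n} = 1$ for all $n$ and $c_\alpha = 0$ otherwise, so $x = \sum_n f^{(\alpha_n)}$. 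Applying Corollary \ref{z25} in the other direction, this series converges in $\mathcal{R}(G,K)$ if and only if $\sum_n \mathbf{b}^{-\alpha_n}$ does. But $-\alpha_n$ again has signed degree zero, so $\|\mathbf{b}^{-\alpha_n}\|_{\rho,\rho'} = 1$ for every admissible pair $\rho < \rho'$, in violation of the convergence criterion in every $D_{[\rho_0,\rho']}(G,K)$. This contradiction shows $\varphi$ is not nice-continuous, so the nice dual is strictly smaller and hence the two topologies differ.

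The main obstacle is the last paragraph: one must pin down that a hypothetical representative $x$ of $\varphi$ must have the formal expansion $\sum d_\alpha f^{(\alpha)}$ with $d_\alpha = \varphi(\mathbf{b}^\alpha)$. Once this is extracted from Lemma \ref{z22} and Corollary \ref{z25}, the degree-zero obstruction available only when $d \geq 2$ supplies the contradiction immediately and explains why the parallel corollary in dimension one (nice $=$ inductive limit for $\mathcal{R}(\mathbb{Z}_p,K)$) is not contradicted.
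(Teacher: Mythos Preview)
Your proof is correct and follows the same strategy as the paper: exhibit a functional continuous for the inductive limit topology but not representable as $y\mapsto$``constant term of $xy$'' for any $x\in\mathcal{R}(G,K)$. The paper's functional sums the coefficients over \emph{all} $\alpha$ with $|\alpha|=0$, while you restrict to the single infinite family $\alpha_n=(n,-n,0,\dots,0)$; either choice works, and your norm estimate and non-convergence argument are the same as the paper's. The only noteworthy difference is in the identification of the hypothetical $x$: the paper simply observes that the nice topology is independent of the group structure and passes to the commutative case where the pairing is transparent, whereas you extract the coefficients explicitly via Lemma~\ref{z22} and Corollary~\ref{z25}. Your route is slightly more self-contained; the paper's is a one-line reduction. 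Both are fine.
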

\begin{proof}
We show that the linear functional
\begin{eqnarray*}
\varphi\colon\mathcal{R}(G,K)&\rightarrow& K\\
\sum_{\alpha\in\mathbb{Z}^d}d_{\alpha}{\bf
b}^{\alpha}&\mapsto&\sum_{\alpha\in\mathbb{Z}^d,|\alpha|=0}d_{\alpha}
\end{eqnarray*}
is continuous in the inductive limit topology, but not in the nice
topology. The latter statement is clear as this functional would
correspond to an element in $\mathcal{R}(G,K)$ by Lemma
\ref{r2}. However, the nice topology is independent of the
uniform pro-$p$ group structure on $G$ and whenever $G$ is
commutative this element would have to be of the form
\begin{equation*}
\sum_{|\alpha|=0}{\bf b}^{\alpha}
\end{equation*}
which is not an element of $\mathcal{R}(G,K)$.

For the other statement it suffices to show that the above
functional is continuous on each $ D_{[\rho,1)}(G,K)$. This is
clear as it satisfies
\begin{equation*}
|\varphi(x)|\leq\|x\|_r
\end{equation*}
for any $\rho\leq r<1$.
\end{proof}

\section{Further results on coadmissible modules}

In this section we define an exact functor $R$ from the category of coadmissible
(left) modules over the distribution algebra $D(G,K)$ to the category of
(left) modules over the generalized Robba ring $\mathcal{R}(G,K)$. We call the
image of this functor the category of coadmissible modules over
$\mathcal{R}(G,K)$. We show that whenever $N$ is a \emph{finitely generated}
coadmissible module over $D(G,K)$ then
$\Hom_{\mathcal{R}(G,K)}(R(N),\mathcal{R}(G,K))$ is also coadmissible
(ie.\ lies in the image of the functor $R$). By Corollary \ref{r3}
$\mathcal{R}(G,K)$ is a self-dual topological $K$-algebra, so we would like to
interpret the module $\Hom_{\mathcal{R}(G,K)}(R(N),\mathcal{R}(G,K))$ as the
na\"ive dual space $\Hom_K^{ct}(R(N),K)$ with respect to some topology on
$R(N)$. For this we equip
$\mathcal{R}(G,K)$ with the nice topology. Further, if $M$ is an (abstract) $\mathcal{R}(G,K)$-module then we
define the \emph{canonical topology} on it by taking the strongest
locally convex topology on $M$ such that for any fixed element
$m_0$ in $M$ the map
\begin{eqnarray*}
\mathcal{R}(G,K)&\rightarrow& M\\
x&\mapsto& xm_0
\end{eqnarray*}
is continuous in the nice topology of $\mathcal{R}(G,K)$.
\begin{lem}\label{r5}
The action of $\mathcal{R}(G,K)$ on $M$ is separately continuous
in the canonical topology of $M$. Moreover, any algebraic
homomorphism between two modules $M$ and $N$ is continuous in the
canonical topology. In particular the canonical topology on the
module $\mathcal{R}(G,K)$ over itself coincides with the nice
topology.
\end{lem}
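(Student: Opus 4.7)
The plan is to verify the three assertions by unpacking the definition of the canonical topology, using the separate continuity of multiplication on $\mathcal{R}(G,K)$ in the nice topology (established in the preceding proposition) as the essential input.

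For the separate continuity of the action, the only nontrivial direction is the continuity of left multiplication by a fixed $x_0\in\mathcal{R}(G,K)$ on $M$, since continuity of $x\mapsto xm_0$ is built into the definition. Given an open lattice $L\subseteq M$ in the canonical topology, I would show that $x_0^{-1}(L):=\{m\in M\colon x_0m\in L\}$ is again open. By the defining property of the canonical topology, this reduces to showing, for each $m_0\in M$, that the set $\{y\in\mathcal{R}(G,K)\colon (x_0y)m_0\in L\}$ is nice-open. But this set is exactly the pre-image under left multiplication by $x_0$ of the nice-open set $\{z\in\mathcal{R}(G,K)\colon zm_0\in L\}$, so separate continuity of multiplication in the nice topology delivers what is needed.

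For functoriality, let $\varphi\colon M\to N$ be any $\mathcal{R}(G,K)$-linear map and $L\subseteq N$ an open lattice. Then $\varphi^{-1}(L)$ is again a lattice in $M$, and for each $m_0\in M$ one has
\begin{equation*}
\{x\in\mathcal{R}(G,K)\colon xm_0\in\varphi^{-1}(L)\}=\{x\in\mathcal{R}(G,K)\colon x\varphi(m_0)\in L\},
\end{equation*}
which is nice-open because $L$ is open in the canonical topology on $N$ applied at the point $\varphi(m_0)$. Hence $\varphi^{-1}(L)$ is open in the canonical topology on $M$.

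For the last assertion, take $M=\mathcal{R}(G,K)$. Separate continuity of multiplication in the nice topology exhibits the nice topology itself as one of the locally convex topologies satisfying the defining continuity condition of the canonical topology, so the canonical topology is at least as fine as the nice topology. Conversely, specializing the defining condition to $m_0=1$ turns the relevant map into the identity $\mathcal{R}(G,K)\to\mathcal{R}(G,K)$, forcing it to be continuous from the nice topology to the canonical topology, so the canonical topology is at most as fine as the nice one. There is no substantive obstacle: every step reduces to careful bookkeeping of which topology sits on which side of each map, with all openness assertions following from the previously established separate continuity of multiplication in the nice topology.
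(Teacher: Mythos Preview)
Your proof is correct and follows the same approach as the paper: both arguments use the universal property of the canonical topology as a final locally convex topology, reducing each continuity claim to checking that the composed maps $\mathcal{R}(G,K)\to M$ (respectively $\to N$) through the structural maps $x\mapsto xm_0$ are continuous in the nice topology. You are in fact more explicit than the paper about where the separate continuity of multiplication in the nice topology enters (namely in handling the composition $x\mapsto x_0xm_0$), which the paper's terse proof leaves implicit.
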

\begin{proof}
For the first statement we need to verify that for any fixed $x_0$
in $\mathcal{R}(G,K)$ the map
\begin{eqnarray*}
M&\rightarrow&M\\
m&\mapsto&x_0m
\end{eqnarray*}
is continuous. By the construction of the topology this follows
from the continuity of the map
\begin{eqnarray*}
\mathcal{R}(G,K)&\rightarrow& M\\
x&\mapsto& xx_0m_0
\end{eqnarray*}
for any fixed $x_0$ in $\mathcal{R}(G,K)$ and $m_0$ in $M$. The
second statement follows similarly.
\end{proof}

So from now on any $\mathcal{R}(G,K)$-module will be equipped with its
canonical topology and we understand continuous maps as ``continuous in the
canonical topology''.
\begin{lem}\label{r4}
Let $M$ be any module over $\mathcal{R}(G,K)$ with its canonical
topology. Then the map
\begin{equation*}
\mu\colon\Hom_{\mathcal{R}(G,K)}(M,\mathcal{R}(G,K))\rightarrow\Hom^{ct}_K(M,K)
\end{equation*}
given by $(\mu(\varphi))(m):=$``the constant term of $\varphi(m)$'' is a $K$-linear bijection.
\end{lem}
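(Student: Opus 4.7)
The plan is to reduce everything to the self-duality of $\mathcal{R}(G,K)$ established in Corollary \ref{r3}, combined with the automatic continuity of algebraic module maps from Lemma \ref{r5}. First, I would check that $\mu$ is well-defined. If $\varphi\in\Hom_{\mathcal{R}(G,K)}(M,\mathcal{R}(G,K))$, then by Lemma \ref{r5} the map $\varphi$ is continuous for the canonical topologies on both sides, and on $\mathcal{R}(G,K)$ the canonical topology is the nice topology. Since the functional ``constant term'' is precisely $y\mapsto(1,y)$ for the pairing of Theorem \ref{z27}, it is continuous on $\mathcal{R}(G,K)$ in the nice topology, and hence $\mu(\varphi)=\text{ct}\circ\varphi\in\Hom_K^{ct}(M,K)$. $K$-linearity of $\mu$ is evident.

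For injectivity, suppose $\mu(\varphi)=0$, and fix $m\in M$. For every $x\in\mathcal{R}(G,K)$, applying the hypothesis to $xm$ and using $\mathcal{R}(G,K)$-linearity gives
\begin{equation*}
0=\mu(\varphi)(xm)=\hbox{ct}(\varphi(xm))=\hbox{ct}(x\varphi(m))=(x,\varphi(m)).
\end{equation*}
But Corollary \ref{r3} (equivalently Theorem \ref{z27}) says that the pairing $(\cdot,\cdot)$ is non-degenerate in each slot, so $\varphi(m)=0$. Since $m$ was arbitrary, $\varphi=0$.

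For surjectivity, take $\lambda\in\Hom_K^{ct}(M,K)$. For each $m\in M$ the ``multiplication by $m$'' map $\mathcal{R}(G,K)\to M$, $x\mapsto xm$, is continuous by the very definition of the canonical topology on $M$, so the composite
\begin{equation*}
\lambda_m\colon\mathcal{R}(G,K)\xrightarrow{\,x\mapsto xm\,}M\xrightarrow{\ \lambda\ }K
\end{equation*}
lies in $\Hom_K^{ct}(\mathcal{R}(G,K),K)$, where $\mathcal{R}(G,K)$ carries the nice topology. By the self-duality bijection (Lemma \ref{r2}, Corollary \ref{r3}) there is a unique $\varphi(m)\in\mathcal{R}(G,K)$ with $\lambda_m(x)=\hbox{ct}(x\varphi(m))$ for every $x$. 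Setting $x=1$ yields $\mu(\varphi)(m)=\hbox{ct}(\varphi(m))=\lambda(m)$, so it remains only to check that $m\mapsto\varphi(m)$ is $K$- and $\mathcal{R}(G,K)$-linear. Both follow at once from the \emph{uniqueness} clause of self-duality: for $a,b\in K$ and $m_1,m_2\in M$ the functional $\lambda_{am_1+bm_2}$ equals $a\lambda_{m_1}+b\lambda_{m_2}$, which is represented by $a\varphi(m_1)+b\varphi(m_2)$; and for $y\in\mathcal{R}(G,K)$ one computes $\lambda_{ym}(x)=\lambda(xym)=\lambda_m(xy)=\hbox{ct}((xy)\varphi(m))=\hbox{ct}(x\cdot y\varphi(m))$, so by uniqueness $\varphi(ym)=y\varphi(m)$.

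The only genuinely subtle point — which is not really an obstacle but must be tracked carefully — is to use the self-duality in the correct slot so that $\mathcal{R}(G,K)$-linearity of $\varphi$ (acting on the left) matches the multiplication inside the pairing. Everything else is a formal unwinding of the definitions; no further input beyond Lemma \ref{r5}, Corollary \ref{r3}, and the separate continuity of the pairing is required.
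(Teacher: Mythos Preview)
Your proof is correct and follows essentially the same approach as the paper: well-definedness via Lemma \ref{r5}, injectivity via non-degeneracy of the pairing, and surjectivity by pulling back a continuous functional along the orbit maps $x\mapsto xm$ and invoking the self-duality of Corollary \ref{r3}. You spell out the verification of $\mathcal{R}(G,K)$-linearity of $\varphi$ via uniqueness more explicitly than the paper does, but the argument is the same.
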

\begin{proof}
By Lemma \ref{r5} the functional $\mu(\varphi)$ is indeed continuous showing
that the map $\mu$ is well-defined. The injectivity of $\mu$ follows from the
non-degeneracy of the pairing on $\mathcal{R}(G,K)$. On the other hand, if
$\overline{\varphi}$ is a continuous linear functional on $M$ then the map 
\begin{eqnarray*}
\varphi_{m_0}\colon \mathcal{R}(G,K)&\rightarrow& K\\
\varphi_{m_0}(r)&:=&\overline{\varphi}(rm_0)
\end{eqnarray*}
is continuous in the nice topology on $\mathcal{R}(G,K)$ and therefore has to
come from the right multiplication by an element $r_0=:\varphi(m_0)$ (by
Corollary \ref{r3}) and the Lemma follows.
\end{proof}

Our next observation is the following
\begin{pro}
Let $P$ be a projective coadmissible left $D(G,K)$-module. Then we
have
\begin{equation*}
\Hom_{D(G,K)}(P,D(G,K))\otimes_{D(G,K)}\mathcal{R}(G,K)\cong\Hom_K^{ct}(\mathcal{R}(G,K)\otimes_{D(G,K)}P,K).
\end{equation*}
\end{pro}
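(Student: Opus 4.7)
The plan is to reduce to the case $P = D(G,K)$ by finite additivity, and to use Lemma \ref{r4} to eliminate the continuous $K$-dual on the right-hand side in favor of an algebraic $\mathcal{R}(G,K)$-linear Hom.

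First, I invoke Proposition \ref{z10} (together with the subsequent remark) to conclude that the projective coadmissible module $P$ is finitely generated, finitely presented, and projective in the ordinary ring-theoretic sense. Hence $P$ is a direct summand of a finitely generated free module $D(G,K)^n$.

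Second, I equip $\mathcal{R}(G,K)\otimes_{D(G,K)}P$ with the canonical topology as an $\mathcal{R}(G,K)$-module, in the sense defined just before Lemma \ref{r5}. By Lemma \ref{r4}, the constant-term map yields a $K$-linear bijection
$$\mu_P\colon\Hom_{\mathcal{R}(G,K)}\bigl(\mathcal{R}(G,K)\otimes_{D(G,K)}P,\ \mathcal{R}(G,K)\bigr)\xrightarrow{\sim}\Hom_K^{ct}\bigl(\mathcal{R}(G,K)\otimes_{D(G,K)}P,\ K\bigr).$$
Thus it suffices to construct a natural isomorphism
$$\alpha_P\colon\Hom_{D(G,K)}(P,D(G,K))\otimes_{D(G,K)}\mathcal{R}(G,K)\xrightarrow{\sim}\Hom_{\mathcal{R}(G,K)}\bigl(\mathcal{R}(G,K)\otimes_{D(G,K)}P,\ \mathcal{R}(G,K)\bigr).$$

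Third, I define $\alpha_P$ as the composition of the evaluation map $\Hom_{D(G,K)}(P,D(G,K))\otimes_{D(G,K)}\mathcal{R}(G,K)\to\Hom_{D(G,K)}(P,\mathcal{R}(G,K))$, sending $\varphi\otimes r$ to $p\mapsto\varphi(p)r$, with the tensor-Hom adjunction $\Hom_{D(G,K)}(P,\mathcal{R}(G,K))\cong\Hom_{\mathcal{R}(G,K)}(\mathcal{R}(G,K)\otimes_{D(G,K)}P,\mathcal{R}(G,K))$. Both source and target of $\alpha_P$ are additive contravariant functors in $P$ that commute with finite direct sums, and hence with passage to direct summands. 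For $P=D(G,K)$ both sides canonically reduce to $\mathcal{R}(G,K)$ and $\alpha_{D(G,K)}$ is the identity; therefore $\alpha_{D(G,K)^n}$ is an isomorphism, and so is its restriction to the direct summand $P$.

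The only verification requiring care is that the evaluation map $\Hom_{D(G,K)}(P,D(G,K))\otimes_{D(G,K)}\mathcal{R}(G,K)\to\Hom_{D(G,K)}(P,\mathcal{R}(G,K))$ is an isomorphism for finitely generated projective $P$, which is standard (trivial for $P=D(G,K)$, preserved by finite direct sums, hence by retracts); this is bookkeeping rather than a substantive obstacle. The main technical content is confined to Proposition \ref{z10} (which produces the finite generation of $P$) and Lemma \ref{r4} (which rests ultimately on the self-duality of $\mathcal{R}(G,K)$ in the nice topology, Corollary \ref{r3}).
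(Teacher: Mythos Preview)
Your proposal is correct and follows essentially the same route as the paper's proof: both invoke Proposition~\ref{z10} to realize $P$ as a direct summand of a finite free module, then use additivity of both sides to reduce to the free case, and finally apply Lemma~\ref{r4} (together with Lemma~\ref{r5}) to identify $\Hom_K^{ct}(-,K)$ with $\Hom_{\mathcal{R}(G,K)}(-,\mathcal{R}(G,K))$. The paper writes out the direct-sum chain of isomorphisms explicitly, whereas you phrase it via a natural transformation $\alpha_P$ and functoriality, but the content is the same.
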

\begin{proof}
We choose another projective $D(G,K)$-module $Q$ such
that $P\oplus Q\cong D(G,K)^m$ for some integer $m$. Note that $P$
is finitely generated by Proposition \ref{z10}. Now we have the
following isomorphisms
\begin{eqnarray}
\Hom(P,D(G,K))\otimes\mathcal{R}(G,K)\oplus\Hom(Q,D(G,K))\otimes\mathcal{R}(G,K)\overset{\sim}{\longrightarrow}\notag\\
\overset{\sim}{\longrightarrow}(\Hom(P,D(G,K))\oplus\Hom(Q,D(G,K)))\otimes\mathcal{R}(G,K)
\overset{\sim}{\longrightarrow}\notag\\
\overset{\sim}{\longrightarrow}\Hom(D(G,K)^m,D(G,K))\otimes\mathcal{R}(G,K)\overset{\sim}{\longrightarrow}
\Hom(\mathcal{R}(G,K)^m,\mathcal{R}(G,K))\overset{\sim}{\longrightarrow}\notag\\
\overset{\sim}{\longrightarrow}\Hom(\mathcal{R}(G,K)\otimes
P\oplus
\mathcal{R}(G,K)\otimes Q,\mathcal{R}(G,K))\overset{\sim}{\longrightarrow}\notag\\
\overset{\sim}{\longrightarrow}\Hom(\mathcal{R}(G,K)\otimes
P,\mathcal{R}(G,K))\oplus \Hom(\mathcal{R}(G,K)\otimes
Q,\mathcal{R}(G,K)).\notag
\end{eqnarray}
Moreover, the isomorphism from the first term to the last term is
the direct sum of the maps
\begin{eqnarray*}
\Hom(P,D(G,K))\otimes\mathcal{R}(G,K)&\rightarrow&\Hom(P\otimes\mathcal{R}(G,K),\mathcal{R}(G,K))\hspace{1cm}\hbox{ and}\\
\Hom(Q,D(G,K))\otimes\mathcal{R}(G,K)&\rightarrow&\Hom(Q\otimes\mathcal{R}(G,K),\mathcal{R}(G,K))
\end{eqnarray*}
so both these maps are isomorphisms and the result follows by Lemmata \ref{r5}
and \ref{r4}.
\end{proof}

In the rest of this section we are going to generalize the above
observation to any finitely generated coadmissible module over the
distribution algebra $D(G,K)$.

For that we need the following

\begin{lem}\label{z38}
Let $G$ be a uniform pro-$p$ group. Then for any
$p^{-1}<\rho_1<\rho_2<1$ in $p^{\mathbb{Q}}$ the inclusion
\begin{equation*}
D_{\rho_2}(G,K)\hookrightarrow D_{[\rho_1,\rho_2]}(G,K)
\end{equation*}
of rings is flat.
\end{lem}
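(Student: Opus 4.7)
I would follow the graded-ring strategy used in the proof of Theorem~\ref{z37}, invoking Proposition~1.2 of \cite{foundations} to reduce flatness of the filtered extension to flatness of the associated graded extension. The key observation is that while the natural filtration $\|\cdot\|_{\rho_1,\rho_2}$ on $D_{[\rho_1,\rho_2]}(G,K)$ yields an associated graded ring with the non-standard multiplication $(\ref{z29})$ (under which $X_1\cdot X_1^{-1}=0$, obstructing flatness over the polynomial ring $(\gr K)[X_1,\dots,X_d]$), the norm $\|\cdot\|_{\rho_2}$ is \emph{multiplicative} on $D_{[\rho_1,\rho_2]}(G,K)$: the product of monomials ${\bf b}^\alpha$ and ${\bf b}^{-\alpha}$ satisfies $\|{\bf b}^\alpha\|_{\rho_2}\cdot\|{\bf b}^{-\alpha}\|_{\rho_2}=1=\|1\|_{\rho_2}$, with no annulus cancellation, and more generally $\|\cdot\|_{\rho_2}$ restricts multiplicatively from $D_{[\rho_2,1)}(G,K)$ (cf.\ the discussion preceding Lemma~\ref{z21}).

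Accordingly, I would filter both rings by $\|\cdot\|_{\rho_2}$. On $D_{\rho_2}(G,K)$ this coincides with $\|\cdot\|_{\rho_1,\rho_2}$ (since all multi-exponents $\alpha\in\mathbb{N}_0^d$ are nonnegative and $\rho_1<\rho_2<1$ imply $\rho_1^{\kappa|\alpha|}\leq\rho_2^{\kappa|\alpha|}$), is complete, and has associated graded ring $A:=(\gr K)[X_1,\dots,X_d]$. The target $D_{[\rho_1,\rho_2]}(G,K)$ is not complete in $\|\cdot\|_{\rho_2}$ (its completion allows coefficients with unbounded $\|\cdot\|_{\rho_1}$-norm); passing to its $\|\cdot\|_{\rho_2}$-completion $\tilde D$, the associated graded ring becomes the Laurent polynomial ring $A[X_1^{\pm 1},\dots,X_d^{\pm 1}]$, a flat localisation of $A$. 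Proposition~1.2 of \cite{foundations} then yields flatness of the composite $D_{\rho_2}(G,K)\hookrightarrow\tilde D$.

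To descend flatness back to $D_{\rho_2}(G,K)\hookrightarrow D_{[\rho_1,\rho_2]}(G,K)$, I would show that $D_{[\rho_1,\rho_2]}(G,K)\hookrightarrow\tilde D$ is faithfully flat, after which standard faithfully-flat descent concludes the proof: for any $D_{\rho_2}(G,K)$-module $M$, flatness of $D_{\rho_2}(G,K)\to\tilde D$ gives $\mathrm{Tor}_1^{D_{\rho_2}}(D_{[\rho_1,\rho_2]}(G,K),M)\otimes_{D_{[\rho_1,\rho_2]}(G,K)}\tilde D=0$, whence by faithful flatness $\mathrm{Tor}_1^{D_{\rho_2}}(D_{[\rho_1,\rho_2]}(G,K),M)=0$. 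Since $D_{[\rho_1,\rho_2]}(G,K)$ is noetherian by Proposition~4.1 and $\tilde D$ is its completion along the coarser submultiplicative filtration $\|\cdot\|_{\rho_2}$, a Zariskian-type argument on the noetherian filtered ring should yield the faithful flatness.

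The principal obstacle is this last step: verifying that the completion $D_{[\rho_1,\rho_2]}(G,K)\hookrightarrow\tilde D$ is faithfully flat. This requires a careful analysis of the $\|\cdot\|_{\rho_2}$-filtration on $D_{[\rho_1,\rho_2]}(G,K)$, checking Zariskian conditions in the noncommutative setting. If this route proves too delicate, an alternative is to identify $\tilde D$ directly with a microlocalised ring constructed in the Appendix and invoke flatness statements established there, thereby circumventing the descent argument altogether.
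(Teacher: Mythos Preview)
Your proposal has a genuine gap at the descent step. The $\|\cdot\|_{\rho_2}$-completion $\tilde D$ of $D_{[\rho_1,\rho_2]}(G,K)$ is precisely $D_{[\rho_2,\rho_2]}(G,K)$, and the map $D_{[\rho_1,\rho_2]}(G,K)\to D_{[\rho_2,\rho_2]}(G,K)$ is \emph{not} faithfully flat. Already for $G=\mathbb{Z}_p$ this is the restriction of analytic functions from the closed annulus $\rho_1\le |T|\le\rho_2$ to its outer boundary circle $|T|=\rho_2$, which is not surjective on maximal spectra. Concretely, choose $N$ large and $c\in K$ with $|c|\rho_2^{-N}<1$ but $\rho_1\le |c|^{1/N}<\rho_2$; then $1-cb_1^{-N}$ lies in the unit ball for $\|\cdot\|_{\rho_2}$, has $\|\cdot\|_{\rho_2}$-distance $<1$ from $1$, yet is a non-unit in $D_{[\rho_1,\rho_2]}(G,K)$ (it vanishes inside the annulus) while becoming a unit in $D_{[\rho_2,\rho_2]}(G,K)$. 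This shows both that the $\|\cdot\|_{\rho_2}$-filtration on $D_{[\rho_1,\rho_2]}(G,K)$ is not Zariskian (so no ``Zariskian-type argument'' can rescue the step) and that the quotient $D_{[\rho_1,\rho_2]}(G,K)/(1-cb_1^{-N})$ is a nonzero module killed by $\otimes\,\tilde D$.

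The paper avoids this by factoring in the forward direction through an intermediate ring $D_{(\rho_0,\rho_2]^{bd}}(G,K)$ with $\rho_0<\rho_1$ (Laurent series convergent on $(\rho_0,\rho_2]$ and bounded at $\rho_0$), so that one has $D_{\rho_2}(G,K)\hookrightarrow D_{(\rho_0,\rho_2]^{bd}}(G,K)\hookrightarrow D_{[\rho_1,\rho_2]}(G,K)$ and it suffices to prove each arrow flat. The first arrow is handled exactly by your graded-localization idea: the unit ball $F^0_{\rho_0,\rho_2}D_{(\rho_0,\rho_2]^{bd}}(G,K)$ \emph{is} complete for $\|\cdot\|_{\rho_2}$ (the boundedness at $\rho_0$ forces convergence on the whole half-open annulus, so the completion does not leave the ring), and its $\|\cdot\|_{\rho_2}$-graded ring is the localization $k[X_0,X_1,\dots,X_d][X_1^{-1},\dots,X_d^{-1}]$, whence flatness via Proposition~1.2 of \cite{foundations}. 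The second arrow is then flat by the same two-step argument as in Theorem~\ref{z37}. The moral is that your localization computation is correct, but it must be carried out on a ring that sits \emph{between} $D_{\rho_2}$ and $D_{[\rho_1,\rho_2]}$, not beyond.
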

\begin{proof}
The proof is similar to that of Theorem \ref{z37}. We choose a
$\rho_0$ in the intersection of $p^{\mathbb{Q}}$ with the open
interval $(p^{-1},\rho_1)$ and show at first that
$D_{(\rho_0,\rho_2]^{bd}}(G,K)$ is flat over $D_{\rho_2}(G,K)$. We take
the filtration $F^s_{\rho_2}$ induced by the norm
$\|\cdot\|_{\rho_2}$ on $D_{\rho_2}(G,K)$ and
$F^0_{\rho_0,\rho_2}D_{(\rho_0,\rho_2]^{bd}}(G,K)$. Both are complete
with respect to this filtration. Moreover,
$F^0_{\rho_2}D_{\rho_2}(G,K)$ is contained in
$F^0_{\rho_0,\rho_2}D_{(\rho_0,\rho_2]^{bd}}(G,K)$ and we have
\begin{equation*}
\mathbb{Q}_p\otimes_{\mathbb{Z}_p}F^0_{\rho_2}D_{\rho_2}(G,K)=D_{\rho_2}(G,K)\hbox{
and
}\mathbb{Q}_p\otimes_{\mathbb{Z}_p}F^0_{\rho_0,\rho_2}D_{(\rho_0,\rho_2]^{bd}}(G,K)=D_{(\rho_0,\rho_2]^{bd}}(G,K).
\end{equation*}
So by Proposition 1.2 in \cite{foundations} it suffices to show
that the associated graded map
\begin{equation*}
\gr_{\rho_2} F^0_{\rho_2}D_{\rho_2}(G,K)\hookrightarrow
\gr_{\rho_2} F^0_{\rho_0,\rho_2}D_{(\rho_0,\rho_2]^{bd}}(G,K)
\end{equation*}
is flat. Since the norm $\|\cdot\|_{\rho_2}$ is multiplicative
both these rings are integral domains. Moreover, we may assume
without loss of generality that $\rho_2$ is an integral power of
the absolute value of the uniformizer in $K$ as we may replace $K$
by a finite extension. So the graded ring $\gr_{\rho_2}
F^0_{\rho_2}D_{\rho_2}(G,K)$ is isomorphic to the polynomial ring
$k[X_0,X_1,\dots,X_d]$ and $\gr_{\rho_2}
F^0_{\rho_0,\rho_2}D_{(\rho_0,\rho_2]^{bd}}(G,K)$ is its localization
at $X_1,\dots,X_d$ and therefore flat.

The proof of the flatness of the map
\begin{equation*}
D_{(\rho_0,\rho_2]^{bd}}(G,K)\hookrightarrow D_{[\rho_1,\rho_2]}(G,K)
\end{equation*}
is entirely analogous to the proof of Theorem \ref{z37} and so we
omit the details.
\end{proof}
Now we fix a sequence $p^{-1}<\rho_1<\dots<\rho_n<\dots<1$ with
$\rho_n\rightarrow1$ in $p^{\mathbb{Q}}$ and introduce (for any
positive integer $l$) the functor $R_{\rho_l}$ from the category
$Coh(D(G,K),\rho_n)$ of coherent sheafs for the projective system
of noetherian Banach algebras $(D_{\rho_n}(G,K))_{n\geq1}$ to the
category $Coh(D_{[\rho_l,1)}(G,K),\rho_n)$ of coherent sheafs for
the system $(D_{[\rho_l,\rho_n]}(G,K))_{n>l}$ by putting
\begin{equation*}
R_{\rho_l}((M_n)_{n\geq1}):=(D_{[\rho_l,\rho_n]}(G,K)\otimes_{D_{\rho_n}(G,K)}M_n)_{n>l}.
\end{equation*}
This is an exact functor by Lemma \ref{z38}. Moreover, by the
equivalence of the categories of coherent sheafs and coadmissible
modules over Fr\'echet-Stein algebras it can be viewed as a
functor from the category $\mathcal{C}_{D(G,K)}$ of coadmissible
modules over $D(G,K)$ to the category
$\mathcal{C}_{D_{[\rho_l,1)}(G,K)}$ of coadmissible modules over
$D_{[\rho_l,1)}(G,K)$ since the latter is also a Fr\'echet-Stein
algebra by Theorem \ref{z37}. We denote this functor by the same
letter. Moreover, let $R$ be the the injective limit of these
functors mapping coadmissible modules $M$ over the distribution
algebra $D(G,K)$ to the module
\begin{equation*}
R(M):=\varinjlim_{l\rightarrow\infty}R_{\rho_l}(M)
\end{equation*}
over the Robba ring $\mathcal{R}(G,K)$. The functor $R$ is also
exact as $\varinjlim$ is exact. We call a module over the ring
$\mathcal{R}(G,K)$ \emph{coadmissible} if it is in the image of
the functor $R$.

\begin{lem}\label{z40}
Let $M$ be a finitely generated coadmissible module over $D(G,K)$.
Then the natural map
\begin{equation*}
\varphi_l\colon D_{[\rho_l,1)}(G,K)\otimes_{D(G,K)}M\rightarrow
R_l(M)
\end{equation*}
is surjective with $D_{[\rho_l,1)}(G,K)$-torsion kernel for any
$l$. In particular if the $D_{[\rho_l,1)}(G,K)$-module $N$ has no
torsion then we have the isomorphism
\begin{equation*}
\Hom_{D_{[\rho_l,1)}(G,K)}(D_{[\rho_l,1)}(G,K)\otimes_{D(G,K)}M,N)\cong\Hom_{D_{[\rho_l,1)}(G,K)}(R_l(M),N).
\end{equation*}
\end{lem}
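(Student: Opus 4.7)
The ``In particular'' clause follows formally: any $D_{[\rho_l,1)}(G,K)$-linear map into a torsion-free module vanishes on torsion, so a surjection with torsion kernel induces a bijection upon applying $\Hom_{D_{[\rho_l,1)}(G,K)}(-,N)$. Hence the content lies in the surjectivity of $\varphi_l$ and the $D_{[\rho_l,1)}(G,K)$-torsion of $\ker\varphi_l$.

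Fix a finite generating set $m_1,\dots,m_s$ of $M$ over $D(G,K)$, giving a presentation $0\to K\to D(G,K)^s\stackrel{\pi}{\to} M\to 0$ with $K:=\ker\pi$. Note $K$ is coadmissible as a submodule of a coadmissible module. Applying the right-exact functor $D_{[\rho_l,1)}(G,K)\otimes_{D(G,K)}(-)$ and the exact functor $R_l$ (exactness from Lemma \ref{z38} together with the identification $R_l(-)=\varprojlim_n D_{[\rho_l,\rho_n]}(G,K)\otimes_{D(G,K)}(-)$) yields the commutative diagram
\[
\begin{CD}
D_{[\rho_l,1)}(G,K)\otimes_{D(G,K)}K @>>> D_{[\rho_l,1)}(G,K)^s @>>> D_{[\rho_l,1)}(G,K)\otimes_{D(G,K)}M \\
@VV\varphi_l^K V @| @VV\varphi_l V \\
R_l(K) @>>> D_{[\rho_l,1)}(G,K)^s @>>> R_l(M)
\end{CD}
\]
in which the top row is right-exact and the bottom row is short-exact (the injectivity $R_l(K)\hookrightarrow D_{[\rho_l,1)}(G,K)^s$ comes from exactness of $R_l$). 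A direct diagram chase delivers both the surjectivity of $\varphi_l$ and the identification $\ker(\varphi_l)\cong R_l(K)/\mathrm{im}(\varphi_l^K)$.

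The main remaining task is to show this quotient is $D_{[\rho_l,1)}(G,K)$-torsion: for every $y\in R_l(K)$ one must produce a nonzero $a\in D_{[\rho_l,1)}(G,K)$ with $ay\in\mathrm{im}(\varphi_l^K)$. Since $K$ need not be finitely generated, one cannot reduce to $M$ being finitely presented---in which case $\varphi_l$ would already be an isomorphism. Instead, exploit that $K=\varprojlim_n K_n$ with each $K_n$ finitely generated over the noetherian Banach algebra $D_{\rho_n}(G,K)$. For $y=(y_n)\in R_l(K)$, the component $y_n\in D_{[\rho_l,\rho_n]}(G,K)\otimes_{D_{\rho_n}(G,K)}K_n$ can be approximated in the $\|\cdot\|_{[\rho_l,\rho_n]}$-norm by a finite $D_{[\rho_l,\rho_n]}(G,K)$-linear combination of actual elements of $K$. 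A Mittag-Leffler style construction along $n\to\infty$, controlled by the flatness of Lemma \ref{z38} and the Fr\'echet-Stein structure of Theorem \ref{z37}, should assemble these local approximations into a single $z\in D_{[\rho_l,1)}(G,K)\otimes_{D(G,K)}K$ whose residue $y-\varphi_l^K(z)$ is annihilated by a suitably chosen nonzero element of $D_{[\rho_l,1)}(G,K)$.

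The principal obstacle is matching approximations across the levels: the generating sets of the $K_n$ depend on $n$, so the construction must simultaneously balance approximations at all levels, and the torsion annihilator must dominate the norms of the residual errors throughout the projective system.
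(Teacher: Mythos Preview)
Your surjectivity argument and the identification $\ker(\varphi_l)\cong R_l(K)/\mathrm{im}(\varphi_l^K)$ via the diagram chase are fine. The gap is in the torsion step: the Mittag-Leffler approximation scheme you sketch is neither carried out nor needed, and as you yourself note, it is unclear how to make it converge.

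The paper's argument bypasses all of this with a one-line flatness trick that you already have the ingredients for. Observe that tensoring your diagram with $D_{[\rho_l,\rho_n]}(G,K)$ over $D_{[\rho_l,1)}(G,K)$ turns both rows into the \emph{same} exact sequence (since $D_{[\rho_l,\rho_n]}(G,K)\otimes_{D_{[\rho_l,1)}(G,K)}\bigl(D_{[\rho_l,1)}(G,K)\otimes_{D(G,K)}M\bigr)=D_{[\rho_l,\rho_n]}(G,K)\otimes_{D(G,K)}M$ is precisely the $n$-th component of $R_l(M)$). Hence $\varphi_l$ becomes an isomorphism after this base change, for every $n$. Now suppose some $x\in\ker(\varphi_l)$ is not torsion. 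Then multiplication by $x$ gives an injection $D_{[\rho_l,1)}(G,K)\hookrightarrow D_{[\rho_l,1)}(G,K)\otimes_{D(G,K)}M$. By the Fr\'echet-Stein property (Theorem~\ref{z37}), $D_{[\rho_l,\rho_n]}(G,K)$ is flat over $D_{[\rho_l,1)}(G,K)$, so this injection survives base change; but its composite with the induced isomorphism $\varphi_l\otimes\mathrm{id}$ sends $1$ to the image of $\varphi_l(x)=0$, a contradiction. That is the entire argument---no approximation, no tracking of generators of the $K_n$.
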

\begin{proof}
The image of $\varphi_l$ is a coadmissible
$D_{[\rho_l,1)}(G,K)$-module as it is a finitely generated
submodule of a coadmissible module. Moreover, for any $n>l$ we
have
\begin{equation*}
D_{[\rho_l,\rho_n]}(G,K)\otimes
\mathrm{Im}(\varphi)=D_{[\rho_l,\rho_n]}(G,K)\otimes R_l(N).
\end{equation*}
Therefore the surjectivity. Let us assume now indirectly that
there is an element $x$ in the kernel of $\varphi_l$ which is not
torsion. This means we have an injective homomorphism from
$D_{[\rho_l,1)}(G,K)$ to $D_{[\rho_l,1)}(G,K)\otimes_{D(G,K)}M$
mapping $1$ to $x$. This contradicts to the flatness of
$D_{[\rho_l,\rho_n]}(G,K)$ over $D_{[\rho_l,1)}(G,K)$ which is a
consequence of the Fr\'echet-Stein property. The last statement
follows immediately from the long exact sequence of the functor
$\Hom_{D_{[\rho_l,1)}(G,K)}(\cdot,N)$.
\end{proof}

Our main result in this section is the following
\begin{thm}\label{z39}
Let $M$ be a \emph{finitely generated} coadmissible module over
the distribution algebra $D(G,K)$ of a uniform pro-$p$ group $G$.
Then $\Hom_K^{ct}(R(M),K)$ is a coadmissible module over
$\mathcal{R}(G,K)$.
\end{thm}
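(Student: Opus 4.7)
My plan is to use Lemma \ref{r4} to reinterpret $\Hom_K^{ct}(R(M),K)$ as $\Hom_{\mathcal{R}(G,K)}(R(M),\mathcal{R}(G,K))$, and then to realise this $\mathcal{R}(G,K)$-module as $R(N)$ for an explicit coadmissible $D(G,K)$-module $N$ built from a finite presentation of $M$.

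First I would upgrade the finite generation of $M$ to a finite presentation. Taking a surjection $\varphi\colon D(G,K)^n\twoheadrightarrow M$, its kernel $K_\varphi$ is closed in the canonical topology and therefore coadmissible; because $M$ itself is finitely generated, the number of generators of each layer $D_{\rho_l}(G,K)\otimes_{D(G,K)}K_\varphi$ can be bounded uniformly in $l$ (using noetherianness of the Banach algebras $D_{\rho_l}(G,K)$), producing a finite generating set for $K_\varphi$ and hence a presentation
\begin{equation*}
D(G,K)^m\xrightarrow{A}D(G,K)^n\to M\to 0.
\end{equation*}

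Applying the exact functor $R$, and observing that $R(D(G,K))=\mathcal{R}(G,K)$ directly from the construction of $R_{\rho_l}$, yields
\begin{equation*}
\mathcal{R}(G,K)^m\xrightarrow{A}\mathcal{R}(G,K)^n\to R(M)\to 0;
\end{equation*}
the left-exact functor $\Hom_{\mathcal{R}(G,K)}(\cdot,\mathcal{R}(G,K))$ then identifies $\Hom_{\mathcal{R}(G,K)}(R(M),\mathcal{R}(G,K))$ with the kernel of the transposed map $A^t\colon\mathcal{R}(G,K)^n\to\mathcal{R}(G,K)^m$. To exhibit this kernel as the image under $R$ of a coadmissible module, I would use the anti-involution of $D(G,K)$ induced by $g\mapsto g^{-1}$ to interpret $A^t$ also as a left $D(G,K)$-linear map $D(G,K)^n\to D(G,K)^m$, and set $N:=\ker(A^t)$. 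Then $N$ is a closed, and hence coadmissible, submodule of the coadmissible module $D(G,K)^n$. Applying $R$ once more to the left exact sequence $0\to N\to D(G,K)^n\xrightarrow{A^t}D(G,K)^m$ gives
\begin{equation*}
0\to R(N)\to\mathcal{R}(G,K)^n\xrightarrow{A^t}\mathcal{R}(G,K)^m,
\end{equation*}
so that $R(N)\cong\Hom_{\mathcal{R}(G,K)}(R(M),\mathcal{R}(G,K))\cong\Hom_K^{ct}(R(M),K)$ by Lemma \ref{r4}, and the last module is coadmissible by definition.

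The principal obstacle I anticipate is the finite presentation step: since $D(G,K)$ itself is not noetherian, finite generation does not automatically imply finite presentation, and one must exploit the coadmissibility of $M$ together with the Fr\'echet-Stein structure to control the kernel of $\varphi$ uniformly across the levels $\rho_l$. A secondary technical point is the careful bookkeeping of left vs.\ right $\mathcal{R}(G,K)$-module structures under the successive dualizations; here the antipode of $D(G,K)$ together with the symmetry of the pairing used in Theorem \ref{z27} and Lemma \ref{r4} provides the needed translation.
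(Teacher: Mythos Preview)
Your overall architecture is sound once finite presentation is granted, but the step where you upgrade finite generation to finite presentation is a genuine gap. Noetherianity of each $D_{\rho_l}(G,K)$ only tells you that the kernel $K_\varphi$ becomes finitely generated \emph{after} base change to each level; it gives no uniform bound on the number of generators as $l$ varies, and hence no finite generating set for $K_\varphi$ itself over $D(G,K)$. In fact, whether a coadmissible submodule of $D(G,K)^n$ is always finitely generated is closely tied to the open Question~1 at the end of the paper, so you should not expect to resolve it by a soft argument. Without finite presentation, your presentation $D(G,K)^m\xrightarrow{A}D(G,K)^n\to M\to 0$ is not available, and the subsequent identification of $\Hom_{\mathcal{R}(G,K)}(R(M),\mathcal{R}(G,K))$ with $\ker(A^t)$ over $\mathcal{R}(G,K)$ collapses.

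The paper sidesteps this difficulty entirely. Instead of presenting $M$ globally, it works level by level: Lemma~\ref{z40} (using only finite generation of $M$) lets one replace $R_l(M)$ by $D_{[\rho_l,1)}(G,K)\otimes_{D(G,K)}M$ inside any $\Hom$ into a torsion-free target, and then the flatness of $D_{[\rho_l,\rho_n]}(G,K)$ over $D_{\rho_n}(G,K)$ (Lemma~\ref{z38}) together with the noetherianity of the Banach layers yields
\[
\Hom_{D_{[\rho_l,\rho_n]}(G,K)}(D_{[\rho_l,\rho_n]}(G,K)\otimes M_n,\,D_{[\rho_l,\rho_n]}(G,K))
\cong D_{[\rho_l,\rho_n]}(G,K)\otimes\Hom_{D_{\rho_n}(G,K)}(M_n,D_{\rho_n}(G,K)).
\]
This is exactly where finite presentation is used, but only for $M_n$ over the noetherian ring $D_{\rho_n}(G,K)$, where it is automatic. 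Passing to limits then gives $\Hom_K^{ct}(R(M),K)\cong R(\Hom_{D(G,K)}(M,D(G,K)))$. Your candidate module $N=\ker(A^t)$ would, under the anti-involution, coincide with $\Hom_{D(G,K)}(M,D(G,K))$ if a finite presentation existed; so morally the two arguments aim at the same target, but the paper's route reaches it without the unproven global input.
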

\begin{proof}
We are going to prove that in fact we have
\begin{equation*}
\Hom_K^{ct}(R(M),K)=R(\Hom_{D(G,K)}(M,D(G,K))).
\end{equation*}
At first, we clearly have
$\Hom_K^{ct}(R(M),K)=\Hom_{\mathcal{R}(G,K)}(R(M),\mathcal{R}(G,K))$.
Moreover, by definition we also have
\begin{equation*}
\Hom_{\mathcal{R}(G,K)}(R(M),\mathcal{R}(G,K))=\varinjlim_{l\rightarrow\infty}\Hom_{D_{[\rho_l,1)}(G,K)}(R_l(M),\mathcal{R}(G,K)).
\end{equation*}
Further, by Lemma \ref{z40} we obtain
\begin{eqnarray}
\Hom_{D_{[\rho_l,1)}(G,K)}(R_l(M),\mathcal{R}(G,K))=
\Hom_{D_{[\rho_l,1)}(G,K)}(D_{[\rho_l,1)}(G,K)\otimes_{D(G,K)}
M,\mathcal{R}(G,K))=\notag\\
=\Hom_{D(G,K)}(M,\mathcal{R}(G,K)).\notag
\end{eqnarray}
So putting these all together we get
\begin{equation}
\Hom_K^{ct}(R(M),K)=\Hom_{D(G,K)}(M,\mathcal{R}(G,K))\label{z41}.
\end{equation}

On the other hand since $M$ is finitely generated we also have
\begin{equation*}
\Hom_{D(G,K)}(M,\mathcal{R}(G,K))=\varinjlim_{l\rightarrow\infty}\Hom_{D(G,K)}(M,D_{[\rho_l,1)}(G,K)).
\end{equation*}
By Lemma \ref{z40} once again we also have
\begin{eqnarray}
\Hom_{D(G,K)}(M,D_{[\rho_l,1)}(G,K))=\Hom_{D_{[\rho_l,1)}(G,K)}(D_{[\rho_l,1)}(G,K)\otimes
M,D_{[\rho_l,1)}(G,K))=\notag\\
=\Hom_{D_{[\rho_l,1)}(G,K)}(R_l(M),D_{[\rho_l,1)}(G,K))=\notag\\
=\varprojlim_n\Hom_{D_{[\rho_l,\rho_n]}(G,K)}(D_{[\rho_l,\rho_n]}(G,K)\otimes
M,D_{[\rho_l,\rho_n]}(G,K)).\notag
\end{eqnarray}
The last equality follows from the fact that $R_l(M)$ is a
coadmissible module over the Fr\'echet-Stein algebra
$D_{[\rho_l,1)}(G,K)$. Now by the flatness of
$D_{[\rho_l,\rho_n]}(G,K)$ over $D_{\rho_n}(G,K)$ and the spectral
sequences associated to
$\Hom_{D_{[\rho_l,\rho_n]}(G,K)}(D_{[\rho_l,\rho_n]}(G,K)\otimes\cdot,D_{[\rho_l,\rho_n]}(G,K))$
and
$D_{[\rho_l,\rho_n]}(G,K)\otimes\Hom_{D_{\rho_n}(G,K)}(\cdot,D_{\rho_n}(G,K))$
we obtain
\begin{eqnarray}
\Hom_{D_{[\rho_l,\rho_n]}(G,K)}(D_{[\rho_l,\rho_n]}(G,K)\otimes
M,D_{[\rho_l,\rho_n]}(G,K))=\notag\\
=D_{[\rho_l,\rho_n]}(G,K)\otimes\Hom_{D_{\rho_n}(G,K)}(M_n,D_{\rho_n}(G,K))\notag
\end{eqnarray}
and hence
\begin{equation*}
\Hom_{D_{[\rho_l,1)}(G,K)}(R_l(M),D_{[\rho_l,1)}(G,K))=R_l(\Hom_{D(G,K)}(M,D(G,K))).
\end{equation*}
The result follows by taking the injective limit.
\end{proof}

\begin{rem}
Whenever $G=\mathbb{Z}_p$ then any torsion-free coadmissible
module over the distribution algebra $D(\mathbb{Z}_p,K)$ is
finitely generated and even free, so the above Theorem applies to \emph{any}
coadmissible module over $D(\mathbb{Z}_p,K)$---we may drop the assumption that
it is finitely generated. Indeed, any homomorphism in $\Hom_{D(\mathbb{Z}_p,K)}(M,D(\mathbb{Z}_p,K))$
factors through the quotient of $M$ by its torsion part. This, however, leaves the following questions
open.
\end{rem}

\begin{que}
Does there exist a uniform pro-$p$ group $G$ and a torsion-free
coadmissible module over its distribution algebra $D(G,K)$ which
is not finitely generated?
\end{que}

\begin{que}
Does $R_l(M)=D_{[\rho_l,1)}(G,K)\otimes_{D(G,K)} M$ always hold or
at least for torsion-free coadmissible modules $M$? It is
certainly true for projective coadmissible modules.
\end{que}

\begin{que}
Is there a torsion-free coadmissible module over $D(G,K)$ such
that
\begin{equation*}
D_{[\rho_l,1)}(G,K)\otimes_{D(G,K)} M
\end{equation*}
is not torsion-free over $D_{[\rho_l,1)}(G,K)$?
\end{que}

\newpage

\appendix

\theoremstyle{plain}

\newcommand{\bb}{\mathbf{b}}

\newtheorem{theorem}{Theorem}[section]
\newtheorem{corollary}[theorem]{Corollary}
\newtheorem{lemma}[theorem]{Lemma}
\newtheorem{remark}[theorem]{Remark}
\newtheorem{proposition}[theorem]{Proposition}
\newtheorem{conjecture}[theorem]{Conjecture}
\newtheorem{definition}[theorem]{Definition}
\newtheorem{fact}[theorem]{Fact}
\newtheorem{question}[theorem]{Question}

\section{Appendix by Peter Schneider: Robba rings for compact
$p$-adic Lie groups}

The Robba ring is a fundamental tool in $p$-adic differential
equations and in $p$-adic Galois representations. It is defined as a
ring of certain infinite Laurent series in one variable over a
$p$-adic field $K$. Conceptually it is related to the cyclotomic
$\mathbb{Z}_p$-extension of $K$ whose Galois group is isomorphic to
the additive group of $p$-adic integers $G = \mathbb{Z}_p$. In fact,
the Robba ring can be understood in terms of the completed group
ring $\mathbb{Z}_p[[G]]$ by a process of localization and
completion.

Recent developments in the theory of $p$-adic Galois representations
require the use of more general compact $p$-adic Lie groups $G$. In
particular $G$ might be nonabelian like $G = GL_n(\mathbb{Z}_p)$. It
then becomes a natural question whether an analog of the Robba ring
exists in this situation, possibly being constructed out of the
completed group ring $\mathbb{Z}_p[[G]]$. But $\mathbb{Z}_p[[G]]$,
in general, will be noncommutative. Hence localization becomes too
difficult a process. The idea of these notes grew out of the
technique of algebraic microlocalization.

In the first section we will adapt microlocalization to the framwork
of $p$-adic Banach algebras. This means that ring filtrations are
replaced by multiplicative nonarchimedean norms. More importantly,
for the application we have in mind, it is crucial to generalize the
theory in such a way that the microlocalization can be preformed
simultaneously with respect to finitely many such norms. Apparently
this has not been done in the algebraic context of ring filtrations.

In the second section we apply this new technique to construct,
under mild assumptions on $G$, various rings out of
$\mathbb{Z}_p[[G]]$ culminating in a ring $R(G,K)$ which we call a
Robba ring of $G$. Actually the ring $R(G,K)$ does depend, which we
suppress in the notation, on the choice of a global coordinate
system for the $p$-adic Lie group $G$. That such a phenomenon occurs
in higher dimensions might not be too surprising.

In the third section we show that various variants of the Robba
ring, which classically play an important role, also exist in our
general setting.

The constructions in these notes were first presented at a workshop
at M\"unster in 2005. Due to the lack of applications they were not
published at the time. Given the progress made by G.\ Z\'abr\'adi on
structural properties of these rings it seemed appropriate to add
these essentially unchanged notes as an appendix to his paper.

\subsection{Generalized microlocalization of quasi-abelian normed
algebras}

Let $K$ be a nonarchimedean complete field with absolute value $|\
|$. To motivate the construction in this section we consider the
Tate algebra $K\langle T\rangle$ over $K$, i.e., the ring of power
series $f(T) = \sum_{n \geq 0} \lambda_nT^n$ over $K$ in one
variable $T$ which converge on the closed unit disk. Its natural
norm is the Gauss norm given by
\begin{equation*}
|f|_1 := \sup_{n \geq 0} |\lambda_n| \ .
\end{equation*}
But for any $0 < r \leq 1$ we also have the norm
\begin{equation*}
|f|_r := \sup_{n \geq 0} |\lambda_n|
\end{equation*}
on $K\langle T\rangle$. If some power of $r$ lies in the value
group $|K^\times|$ then the completion of $K\langle T\rangle$ with
respect to the norm $|\ |_r$ is the algebra of analytic functions
on the closed disk of radius $r$ around the origin. On the other
hand, if we first invert the variable $T$ and then complete with
respect to the norm $|\ |_1$ then we obtain the analytic functions
on the unit circle. Microlocalization can be viewed as a
generalization of this latter construction to certain
noncommutative normed algebras. Finally, we may invert the
variable $T$ and then complete with respect to the norm $\max(|\
|_1,|\ |_r)$ in order to obtain the algebra of analytic functions
on the closed annulus of inner and outer radius $r$ and $1$,
respectively. The purpose of this section is to generalize the
concept of microlocalization in such a way that we obtain a
noncommutative analog of this third construction.

In fact, what we are going to do is a rather straightforward
modification of the arguments and results in \cite{Spr}. But since
we work with normed algebras instead of filtered rings and since
the paper \cite{Spr} is partly obscured my confusing typographical
errors we include, for the benefit of the reader, complete proofs.

Having specific applications in mind we do not strive for ultimate
generality. It is clear that similar ideas will work in the more
general context considered in \cite{vdE}.

We fix a (usually noncommutative) unital $K$-algebra $A$. A
(nonarchimedean) norm $|\ |$ on $A$ is called multiplicative if
\begin{equation*}
|1| = 1\qquad\text{and}\qquad |ab| = |a|\cdot |b|\quad\text{for
any $a,b \in A$}.
\end{equation*}
Let $|\ |$ be such a multiplicative norm. The ring $A$ of course
then is an integral domain. For later reference we also recall the
following triviality.

\begin{remark}\label{rem1}
If $|a_0 - b_0| < |b_0|$ and $|a_1 - b_1| < |b_1|$ then $|a_0a_1 -
b_0b_1| < |b_0b_1|$.
\end{remark}
\begin{proof}
We compute
\begin{align*}
|a_0a_1 - b_0b_1| & = |(a_0 - b_0)a_1 + b_0(a_1 - b_1)| \\
 & = \max(|a_0 - b_0|\cdot|a_1|,|b_0|\cdot|a_1 - b_1)) \\
 & < |b_0|\cdot|b_1| = |b_0b_1| \ .
\end{align*}
\end{proof}

In this paper we are mostly interested in norms of the following
much more restricted kind.

\begin{definition}\label{quasi-abelian}
The multiplicative norm $|\ |$ on $A$ is called quasi-abelian if
there is a constant $0 < \gamma < 1$ such that
\begin{equation*}
|ab - ba| \leq \gamma\cdot|ab|\qquad\hbox{for any}\ a,b \in A\ .
\leqno{(qa)}
\end{equation*}
\end{definition}

Throughout the paper we in fact fix a finite family of
quasi-abelian norms $|\ |_1,\ldots,|\ |_m$ on $A$. Corresponding
to each norm $|\ |_i$ we introduce the function
\begin{equation*}
\Delta_i(x,y) := |s|_i^{-1}\cdot |t|_i^{-1}\cdot |at - sb|_i
\end{equation*}
on $A\setminus\{0\} \times A$ where $x = (s,a)$ and $y = (t,b)$.
As a second input we fix a multiplicatively closed subset $S$ of
$A$ (by convention this includes the requirement that $1 \in S$
but $0 \not\in S$). The \textit{saturation} $S_i$ of $S$ with
respect to $|\ |_i$ is the set
\begin{equation*}
S_i := \{a \in A : |at - s|_i < |s|_i\ \hbox{for some}\ s,t \in
S\}.
\end{equation*}
Note that this definition is symmetric in that, due to the
condition (qa), we have
\begin{equation*}
S_i = \{a \in A : |ta - s|_i < |s|_i\ \hbox{for some}\ s,t \in
S\}.
\end{equation*}

\begin{lemma}\label{multclosed}
$S_i$ is a multiplicatively closed subset containing $S$.
\end{lemma}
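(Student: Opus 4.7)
The plan is to verify the two assertions separately. The containment $S \subseteq S_i$ is immediate: for any $a \in S$ we may take $t := 1 \in S$ and $s := a \in S$, so that $|at-s|_i = 0 < |a|_i = |s|_i$ (using that the multiplicative norm is strictly positive on the nonzero element $a$). In particular $1 \in S_i$.

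For multiplicative closedness, suppose $a_1, a_2 \in S_i$, witnessed by $s_1,t_1,s_2,t_2 \in S$ with $|a_j t_j - s_j|_i < |s_j|_i$. I will look for witnesses of the form $s := s_2 s_1$ and $t := t_2 t_1$, both of which lie in $S$ (and are nonzero since the multiplicative norm makes $A$ a domain). Write $a_j t_j = s_j + \varepsilon_j$ with $|\varepsilon_j|_i < |s_j|_i$, and decompose
\begin{equation*}
a_1 a_2 \, t_2 t_1 \;-\; s_2 s_1
\;=\; (a_1 s_2 - s_2 a_1)\, t_1 \;+\; s_2 \varepsilon_1 \;+\; a_1 \varepsilon_2 t_1 .
\end{equation*}
The key observation is that by multiplicativity and the ultrametric inequality, $|a_1|_i \, |t_1|_i = |a_1 t_1|_i = |s_1 + \varepsilon_1|_i = |s_1|_i$. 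Thus the last two terms have norm $|s_2|_i |\varepsilon_1|_i$ and $|\varepsilon_2|_i |s_1|_i$, respectively, both strictly less than $|s_2 s_1|_i$. For the commutator term I invoke the quasi-abelian condition $(qa)$ to bound
\begin{equation*}
|(a_1 s_2 - s_2 a_1) t_1|_i \;\leq\; \gamma \, |a_1 s_2|_i \, |t_1|_i \;=\; \gamma\, |s_2|_i\, |a_1|_i |t_1|_i \;=\; \gamma\, |s_2 s_1|_i ,
\end{equation*}
which is again strictly less than $|s_2 s_1|_i$ because $\gamma < 1$. Applying the ultrametric inequality to the three-term sum then yields $|a_1 a_2 \, t_2 t_1 - s_2 s_1|_i < |s_2 s_1|_i$, proving $a_1 a_2 \in S_i$.

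There is no real obstacle here; the only subtlety is noncommutativity, which forces one to swap $a_1$ past $s_2$ via a commutator, and this is exactly what the quasi-abelian hypothesis is designed to control. The identity $|a_1|_i |t_1|_i = |s_1|_i$ (extracted from the approximation $a_1 t_1 \approx s_1$) is what makes the commutator estimate come out with the right scaling $\gamma |s_2 s_1|_i$ rather than something uncontrolled.
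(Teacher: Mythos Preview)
Your proof is correct and follows essentially the same strategy as the paper: decompose the difference into a commutator term controlled by the quasi-abelian condition $(qa)$ plus error terms bounded by the witness inequalities, using throughout the identity $|a_j|_i|t_j|_i = |s_j|_i$ extracted from the ultrametric. The only cosmetic differences are that the paper orders the witnesses as $(ss',tt')$ rather than your $(s_2 s_1, t_2 t_1)$, swaps $b$ past $t$ (commutator $[b,t]$) instead of your $a_1$ past $s_2$, and bundles two of your error terms into a single application of Remark~\ref{rem1}; you should also note explicitly that $0 \notin S_i$ (immediate, since $|0\cdot t - s|_i = |s|_i \not< |s|_i$).
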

\begin{proof}
Obviously $S \subseteq S_i$ and $0 \not\in S_i$. Let $a,b \in S_i$
and $s,t,s',t' \in S$ such that
\begin{equation*}
|at - s|_i < |s|_i\quad \text{and}\quad |bt' - s'|_i < |s'|_i\ .
\end{equation*}
By Remark \ref{rem1} we then have $|atbt' - ss'|_i < |ss'|_i$.
Because of
\begin{align*}
 |abtt' - ss'|_i & = |abtt' - atbt' + atbt' -ss'|_i \\
                 & \leq \max(|a|_i|bt - tb|_i|t'|_i,|atbt' -
                 ss'|_i)
\end{align*}
it therefore suffices, in order to obtain $ab \in S_i$, to check
that
\begin{equation*}
|a|_i|bt - tb|_i|t'|_i < |ss'|_i = |atbt'|_i
\end{equation*}
but which is a consequence of the condition (qa).
\end{proof}

The crucial consequence of the condition (qa) on which everything
later on relies is the following ``approximative'' Ore condition.

\begin{proposition}\label{approximative}
For any $\epsilon > 0$ and any $(s,a) \in S \times A$ we have:
\begin{itemize}
 \item[i.] There is a pair $(t,b) \in S \times A$ such that
\begin{equation*}
|at - sb|_i \leq \epsilon |a|_i|t|_i\qquad\text{(resp.}\ |ta -
bs|_i \leq \epsilon |a|_i|t|_i)
\end{equation*}
and $|s|_i|b|_i \leq |a|_i|t|_i$ for any $1 \leq i \leq m$; if
$\epsilon < 1$ then $|s|_i|b|_i = |a|_i|t|_i$;
 \item[ii.] if in i. we have $a \in S$ and $\epsilon < 1$ then $b \in S_1
\cap\ldots\cap S_m$.
\end{itemize}
\end{proposition}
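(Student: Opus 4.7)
My plan is to adapt Springer's iterative construction for algebraic microlocalization \cite{Spr} to the normed setting, running the recursion simultaneously for all $m$ norms. Since our family of norms is finite, I may fix a uniform constant $\gamma := \max_i \gamma_i < 1$ witnessing the quasi-abelian inequality in every norm at once. The natural choice is to take $t := s^n$ for some integer $n$ to be determined from $\epsilon$, and to build $b$ by recursion; each step will strip off an additional factor of $\gamma$ from the residue, uniformly in $i$, thanks to quasi-abelianness.

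For the left-Ore version in (i), I would set $b_1 := a$ and, having produced $b_n$, let $r_n := as^n - sb_n$ and define $b_{n+1} := b_n s + r_n$. A direct computation using $as^{n+1} = (as^n)s = (sb_n + r_n)s$ yields
\begin{equation*}
as^{n+1} - sb_{n+1} = r_n s - s r_n = [r_n, s],
\end{equation*}
and the quasi-abelian bound gives $|[r_n,s]|_i \leq \gamma |r_n|_i |s|_i$; by induction (base case $n=1$: quasi-abelianness applied directly to $a$ and $s$) one obtains $|as^n - sb_n|_i \leq \gamma^n |a|_i |s|_i^n = \gamma^n |a|_i|t|_i$ for every $i$. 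Choosing $n$ with $\gamma^n \leq \min(\epsilon, \gamma)$ and setting $(t,b) := (s^n, b_n)$ delivers the desired pair. The right-Ore version is entirely symmetric, using $b_{n+1} := sb_n + r_n$ where now $r_n := s^n a - b_n s$; the same commutator identity produces the factor $\gamma$ per step.

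The second inequality follows at once from the nonarchimedean triangle inequality: writing $sb = at - (at - sb)$ gives $|sb|_i \leq \max(|at|_i, |at-sb|_i) \leq |a|_i|t|_i$ whenever $\epsilon \leq 1$, which we may always arrange by enlarging $n$. When $\epsilon < 1$ the strict inequality $|at - sb|_i < |at|_i$ forces equality $|sb|_i = |at|_i$, whence $|s|_i|b|_i = |a|_i|t|_i$ by multiplicativity. For (ii), assume $a \in S$ and $\epsilon < 1$; then $at \in S$ by multiplicative closedness of $S$, and the estimate $|sb - at|_i < |at|_i$ is exactly the condition exhibiting $b \in S_i$ via the symmetric form of the definition recalled immediately after the definition of $S_i$ (with $at$ and $s$ playing the roles of the generic $s$ and $t$ there). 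Since this holds for every $i$, $b$ lies in $S_1 \cap \cdots \cap S_m$.

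I do not expect any real obstacle. The essential point is recognizing the identity $r_n s - sr_n = [r_n,s]$ as the source of an extra factor of $\gamma$ per iteration; the mild novelty over the classical filtered case is that by taking the common bound $\gamma = \max_i \gamma_i$ one recursion produces a single pair $(t,b)$ that approximates the Ore condition simultaneously in all $m$ norms.
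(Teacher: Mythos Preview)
Your proof is correct and follows the same core idea as the paper: take $t := s^n$ and show that the residue $as^n - sb$ is the $n$-fold iterated commutator of $a$ with $s$, bounded by $\gamma^n|a|_i|s|_i^n$ in every norm. The paper packages this via the closed-form binomial identities $as^n = \sum_{j=0}^{n-1}\binom{n}{j}s^{n-j}a_j + a_n$ and $a_n = \sum_{j=0}^n(-1)^j\binom{n}{j}s^j a s^{n-j}$ (with $a_n$ the iterated commutator), whereas your recursion $b_{n+1} = b_n s + r_n$ yielding $r_{n+1} = [r_n,s]$ produces exactly the same $b$ and residue more directly and lets you read off $|s|_i|b|_i \leq |a|_i|t|_i$ from the ultrametric inequality rather than from the explicit formula; your treatment of part~(ii) via the symmetric description of $S_i$ matches the paper's.
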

\begin{proof}
i. Put $a_0 := a$ and $a_n := a_{n-1}s - sa_{n-1}$ for $n \geq 1$.
By (qa) we have a constant $0 < \gamma < 1$ such that
\begin{equation*}
|a_n|_i \leq \gamma |a_{n-1}|_i|s|_i \qquad\text{and hence}\qquad
|a_n|_i \leq \gamma^n |s|_i^n|a|_i
\end{equation*}
for any $n \geq 0$ and any $1 \leq i \leq m$. We therefore find an
$\ell \in \mathbb{N}$ such that $|a_\ell|_i \leq \epsilon |a|_i
|s^\ell|_i$ for any $1 \leq i \leq m$. By induction with respect
to $n$ one checks that
\begin{equation}\label{1}
as^n = \sum_{j = 0}^{n-1} {n \choose j} s^{n-j}a_j + a_n
\end{equation}
and
\begin{equation}\label{2}
a_n = \sum_{j=0}^n (-1)^j {n \choose j} s^j a s^{n-j}\ .
\end{equation}
We put $t := s^\ell$ and $b := \sum_{j=0}^{\ell -1} {\ell \choose
j} s^{\ell -j-1}a_j$ and obtain from \eqref{1} that $at = sb +
a_\ell$ and hence
\begin{equation*}
|at - sb|_i \leq \epsilon |a|_i|t|_i
\end{equation*}
for any $1 \leq i \leq m$. By \eqref{2} we have $|a_n|_i \leq
|s|_i^n |a|_i$ for any $n \geq 0$ and therefore $|b|_i \leq
|s^{\ell -1}|_i |a|_i = |s|_i^{-1} |t|_i |a|_i$. The stated
identity in case $\epsilon < 1$ is obvious. The second half of the
assertion is shown analogously. ii. This is clear.
\end{proof}

\begin{corollary}\label{cor-approx}
For any $\epsilon > 0$ and any $x \in S \times A$ there is a $\xi
\in S \times A$ such that $\Delta_i(x,\xi) \leq \epsilon$ for any
$1 \leq i \leq m$.
\end{corollary}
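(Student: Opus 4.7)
The plan is to deduce this corollary directly from Proposition \ref{approximative}(i) by adjusting the $\epsilon$ parameter to absorb the difference between the normalization $|a|_i|t|_i$ appearing there and the normalization $|s|_i|t|_i$ required in $\Delta_i$.

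First I would dispose of the degenerate case $a=0$ (with $x=(s,a)$): take $\xi=(s,0)\in S\times A$, then $at-sb=0$, so $\Delta_i(x,\xi)=0\le\epsilon$ for all $i$. Henceforth assume $a\ne 0$. Since each $|\cdot|_i$ is multiplicative (hence $A$ is a domain) and $s\in S$ with $0\notin S$, we have $|a|_i>0$ and $|s|_i>0$ for every $1\le i\le m$. Because the family is finite, the real number
\begin{equation*}
\epsilon':=\epsilon\cdot\min_{1\le i\le m}\frac{|s|_i}{|a|_i}
\end{equation*}
is strictly positive.

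Next I would apply Proposition \ref{approximative}(i) with this $\epsilon'$ to obtain a pair $\xi=(t,b)\in S\times A$ satisfying
\begin{equation*}
|at-sb|_i\le\epsilon'\,|a|_i\,|t|_i\qquad\text{for every }1\le i\le m.
\end{equation*}
Dividing both sides by $|s|_i|t|_i$ (which is nonzero) gives
\begin{equation*}
\Delta_i(x,\xi)=|s|_i^{-1}|t|_i^{-1}|at-sb|_i\le\epsilon'\cdot\frac{|a|_i}{|s|_i}\le\epsilon
\end{equation*}
for each $i$, by the choice of $\epsilon'$. This is exactly the conclusion.

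There is no real obstacle here: the content of the statement lies entirely in Proposition \ref{approximative}(i); the corollary is essentially a bookkeeping exercise ensuring that one can pass from the bound $\epsilon|a|_i|t|_i$ to $\epsilon|s|_i|t|_i$ uniformly in the finitely many norms, which is possible because $\min_i |s|_i/|a|_i$ is a finite positive constant depending only on the fixed input $x$.
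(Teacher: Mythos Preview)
Your proof is correct and is precisely the straightforward deduction from Proposition~\ref{approximative}(i) that the paper has in mind; the paper states the corollary without proof, and your rescaling $\epsilon' = \epsilon\cdot\min_i |s|_i/|a|_i$ (together with the trivial case $a=0$) is exactly the routine bookkeeping needed to pass from the bound $\epsilon|a|_i|t|_i$ to the required $\epsilon|s|_i|t|_i$.
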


We now introduce, for any $1 \leq i \leq m$, the function
\begin{equation*}
d_i(x,y) := \inf_{\xi \in S_i \times A}
\max(\Delta_i(x,\xi),\Delta_i(y,\xi))
\end{equation*}
on $(S_i \times A)^2$ as well as the function
\begin{equation*}
d(x,y) := \max(d_1(x,y),\ldots,d_m(x,y))
\end{equation*}
on $(S \times A)^2$. Obviously we have $d(x,y) = d(y,x) \geq 0$.
Furthermore, it follows from Cor.\ \ref{cor-approx} that $d(x,x) =
0$.

\begin{proposition}\label{triangle}
For any $x,y,z \in S \times A$ we have
\begin{equation*}
d(x,z) \leq \max(d(x,y),d(y,z))\ .
\end{equation*}
\end{proposition}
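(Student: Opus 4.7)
Since $d = \max_i d_i$, the inequality reduces to proving $d_i(x,z) \leq \max(d_i(x,y), d_i(y,z))$ for each fixed $i$. Fix such an $i$ and drop the subscript; write $S^*$ for $S_i$ and $x = (s,a)$, $y = (t,b)$, $z = (u,c)$. The main additional input I will need is that Proposition 1.5.i extends verbatim to pairs $(s,a) \in S^* \times A$ with conclusion $(t,b) \in S^* \times A$: the given proof uses only quasi-abelianness and multiplicative closure of the denominator set, both of which hold for $S^*$ by Lemma \ref{multclosed}.

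Given $\epsilon > 0$, choose approximators $\xi = (s',a')$ and $\eta = (t',b')$ in $S^* \times A$ realizing the infima in $d(x,y)$ and $d(y,z)$ to within $\epsilon$. The plan is to build a \emph{common refinement} of $\xi$ and $\eta$: apply the approximative Ore condition to $(t',s') \in S^* \times A$ to obtain $(p,q) \in S^* \times A$ with $|s'p - t'q| \leq \epsilon|s'||p|$ and $|t'||q| = |s'||p|$, and set $\zeta := (s'p, a'p)$. Then $\Delta(x,\zeta) \leq d(x,y) + \epsilon$ is immediate by multiplicativity, since the factor $p$ cancels. To bound $\Delta(z,\zeta)$ I would telescope
\[
c s' p - u a' p \;=\; c(s'p - t'q) + (ct' - ub')q + u(b'q - a'p),
\]
and further telescope $t(a'p - b'q)$ through the two approximators of $y$ using the estimates on $bs' - ta'$, $b(s'p - t'q)$, and $bt' - tb'$, dividing by $|t|$ via multiplicativity of the norm at the end. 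Each resulting term is controlled either by $d(x,y) + \epsilon$, by $d(y,z) + \epsilon$, or by $\epsilon$ times a fixed quantity depending only on $|b|/|t|$ or $|c|/|u|$. Letting $\epsilon \to 0$ yields the inequality.

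The main obstacle is avoiding error terms of fixed size $\gamma \cdot \max(|b|/|t|, |c|/|u|)$ which would otherwise arise if one tried, say, $\zeta = (s't', a't')$ and invoked the quasi-abelian estimate directly on $s't' - t's'$; such fixed-size errors would spoil the ultrametric conclusion, since $\gamma < 1$ is a constant and not arbitrarily small. The role of the common refinement is precisely to replace the commutation relation by the Ore relation $s'p \approx t'q$, whose error is under our $\epsilon$-control and thus passes through the telescopes harmlessly. The remaining work is essentially bookkeeping of the multiplicative factors $|s|, |t|, |u|, |s'|, |t'|, |p|, |q|$ through the telescopes, using only the multiplicativity of the norms and condition (qa); no sharper property is needed.
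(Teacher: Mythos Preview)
Your proposal is correct and follows essentially the same approach as the paper: reduce to a single $i$, pick approximators $\xi,\eta$ for the two distances, apply the approximative Ore condition (extended to $S_i$) to the pair of denominators to build a common refinement, then telescope. The only cosmetic differences are that the paper uses the witness $(\sigma v,\beta d)$ (in your notation $(s'p,b'q)$) rather than your $(s'p,a'p)$, and frames the argument via a fixed $\gamma_0>\max(d_i(x,y),d_i(y,z))$ with $\epsilon$ chosen so that $\epsilon|b|/|t|,\,\epsilon|c|/|u|\leq\gamma_0$, rather than letting $\epsilon\to 0$ at the end; the telescoping computations are mirror images of each other.
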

\begin{proof}
It certainly suffices to establish the inequality
\begin{equation*}
d_i(x,z) \leq \max(d_i(x,y),d_i(y,z))
\end{equation*}
for each individual $1 \leq i \leq m$. Given any constant
$\gamma_0
> \max(d_i(x,y),d_i(y,z))$ we have to show that $d_i(x,z) <
\gamma_0$. Let $x = (s,a)$, $y = (t,b)$, and $z = (u,c)$. We find
$\xi = (\sigma,\alpha)$ and $\eta = (\tau,\beta)$ in $S_i \times
A$ such that $\Delta_i(x,\xi)$, $\Delta_i(y,\xi)$,
$\Delta_i(y,\eta)$, and $\Delta_i(z,\eta)$ all are smaller than
$\gamma_0$, i.e., such that
\begin{gather*}
 |a\sigma - s\alpha|_i < \gamma_0 |s|_i|\sigma|_i\ , \ |b\sigma -
 t\alpha|_i < \gamma_0 |t|_i|\sigma|_i\ ,\\
 |b\tau - t\beta|_i < \gamma_0 |t|_i|\tau|_i\ , \ |c\tau - u\beta|_i
 < \gamma_0 |u|_i|\tau|_i\ .
\end{gather*}
We choose a $0 < \epsilon < 1$ such that $\epsilon |b|_i \leq
\gamma_0 |t|_i$ and $\epsilon |c|_i \leq \gamma_0 |u|_i$. By
Prop.\ \ref{approximative}.i applied to the multiplicative set
$S_i$ and $(\tau,\sigma) \in S_i \times A$ there is a pair $(v,d)
\in S_i \times A$ such that
\begin{equation*}
|\sigma v - \tau d|_i < \epsilon |\sigma|_i|v|_i < |\sigma|_i|v|_i
= |\tau|_i|d|_i \ .
\end{equation*}
It follows that
\begin{align*}
 |t\alpha v - t\beta d|_i & = |t\alpha v - b\sigma v + b\sigma v -
 b\tau d + b\tau d - t\beta d|_i \\
 & < \max(\gamma_0 |t|_i|\sigma|_i|v|_i,\epsilon
 |b|_i|\sigma|_i|v|_i,\gamma_0 |t|_i|\tau|_i|d|_i) \\
 & = \gamma_0 |t|_i|\sigma|_i|v|_i\ .
\end{align*}
By the multiplicativity of $|\ |_i$ we then must have
\begin{equation*}
|\alpha v - \beta d|_i < \gamma_0 |\sigma|_i|v|_i\ .
\end{equation*}
We therefore obtain
\begin{align*}
 |s\beta d - a\sigma v|_i & = |s\beta d - s\alpha v + s\alpha v -
 a\sigma v|_i \\
 & < \gamma_0 |s|_i|\sigma|_i|v|_i = \gamma_0 |s|_i|\sigma v|_i
\end{align*}
and
\begin{align*}
 |u\beta d - c\sigma v|_i & = |u\beta d - c\tau d + c\tau d -
 c\sigma v|_i \\
 & < \max(\gamma_0
 |u|_i|\tau|_i|d|_i,\epsilon|c|_i|\sigma|_i|v|_i) \\
 & = \gamma_0 |u|_i|\sigma|_i|v|_i = \gamma_0 |u|_i|\sigma v|_i \
 .
\end{align*}
This means that
\begin{equation*}
d_i(x,z) \leq \max(\Delta_i(x,(\sigma v,\beta
d)),\Delta_i(z,(\sigma v,\beta d))) < \gamma_0 \ .
\end{equation*}
\end{proof}

We obtain that $d$ is a pseudometric on the set $S \times A$. The
object of interest in this paper is the Hausdorff completion of
the pseudometric space $(S \times A,d)$. But first we give a few
explicit formulae for $d$.

\begin{lemma}\label{formulae}
For any $1 \leq i \leq m$ and any $a,b \in A$ and $s,t \in S_i$ we
have:
\begin{enumerate}
 \item[i.] $d_i((s,a),(ts,ta)) = 0$;
 \item[ii.] $d_i((s,a),(s,b)) = |s|_i^{-1}|a - b|_i$;
 \item[iii.] $d_i((s,a),(1,0)) = |s|_i^{-1}|a|_i$;
 \item[iv.] $d_i((s,a),(t,a)) \leq |s|_i^{-1}|t|_i^{-1}|a|_i|s - t|_i$.
\end{enumerate}
\end{lemma}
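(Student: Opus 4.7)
The plan is to evaluate $\Delta_i$ on well-chosen auxiliary elements $\xi$ and invoke the approximative Ore condition (Corollary \ref{cor-approx} and Proposition \ref{approximative}) to collapse the infimum defining $d_i$.

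For (i), the key observation is that multiplicativity of $|\cdot|_i$ yields $\Delta_i((ts,ta),\xi) = \Delta_i((s,a),\xi)$ for every $\xi = (\sigma, \alpha)$, since $|ta\sigma - ts\alpha|_i = |t|_i|a\sigma - s\alpha|_i$ cancels against $|ts|_i = |t|_i|s|_i$ in the denominator. Hence $d_i((s,a),(ts,ta)) = \inf_\xi \Delta_i((s,a),\xi)$, which vanishes by Corollary \ref{cor-approx}. For (ii), the lower bound $d_i \geq |s|_i^{-1}|a - b|_i$ drops out of a one-line ultrametric estimate on $(a - b)\sigma = (a\sigma - s\alpha) - (b\sigma - s\alpha)$. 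For the matching upper bound I will invoke Corollary \ref{cor-approx} to produce a sequence $\xi_n = (\sigma_n, \alpha_n)$ with $\Delta_i((s, a), \xi_n) \to 0$; the identity $b\sigma_n - s\alpha_n = (b - a)\sigma_n + (a\sigma_n - s\alpha_n)$ combined with the strict ultrametric inequality then pins $\Delta_i((s, b), \xi_n)$ to exactly $|s|_i^{-1}|a - b|_i$ for $n$ large (the degenerate case $b = a$ is covered by (i) with $t = 1$). Then (iii) follows from (i), (ii), and the triangle inequality of Proposition \ref{triangle}, routing $(s, a) \to (s, 0) \to (1, 0)$, with the lower bound supplied by the same ultrametric trick applied to $a\sigma = (a\sigma - s\alpha) + s\alpha$.

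The hard part is (iv), where the two entries $(s, a)$ and $(t, a)$ no longer share a first coordinate and none of (i)--(iii) applies verbatim. Here I would apply Proposition \ref{approximative}.i to the pair $(s, a) \in S_i \times A$ (using that $S_i$ is itself multiplicatively closed by Lemma \ref{multclosed}) to produce, for arbitrary $\epsilon > 0$, a pair $(u, b) \in S_i \times A$ satisfying $|au - sb|_i \leq \epsilon|a|_i|u|_i$ together with $|s|_i|b|_i \leq |a|_i|u|_i$. The telescoping identity $au - tb = (au - sb) + (s - t)b$ then bounds $|au - tb|_i$ by $\max(\epsilon|a|_i|u|_i, |s - t|_i|a|_i|u|_i/|s|_i)$, which for $\epsilon < |s - t|_i/|s|_i$ collapses to the second summand. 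Dividing by $|t|_i|u|_i$ yields $\Delta_i((t, a), (u, b)) \leq |s|_i^{-1}|t|_i^{-1}|a|_i|s - t|_i$, while simultaneously $\Delta_i((s, a), (u, b)) \leq \epsilon|a|_i/|s|_i \to 0$. Taking the infimum gives the claimed bound. The delicate point is that a single application of Proposition \ref{approximative}.i yields one $(u, b)$ controlling both $\Delta_i$-values at once; without this uniformity the argument would fall apart.
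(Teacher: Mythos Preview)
Your proposal is correct and follows essentially the same route as the paper. Parts (i), (ii), and (iv) are argued identically: the same auxiliary $\xi$ produced by Proposition~\ref{approximative}.i (applied to the multiplicatively closed set $S_i$), the same telescoping identities, and the same passage to the limit $\epsilon \to 0$. In (iii) you verify the lower bound directly via $a\sigma = (a\sigma - s\alpha) + s\alpha$, whereas the paper simply notes that in an ultrametric $d_i((s,0),(1,0)) = 0$ forces $d_i((s,a),(1,0)) = d_i((s,a),(s,0))$; both are one-line arguments.
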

\begin{proof}
i. For any $\xi \in A\setminus\{0\} \times A$ we have
\begin{equation*}
\Delta_i((ts,ta),\xi) = \Delta_i((s,a),\xi) \ .
\end{equation*}
Hence $d_i((s,a),(ts,ta)) = \mathop{\inf}\limits_{\xi \in S_i
\times A} \Delta_i((s,a),\xi)$ and the assertion follows from
Cor.\ \ref{cor-approx} applied to $S_i \times A$.

ii. For $\xi = (\sigma,\alpha) \in A\setminus\{0\} \times A$ we
have
\begin{align*}
 \max(\Delta_i((s,a),\xi),\Delta_i((s,b),\xi)) & =
 |s|_i^{-1}|\sigma|_i^{-1} \max(|a\sigma - s\alpha|_i,|b\sigma -
 s\alpha|_i) \\
 & \geq |s|_i^{-1}|\sigma|_i^{-1} |a\sigma - b\sigma|_i =
 |s|_i^{-1} |a-b|_i \ .
\end{align*}
Hence $d_i((s,a),(s,b)) \geq |s|_i^{-1} |a-b|_i$. On the other
hand we have
\begin{align*}
 \Delta_i((s,b),\xi) & = |s|_i^{-1}|\sigma|_i^{-1}|b\sigma -
 s\alpha|_i \\
 & = |s|_i^{-1}|\sigma|_i^{-1}|b\sigma - a\sigma + a\sigma -
 s\alpha|_i \\
 & \leq \max(|s|_i^{-1}|b-a|_i,\Delta_i((s,a),\xi)) \ .
\end{align*}
Using Cor.\ \ref{cor-approx} (applied to $S_i \times A$) it
follows that $d_i((s,a),(s,b)) \leq |s|_i^{-1} |a-b|_i$.

iii. As a special case of ii. we have $d_i((s,a),(s,0)) =
|s|_i^{-1}|a|_i$. We also have $d_i((1,0),(s,0)) = 0$ by i. It
then follows from Prop.\ \ref{triangle} that $d_i((s,a),(1,0)) =
d_i((s,a),(s,0)) = |s|_i^{-1}|a|_i$.

iv. For $\xi = (\sigma,\alpha) \in A\setminus\{0\} \times A$ we
have
\begin{align*}
 \Delta_i((t,a),\xi) & = |t|_i^{-1}|\sigma|_i^{-1}
 |a\sigma - t\alpha|_i \\
 & \leq |t|_i^{-1}|\sigma|_i^{-1}\max(|a\sigma -
 s\alpha|_i,|s-t|_i|\alpha|_i) \ .
\end{align*}
By Prop.\ \ref{approximative}.i there is a $\xi \in S_i \times A$,
for any $\epsilon
> 0$, such that
\begin{equation*}
|a\sigma - s\alpha|_i \leq \epsilon|a|_i|\sigma|_i
\qquad\text{and}\qquad |\alpha|_i \leq |s|_i^{-1}|a|_i|\sigma|_i \
.
\end{equation*}
For such a $\xi$ we have
\begin{equation*}
\Delta_i((t,a),\xi) \leq |t|_i^{-1}\max(\epsilon
|a|_i,|s|_i^{-1}|a|_i|s-t|_i) \ .
\end{equation*}
It follows that
\begin{align*}
 d_i((s,a),(t,a)) & \leq
 \max(\Delta_i((s,a),\xi),\Delta_i((t,a),\xi)) \\
 & = \max(|s|_i^{-1}|\sigma|_i^{-1}|a\sigma - s\alpha|_i,
 \Delta_i((t,a),\xi)) \\
 & \leq \max(\epsilon|s|_i^{-1}|a|_i,\epsilon|t|_i^{-1}|a|_i,
 |s|_i^{-1}|t|_i^{-1}|a|_i|s-t|_i)
\end{align*}
for any $\epsilon > 0$ and hence $d_i((s,a),(t,a)) \leq
|s|_i^{-1}|t|_i^{-1}|a|_i|s-t|_i$.
\end{proof}

Let now $\mathcal{C}(S \times A)$ denote the set of all Cauchy
sequences $(x_j)_{j \in \mathbb{N}}$ (w.r.t.\ $d$) in $S \times
A$. It contains $S \times A$ via the constant sequences. The
pseudometric $d$ extends to $\mathcal{C}(S \times A)$ by
\begin{equation*}
d((x_j)_j,(y_j)_j) := \lim_{j \rightarrow \infty} d(x_j,y_j) \ .
\end{equation*}
We let $B := A\langle S;|\ |_1,\ldots,|\ |_m\rangle$ denote the
quotient of $\mathcal{C}(S \times A)$ by the equivalence relation
\begin{equation*}
(x_j)_j \sim (y_j)_j \quad\text{if}\quad d((x_j)_j,(y_j)_j) = 0 \
.
\end{equation*}
The pseudometric $d$ passes to a metric on $B$ which we again
denote by $d$. The metric space $(B,d)$ together with the obvious
map $S \times A \longrightarrow B$ is the Hausdorff completion of
the pseudometric space $(S \times A,d)$. We let $s^{-1}a \in B$
denote the image of $(s,a) \in S \times A$. Obviously
\begin{equation*}
s^{-1}a = t^{-1}b \qquad\text{if and only if}\qquad d((s,a),(t,b))
= 0 \ .
\end{equation*}
By Lemma \ref{formulae}.ii the composed map
\begin{align*}
 (A,\max(|\ |_1,\ldots,|\ |_m)) & \longrightarrow  (B,d) \\
 \hfill a & \longmapsto  1^{-1}a =: a
\end{align*}
is an isometry.

In the following we will show that $B$ naturally is a $K$-Banach
algebra. We begin with the scalar multiplication by $K$. On the
set $S \times A$ we have a $K$-action given by
\begin{equation*}
\lambda(s,a) := (s,\lambda a) \ .
\end{equation*}
It satisfies
\begin{equation*}
d(\lambda x,\lambda y) = |\lambda|d(x,y)
\end{equation*}
and therefore extends to a $K$-action
\begin{equation*}
\lambda(x_j)_j := (\lambda x_j)_j
\end{equation*}
on $\mathcal{C}(S \times A)$ which in turn descends to a
$K$-action on $B$. The natural map $A \longrightarrow B$ is
$K$-equivariant.

Next we construct the addition, and begin with the following
immediate consequence of Lemma \ref{formulae}.ii.

\begin{remark}\label{rem2}
For any $s,t \in S$ and $a_1,a_2,b_1,b_2 \in A$ we have
\begin{equation*}
d((s,a_1 + a_2),(s,b_1 + b_2)) \leq
\max(d((s,a_1),(s,b_1)),d((s,a_2),(s,b_2))) \ .
\end{equation*}
\end{remark}

\begin{lemma}\label{add1}
Let $0 < \epsilon < 1$; for $s,t \in S$ and $a,b,c,d \in A$ such
that
\begin{equation*}
d((s,a),(t,c))\leq \epsilon\quad\text{and}\quad d((s,b),(t,d))
\leq \epsilon
\end{equation*}
we have
\begin{equation*}
d((s,a+b),(t,c+d)) \leq \epsilon \max_{1 \leq i \leq m}
\max(1,|s|_i^{-1}|a|_i,|s|_i^{-1}|b|_i,|t|_i^{-1}|c|_i,|t|_i^{-1}|d|_i)\
.
\end{equation*}
\end{lemma}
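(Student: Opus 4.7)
The plan is to produce, for each index $i \in \{1,\dots,m\}$, a single element $\zeta_i \in S_i \times A$ which simultaneously approximates both $(s,a+b)$ and $(t,c+d)$ in the $\Delta_i$-sense, and then to let an auxiliary accuracy $\delta > 0$ tend to zero. Fix $\delta > 0$. Since $d_i$ is defined as an infimum, the two hypotheses let me pick $\xi_i = (\sigma_i,\alpha_i) \in S_i \times A$ with $\Delta_i((s,a),\xi_i),\Delta_i((t,c),\xi_i) \leq \epsilon + \delta$, and $\eta_i = (\tau_i,\beta_i) \in S_i \times A$ with $\Delta_i((s,b),\eta_i),\Delta_i((t,d),\eta_i) \leq \epsilon + \delta$. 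The denominators $\sigma_i$ and $\tau_i$ now have to be brought to a common multiple; this is exactly what Proposition \ref{approximative}(i), applied inside the multiplicatively closed set $S_i$ (Lemma \ref{multclosed}) to the pair $(\tau_i,\sigma_i)$ with accuracy parameter $\epsilon_0 := \epsilon \in (0,1)$, accomplishes: one obtains $(v_i,e_i) \in S_i \times A$ with $|\sigma_i v_i - \tau_i e_i|_i \leq \epsilon |\sigma_i|_i|v_i|_i$ and $|\tau_i|_i|e_i|_i \leq |\sigma_i|_i|v_i|_i$. I then set $\zeta_i := (\sigma_i v_i,\, \alpha_i v_i + \beta_i e_i) \in S_i \times A$.

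The core of the proof is the three-term identity
\begin{equation*}
(a+b)\sigma_i v_i - s(\alpha_i v_i + \beta_i e_i) \;=\; (a\sigma_i - s\alpha_i)v_i \,+\, b(\sigma_i v_i - \tau_i e_i) \,+\, (b\tau_i - s\beta_i)e_i,
\end{equation*}
together with its analogue for $(t,c+d)$ obtained by substituting $(s,a,b) \mapsto (t,c,d)$. Writing $M_i := \max(1,|s|_i^{-1}|a|_i,|s|_i^{-1}|b|_i,|t|_i^{-1}|c|_i,|t|_i^{-1}|d|_i)$, the three summands above are bounded in $|\cdot|_i$-norm by $(\epsilon+\delta)|s|_i|\sigma_i v_i|_i$, $\epsilon|b|_i|\sigma_i v_i|_i$, and $(\epsilon+\delta)|s|_i|\sigma_i v_i|_i$ respectively, the last bound using $|\tau_i|_i|e_i|_i \leq |\sigma_i|_i|v_i|_i$. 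Dividing by $|s|_i|\sigma_i v_i|_i$ and observing that $\epsilon|s|_i^{-1}|b|_i \leq \epsilon M_i$ (regardless of whether $|s|_i^{-1}|b|_i \leq 1$ or $> 1$, since $M_i \geq \max(1,|s|_i^{-1}|b|_i)$) yields $\Delta_i((s,a+b),\zeta_i) \leq (\epsilon+\delta)M_i$. The symmetric computation with $(s,a,b) \mapsto (t,c,d)$ and the estimate $\epsilon|t|_i^{-1}|d|_i \leq \epsilon M_i$ gives $\Delta_i((t,c+d),\zeta_i) \leq (\epsilon+\delta)M_i$. Hence $d_i((s,a+b),(t,c+d)) \leq (\epsilon+\delta)M_i$, and letting $\delta \downarrow 0$ and then maximising over $i$ concludes the argument.

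The main subtlety, and the only place where cleverness is required, is the choice of the auxiliary accuracy as $\epsilon_0 = \epsilon$ (rather than some crude absolute constant): only with this fine tuning is the ``mismatch'' summand $b(\sigma_i v_i - \tau_i e_i)$, which is the unique term carrying the bare factor $|b|_i$ (and, symmetrically, $|d|_i$), small enough to be absorbed into $(\epsilon + \delta) M_i$. This explains both the appearance of $|s|_i^{-1}|b|_i$ and $|t|_i^{-1}|d|_i$ inside the maximum on the right-hand side and the r\^ole of the hypothesis $\epsilon < 1$, which is needed precisely so that $\epsilon_0 = \epsilon < 1$ is admissible in Proposition \ref{approximative}.
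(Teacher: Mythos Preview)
Your proof is correct. The three-term identity is right, the bounds on each summand are verified (including the crucial use of $|\tau_i|_i|e_i|_i \leq |\sigma_i|_i|v_i|_i$ from Proposition~\ref{approximative}.i for the third summand), and the $\delta \downarrow 0$ limiting argument is sound.

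The route differs from the paper's. The paper first applies Proposition~\ref{approximative} once to the pair $(t,s) \in S \times A$, obtaining a single common denominator $(u,v)$ valid for \emph{all} indices $i$ simultaneously; it then argues via the chain $(s,a+b) \rightsquigarrow (us,u(a+b)) \rightsquigarrow (us,v(c+d)) \rightsquigarrow (vt,v(c+d)) \rightsquigarrow (t,c+d)$, invoking Lemma~\ref{formulae}.i, ii, iv and Remark~\ref{rem2} along the way. You instead work index by index: for each $i$ you first extract approximants $\xi_i,\eta_i$ from the very definition of $d_i$ as an infimum, and only then apply the approximative Ore condition inside $S_i$ to merge \emph{their} denominators $\sigma_i,\tau_i$. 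Your approach is more self-contained (it does not invoke Lemma~\ref{formulae} at all) and the algebraic identity makes the mechanism transparent; the price is the extra $\delta$ and the per-index bookkeeping. One small remark: your closing sentence slightly overstates the role of the hypothesis $\epsilon < 1$ in \emph{your} argument, since Proposition~\ref{approximative}.i already gives the inequality $|\tau_i|_i|e_i|_i \leq |\sigma_i|_i|v_i|_i$ for arbitrary $\epsilon_0 > 0$; so your proof in fact goes through for any $\epsilon > 0$, which is harmless.
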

\begin{proof}
Applying Prop.\ \ref{approximative} to $(t,s)$ we find $(u,v) \in
S \times S_1\cap\ldots\cap S_m$ such that
\begin{equation*}
|us - vt|_i \leq \epsilon |u|_i|s|_i < |u|_i|s|_i = |v|_i|t|_i
\end{equation*}
for any $1 \leq i \leq m$. We claim that
\begin{equation*}
d_i((s,a),(us,vc)) \leq \epsilon \max(1,|s|_i^{-1}|a|_i) \ .
\end{equation*}
For any $\xi = (\sigma,\alpha) \in S_i \times A$ we have
\begin{align*}
 |vc\sigma - us\alpha|_i & = |v(c\sigma - t\alpha) + (vt -
 us)\alpha|_i \\
 & \leq \max(|t|_i^{-1}|u|_i|s|_i|c\sigma - t\alpha|_i, \epsilon
 |u|_i|s|_i|\alpha|_i)
\end{align*}
and hence
\begin{align*}
 \Delta_i((us,vc),\xi) & \leq \max(|t|_i^{-1}|\sigma|_i^{-1}
 |c\sigma - t\alpha|_i, \epsilon |\sigma|_i^{-1}|\alpha|_i) \\
 & = \max(\Delta_i((t,c),\xi),\epsilon |\sigma|_i^{-1}|\alpha|_i) \
 .
\end{align*}
Since $d((s,a),(t,c)) \leq \epsilon$ we find a $\xi$ with
$\max(\Delta_i((s,a),\xi),\Delta_i((t,c),\xi)) \leq \epsilon$. We
conclude that
\begin{align*}
 d_i((s,a),(us,vc)) & \leq
 \max(\Delta_i((s,a),\xi),\Delta_i((us,vc),\xi)) \\
 & \leq \epsilon \max(1,|\sigma|_i^{-1}|\alpha|_i) \ .
\end{align*}
But $\Delta_i((s,a),\xi) \leq \epsilon$ also implies that
$|\sigma|_i^{-1}|\alpha|_i \leq \max(|s|_i^{-1}|a|_i,\epsilon)$.
Hence our claim (observe that in case $|s|_i^{-1}|a|_i < \epsilon
< 1$ we have $\epsilon \max(1,|\sigma|_i^{-1}|\alpha|_i) =
\epsilon = \epsilon \max(1,|s|_i^{-1}|a|_i)$. By the same
reasoning we also have
\begin{equation*}
d_i((s,b),(us,vd)) \leq \epsilon \max(1,|s|_i^{-1}|b|_i) \ .
\end{equation*}
Applying Lemma \ref{formulae}.i to the element $v \in S_1
\cap\ldots\cap S_m$ we obtain
\begin{multline*}
 d_i((s,a+b),(t,c+d)) = d_i((s,a+b),(vt,v(c+d))) \\
 \leq
 \max(d_i((s,a+b),(us,v(c+d))),d_i((us,v(c+d)),(vt,v(c+d)))) \ .
\end{multline*}
Using again Lemma \ref{formulae}.i and Remark \ref{rem2} we have
\begin{equation*}
\begin{split}
 & d_i((s,a+b),(us,v(c+d))) = d_i((us,u(a+b)),(us,v(c+d))) \\
 & \qquad \leq \max(d_i((us,ua),(us,vc)),d_i((us,ub),(us,vd))) \\
 & \qquad = \max(d_i((s,a),(us,vc)),d_i((s,b),(us,vd))) \\
 & \qquad \leq \epsilon \max(1,|s|_i^{-1}|a|_i,|s|_i^{-1}|b|_i) \ .
\end{split}
\end{equation*}
According to Lemma \ref{formulae}.iv we have
\begin{equation*}
\begin{split}
 & d_i((us,v(c+d)),(vt,v(c+d))) \\
 & \qquad\leq
 |u|_i^{-1}|s|_i^{-1}|v|_i^{-1}|t|_i^{-1}|v|_i|c+d|_i|us - vt|_i) \\
 & \qquad \leq \epsilon |t|_i^{-1}|c+d|_i \ .
\end{split}
\end{equation*}
It follows that
\begin{equation*}
d_i((s,a+b),(t,c+d)) \leq \epsilon
\max(1,|s|_i^{-1}|a|_i,|s|_i^{-1}|b|_i,|t|_i^{-1}|c+d|_i) \ .
\end{equation*}
This finishes the proof.
\end{proof}

\begin{corollary}\label{add2}
For $s,t \in S$ and $a,b,c,d \in A$ such that $s^{-1}a = t^{-1}c$
and $s^{-1}b = t^{-1}d$ we have $s^{-1}(a+b) = t^{-1}(c+d)$.
\end{corollary}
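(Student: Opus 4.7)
The plan is to obtain Corollary \ref{add2} as an immediate consequence of the quantitative estimate in Lemma \ref{add1}, by letting the parameter $\epsilon$ tend to $0$. Recall that the equality $s^{-1}a = t^{-1}c$ in $B$ is by definition equivalent to $d((s,a),(t,c))=0$, and similarly $s^{-1}b = t^{-1}d$ means $d((s,b),(t,d))=0$. Thus, for every $0 < \epsilon < 1$, both pairs $((s,a),(t,c))$ and $((s,b),(t,d))$ satisfy the hypotheses of Lemma \ref{add1}.

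The crucial point is that the right-hand side of the conclusion of Lemma \ref{add1} is the product of $\epsilon$ with the constant
\[
C := \max_{1 \leq i \leq m} \max\bigl(1,|s|_i^{-1}|a|_i,|s|_i^{-1}|b|_i,|t|_i^{-1}|c|_i,|t|_i^{-1}|d|_i\bigr),
\]
which depends only on the fixed data $s,t,a,b,c,d$ and not on $\epsilon$. Applying Lemma \ref{add1} therefore yields
\[
d\bigl((s,a+b),(t,c+d)\bigr) \leq \epsilon \cdot C
\]
for every $0 < \epsilon < 1$. Letting $\epsilon \to 0$ forces $d((s,a+b),(t,c+d)) = 0$, which by definition is the desired equality $s^{-1}(a+b) = t^{-1}(c+d)$ in $B$.

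There is no genuine obstacle here, since all of the work has already been absorbed into Lemma \ref{add1}; the corollary is just the qualitative statement obtained by passing from the $\epsilon$-approximate additivity to an exact equality in the Hausdorff completion. The only thing worth checking is that the constant $C$ is indeed finite, which is automatic because the maximum is taken over the finitely many norms $|\cdot|_1,\dots,|\cdot|_m$ and each $|s|_i, |t|_i$ is nonzero (as $s,t \in S$ and each $|\cdot|_i$ is multiplicative).
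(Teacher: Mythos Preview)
Your proof is correct and follows exactly the same approach as the paper's own proof: translate the hypotheses into $d((s,a),(t,c)) = d((s,b),(t,d)) = 0$, apply Lemma~\ref{add1} for arbitrary $0 < \epsilon < 1$, and let $\epsilon \to 0$. The paper's proof is simply a two-sentence compression of what you have written out in detail.
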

\begin{proof}
Our assumption amounts to $d((s,a),(t,c)) = d((s,b),(t,d)) = 0$.
The previous lemma then implies that $d((s,a+b),(t,c+d)) = 0$.
\end{proof}

At this point we introduce, for any $n \in \mathbb{N}$, the subset
\begin{multline*}
 \qquad\qquad P^{(n)} :=\{(e_1,\ldots,e_n) \in B^n : \text{there are
 $a_1,\ldots,a_n \in A$ and $s \in S$ such} \\
 \text{that $e_j = s^{-1}a_j$ for any $1 \leq j
 \leq n$}\} \qquad\qquad\qquad
\end{multline*}
of $B^n$. Of course, $P^{(1)}$ is just the image of the natural
map $S \times A \longrightarrow B$. By Lemma \ref{formulae}.iii
the element
\begin{equation*}
0 := s^{-1}0
\end{equation*}
is well defined in $P^{(1)}$ (i.e., is independent of the choice
of $s \in S$). Cor.\ \ref{add2} says that the map
\begin{align*}
 P^{(2)} & \longrightarrow  B \\
 (e,f) & \longmapsto  e+f := s^{-1}(a+b)\ \ \text{if}\ e = s^{-1}a,
 f = s^{-1}b
\end{align*}
is well defined. We obviously have:
\begin{enumerate}
 \item[1.] $e+f = f+e$,
 \item[2.] $e+0 = e$,
 \item[3.] $e+(-1)e = 0$,
 \item[4.] $(e+f)+g = e+(f+g)$ for $(e,f,g) \in P^{(3)}$,
 \item[5.] $\lambda(e+f) = \lambda e + \lambda f$ for $\lambda \in
 K$,
 \item[6.] $(\lambda + \mu)e = \lambda e + \mu e$ for $\lambda,\mu \in
 K$.
\end{enumerate}
For any $e \in B$ we put
\begin{equation*}
|e| := d(e,0) \ .
\end{equation*}
It follows from Lemma \ref{formulae}.ii and iii that for $(e,f)
\in P^{(2)}$ we have
\begin{enumerate}
 \item[7.] $d(e,f) = |e-f|$.
\end{enumerate}
With this notation we may rephrase Lemma \ref{add1} as follows.

\begin{corollary}\label{add3}
Let $0 < \epsilon < 1$; for any $(e_1,f_1)$ and $(e_2,f_2)$ in
$P^{(2)}$ such that
\begin{equation*}
d(e_1,e_2)\leq \epsilon\quad\text{and}\quad d(f_1,f_2) \leq
\epsilon
\end{equation*}
we have
\begin{equation*}
d(e_1 + f_1,e_2 + f_2) \leq \epsilon
\max(1,|e_1|,|e_2|,|f_1|,|f_2|)\ .
\end{equation*}
\end{corollary}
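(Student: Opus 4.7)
The plan is to reduce the statement to Lemma \ref{add1} by choosing representatives with common denominators, and then translate the right-hand side into the intrinsic ``absolute value'' $|\cdot|$ on $B$ using Lemma \ref{formulae}. First, since $(e_1,f_1)$ and $(e_2,f_2)$ lie in $P^{(2)}$, by the definition of $P^{(2)}$ I may pick $s,t\in S$ and $a,b,c,d\in A$ with $e_1=s^{-1}a$, $f_1=s^{-1}b$, $e_2=t^{-1}c$, and $f_2=t^{-1}d$. The hypotheses $d(e_1,e_2)\leq\epsilon$ and $d(f_1,f_2)\leq\epsilon$ then read $d((s,a),(t,c))\leq\epsilon$ and $d((s,b),(t,d))\leq\epsilon$, which are exactly the hypotheses of Lemma \ref{add1}. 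Moreover, by the definition of the addition map $P^{(2)}\to B$ given just before the statement, we have $e_1+f_1=s^{-1}(a+b)$ and $e_2+f_2=t^{-1}(c+d)$, so that $d(e_1+f_1,e_2+f_2)=d((s,a+b),(t,c+d))$.

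Applying Lemma \ref{add1} therefore yields
\begin{equation*}
d(e_1+f_1,e_2+f_2)\leq\epsilon\max_{1\leq i\leq m}\max\bigl(1,|s|_i^{-1}|a|_i,|s|_i^{-1}|b|_i,|t|_i^{-1}|c|_i,|t|_i^{-1}|d|_i\bigr).
\end{equation*}
It remains to pass from the individual seminorms to the ambient norm $|\cdot|$ on $B$. By Lemma \ref{formulae}.iii one has $d_i((s,a),(1,0))=|s|_i^{-1}|a|_i$, so that $\max_i|s|_i^{-1}|a|_i=\max_i d_i(s^{-1}a,0)=d(e_1,0)=|e_1|$; and analogously for $b,c,d$. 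Pulling $\max_i$ inside past the outer maximum (which is permissible since the constant $1$ is independent of $i$) turns the bound into $\epsilon\max(1,|e_1|,|f_1|,|e_2|,|f_2|)$, as required.

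I expect no serious obstacle: the content is entirely in Lemma \ref{add1}, and the corollary merely repackages it in the $B$-notation. The only verification that deserves any care is the interchange of $\max_i$ with the outer $\max(1,\ldots)$ and the identification $\max_i|s|_i^{-1}|a|_i=|e_1|$, both of which follow immediately from the definitions $|e|:=d(e,0)$ and $d:=\max_i d_i$ together with Lemma \ref{formulae}.iii.
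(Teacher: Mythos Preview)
Your proof is correct and is exactly the unpacking the paper has in mind: the paper itself gives no separate argument but simply introduces $|e|:=d(e,0)$ and then states that ``with this notation we may rephrase Lemma~\ref{add1} as follows,'' which is precisely your reduction via the definition of $P^{(2)}$ and Lemma~\ref{formulae}.iii.
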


This corollary shows that the map $+ : P^{(2)} \longrightarrow B$
is continuous and extends continuously to the closure of $P^{(2)}$
in $B^2$ (cf.\ \cite{B-GT} II\S3.6 Prop.\ 11 and IX\S2.3).

\begin{lemma}\label{add4}
$P^{(n)}$, for any $n \in \mathbb{N}$, is dense in $B^n$ (for the
product topology).
\end{lemma}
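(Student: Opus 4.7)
\textbf{The plan} is to argue by induction on $n$, with the base case $n=1$ being just the density of $S\times A$ (hence of $P^{(1)}$) in $B$, which is built into the construction of $B$ as the Hausdorff completion of $(S\times A,d)$. For the induction step the essential tool is the approximative Ore condition of Proposition \ref{approximative}: given common denominators arising separately from approximations of $(e_1,\dots,e_{n-1})$ and of $e_n$, that proposition together with Lemma \ref{formulae}.i and iv will let me replace them by a single common denominator at the cost of a prescribed error.

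\textbf{Execution.} Fix $(e_1,\dots,e_n)\in B^n$ and $\epsilon>0$. By the inductive hypothesis I choose $(s',c_1,\dots,c_{n-1})\in S\times A^{n-1}$ with $d(e_j,(s')^{-1}c_j)<\epsilon$ for all $j<n$, and by the $n=1$ case I choose $(s_n,a_n)\in S\times A$ with $d(e_n,s_n^{-1}a_n)<\epsilon$. Apply the left form of Proposition \ref{approximative}.i to $(s,a):=(s_n,s')\in S\times S$ with a parameter $0<\delta<1$ to be fixed later: this produces $(t,b)\in S\times A$ with $b\in\bigcap_{i=1}^m S_i$ and
\begin{equation*}
|ts'-bs_n|_i\le\delta\,|s'|_i|t|_i\qquad\text{for every }1\le i\le m.
\end{equation*}
Set $s:=ts'\in S$ and take as the candidate in $P^{(n)}$ the tuple $\bigl(s^{-1}(tc_1),\dots,s^{-1}(tc_{n-1}),s^{-1}(ba_n)\bigr)$. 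Lemma \ref{formulae}.i applied to $t\in S\subseteq S_i$ gives $s^{-1}(tc_j)=(s')^{-1}c_j$ in $B$ for every $j<n$, so the first $n-1$ coordinates already lie within $\epsilon$ of $e_1,\dots,e_{n-1}$.

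\textbf{Last coordinate and error bound.} It remains to estimate $d(s^{-1}(ba_n),e_n)$. Lemma \ref{formulae}.i applied to $b\in\bigcap_i S_i$ gives $(bs_n)^{-1}(ba_n)=s_n^{-1}a_n$ in $B$, so it suffices to compare the representatives $(ts',ba_n)$ and $(bs_n,ba_n)$, which share the numerator $ba_n$ and have denominators in $S_i$. Lemma \ref{formulae}.iv together with the multiplicativity of $|\cdot|_i$ then yields
\begin{equation*}
d_i\bigl((ts',ba_n),(bs_n,ba_n)\bigr)\le|ts'|_i^{-1}|bs_n|_i^{-1}|ba_n|_i\,|ts'-bs_n|_i\le\delta\,|s_n|_i^{-1}|a_n|_i,
\end{equation*}
all the factors of $|b|_i$, $|s'|_i$, $|t|_i$ cancelling out. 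Choosing $\delta<1$ small enough that $\delta\max_{i}|s_n|_i^{-1}|a_n|_i<\epsilon$ and invoking Proposition \ref{triangle} (the ultrametric inequality for $d$) gives $d(s^{-1}(ba_n),e_n)<\epsilon$, completing the induction.

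\textbf{Main obstacle.} The delicate point is that in the noncommutative setting no exact common denominator exists inside $S$; the whole argument hinges on the precise cancellation between the error bound from Proposition \ref{approximative} and the factor $|ts'|_i^{-1}|bs_n|_i^{-1}|ba_n|_i$ appearing in Lemma \ref{formulae}.iv, which is what leaves only the harmless quantity $\delta|s_n|_i^{-1}|a_n|_i$ depending on nothing but the already fixed approximation of $e_n$ and the freely chosen parameter $\delta$.
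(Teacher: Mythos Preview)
Your proof is correct and follows the same inductive strategy as the paper: use Proposition \ref{approximative} to merge the denominator of the last coordinate with the common denominator already obtained for the first $n-1$. The paper bounds the error on the last coordinate by a second application of Proposition \ref{approximative} (producing a witness $(\tau,\beta)$) and a direct estimate of $\Delta_i$ against that witness, whereas you package this step into Lemma \ref{formulae}.iv; the two computations are essentially equivalent.

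One minor imprecision worth flagging: since $b$ is guaranteed only to lie in $\bigcap_i S_i$, not in $S$, the pair $(bs_n,ba_n)$ does not literally represent an element of $B$, so the line ``$(bs_n)^{-1}(ba_n)=s_n^{-1}a_n$ in $B$'' is an abuse of notation. The precise content you need is that $\Delta_i((bs_n,ba_n),\xi)=\Delta_i((s_n,a_n),\xi)$ for every $\xi$ (this is exactly the observation in the proof of Lemma \ref{formulae}.i), and hence
\[
d_i\bigl((ts',ba_n),(s_n,a_n)\bigr)=d_i\bigl((ts',ba_n),(bs_n,ba_n)\bigr)\le \delta\,|s_n|_i^{-1}|a_n|_i,
\]
after which Proposition \ref{triangle} on $B$ finishes the job. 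This is clearly what you intend, but it is worth stating explicitly since Proposition \ref{triangle} is only formulated on $S\times A$.
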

\begin{proof}
We have to find, for any given $\epsilon > 0$ and $x_1,\ldots,x_n
\in S \times A$, elements $a_1,\ldots,a_n \in A$ and $s \in S$
such that
\begin{equation*}
d(x_j,(s,a_j)) < \epsilon \qquad\text{for any}\ 1 \leq j \leq n\ .
\end{equation*}
We may assume $n > 1$ and, by induction with respect to $n$, also
that $x_1 = (t,b_1),\ldots,x_{n-1} = (t,b_{n-1})$. Let $x_n =
(u,c)$ and choose $0 < \eta < 1$ such that $\eta\cdot
\mathop{\max}\limits_{1 \leq i \leq m} |c|_i|u|_i^{-1} <
\epsilon$. By Prop.\ \ref{approximative}.i there are elements
$(\sigma,\alpha),(\tau,\beta) \in S \times A$ such that
\begin{gather*}
 |\sigma t - \alpha u|_i \leq \eta |\sigma|_i|t|_i <
 |\sigma|_i|t|_i = |\alpha_i|_i|u|_i \\
 |c\tau - u\beta|_i
 \leq \eta |c|_i|\tau|_i \leq |c|_i|\tau|_i = |u|_i|\beta|_i
\end{gather*}
for any $1 \leq i \leq m$. In particular
\begin{align*}
 |\alpha c\tau - \sigma t\beta|_i & \leq \max(|\alpha|_i|c\tau -
 u\beta|_i,|\alpha u - \sigma t|_i|\beta|_i) \\
 & \leq \eta
 \max(|\alpha|_i|c|_i|\tau|_i,|\beta|_i|\sigma|_i|t|_i) \cr
 & = \eta |u|_i^{-1}|c|_i|\tau|_i|\sigma|_i|t|_i \ .
\end{align*}
We put $y_j := (\sigma t,\sigma b_j)$ for $1 \leq j \leq n-1$ and
$y_n := (\sigma t,\alpha c)$. According to Lemma \ref{formulae}.i
we have
\begin{equation*}
d(x_j,y_j) = 0 \qquad\text{for any}\ 1 \leq j \leq n-1 \ .
\end{equation*}
Furthermore
\begin{align*}
 d_i(x_n,y_n) & \leq
 \max(\Delta_i(x_n,(\tau,\beta)),\Delta_i(y_n,(\tau,\beta))) \\
 & = \max(|u|_i^{-1}|\tau|_i^{-1}|c\tau - u\beta|_i,
 |\sigma|_i^{-1}|t|_i^{-1}|\tau|_i^{-1}|\alpha c\tau - \sigma
 t\beta|_i) \\
 & \leq \eta |c|_i|u|_i^{-1} < \epsilon \ .
\end{align*}
\end{proof}

It follows in particular that our map $+ : P^{(2)} \longrightarrow
B$ extends continuously to a map
\begin{align*}
 B \times B & \longrightarrow  B \\
 (e,f) & \longmapsto  e+f
\end{align*}
which satisfies 1.-7. (for the associativity use Lemma \ref{add4}
for $n = 3$). We see that $|\ |$ is a nonarchimedean norm which
makes $B$ into a $K$-Banach space. The natural map
\begin{equation*}
(A,\max(|\ |_1,\ldots,|\ |_m)) \longrightarrow (B,|\ |)
\end{equation*}
is an isometry of normed $K$-vector spaces.

In order to construct the multiplication on $B$ we proceed in a
similar way.

\begin{lemma}\label{mult1}
Let $0 < \epsilon < 1$; for $s,t,u,v \in S$ and $a,b,c,d \in A$
such that
\begin{equation*}
d((s,at),(u,cv))\leq \epsilon\quad\text{and}\quad d((t,b),(v,d))
\leq \epsilon
\end{equation*}
we have
\begin{equation*}
 d((s,ab),(u,cd))
 \leq \epsilon \mathop{\max}\limits_{1 \leq i \leq m}
 \max(1,\frac{|at|_i}{|s|_i},\frac{|at|_i}{|s|_i} \frac{|b|_i}
 {|t|_i}, \frac{|b|_i}{|t|_i},
 \frac{|cv|_i}{|u|_i}, \frac{|cv|_i}{|u|_i} \frac{|d|_i}{|v|_i},
 \frac{|d|_i}{|v|_i})\ .
\end{equation*}
\end{lemma}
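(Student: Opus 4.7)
The plan is to mirror closely the strategy of Lemma \ref{add1}: since $d = \max_i d_i$ and each $d_i$ is itself an infimum over $S_i \times A$, I would fix an index $i$ and bound $d_i((s,ab),(u,cd))$ by exhibiting a single witness $(\sigma,\alpha) \in S_i \times A$ making both $\Delta_i((s,ab),(\sigma,\alpha))$ and $\Delta_i((u,cd),(\sigma,\alpha))$ small, then take the maximum over $i$ at the end. The witness will be built from the data furnished by the two hypotheses together with one application of the approximative Ore condition of Prop.\ \ref{approximative}.i.

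More precisely, given $\delta>0$ (to be sent to zero eventually), I would choose $(\sigma_1,\alpha_1) \in S_i \times A$ realizing the first hypothesis up to $\delta$,
\begin{equation*}
|at\sigma_1 - s\alpha_1|_i \leq (\epsilon+\delta)|s|_i|\sigma_1|_i, \qquad |cv\sigma_1 - u\alpha_1|_i \leq (\epsilon+\delta)|u|_i|\sigma_1|_i,
\end{equation*}
and analogously $(\sigma_2,\alpha_2) \in S_i \times A$ from the second hypothesis with $|b\sigma_2 - t\alpha_2|_i \leq (\epsilon+\delta)|t|_i|\sigma_2|_i$ and $|d\sigma_2 - v\alpha_2|_i \leq (\epsilon+\delta)|v|_i|\sigma_2|_i$. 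Then I would apply Prop.\ \ref{approximative}.i inside the multiplicatively closed set $S_i$ (cf.\ Lemma \ref{multclosed}) to the pair $(\sigma_1,\alpha_2)\in S_i \times A$, with parameter $\epsilon':=\epsilon+\delta<1$, obtaining $(\tau,\beta)\in S_i\times A$ with $|\alpha_2\tau - \sigma_1\beta|_i \leq \epsilon'|\alpha_2|_i|\tau|_i$ and (crucially, since $\epsilon'<1$) the equality $|\sigma_1|_i|\beta|_i = |\alpha_2|_i|\tau|_i$. The candidate common witness is $(\sigma,\alpha):=(\sigma_2\tau,\alpha_1\beta)\in S_i\times A$.

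The central computation is the telescoping identity
\begin{equation*}
ab\sigma_2\tau - s\alpha_1\beta = e_1\beta + at\,e_3 + a\,e_2\tau,
\end{equation*}
where $e_1:=at\sigma_1 - s\alpha_1$, $e_2:=b\sigma_2 - t\alpha_2$, $e_3:=\alpha_2\tau - \sigma_1\beta$. After dividing each error term by $|s|_i|\sigma_2|_i|\tau|_i$ and using multiplicativity of $|\ |_i$, they contribute respectively $(\epsilon+\delta)|\sigma_1|_i|\beta|_i/(|\sigma_2|_i|\tau|_i)$, $(\epsilon+\delta)|at|_i|\alpha_2|_i/(|s|_i|\sigma_2|_i)$, and $(\epsilon+\delta)|at|_i/|s|_i$. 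The equality coming from Prop.\ \ref{approximative}.i collapses the first to $(\epsilon+\delta)|\alpha_2|_i/|\sigma_2|_i$; the approximate identity $b\sigma_2 \approx t\alpha_2$ together with multiplicativity gives $|\alpha_2|_i/|\sigma_2|_i \leq \max(1,|b|_i/|t|_i)$, so the first term is $\leq (\epsilon+\delta)\max(1,|b|_i/|t|_i)$ and the second is $\leq (\epsilon+\delta)\max\bigl(|at|_i/|s|_i,(|at|_i|b|_i)/(|s|_i|t|_i)\bigr)$. Combining, $\Delta_i((s,ab),(\sigma,\alpha)) \leq (\epsilon+\delta)\max(1,|at|_i/|s|_i)\max(1,|b|_i/|t|_i)$.

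An entirely parallel expansion of $cd\sigma_2\tau - u\alpha_1\beta$, using the $(u,cv,v,d)$-data on the \emph{same} witness $(\sigma,\alpha)$, yields the symmetric bound $\Delta_i((u,cd),(\sigma,\alpha)) \leq (\epsilon+\delta)\max(1,|cv|_i/|u|_i)\max(1,|d|_i/|v|_i)$. Taking the maximum of the two, then the maximum over $i$, and letting $\delta\to 0$ gives the stated inequality. The main subtlety is bookkeeping rather than a new idea: the \emph{a priori} uncontrolled quantity $|\beta|_i$ appearing in the error $e_1\beta$ must be converted, via the Ore equality $|\sigma_1|_i|\beta|_i=|\alpha_2|_i|\tau|_i$, into $|\alpha_2|_i$ so that it can in turn be estimated through the second hypothesis; and the middle term $ate_3$ is precisely what produces the mixed ``cross'' factor $(|at|_i/|s|_i)(|b|_i/|t|_i)$ in the final bound.
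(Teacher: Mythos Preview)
Your proposal is correct and follows essentially the same strategy as the paper: fix $i$, take witnesses $(\sigma_1,\alpha_1)$ and $(\sigma_2,\alpha_2)$ from the two hypotheses, apply the approximative Ore condition (Prop.\ \ref{approximative}.i) once to align them, and use the resulting pair $(\sigma_2\tau,\alpha_1\beta)$ as the common witness for both $\Delta_i((s,ab),\cdot)$ and $\Delta_i((u,cd),\cdot)$ via the same telescoping decomposition. Your variables $(\sigma_1,\alpha_1,\sigma_2,\alpha_2,\tau,\beta)$ correspond exactly to the paper's $(\tau,\beta,\sigma,\alpha,\rho,\kappa)$; your explicit $\delta\to 0$ step is a minor refinement over the paper, which tacitly treats the infimum as attained.
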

\begin{proof}
Let $0 \leq i \leq m$. We find $\xi = (\sigma,\alpha)$ and $\eta =
(\tau,\beta)$ in $S_i \times A$ such that
\begin{gather*}
 |b\sigma - t\alpha|_i \leq \epsilon |\sigma|_i|t|_i\ ,\ |d\sigma -
 v\alpha|_i \leq \epsilon |\sigma|_i|v|_i , \\
 |at\tau - s\beta|_i \leq \epsilon |\tau|_i|s|_i\ ,\ |cv\tau -
 u\beta|_i \leq \epsilon |\tau|_i|u|_i \ .
\end{gather*}
In particular, we have $|\sigma|_i^{-1}|\alpha|_i \leq
\max(|t|_i^{-1}|b|_i,\epsilon) \leq \max(|t|_i^{-1}|b|_i,1)$ and
also $|\sigma|_i^{-1}|\alpha|_i \leq \max(|v|_i^{-1}|d|_i,1)$.
Using Prop.\ \ref{approximative}.i (for $S_i$) we choose
$(\rho,\kappa) \in S_i \times A$ such that
\begin{equation*}
|\alpha\rho - \tau\kappa|_i \leq \epsilon|\alpha|_i|\rho|_i\ ,\
|\tau|_i|\kappa|_i = |\alpha|_i|\rho|_i \ .
\end{equation*}
We then have
\begin{align*}
 |ab\sigma\rho - s\beta\kappa|_i & = |a(b\sigma - t\alpha )\rho +
 at(\alpha\rho - \tau\kappa ) + (at\tau -s\beta )\kappa|_i \\
 & \leq \epsilon \max(|a|_i|\sigma|_i|t|_i|\rho|_i,
 |a|_i|t|_i|\alpha|_i|\rho|_i, |s|_i|\alpha|_i|\rho|_i)
\end{align*}
and hence
\begin{align*}
 \Delta_i((s,ab),(\sigma\rho,\beta\kappa)) & \leq \epsilon
 \max(|s|_i^{-1}|at|_i,|s|_i^{-1}|at|_i|\sigma|_i^{-1}|\alpha|_i,
 |\sigma|_i^{-1}|\alpha|_i) \\
 & \leq \epsilon \max(|s|_i^{-1}|at|_i,|s|_i^{-1}|at|_i|t|_i^{-1}|b|_i,
 |t|_i^{-1}|b|_i,1) \ .
\end{align*}
Similarly
\begin{align*}
 |cd\sigma\rho - u\beta\kappa|_i & = |c(d\sigma - v\alpha)\rho +
 cv(\alpha\rho - \tau\kappa) + (cv\tau - u\beta)\kappa|_i \\
 & \leq \epsilon \max(|c|_i|\sigma|_i|v|_i|\rho|_i,
 |c|_i|v|_i|\alpha|_i|\rho|_i, |u|_i|\alpha|_i|\rho|_i)
\end{align*}
and
\begin{align*}
 \Delta_i((u,cd),(\sigma\rho,\beta\kappa)) & \leq \epsilon
 \max(|u|_i^{-1}|cv|_i,|u|_i^{-1}|cv|_i|\sigma|_i^{-1}|\alpha|_i,
 |\sigma|_i^{-1}|\alpha|_i) \\
 & \leq \epsilon \max(|u|_i^{-1}|cv|_i,|u|_i^{-1}|cv|_i|v|_i^{-1}|d|_i,
 |v|_i^{-1}|d|_i,1) \ .
\end{align*}
We obtain
\begin{equation*}
 d_i((s,ab),(u,cd))
 \leq \epsilon
 \max(1,\frac{|at|_i}{|s|_i},\frac{|at|_i}{|s|_i} \frac{|b|_i}
 {|t|_i}, \frac{|b|_i}{|t|_i},
 \frac{|cv|_i}{|u|_i}, \frac{|cv|_i}{|u|_i} \frac{|d|_i}{|v|_i},
 \frac{|d|_i}{|v|_i})\ .
\end{equation*}
\end{proof}

\begin{corollary}\label{mult2}
For $s,t,u,v \in S$ and $a,b,c,d \in A$ such that $s^{-1}(at) =
u^{-1}(cv)$ and $t^{-1}b = v^{-1}d$ we have $s^{-1}(ab) =
u^{-1}(cd)$.
\end{corollary}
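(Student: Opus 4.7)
The plan is to mimic exactly the passage from Lemma \ref{add1} to Corollary \ref{add2}, but now using Lemma \ref{mult1} in place of Lemma \ref{add1}. The two hypotheses $s^{-1}(at)=u^{-1}(cv)$ and $t^{-1}b=v^{-1}d$ translate directly into the two distance vanishings
\begin{equation*}
d((s,at),(u,cv))=0 \quad\text{and}\quad d((t,b),(v,d))=0,
\end{equation*}
so in particular both quantities are bounded above by any prescribed $0<\epsilon<1$. Lemma \ref{mult1} then produces an inequality of the form
\begin{equation*}
d((s,ab),(u,cd)) \le \epsilon \cdot M(a,b,c,d,s,t,u,v),
\end{equation*}
where $M$ is a finite constant, a maximum over $1\le i\le m$ of certain ratios of the norms $|\cdot|_i$ of the fixed elements $a,b,c,d,s,t,u,v$, but crucially independent of $\epsilon$. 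Letting $\epsilon\to 0$ forces $d((s,ab),(u,cd))=0$, which by definition of $B=A\langle S;|\ |_1,\dots,|\ |_m\rangle$ means exactly $s^{-1}(ab)=u^{-1}(cd)$.

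There is essentially no obstacle here once Lemma \ref{mult1} is available; it is a purely formal corollary. The only very minor subtlety is that Lemma \ref{mult1} requires $\epsilon<1$, but this is harmless since we are free to let $\epsilon$ shrink to $0$, and the constant $M$ does not depend on $\epsilon$. Thus the conclusion holds as stated.
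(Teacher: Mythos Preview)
Your proof is correct and follows exactly the approach the paper intends: the corollary is stated without proof in the paper, but the analogous Corollary~\ref{add2} is proved in precisely this way (translate the hypotheses into $d=0$ statements, apply the preceding lemma with $\epsilon$ arbitrary, and let $\epsilon\to 0$). Your observation that the constant $M$ in Lemma~\ref{mult1} is independent of $\epsilon$ is exactly the point that makes the limit argument go through.
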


This corollary says that on the subset
\begin{equation*}
Q := \{(s^{-1}a,t^{-1}b) \in (P^{(1)})^2 : a \in At\}
\end{equation*}
of $B^2$ the map
\begin{align*}
 Q & \longrightarrow  B \\
 (e,f) & \longmapsto  e\cdot f := s^{-1}(ab)\ \ \text{if}\ e = s^{-1}(at),
 f = t^{-1}b
\end{align*}
is well defined.

\begin{corollary}\label{mult3}
Let $0 < \epsilon < 1$; for any $(e_1,f_1)$ and $(e_2,f_2)$ in $Q$
such that
\begin{equation*}
d(e_1,e_2)\leq \epsilon\quad\text{and}\quad d(f_1,f_2) \leq
\epsilon
\end{equation*}
we have
\begin{equation*}
d(e_1 \cdot f_1,e_2 \cdot f_2) \leq \epsilon
\max(1,|e_1|,|e_1||f_1|,|f_1|,|e_2|,|e_2||f_2|,|f_2|)\ .
\end{equation*}
\end{corollary}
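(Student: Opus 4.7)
The plan is to deduce Corollary \ref{mult3} as a direct translation of Lemma \ref{mult1}, using the observation that the norm $|\cdot|$ on $B$ is the supremum over $i$ of the individual $d_i$-distances to $0$.

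First, I would unwind the definition of $Q$: given $(e_1,f_1)$ and $(e_2,f_2)$ in $Q$, I may pick representatives $e_1=s^{-1}(at)$, $f_1=t^{-1}b$, $e_2=u^{-1}(cv)$, $f_2=v^{-1}d$ with $s,t,u,v\in S$ and $a,b,c,d\in A$, and then by definition $e_1\cdot f_1=s^{-1}(ab)$ and $e_2\cdot f_2=u^{-1}(cd)$. The assumption $d(e_1,e_2)\leq\epsilon$ is the same as $d((s,at),(u,cv))\leq\epsilon$, and $d(f_1,f_2)\leq\epsilon$ is $d((t,b),(v,d))\leq\epsilon$, so the hypotheses of Lemma \ref{mult1} are exactly satisfied and I may apply that lemma to obtain
\begin{equation*}
d(e_1\cdot f_1,e_2\cdot f_2)\leq\epsilon\cdot\max_{1\leq i\leq m}\max\bigl(1,\tfrac{|at|_i}{|s|_i},\tfrac{|at|_i}{|s|_i}\tfrac{|b|_i}{|t|_i},\tfrac{|b|_i}{|t|_i},\tfrac{|cv|_i}{|u|_i},\tfrac{|cv|_i}{|u|_i}\tfrac{|d|_i}{|v|_i},\tfrac{|d|_i}{|v|_i}\bigr).
\end{equation*}

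Next I would rewrite the right-hand side in terms of the $B$-norms of $e_1,f_1,e_2,f_2$. By Lemma \ref{formulae}.iii, for any representative $g=\sigma^{-1}\alpha$ of an element of $P^{(1)}$ and each $i$ one has $d_i(g,0)=|\sigma|_i^{-1}|\alpha|_i$, and since $|g|=d(g,0)=\max_i d_i(g,0)$ this yields $|e_1|=\max_i|s|_i^{-1}|at|_i$, $|f_1|=\max_i|t|_i^{-1}|b|_i$, and likewise for $e_2,f_2$. After interchanging the outer $\max_i$ with the finite inner $\max$, the four single-ratio terms become precisely $|e_1|,|f_1|,|e_2|,|f_2|$; for the two mixed terms I would apply the elementary inequality $\max_i(\alpha_i\beta_i)\leq(\max_i\alpha_i)(\max_i\beta_i)$ for nonnegative reals, giving
\begin{equation*}
\max_i\tfrac{|at|_i}{|s|_i}\tfrac{|b|_i}{|t|_i}\leq|e_1||f_1|\quad\text{and}\quad\max_i\tfrac{|cv|_i}{|u|_i}\tfrac{|d|_i}{|v|_i}\leq|e_2||f_2|.
\end{equation*}
Putting these estimates together yields exactly the asserted bound.

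There is essentially no obstacle here beyond the bookkeeping; the content is already packed into Lemma \ref{mult1}, and the only genuine point is the submultiplicativity of $\max_i$ on products, which handles the cross terms $|e_1||f_1|$ and $|e_2||f_2|$. It is worth noting that the bound does not depend on the chosen representatives because $|\cdot|$ descends to $B$; hence the statement is well-posed on $Q$.
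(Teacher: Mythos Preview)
Your proposal is correct and is exactly the intended argument: the paper states Corollary~\ref{mult3} without proof because it is meant as the direct translation of Lemma~\ref{mult1} into the language of $|\cdot|$ on $B$, via Lemma~\ref{formulae}.iii and the trivial inequality $\max_i(\alpha_i\beta_i)\leq(\max_i\alpha_i)(\max_i\beta_i)$ for the mixed terms. Your bookkeeping is accurate, and the remark that the bound is independent of representatives (since $|\cdot|$ descends to $B$) is a sensible closing observation.
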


This corollary shows that the map $\cdot : Q \longrightarrow B$ is
continuous and extends continuously to the closure of $Q$ in
$B^2$.

\begin{lemma}\label{mult4}
For any given $t \in S$ the set $\{s^{-1}(at) : a \in A, s \in
S\}$ is dense in $B$.
\end{lemma}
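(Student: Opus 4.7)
The plan is to reduce the statement to the density of $S \times A$ in $B$ and then apply the ``resp.''\ version of the approximative Ore condition (Proposition \ref{approximative}.i). Since by construction $B$ is the Hausdorff completion of the pseudometric space $(S \times A, d)$, the image of $S \times A$ is dense in $B$, so it suffices to prove that for every $(s,a) \in S \times A$ and every $\epsilon > 0$ there exist $a' \in A$ and $s' \in S$ with $d((s,a),(s',a't)) \leq \epsilon$.

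Given such $(s,a)$ and $\epsilon > 0$ (the case $a = 0$ being trivial), I would first pick $\epsilon' > 0$ small enough that $\epsilon' \max_{1 \leq i \leq m} |s|_i^{-1}|a|_i \leq \epsilon$. Applying the ``resp.''\ variant of Proposition \ref{approximative}.i to the input pair $(t,a) \in S \times A$ with this $\epsilon'$, I obtain an output pair $(u,b) \in S \times A$ satisfying
\[
|ua - bt|_i \leq \epsilon' |a|_i|u|_i \qquad \text{for every } 1 \leq i \leq m.
\]
The candidate approximant will then be $(s',a't) := (us, bt)$, which is of the required form since $us \in S$ and $b \in A$.

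It remains to estimate $d((s,a),(us,bt))$. By Lemma \ref{formulae}.i one has $d_i((s,a),(us,ua)) = 0$, so by the pseudometric property (Proposition \ref{triangle}) the task reduces to bounding $d_i((us,ua),(us,bt))$. Lemma \ref{formulae}.ii together with the multiplicativity of each $|\cdot|_i$ then yields
\[
d_i((us,ua),(us,bt)) = |us|_i^{-1}|ua - bt|_i = |u|_i^{-1}|s|_i^{-1}|ua - bt|_i \leq \epsilon' |s|_i^{-1}|a|_i.
\]
Taking the maximum over $i$ gives $d((s,a),(us,bt)) \leq \epsilon' \max_i |s|_i^{-1}|a|_i \leq \epsilon$, finishing the proof.

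The only delicate point — and it is really only a matter of bookkeeping — is the choice of the correct left/right form of Proposition \ref{approximative}.i: the ``resp.''\ version controls $|ua - bt|_i$, which is exactly the quantity that appears after using Lemma \ref{formulae}.i to identify $(s,a)$ with $(us, ua)$ (left-multiplication by $u \in S$) and then comparing with $(us, bt)$ via Lemma \ref{formulae}.ii. Using the other variant would leave an uncontrolled commutator $[t,b]$ in the way, so the symmetry built into the definition of saturation is what makes the argument go through cleanly.
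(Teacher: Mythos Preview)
Your proof is correct and follows essentially the same approach as the paper's own proof: both reduce to approximating an arbitrary $(s,a) \in S \times A$ by an element of the form $(us,bt)$, apply the ``resp.'' version of Proposition \ref{approximative}.i to the pair $(t,a)$ to control $|ua - bt|_i$, and then combine Lemma \ref{formulae}.i and ii to bound $d((s,a),(us,bt))$. The only cosmetic difference is that the paper works directly with $\epsilon$ and accepts the final bound $\epsilon \max_i |s|_i^{-1}|a|_i$, whereas you introduce an auxiliary $\epsilon'$ to land exactly on $\epsilon$.
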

\begin{proof}
Let $(u,c) \in S \times A$ and let $\epsilon > 0$. By Prop.\
\ref{approximative}.i we find $(\sigma,\alpha) \in S \times A$
such that $|\sigma c - \alpha t|_i \leq \epsilon |\sigma|_i|c|_i$
for any $1 \leq i \leq m$. Using Lemma \ref{formulae}.i and ii we
obtain
\begin{align*}
 d(u^{-1}c,(\sigma u)^{-1}(\alpha t)) & = d((\sigma u)^{-1}(\sigma
 c),(\sigma u)^{-1}(\alpha t)) \cr
 & = \max_{1 \leq i \leq m}
 |u|_i^{-1}|\sigma|_i^{-1}|\sigma c - \alpha t|_i \cr
 & \leq \epsilon \max_{1 \leq i \leq m}
 |u|_i^{-1}|c|_i \ .
\end{align*}
\end{proof}

This lemma implies that $Q$ is dense in $B^2$. Hence by continuous
extension we have a map
\begin{align*}
 B \times B & \longrightarrow  B \\
 (e,f) & \longmapsto  e\cdot f \ .
\end{align*}

\begin{lemma}\label{mult5}
For any $e,f,g \in B$ we have:
\begin{enumerate}
 \item[i.] $d(e\cdot f,e\cdot g) \leq |e|d(f,g)$ and $e\cdot (f+g) = e\cdot
f + e\cdot g$;
 \item[ii.] $d(e\cdot g,f\cdot g) \leq d(e,f)|g|$ and
$(e+f)\cdot g = e\cdot g + f\cdot g$;
 \item[iii.] $(e\cdot f)\cdot g =
e\cdot (f\cdot g)$.
\end{enumerate}
\end{lemma}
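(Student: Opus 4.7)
The strategy for all three parts is identical: verify the identity or inequality first on a dense subset of $B^2$ or $B^3$ whose elements share the matching denominators required by the defining formula $e \cdot f = s^{-1}(ab)$ (for $e = s^{-1}(at)$, $f = t^{-1}b$), and then pass to the limit using joint continuity of multiplication, which follows from Corollary \ref{mult3}, together with joint continuity of addition from Corollary \ref{add3}.

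For (i), given $e, f, g \in B$ we first use density of $P^{(2)}$ in $B^2$ (Lemma \ref{add4}) to find $(f_n, g_n) = (t_n^{-1}b_n, t_n^{-1}c_n)$ converging to $(f, g)$, and then, with $t_n$ fixed, use Lemma \ref{mult4} to find $e_n = s_n^{-1}(a_n t_n)$ converging to $e$. On such a triple Corollary \ref{add2} gives $f_n + g_n = t_n^{-1}(b_n + c_n)$, and the product formula yields directly
\[
e_n \cdot f_n + e_n \cdot g_n = s_n^{-1}(a_n b_n) + s_n^{-1}(a_n c_n) = s_n^{-1}\bigl(a_n(b_n + c_n)\bigr) = e_n \cdot (f_n + g_n).
\]
For the inequality, Lemma \ref{formulae}.ii and multiplicativity of each $|\cdot|_i$ give
\[
d_i(e_n \cdot f_n, e_n \cdot g_n) = |s_n|_i^{-1}|a_n|_i|b_n - c_n|_i \leq \bigl(|s_n|_i^{-1}|a_n t_n|_i\bigr)\bigl(|t_n|_i^{-1}|b_n - c_n|_i\bigr),
\]
and taking the maximum over $i$ yields $d(e_n \cdot f_n, e_n \cdot g_n) \leq |e_n|\, d(f_n, g_n)$. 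Passing to the limit in $n$ proves (i).

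Part (ii) is treated symmetrically by approximating $(e, f)$ by a pair $(s_n^{-1}(a_n t_n), s_n^{-1}(a_n' t_n))$ with common left denominator $s_n$, together with $g_n = t_n^{-1} b_n$. To produce such an approximation one first approximates $g$ by $t_n^{-1} b_n \in P^{(1)}$ and then, with $t_n$ fixed, approximates $e$ and $f$ separately by elements of the form $s^{-1}(at_n)$ via Lemma \ref{mult4}; finally one uses the approximate Ore condition of Proposition \ref{approximative} to replace the two distinct left denominators by a single common one. The analogous direct computation on the dense subset, combined with continuity, then gives (ii).

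For (iii) we must approximate $(e, f, g)$ by triples of the form $(s_n^{-1}(a_n t_n), t_n^{-1}(b_n u_n), u_n^{-1}c_n)$ with chained matching denominators. This is the main technical obstacle and is produced iteratively: first pick $g_n = u_n^{-1}c_n \in P^{(1)}$ with $g_n \to g$; with this $u_n$ fixed, apply Lemma \ref{mult4} to obtain $f_n = t_n^{-1}(b_n u_n)$ converging to $f$; with this $t_n$ fixed, apply Lemma \ref{mult4} once more to obtain $e_n = s_n^{-1}(a_n t_n)$ converging to $e$. The product formula then gives on the nose
\[
(e_n \cdot f_n) \cdot g_n = s_n^{-1}(a_n b_n u_n) \cdot g_n = s_n^{-1}(a_n b_n c_n) = e_n \cdot t_n^{-1}(b_n c_n) = e_n \cdot (f_n \cdot g_n),
\]
and passing to the limit completes the proof. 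The feasibility of (iii) rests essentially on the flexibility of Lemma \ref{mult4}, which allows one to fix an arbitrary factor or denominator in advance before approximating.
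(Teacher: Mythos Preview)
Your proposal is correct and follows essentially the same approach as the paper's proof: in each part you identify the same dense subset of $B^3$ (built from Lemma~\ref{add4} and Lemma~\ref{mult4}) on which the identity or inequality is immediate from Lemma~\ref{formulae} and the defining formulae for $+$ and $\cdot$, and then pass to the limit using the uniform continuity on bounded sets furnished by Corollaries~\ref{add3} and~\ref{mult3}. The only cosmetic difference is that you spell out the approximation as a diagonal sequence argument while the paper simply asserts density of the relevant subset; the content is the same.
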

\begin{proof}
By continuity all three assertions need only to be checked on an
appropriate dense subset of $B^3$.

i. As a consequence of Lemma \ref{add4} (for $n=2$) and Lemma
\ref{mult4} the set
\begin{equation*}
\{(s^{-1}(at),t^{-1}b,t^{-1}c) \in (P^{(1)})^3 : a,b,c \in A, s,t
\in S\}
\end{equation*}
is dense in $B^3$. For a triple in this set the first inequality
is immediate from Lemma \ref{formulae}.ii and iii and the second
identity follows from the definitions.

ii. As a consequence of Lemma \ref{mult4} the set
$\{(s_1^{-1}(a't),s_2^{-1}(b't),t^{-1}c)\}$ is dense in $B^3$. In
addition, the proof of Lemma \ref{add4} (for $n=2$) shows that a
pair $(s_1^{-1}(a't),s_2^{-1}(b't))$ can be approximated by a pair
of the form $(s^{-1}(at),s^{-1}(bt))$. Hence the set
\begin{equation*}
\{(s^{-1}(at),s^{-1}(bt),t^{-1}c) \in (P^{(1)})^3 : a,b,c \in A,
s,t \in S\}
\end{equation*}
is dense in $B^3$. For a triple in this set the first inequality
again is immediate from Lemma \ref{formulae}.ii and iii and the
second identity again follows from the definitions.

iii. By Lemma \ref{mult4} the set
\begin{equation*}
\{(s^{-1}(at),t^{-1}(bu),u^{-1}c) \in (P^{(1)})^3 : a,b,c \in A,
s,t,u \in S\}
\end{equation*}
is dense in $B^3$. On this subset the asserted associativity is
immediate from the definition of the multiplication.
\end{proof}

We see that the multiplication $\cdot$ in $B$ is distributive and
associative. It is easy to see, using Lemma \ref{formulae}.i and
the density of $Q$, that $1$ is a unit element and that this
multiplication is compatible with the scalar multiplication by
$K$. Finally, $|1| = 1$ and as a consequence of Lemma
\ref{mult5}1.16.i we have $|e\cdot f| \leq |e||f|$. Hence we
conclude that
\begin{equation*}
B\ \text{is a unital $K$-Banach algebra with submultiplicative
norm $|\ |$}
\end{equation*}
and that the natural map $(A,\max(|\ |_1,\ldots,|\ |_m))
\longrightarrow (B,|\ |)$ is an isometric unital homomorphism of
normed $K$-algebras. By construction we have $(s^{-1}1)\cdot s =
s^{-1}s = 1$ and $s\cdot (s^{-1}a) = a$ for any $a \in A$ and $s
\in S$. In particular, the elements of the multiplicative set $S$
become invertible in the ring $B$.

\begin{proposition}[Universal property]\label{univprop}
Let $(D,|\ |_D)$ be a unital $K$-Banach algebra and let $\phi : A
\longrightarrow D$ be any unital homomorphism of $K$-algebras such
that:
\begin{enumerate}
 \item[(i)] $\phi(s) \in D^\times$ for any $s \in S$;
 \item[(ii)] there is a constant $\gamma > 0$ such that
$|\phi(s)^{-1}\phi(a)|_D \leq \gamma \max_{1 \leq i \leq m}
|s|_i^{-1}|a|_i$ for any $s \in S, a \in A$ (in particular, $\phi$
is continuous);
\end{enumerate}
then there is a unique continuous unital homomorphism of
$K$-Banach algebras
\begin{equation*}
\phi_S : A\langle S;|\ |_1,\ldots,|\ |_m\rangle \longrightarrow D
\end{equation*}
such that $\phi_S|A = \phi$. If $|\ |_D$ is submultiplicative and
the constant in $(ii)$ can be chosen to be $\gamma = 1$ then
$\phi_S$ is norm decreasing.
\end{proposition}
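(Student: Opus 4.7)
Uniqueness is immediate: any continuous unital $K$-algebra homomorphism $\phi_S: B \to D$ extending $\phi$ satisfies $\phi_S(s^{-1}a) = \phi_S(s)^{-1}\phi_S(a) = \phi(s)^{-1}\phi(a)$ on the dense subset $P^{(1)} \subset B$ (the image of $S \times A$ in the Hausdorff completion $B$), and is thus pinned down by continuity.

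For existence, the plan is to define $\phi_S(s^{-1}a) := \phi(s)^{-1}\phi(a)$ on $P^{(1)}$ and extend continuously to $B$. The starting observation is that on pairs with a common denominator the estimate is immediate: by Lemma \ref{formulae}.ii,
\[
d(s^{-1}a, s^{-1}b) = \max_i |s|_i^{-1}|a-b|_i,
\]
while condition (ii) yields
\[
|\phi(s)^{-1}\phi(a) - \phi(s)^{-1}\phi(b)|_D = |\phi(s)^{-1}\phi(a-b)|_D \leq \gamma \max_i |s|_i^{-1}|a-b|_i = \gamma\, d(s^{-1}a, s^{-1}b).
\]
Hence $\phi_S$ is $\gamma$-Lipschitz on common-denominator pairs, and by Lemma \ref{add4} (applied with $n=2$) such pairs are dense in $B\times B$.

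The main technical obstacle is to upgrade this common-denominator Lipschitz estimate to uniform continuity on all of $P^{(1)}$, which simultaneously gives well-definedness (on $P^{(1)}$) and the continuous extension to $B$. My approach is, given $(s,a),(t,b) \in S \times A$, to use the proof of Lemma \ref{add4} together with the approximate Ore condition of Prop.\ \ref{approximative} to exhibit, for each $\epsilon > 0$, a common-denominator pair $(\sigma,\alpha_1),(\sigma,\alpha_2) \in S \times A$ with $d((s,a),(\sigma,\alpha_1))$ and $d((t,b),(\sigma,\alpha_2))$ controlled by $\epsilon$. The triangle decomposition
\[
\phi(s)^{-1}\phi(a) - \phi(t)^{-1}\phi(b) = \bigl[\phi(s)^{-1}\phi(a) - \phi(\sigma)^{-1}\phi(\alpha_1)\bigr] + \phi(\sigma)^{-1}\phi(\alpha_1 - \alpha_2) + \bigl[\phi(\sigma)^{-1}\phi(\alpha_2) - \phi(t)^{-1}\phi(b)\bigr]
\]
handles the middle term by the common-denominator estimate, while each outer term is controlled by the algebraic identity
\[
\phi(s)^{-1}\phi(a) - \phi(\sigma)^{-1}\phi(\alpha) = \phi(s)^{-1}\phi(a\sigma - s\alpha)\phi(\sigma)^{-1} + \phi(\sigma)^{-1}\phi(\sigma\alpha - \alpha\sigma)\phi(\sigma)^{-1},
\]
a direct computation from multiplicativity of $\phi$. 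The first summand is controlled via condition (ii) by a multiple of $\Delta_i((s,a),(\sigma,\alpha))$, and the second is a commutator correction bounded via the quasi-abelian hypothesis (qa) on $A$ combined with (ii). The delicate point is that the commutator summand does \emph{not} vanish individually as $\epsilon \to 0$; the heart of the proof is to show that the commutator contributions from the two outer terms cancel out against each other — a cancellation made possible precisely because the same intermediate $\sigma$ is used for both — so that the total error in the triangle decomposition tends to zero.

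Granted this uniform continuity, $\phi_S$ extends uniquely to a continuous map $\phi_S: B \to D$. Its unital $K$-linearity and multiplicativity follow by continuity from the density of $P^{(2)}$ in $B^2$ (Lemma \ref{add4}) and of $Q$ in $B^2$ (Lemma \ref{mult4}), together with the identities $\phi_S(s^{-1}a + s^{-1}b) = \phi(s)^{-1}\phi(a+b) = \phi_S(s^{-1}a) + \phi_S(s^{-1}b)$ on $P^{(2)}$ and $\phi_S(s^{-1}(at) \cdot t^{-1}b) = \phi_S(s^{-1}(ab)) = \phi(s)^{-1}\phi(a)\phi(b) = \phi_S(s^{-1}(at))\,\phi_S(t^{-1}b)$ on $Q$, using that $\phi$ is a $K$-algebra homomorphism. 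Finally, when $\gamma = 1$ and $|\ |_D$ is submultiplicative, the Lipschitz constant on common-denominator pairs is $1$, so the continuous extension is norm-decreasing on all of $B$.
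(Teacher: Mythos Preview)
Your overall strategy---reduce to a common denominator via Prop.~\ref{approximative}/Lemma~\ref{add4} and then estimate---matches the paper's. But your treatment of the outer terms has a real gap when $m \geq 2$. You assert that $\phi(s)^{-1}\phi(a\sigma - s\alpha)\phi(\sigma)^{-1}$ is controlled by $\max_i\Delta_i((s,a),(\sigma,\alpha))$. Condition~(ii) only gives
\[
|\phi(s)^{-1}\phi(a\sigma - s\alpha)|_D \leq \gamma\max_i |s|_i^{-1}|a\sigma - s\alpha|_i = \gamma\max_i |\sigma|_i\,\Delta_i,
\]
and right-multiplying by $\phi(\sigma)^{-1}$ costs another $|\phi(\sigma)^{-1}|_D \leq \gamma\max_j|\sigma|_j^{-1}$ with a possibly \emph{different} index $j$; the factor $(\max_i|\sigma|_i)(\max_j|\sigma|_j^{-1})$ is unbounded. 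Your ``cancellation of commutator contributions'' does not escape this: after collecting terms you are left with $[\cdots]\phi(\sigma)^{-1}$ and the same index mismatch. There is also a secondary issue: you only argue that the error tends to zero with the auxiliary $\epsilon$, but extending from the dense $P^{(1)}$ to the completion $B$ needs a uniform Lipschitz estimate $|\phi(s)^{-1}\phi(a) - \phi(t)^{-1}\phi(b)|_D \leq \gamma_1\,|s^{-1}a - t^{-1}b|$ with $\gamma_1$ independent of all data; this is also what gives well-definedness on $P^{(1)}$.

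The paper resolves both problems by exploiting the ring structure on $B$ that has already been built. It approximates $st^{-1}\in B$ by some $\sigma^{-1}\alpha$ and uses the exact identity
\[
s^{-1}a - t^{-1}b \;=\; (\sigma s)^{-1}(\sigma a - \alpha b) \;-\; (\sigma s)^{-1}(\sigma s - \alpha t)\,t^{-1}b
\]
simultaneously in $B$ and (via $\phi$) in $D$. Each piece is now of the form $u^{-1}c$ or a product of two such, so condition~(ii) applies directly to each factor and yields the global constant $\gamma_1=\max(\gamma,\gamma_0\gamma^2)$. The point is that all estimates are carried out using the single norm $|\cdot|$ on $B$ (e.g.\ $|(\sigma s)^{-1}(\sigma a-\alpha b)|\leq |s^{-1}a-t^{-1}b|$), rather than the individual $|\cdot|_i$ on $A$, and this is exactly what eliminates the index-mismatch obstruction.
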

\begin{proof}
The subset $P^{(1)} = \{s^{-1}a : a \in A, s \in S\}$ is dense in
$B$. Because of $s\cdot (s^{-1}a) = a$ we have to have
$\phi_S(s^{-1}a) = \phi(s)^{-1}\phi(a)$. Hence the uniqueness of
$\phi_S$ is clear. To establish existence let $\gamma_0 > 0$ be a
constant such that $|d_1d_2|_D \leq \gamma_0 |d_1|_D|d_2|_D$ for
any $d_1,d_2 \in D$. We claim that there is a constant $\gamma_1
> 0$ such that
\begin{equation*}
|\phi(s)^{-1}\phi(a) - \phi(t)^{-1}\phi(b)|_D \leq
\gamma_1|s^{-1}a - t^{-1}b|
\end{equation*}
for any $(s,a),(t,b) \in S \times A$. Choose $\eta > 0$ such that
$\eta|b| \leq \eta |t||t^{-1}||b| \leq |s^{-1}|^{-1}|s^{-1}a -
t^{-1}b|$. Since $P^{(1)}$ is dense in $B$ we find
$(\sigma,\alpha) \in S \times A$ such that
\begin{equation*}
|st^{-1} - \sigma^{-1}\alpha| \leq \eta \ .
\end{equation*}
We then have
\begin{equation*}
(\sigma s)^{-1}(\sigma a - \alpha b) - (\sigma s)^{-1}(\sigma s -
\alpha t)t^{-1}b = s^{-1}a - t^{-1}b
\end{equation*}
and
\begin{equation*}
|(\sigma s)^{-1}(\sigma s - \alpha t)t^{-1}b| \leq |s^{-1}|\eta
|b| \leq |s^{-1}a - t^{-1}b| \ .
\end{equation*}
It follows that $|(\sigma s)^{-1}(\sigma a - \alpha b)| \leq
|s^{-1}a - t^{-1}b|$ and hence, by (ii), that
\begin{equation*}
|\phi(\sigma s)^{-1}\phi(\sigma a - \alpha b)|_D \leq \gamma
|s^{-1}a - t^{-1}b| \ .
\end{equation*}
On the other hand
\begin{equation*}
\begin{split}
 & |\phi(\sigma s)^{-1}\phi(\sigma s - \alpha
 t)\phi(t)^{-1}\phi(b)|_D \\
 & \qquad \leq \gamma_0 |\phi(\sigma s)^{-1}\phi(\sigma s - \alpha
 t)|_D|\phi(t)^{-1}\phi(b)|_D \\
 & \qquad \leq \gamma_0 \gamma^2 |(\sigma s)^{-1}(\sigma s - \alpha
 t)||t^{-1}b| \\
 & \qquad \leq \gamma_0 \gamma^2 |(\sigma s)^{-1}(\sigma s - \alpha
 t)t^{-1}||t||t^{-1}||b| \\
 & \qquad \leq \gamma_0 \gamma^2 |s^{-1}|\eta |t||t^{-1}||b| \\
 & \qquad \leq \gamma_0 \gamma^2 |s^{-1}a - t^{-1}b| \ .
\end{split}
\end{equation*}
Setting $\gamma_1 := \max(\gamma,\gamma_0\gamma^2)$ we finally
obtain
\begin{equation*}
\begin{split}
 & |\phi(s)^{-1}\phi(a) - \phi(t)^{-1}\phi(b)|_D \\
 & \qquad = |\phi(\sigma s)^{-1}\phi(\sigma a - \alpha b) - \phi(\sigma s)^{-1}\phi(\sigma s - \alpha
 t)\phi(t)^{-1}\phi(b)|_D \\
 & \qquad \leq \max(\gamma |s^{-1}a - t^{-1}b|, \gamma_0\gamma^2 |s^{-1}a -
 t^{-1}b|) \\
 & \qquad = \gamma_1 |s^{-1}a - t^{-1}b| \ .
\end{split}
\end{equation*}
This proves our claim and implies that the map
\begin{align*}
 S \times A & \longrightarrow  D \\
 (s,a) & \longmapsto \phi(s)^{-1}\phi(a)
\end{align*}
is uniformly continuous and therefore extends continuously to a
map $\phi_S : B \longrightarrow D$. It is straightforward to check
that $\phi_S$ is a unital homomorphism of $K$-algebras such that
$\phi_S|A = \phi$.
\end{proof}

Although the norm $|\ |$ on $B$ is only submultiplicative it
nevertheless is quasi-abelian in the following sense. We fix a
constant $0 < \gamma < 1$ so that the condition (qa) is satisfies
simultaneously for any $|\ |_i$.

\begin{lemma}\label{qa-lem1}
$|u^{-1}s^{-1}ec - u^{-1}es^{-1}c| \leq \gamma\cdot
|u^{-1}s^{-1}ec| = \gamma\cdot |u^{-1}es^{-1}c|$ for any $u,s \in
S$, $c \in A$, and $e \in B$.
\end{lemma}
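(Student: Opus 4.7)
My strategy combines an algebraic identity valid in $B$, continuity together with density, and the approximative Ore condition to transfer the quasi-abelian estimate from $A$ to $B$. The starting point is the identity
\[
u^{-1}s^{-1}ec - u^{-1}es^{-1}c \;=\; u^{-1}s^{-1}(es - se)s^{-1}c,
\]
which holds for every $e \in B$ since $s\cdot s^{-1} = s^{-1}\cdot s = 1$ in $B$: the right-hand side expands as $u^{-1}s^{-1}(es)s^{-1}c - u^{-1}s^{-1}(se)s^{-1}c$, and collapsing the adjacent $s\cdot s^{-1}$ (respectively $s^{-1}\cdot s$) factors returns $u^{-1}s^{-1}ec - u^{-1}es^{-1}c$. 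Once the inequality $|u^{-1}s^{-1}(es-se)s^{-1}c| \leq \gamma|u^{-1}s^{-1}ec|$ is established, the equality $|u^{-1}s^{-1}ec| = |u^{-1}es^{-1}c|$ follows formally from the ultrametric property: with $\gamma < 1$, the bound $|x - y| \leq \gamma|x|$ forces $|x| = |y|$ (either both vanish, or the strong triangle inequality on both sides yields equality).

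To prove the inequality, note that both sides are continuous in $e$ since left and right multiplication in $B$ by fixed elements are continuous (Lemma \ref{mult5}). By density of $P^{(1)} = \{t^{-1}b : t \in S,\ b \in A\}$ in $B$ (Lemma \ref{add4}), I reduce to the case $e = t^{-1}b$. Applying the same manipulation to the pair $(t,s)$ yields $st^{-1} - t^{-1}s = t^{-1}(ts-st)t^{-1}$ in $B$, hence
\[
es - se \;=\; t^{-1}bs - st^{-1}b \;=\; t^{-1}(bs-sb) \;-\; t^{-1}(ts-st)t^{-1}b,
\]
decomposing the difference into two summands each controlled by an $A$-commutator obeying the quasi-abelian bounds $|bs-sb|_i \leq \gamma|b|_i|s|_i$ and $|ts-st|_i \leq \gamma|t|_i|s|_i$. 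Since $u^{-1}s^{-1}t^{-1} = (tsu)^{-1}$ in $B$, the reference element becomes $u^{-1}s^{-1}ec = (tsu)^{-1}bc$ whose $B$-norm is exactly $\max_i|tsu|_i^{-1}|b|_i|c|_i$ (Lemma \ref{formulae}.iii together with the multiplicativity of each $|\cdot|_i$ on $A$).

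The key technical step is then to estimate each summand against this reference by repeatedly invoking the approximative Ore condition (Proposition \ref{approximative}) with $\epsilon < 1$, which yields the exact component-wise equality $|s|_i|b|_i = |a|_i|t|_i$, so as to rewrite each summand as a limit of standard fractions $q^{-1}r$ with $q \in S_1 \cap \ldots \cap S_m$ and $r \in A$. Concretely, for the first summand $(tsu)^{-1}(bs-sb)s^{-1}c$ I first approximate $s^{-1}c \approx c_1 t_1^{-1}$ and then $((bs-sb)c_1)t_1^{-1} \approx t_2^{-1}b_2$, obtaining $(tsu)^{-1}(bs-sb)s^{-1}c \approx (t_2\cdot tsu)^{-1}b_2$; the chain of Ore equalities propagates into $|t_2|_i^{-1}|b_2|_i = |bs-sb|_i|c|_i/|s|_i \leq \gamma|b|_i|c|_i$, hence $|(tsu)^{-1}(bs-sb)s^{-1}c|_B \leq \gamma|u^{-1}s^{-1}ec|$ after letting $\epsilon\to 0$. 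The second summand is treated analogously, first rewriting $bs^{-1} \approx t_3^{-1}b_3$ and then putting $(ts-st)(t_3 t)^{-1}$ into standard form, producing the same bound. The ultrametric inequality combines the two summands into $|u^{-1}s^{-1}(es-se)s^{-1}c| \leq \gamma|u^{-1}s^{-1}ec|$. The main obstacle is precisely this bookkeeping: chaining the Ore approximations so that the component-wise equalities propagate cleanly and the factor $\gamma$ from the commutator estimates in $A$ appears undiluted in the final bound.
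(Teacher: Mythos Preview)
Your argument is correct, but it is considerably more laborious than the paper's. The paper exploits Lemma~\ref{mult4} rather than the plain density of $P^{(1)}$: for the \emph{given} $s$, one approximates $e$ by elements of the special form $t^{-1}(bs)$. With this choice the factor $s^{-1}$ cancels on the spot, $u^{-1}es^{-1}c = u^{-1}t^{-1}bc$, while $u^{-1}s^{-1}ec = (tsu)^{-1}(bsc)$; inserting and subtracting the single term $u^{-1}t^{-1}s^{-1}bsc$ reduces the whole estimate to one application each of Lemma~\ref{formulae}.ii and Lemma~\ref{formulae}.iv, and the quasi-abelian bound in $A$ enters once for $ts-st$ and once for $bs-sb$. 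No Ore approximations are needed at all. Your route, writing $e=t^{-1}b$ generically and decomposing $es-se = t^{-1}(bs-sb) - t^{-1}(ts-st)t^{-1}b$, is certainly valid, but then each summand contains an interior $s^{-1}$ (and the second one an interior $t^{-1}$ as well), so you must invoke Proposition~\ref{approximative} once for the first summand and twice for the second to bring them into the form $q^{-1}r$ and read off the norm via Lemma~\ref{formulae}.iii, carrying the component-wise equalities $|s|_i|b|_i=|a|_i|t|_i$ through each step and controlling the $O(\epsilon)$ errors separately. The payoff of your approach is a transparent algebraic identity isolating $es-se$; the payoff of the paper's is that the entire proof fits in four lines.
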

\begin{proof}
By Lemma \ref{mult4} we may assume that $e = t^{-1}(bs)$ with $t
\in S$ and $b \in A$. We then compute
\begin{equation*}
    \begin{split}
       & |u^{-1}s^{-1}t^{-1}bsc - u^{-1}t^{-1}bc| \\
       & \qquad = |u^{-1}s^{-1}t^{-1}bsc - u^{-1}t^{-1}s^{-1}bsc +
       u^{-1}t^{-1}s^{-1}bsc - u^{-1}t^{-1}s^{-1}sbc| \\
       & \qquad \leq \max (d((tsu,bsc),(stu,bsc)),
       d((stu,bsc),(stu,sbc))) \\
       & \qquad = \max_i \max (|tsu|_i^{-1}|stu|_i|bsc|_i |tsu -
       stu|_i, |stu|_i^{-1}|bsc - sbc|_i) \\
       & \qquad \leq \gamma \cdot \max_i |stu|_i^{-1} |bsc|_i =
       \gamma\cdot \max_i |tu|_i^{-1} |bc|_i \\
       & \qquad = \gamma\cdot |u^{-1}t^{-1}bc|
     \end{split}
\end{equation*}
using Lemma \ref{formulae} in the fourth line.
\end{proof}

\begin{lemma}\label{qa-lem2}
$|s^{-1}aec - s^{-1}eac| \leq \gamma\cdot |s^{-1}aec| =
\gamma\cdot |s^{-1}eac|$ for any $s \in S$, $a,c \in A$, and $e
\in B$.
\end{lemma}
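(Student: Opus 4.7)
The plan is to reduce the statement to a direct computation by approximating $e \in B$ by an element of $P^{(1)}$ of the form $t^{-1}b$ with $t \in S$ and $b \in A$, which is possible by the density of $P^{(1)}$ in $B$ (a consequence of Lemma \ref{mult4} or Lemma \ref{add4}). Both expressions $e \mapsto s^{-1}aec - s^{-1}eac$ and $e \mapsto |s^{-1}aec|$ are continuous in $e$ by submultiplicativity of $|\cdot|$ on $B$, so once the asserted inequality is established on this dense subset it extends to all $e \in B$.

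First I would handle the case $e = t^{-1}b$. The strategy is to move $t^{-1}$ past $a$ using the previous lemma and then exploit the quasi-abelianness on $A$ to swap $a$ and $b$. Applying Lemma \ref{qa-lem1} with parameters $u := s$, the ``$s$'' of that lemma replaced by $t$, the ``$e$'' replaced by $a \in A \subseteq B$, and the ``$c$'' replaced by $bc \in A$, yields
\begin{equation*}
|s^{-1}at^{-1}bc - s^{-1}t^{-1}abc| \leq \gamma\cdot |s^{-1}t^{-1}abc|.
\end{equation*}
On the other hand, condition (qa) on each norm $|\cdot|_i$ of $A$ gives $|ab - ba|_i \leq \gamma |ab|_i$, hence
\begin{equation*}
|s^{-1}t^{-1}(ab-ba)c| = \max_i |st|_i^{-1}|ab-ba|_i|c|_i \leq \gamma \max_i |st|_i^{-1}|abc|_i = \gamma\cdot |s^{-1}t^{-1}abc|.
\end{equation*}
Combining these two estimates via the nonarchimedean triangle inequality yields $|s^{-1}aec - s^{-1}eac| \leq \gamma \cdot |s^{-1}t^{-1}abc|$ in the case $e = t^{-1}b$.

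To convert the bound into the form stated in the lemma, observe that the same two estimates (with $\gamma < 1$) force equality of norms: by the nonarchimedean property $|s^{-1}aec| = |s^{-1}at^{-1}bc| = |s^{-1}t^{-1}abc|$, and similarly $|s^{-1}eac| = |s^{-1}t^{-1}bac|$; finally the multiplicativity of $|\cdot|_i$ on $A$ gives $|abc|_i = |a|_i|b|_i|c|_i = |bac|_i$, so both norms equal $\max_i |st|_i^{-1}|a|_i|b|_i|c|_i$. This settles the case $e \in P^{(1)}$.

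For the final step, I pass from $P^{(1)}$ to arbitrary $e \in B$ by continuity. Given $e \in B$ and $\epsilon > 0$, pick $e' = t^{-1}b \in P^{(1)}$ with $|e - e'|$ so small that, by submultiplicativity, both $|s^{-1}a(e-e')c|$ and $|s^{-1}(e-e')ac|$ are at most $\epsilon$, and $|s^{-1}ae'c|$ differs from $|s^{-1}aec|$ by at most $\epsilon$. Applying the already-established inequality to $e'$ and letting $\epsilon \to 0$ yields $|s^{-1}aec - s^{-1}eac| \leq \gamma\cdot |s^{-1}aec|$, together with the equality $|s^{-1}aec| = |s^{-1}eac|$. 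No serious obstacle is expected: the argument is modelled directly on the proof of Lemma \ref{qa-lem1}, with the mild additional ingredient that the quasi-abelian estimate on $A$ is used to swap two genuine algebra elements rather than an algebra element with an element of $S$.
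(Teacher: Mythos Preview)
Your proof is correct and follows essentially the same route as the paper: reduce by density to $e = t^{-1}b$, split $s^{-1}at^{-1}bc - s^{-1}t^{-1}bac$ through the intermediate term $s^{-1}t^{-1}abc$, bound the first piece by Lemma~\ref{qa-lem1} and the second by the quasi-abelian estimate on $A$ together with Lemma~\ref{formulae}.iii, and then recover the stated equality of norms. Your write-up is simply more explicit than the paper's about the continuity step justifying the passage from $P^{(1)}$ to general $e \in B$.
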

\begin{proof}
By density we may assume that $e = t^{-1}b$ with $t \in S$ and $b
\in A$. We then compute
\begin{equation*}
    \begin{split}
      & |s^{-1}at^{-1}bc - s^{-1}t^{-1}bac| \\
      & \qquad = |s^{-1}at^{-1}bc - s^{-1}t^{-1}abc + s^{-1}t^{-1}abc
      - s^{-1}t^{-1}bac| \\
      & \qquad \leq \max (|s^{-1}at^{-1}bc - s^{-1}t^{-1}abc|,
      |s^{-1}t^{-1}abc - s^{-1}t^{-1}bac|) \\
      & \qquad \leq \max(\gamma\cdot |s^{-1}t^{-1}abc|, d((ts,abc),
      (ts,bac))) \\
      & \qquad = \max (\gamma\cdot |s^{-1}t^{-1}abc|, \max_i
      |ts|_i^{-1}|abc - bac|_i) \\
      & \qquad \leq \gamma\cdot\max (|s^{-1}t^{-1}abc|, \max_i
      |ts|_i^{-1}|abc|_i) \\
      & \qquad = \gamma\cdot |s^{-1}t^{-1}abc| \\
      & \qquad = \gamma\cdot |s^{-1}at^{-1}bc|
    \end{split}
\end{equation*}
using the previous lemma in the fourth and the last line.
\end{proof}

\begin{proposition}\label{qa}
There is a constant $0 < \gamma < 1$ such that, for any elements
$e_1,\ldots,e_n \in B$ and any permutation $\sigma$ of
$\{1,\ldots,n\}$, we have
\begin{equation*}
    |e_1\cdot\ldots\cdot e_n - e_{\sigma(1)}\cdot\ldots\cdot
    e_{\sigma(n)}| \leq \gamma \cdot |e_1\cdot\ldots\cdot e_n| =
    \gamma \cdot |e_{\sigma(1)}\cdot\ldots\cdot e_{\sigma(n)}| \ .
\end{equation*}
\end{proposition}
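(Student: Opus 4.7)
The plan is to proceed in three stages: first establish the two--element commutator estimate, then deduce cyclic invariance of the norm, and finally reduce the general permutation case to that of adjacent transpositions via a density argument.

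First I will prove the two--element case: for any $e,f \in B$ one has $|ef - fe| \leq \gamma|ef|$. By the density of $P^{(1)}$ in $B$ and the continuity of multiplication and of the norm, it suffices to verify this when $e = s^{-1}a$ with $s \in S$ and $a \in A$. The telescoping identity
\begin{equation*}
s^{-1}af - fs^{-1}a \;=\; (s^{-1}af - s^{-1}fa) \;+\; (s^{-1}fa - fs^{-1}a)
\end{equation*}
reduces the estimate to two applications of the preceding lemmas: Lemma \ref{qa-lem2} with $c=1$ bounds the first summand by $\gamma|s^{-1}af|$, and Lemma \ref{qa-lem1} with $u=1$ and $c=a$ bounds the second by $\gamma|s^{-1}fa|$. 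The equality assertion in Lemma \ref{qa-lem2} gives $|s^{-1}af|=|s^{-1}fa|$, and the ultrametric inequality then yields $|s^{-1}af - fs^{-1}a| \leq \gamma|s^{-1}af|$. Passing to the limit gives the claim for all $e,f \in B$, and since $\gamma<1$ a further ultrametric step yields the equality $|ef|=|fe|$.

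Next I apply this equality to $e = e_1\cdots e_{n-1}$ and $f = e_n$; this shows that the norm of an $n$--fold product is invariant under cyclic shifts, hence under all cyclic permutations. Because every permutation decomposes as a product of adjacent transpositions, the ultrametric inequality further reduces the proposition to proving, for each $1 \leq i < n$, the single estimate
\begin{equation*}
\bigl| e_1\cdots e_n - e_1\cdots e_{i-1}\,e_{i+1}e_i\,e_{i+2}\cdots e_n \bigr| \;\leq\; \gamma\,|e_1\cdots e_n|;
\end{equation*}
once this holds, the equality of norms for arbitrary $\sigma$ follows by another application of the ultrametric inequality (since $\gamma<1$), and chaining the transpositions yields the full bound.

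The hard part will be this last estimate, since the naive combination of submultiplicativity with the two--element case only gives $\gamma \cdot |e_1\cdots e_{i-1}|\cdot|e_ie_{i+1}|\cdot|e_{i+2}\cdots e_n|$, which generally exceeds $\gamma|e_1\cdots e_n|$ because the norm on $B$ is only submultiplicative. To circumvent this submultiplicative slack I would approximate the whole tuple $(e_1,\ldots,e_n)$ simultaneously by an element of $P^{(n)}$ with a common denominator (Lemma \ref{add4}), and then repeatedly use Lemma \ref{mult4} to rewrite $s^{-1}a_k$ in the form $r_k^{-1}(c_k s)$ before multiplying with the next factor, thereby collapsing the iterated product $(s^{-1}a_1)\cdots(s^{-1}a_n)$ into a single $P^{(1)}$--expression $r^{-1}c$ with $c \in A$, whose norm is computed exactly by Lemma \ref{formulae}. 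Both the original product and its transposed variant admit such representations over a common denominator in the limit, and the difference of the numerators is then controlled by iterated applications of Lemmas \ref{qa-lem1}, \ref{qa-lem2}, and the condition (qa) on $A$, in exactly the telescoping spirit used in the proof of Lemma \ref{qa-lem2} itself.
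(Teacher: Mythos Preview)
Your framework is sound and your two-element commutator estimate is correct: the telescoping $s^{-1}af - fs^{-1}a = (s^{-1}af - s^{-1}fa) + (s^{-1}fa - fs^{-1}a)$ together with Lemmas~\ref{qa-lem2} (with $c=1$) and~\ref{qa-lem1} (with $u=1$) does give $|ef-fe|\leq\gamma|ef|$ after passing to the limit in $e$, and cyclic invariance follows. The reduction to adjacent transpositions is also correct.

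The gap is exactly where you locate it, and your proposed fix is both more complicated than necessary and not clearly correct as stated. Collapsing $(s^{-1}a_1)\cdots(s^{-1}a_n)$ into a single $r^{-1}c$ via iterated approximations from Lemma~\ref{mult4} introduces error terms at every step, and comparing the two resulting numerators in $A$ still requires tracking all those errors against $\gamma$ times the \emph{full} product norm rather than against a product of factor norms. You have not indicated how to do this bookkeeping, and it is not obvious.

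The paper bypasses the submultiplicative slack entirely by two further telescoping identities of the same flavour as yours. Writing $[x,y]:=xy-yx$, one has
\[
ge_0e_1f - ge_1e_0f \;=\; [g,\,e_0e_1f] \;+\; \bigl(e_0e_1(fg)-e_1e_0(fg)\bigr) \;+\; [e_1e_0f,\,g],
\]
so your two-element bound on the outer commutators (plus cyclic invariance for the norms) reduces the general transposition to the case $g=1$. Then, for $f=u^{-1}c$ dense,
\[
e_0e_1u^{-1}c - e_1e_0u^{-1}c \;=\; [e_0e_1,\,u^{-1}]\,c \;+\; \bigl(u^{-1}e_0e_1c-u^{-1}e_1e_0c\bigr) \;+\; [u^{-1},\,e_1e_0]\,c,
\]
and the outer terms are instances of Lemma~\ref{qa-lem1} with trivial left prefix. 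Finally, taking $e_0=s^{-1}a$ by density, the middle term splits as
\[
u^{-1}s^{-1}ae_1c - u^{-1}e_1s^{-1}ac \;=\; \bigl((us)^{-1}ae_1c-(us)^{-1}e_1ac\bigr)+\bigl(u^{-1}s^{-1}e_1ac-u^{-1}e_1s^{-1}ac\bigr),
\]
which is precisely Lemma~\ref{qa-lem2} plus Lemma~\ref{qa-lem1}. Because the equality clauses in those lemmas carry the exact norm of the full product through each step, no submultiplicative loss ever appears. This chain of identities is the missing ingredient in your sketch; once you have it, the $P^{(n)}$-collapsing manoeuvre is unnecessary.
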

\begin{proof}
Since any permutation is a product of elementary transpositions it
suffices to prove that
\begin{equation*}
    |ge_0e_1f - ge_1e_0f| \leq \gamma\cdot |ge_0e_1f| = \gamma \cdot
    |ge_1e_0f|
\end{equation*}
for any $g,e_0,e_1,f \in B$. Since
\begin{equation*}
\begin{split}
    ge_0e_1f - ge_1e_0f & = g(e_0e_1f) - (e_0e_1f)g + e_0e_1(fg) -
    e_1e_0(fg) \\
    & \quad + (e_1e_0f)g - g(e_1e_0f)
\end{split}
\end{equation*}
we may in fact assume that $g = 1$. By density it furthermore
suffices to consider the case $f = u^{-1}c$ with $u \in S$ and $c
\in A$. Using the identity
\begin{equation*}
\begin{split}
    e_0e_1u^{-1}c - e_1e_0u^{-1}c  & = (e_0e_1)u^{-1}c -
    u^{-1}(e_0e_1)c + u^{-1}e_0e_1c - u^{-1}e_1e_0c \\
    & \quad + u^{-1}(e_1e_0)c - e_1e_0u^{-1}c
\end{split}
\end{equation*}
we therefore are reduced to showing that
\begin{equation*}
    |u^{-1}e_0e_1c - u^{-1}e_1e_0c| \leq \gamma \cdot
    |u^{-1}e_0e_1c| = \gamma \cdot |u^{-1}e_1e_0c|
\end{equation*}
for any $u \in S$, $c \in A$, and $e_0,e_1 \in B$. But for $e_0 =
s^{-1}a$ with $s \in S$ and $a \in A$ this is a consequence of
Lemmas \ref{qa-lem1} and \ref{qa-lem2}. The case of a general
$e_0$ then follows by density.
\end{proof}

\begin{remark}\label{multiplicative}
For a single initial quasi-abelian norm on $A$ (i.e., $m = 1$) the
resulting norm $|\ |$ on $B$ again is multiplicative (and
quasi-abelian).
\end{remark}
\begin{proof}
We need to show that $|ef| = |e||f|$. By continuity it suffices to
consider the case $e = s^{-1}a$ and $f = t^{-1}b$ with $s,t \in S$
and $a,b \in A$. But then, using Prop.\ \ref{qa} and Lemma
\ref{formulae}.iii, we compute
\begin{equation*}
    |s^{-1}at^{-1}b| = |(ts)^{-1}ab| = |ts|_1^{-1}|ab|_1 =
    |s|_1^{-1}|a|_1|t|_1^{-1}|b|_1 = |s^{-1}a||t^{-1}b| \ .
\end{equation*}
\end{proof}

\subsection{Noncommutative annuli for uniform pro-$p$-groups}

For the rest of this paper we assume that $\mathbb{Q}_p \subseteq
K \subseteq \mathbb{C}_p$ is discretely valued. For simplicity we
also assume that $p \neq 2$. We fix a uniform pro-$p$-group $G$ as
well as an ordered basis $h_1,\ldots,h_d$ of $G$. Then the map
\begin{align*}
 \psi : \qquad\qquad \mathbb{Z}_p^d &
\xrightarrow{\;\sim\;}  G \\
 (x_1,\ldots x_d) & \longmapsto
h_1^{x_1}\cdot\ldots\cdot h_d^{x_d}
\end{align*}
is a bijective global chart for $G$ as a locally
$\mathbb{Q}_p$-analytic manifold (cf.\ \cite{DDMS} \S4.2). Using
this chart we may identify the $K$-Fr\'echet spaces of locally
analytic distributions
\begin{equation*}
\psi^{\ast} : D(G,K) \xrightarrow{\;\cong\;} D(\mathbb{Z}_p^d,K)
\end{equation*}
As usual, we embed the group $G$ into the algebra $D(G,K)$ via the
Dirac distributions. We write $b_i := h_i - 1$ and $\bb^{\alpha}
:= b_1^{\alpha_1}b_2^{\alpha_2}\cdots b_{d}^{\alpha_d}$, for
$\alpha = (\alpha_1,\ldots,\alpha_d) \in \mathbb{N}_0^d$. Any
distribution $\mu \in D(G,K)$ has a unique convergent expansion
\begin{equation*}
\mu = \sum_{\alpha\in\mathbb{N}_0^{d}} d_{\alpha}\bb^{\alpha}
\end{equation*}
with $d_\alpha \in K$ such that, for any $0 < r < 1$, the set
$\{|d_\alpha|r^{\alpha}\}_{\alpha\in\mathbb{N}_0^d}$ is bounded.
Here we put $r^\alpha := r^{\alpha_1 + \ldots + \alpha_d}$.
Conversely, any such series is convergent in $D(G,K)$. The
Fr\'echet topology on $D(G,K)$ is defined by the family of norms
\begin{equation*}
|\mu|_r := \mathop{\rm sup}\limits_{\alpha\in\mathbb{N}_0^d}
|d_\alpha|r^{\alpha}
\end{equation*}
for $0 < r < 1$. It is shown in \cite{foundations} Thm.\ 4.5 that,
for any $1/p < r < 1$ such that $r \in p^{\mathbb{Q}}$, the norm
$|\ |_r$ on $D(G,K)$ is multiplicative and quasi-abelian. Whereas
$D(G,K)$ is a ``noncommutative open unit disk'' the completion
$D_r(G,K)$ of $D(G,K)$ with respect to $|\ |_r$ is a
``noncommutative closed disk of radius $r$''. We now use the
technique of the previous section to construct corresponding
``noncommutative closed annuli'' for any radii $1/p < r_0 \leq r <
1$ such that $r_0,r \in p^{\mathbb{Q}}$.

We take $A := D_r(G,K)$, the two norms $|\ |_{r_0}$ and $|\ |_r$,
and the multiplicatively closed subset $S \subseteq D_r(G,K)$
generated by $b_1,\ldots,b_d$. We define
\begin{equation*}
D_{[r_0,r]}(G,K) := A\langle S;|\ |_{r_0},|\ |_r\rangle
\end{equation*}
and we let $|\ |_{r_0,r}$ denote its natural norm. Since $\max(|\
|_{r_0},|\ |_r) = |\ |_r$ the canonical homomorphism of $K$-Banach
algebras $D_r(G,K) \longrightarrow D_{[r_0,r]}(G,K)$ is an
isometry for the natural norms.

\begin{lemma}\label{convex}
For $1/p < r \leq r' \leq r'' < 1$, any integer $m \geq 0$, and
any $\mu \in D(G,K)$ we have
\begin{equation*}
\frac{|\mu|_{r'}}{r'^m} \leq \max(\frac{|\mu|_r}{r^m},
\frac{|\mu|_{r''}}{r''^m}) \ .
\end{equation*}
\end{lemma}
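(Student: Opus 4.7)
The plan is to reduce the inequality to a termwise comparison in the canonical expansion. Writing $\mu = \sum_{\alpha\in\mathbb{N}_0^d} d_\alpha \mathbf{b}^\alpha$, the two sides of the claimed inequality are suprema of $|d_\alpha| \rho^{|\alpha|-m}$ (up to a common factor of $\rho^m$ that I have absorbed) over $\alpha$, for $\rho = r',r,r''$ respectively. Since the supremum of a maximum equals the maximum of the suprema, it suffices to establish, for every single $\alpha\in\mathbb{N}_0^d$, that
\begin{equation*}
|d_\alpha|\,r'^{|\alpha|-m} \;\leq\; \max\!\bigl(|d_\alpha|\,r^{|\alpha|-m},\; |d_\alpha|\,r''^{|\alpha|-m}\bigr).
\end{equation*}

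The only content is then the purely arithmetic fact that, for $0<r\leq r'\leq r''<1$ and any integer $k=|\alpha|-m\in\mathbb{Z}$, one has $r'^k \leq \max(r^k, r''^k)$. This splits into two monotonicity cases: if $k\geq 0$, then $\rho\mapsto \rho^k$ is non-decreasing, so $r'^k\leq r''^k$; if $k<0$, then $\rho\mapsto \rho^k$ is decreasing on $(0,1)$, so $r'^k\leq r^k$. Either way the bound holds.

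Taking the supremum over $\alpha\in\mathbb{N}_0^d$ on both sides and multiplying through by $r'^{-m}\cdot r^m$, $r'^{-m}\cdot r''^m$ in the appropriate places yields exactly
\begin{equation*}
\frac{|\mu|_{r'}}{r'^m} \;\leq\; \max\!\left(\frac{|\mu|_r}{r^m},\; \frac{|\mu|_{r''}}{r''^m}\right),
\end{equation*}
as required. There is no real obstacle here; the lemma is the ultrametric avatar of the classical three-circles theorem, and its proof requires only the monotonicity of $\rho\mapsto\rho^k$ together with the definition of the norm $|\cdot|_\rho$ as a supremum over the monomial coefficients. In particular, quasi-abelianness of the norms and the Banach algebra structure are not needed, which is why the statement holds for every $\mu\in D(G,K)$ and every real $r'$ in the range $[r,r'']$ (not only for $r'\in p^{\mathbb{Q}}$).
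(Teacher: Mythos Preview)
Your proof is correct and is essentially the same argument as the paper's, just packaged differently. The paper observes that $t \mapsto \log\bigl(|\mu|_{\exp(t)}/\exp(t)^m\bigr) = \sup_\alpha\bigl(\log|d_\alpha| + (|\alpha|-m)t\bigr)$ is a supremum of affine functions, hence convex, and a convex function on an interval is bounded above by the maximum of its endpoint values; you instead verify the inequality termwise via the monotonicity of $\rho \mapsto \rho^k$ and then take the supremum, which amounts to the same computation without invoking the word ``convex.'' (Your final sentence about ``multiplying through by $r'^{-m}\cdot r^m$'' is unnecessary, since $\sup_\alpha |d_\alpha|\rho^{|\alpha|-m}$ already equals $|\mu|_\rho/\rho^m$, but this does not affect correctness.)
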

\begin{proof}
Since exponential function and logarithm are monotonous functions
it suffices to show that the function
\begin{equation*}
t \longmapsto {\rm log} (\frac{|\mu|_{\exp(t)}}{\exp(t)^m})
\end{equation*}
on $(- \infty,0)$ is convex. But it is a supremum
\begin{equation*}
t \longmapsto \sup_\alpha (\log |d_\alpha| + (|\alpha| - m)t)
\end{equation*}
of affine functions and hence is visibly convex.
\end{proof}

Let $1/p \leq r_0 < r \leq r' \leq r'' < r_1 \leq 1$ all contained
in $p^{\mathbb{Q}}$ and consider the composed unital homomorphism
of $K$-Banach algebras
\begin{equation*}
D_{r''}(G,K) \longrightarrow D_r(G,K) \longrightarrow
D_{[r,r']}(G,K)
\end{equation*}
which is norm decreasing and makes the elements in $S$ invertible.
Let $\mu \in D_{r'}(G,K)$ and suppose that the monomial $s$ in the
$b_1,\ldots,b_d$ has $m$ factors. Using Lemma \ref{convex} we
compute
\begin{align*}
 |s^{-1}\mu|_{r,r'} & = \max(|s|_r^{-1}|\mu|_r,
 |s|_{r'}^{-1}|\mu|_{r'}) = \max(\frac{|\mu|_r}{r^m},
 \frac{|\mu|_{r'}}{r'^m}) \\
 & \leq \max(\frac{|\mu|_r}{r^m},
 \frac{|\mu|_{r''}}{r''^m}) = \max(|s|_r^{-1}|\mu|_r,
 |s|_{r''}^{-1}|\mu|_{r''}) \ .
\end{align*}
This shows that the assumptions of the universal property Prop.\
\ref{univprop} are satisfied. The above composed homomorphism
extends uniquely to a norm decreasing unital homomorphism of
$K$-Banach algebras
\begin{equation*}
D_{[r,r'']}(G,K) \longrightarrow D_{[r,r']}(G,K) \ .
\end{equation*}
We then may pass  to the projective limit with respect to $r''$
and obtain the $K$-Fr\'echet algebra
\begin{equation*}
D_{[r,r_1)}(G,K) := \varprojlim_{r \leq r'' < r_1}
D_{[r,r'']}(G,K)
\end{equation*}
representing a ``noncommutative half open annulus''.

A similar argument shows that the natural homomorphism
$D_{r''}(G,K) \longrightarrow D_{[r',r'']}(G,K)$ extends uniquely
to a norm decreasing unital homomorphism of $K$-Banach algebras
\begin{equation*}
D_{[r,r'']}(G,K) \longrightarrow D_{[r',r'']}(G,K) \ .
\end{equation*}
Again we obtain a $K$-Fr\'echet algebra
\begin{equation*}
D_{(r_0,r'']}(G,K) := \varprojlim_{r_0 < r \leq r''}
D_{[r,r'']}(G,K) \ .
\end{equation*}
It also follows that the Fr\'echet algebras $D_{[r,r_1)}(G,K)$
form an inductive system with respect to $r$. Especially in the
case $r_1 = 1$ the inductive limit
\begin{equation*}
R(G,K) := \varinjlim D_{[r,1)}(G,K)
\end{equation*}
is a locally convex unital $K$-algebra which we call the
\textit{Robba ring} of $G$. Although we suppress this in the
notation this ring does depend on the initial choice of a basis
$\{h_1,\ldots,h_d\}$ of $G$ but not on its ordering. It is shown
in \cite{foundations} in the discussion following Thm.\ 4.10 that,
for $1/p \leq r < 1$, the norm $\|\ \|_r$ on $D(G,K)$ is
completely independent of the choice of the ordered basis. It
follows that each $D_{[r,r']}(G,K)$ together with its norm $\|\
\|_{r,r'}$ as well as the topological algebras $D_{[r,r_1)}(G,K)$,
$D_{(r_0,r'']}(G,K)$, and $R(G,K)$ do not depend on the ordering
of the chosen basis of $G$. In fact a little more is true. In the
ring $\mathbb{Z}_p[[Z]]$ of formal power series in one variable
over $\mathbb{Z}_p$ we have, for any $x \in \mathbb{Z}_p$, the
identity
\begin{equation}\label{3}
    (1 + Z)^x - 1 = \sum_{i \geq 1} {x \choose i} Z^i = Z(x +
    Zf_x(Z)) \qquad \textrm{with $f_x \in \mathbb{Z}_p[[Z]]$}.
\end{equation}
If $x \in \mathbb{Z}_p^\times$ then $x + Zf_x(Z)$ is a unit in
$\mathbb{Z}_p[[Z]]$. This shows that for $x \in
\mathbb{Z}_p^\times$ we have $h_i^x - 1 \in b_i \cdot
D(G,K)^\times$ for any $1 \leq i \leq d$. Applying the universal
property Prop.\ \ref{univprop} we conclude that replacing
$\{h_1,\ldots,h_d\}$ by $\{h_1^{x_1},\ldots,h_d^{x_d}\}$ for some
$x_1,\ldots,x_d \in \mathbb{Z}_p^\times$ does not change the
Banach algebras $D_{[r,r']}(G,K)$ together with their norm $\|\
\|_{r,r'}$ and hence does not change the topological algebras
$D_{[r,r_1)}(G,K)$, $D_{(r_0,r'']}(G,K)$, and $R(G,K)$ either.

In order to be able to work with these rings we will show that
their elements can be viewed as Laurent series. For that we
introduce the affinoid domain
\begin{equation*}
    X^d_{[r,r']} := \{(z_1,\ldots,z_d) \in \mathbb{C}_p^d : r
    \leq |z_1| = \ldots = |z_d| \leq r'\} \ .
\end{equation*}
The ring $\mathcal{O}_K(X^d_{[r,r']})$ of $K$-analytic functions
on $X^d_{[r,r']}$ is the ring of all Laurent series
\begin{equation*}
    f(Z_1,\ldots,Z_d) = \sum_{\alpha \in \mathbb{Z}^d} d_\alpha
    \mathbf {Z}^\alpha
\end{equation*}
with $d_\alpha \in K$ and such that $\lim_{|\alpha| \rightarrow
\infty} |d_\alpha| \rho^\alpha = 0$ for any $r \leq \rho \leq r'$.
Here
\begin{equation*}
    \mathbf{Z}^\alpha := Z_1^{\alpha_1}\cdot\ldots\cdot
Z_d^{\alpha_d} \qquad\text{and}\qquad |\alpha| := |\alpha_1| +
\ldots + |\alpha_d|
\end{equation*}
(with the abuse of notation that $|\alpha_i|$ exceptionally means
the archimedean absolute value). Since $\rho^\alpha \leq
\max(r^\alpha ,r'^\alpha)$ for any $r \leq \rho \leq r'$ and any
$\alpha \in \mathbb {Z}^d$ the latter condition on $f$ is
equivalent to
\begin{equation*}
    \lim_{|\alpha| \rightarrow
\infty} |d_\alpha| r^\alpha = \lim_{|\alpha| \rightarrow \infty}
|d_\alpha| r'^\alpha = 0 \ .
\end{equation*}
The spectral norm on the affinoid algebra
$\mathcal{O}_K(X^d_{[r,r']})$ is given by
\begin{align*}
    |f|_{X^d_{[r,r']}} & = \sup_{r \leq \rho \leq r'} \max_{\alpha
    \in \mathbb{Z}^d} |d_\alpha|\rho^\alpha \\
    & = \max( \max_{\alpha \in \mathbb{Z}^d} |d_\alpha|r^\alpha,
    \max_{\alpha \in \mathbb{Z}^d} |d_\alpha| r'^\alpha) \ .
\end{align*}
Setting $\bb^\alpha := b_1^{\alpha_1} \cdot\ldots\cdot
b_d^{\alpha_d}$ for any $\alpha = (\alpha_1,\ldots,\alpha_d) \in
\mathbb{Z}^d$ we claim that $f(b_1,\ldots,b_d) := \sum_{\alpha \in
\mathbb{Z}^d} d_\alpha \bb^\alpha$ converges in $D_{[r,r']}(G,K)$.
As a consequence of Prop.\ \ref{qa} and Lemma \ref{formulae}.iii
we have
\begin{equation*}
    |\bb^\alpha|_{r,r'} = \max(r^\alpha,r'^\alpha)
\end{equation*}
for any $\alpha \in \mathbb{Z}^d$. Hence
\begin{equation*}
    \lim_{|\alpha| \rightarrow
\infty} |d_\alpha\bb^\alpha|_{r,r'} = \lim_{|\alpha| \rightarrow
\infty} \max(|d_\alpha| r^\alpha, |d_\alpha| r'^\alpha) = \max(
\lim_{|\alpha| \rightarrow \infty} |d_\alpha| r^\alpha,
\lim_{|\alpha| \rightarrow \infty} |d_\alpha| r'^\alpha) = 0 \ .
\end{equation*}
Therefore
\begin{align*}
    \mathcal{O}_K(X^d_{[r,r']}) & \longrightarrow
    D_{[r,r']}(G,K) \\
    f & \longmapsto f(b_1,\ldots,b_d)
\end{align*}
is a well defined $K$-linear map. In order to investigate this map
we introduce the filtration
\begin{equation*}
    F^iD_{[r,r']}(G,K) := \{ e \in D_{[r,r']}(G,K) : |e|_{r,r'}
    \leq |p|^i \} \qquad\text{for $i \in \mathbb{R}$}
\end{equation*}
on $D_{[r,r']}(G,K)$. Since $K$ is discretely valued and $r,r' \in
p^{\mathbb{Q}}$ this filtration is quasi-integral in the sense of
\cite{foundations} \S1. The corresponding graded ring $gr^\cdot
D_{[r,r']}(G,K)$, by Prop.\ \ref{qa}, is commutative. We let
$\sigma(e) \in gr^\cdot D_{[r,r']}(G,K)$ denote the principal
symbol of any element $e \in D_{[r,r']}(G,K)$.

\begin{proposition}\label{expansion}
\begin{itemize}
    \item[i.] $gr^\cdot D_{[r,r']}(G,K)$ is a free $gr^\cdot
K$-module with basis $\{ \sigma(\bb^\alpha) : \alpha \in
\mathbb{Z}^d \}$.
    \item[ii.] The map
\begin{align*}
    \mathcal{O}_K(X^d_{[r,r']}) & \xrightarrow{\;\cong\;}
    D_{[r,r']}(G,K) \\
    f & \longmapsto f(b_1,\ldots,b_d)
\end{align*}
is a $K$-linear isometric bijection.
\end{itemize}
\end{proposition}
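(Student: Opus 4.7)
The plan is to prove (ii) first and deduce (i) from it. The map
\[
\phi \colon \mathcal{O}_K(X^d_{[r,r']}) \to D_{[r,r']}(G,K), \qquad f \mapsto f(b_1,\ldots,b_d),
\]
is $K$-linear and norm-decreasing: the formula $|\bb^\alpha|_{r,r'} = \max(r^\alpha,r'^\alpha)$ (from Prop.\ \ref{qa} and Lemma \ref{formulae}.iii) combined with submultiplicativity gives $|\phi(f)|_{r,r'} \leq |f|_{X^d_{[r,r']}}$. By completeness of both sides, it suffices to prove (A) isometry of $\phi$ on the dense subspace of Laurent polynomials, and (B) density of the image of $\phi$ in $D_{[r,r']}(G,K)$.

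For (A), I would exploit the single-norm microlocalizations $B_\rho := D_r(G,K)\langle S;|\cdot|_\rho\rangle$ for $\rho \in \{r,r'\}$. By Remark \ref{multiplicative} the norm on each $B_\rho$ is multiplicative, and from the very construction of $d$ as $\max(d_r,d_{r'})$ one obtains norm-decreasing $K$-algebra homomorphisms $\pi_\rho\colon D_{[r,r']}(G,K) \to B_\rho$ satisfying $|e|_{r,r'} = \max(|\pi_r(e)|_r,|\pi_{r'}(e)|_{r'})$. Since $|\cdot|_\rho$ is multiplicative and $gr^\cdot B_\rho$ is commutative (Prop.\ \ref{qa}), one has $\sigma_\rho(b_i)\sigma_\rho(b_i^{-1}) = \sigma_\rho(1) = 1$, and $gr^\cdot B_\rho$ is a Laurent polynomial ring over $gr^\cdot K$ in the $\sigma_\rho(b_i)^{\pm 1}$, with the family $\{\sigma_\rho(\bb^\alpha)\}_{\alpha \in \mathbb{Z}^d}$ a $gr^\cdot K$-basis. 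Consequently no cancellation occurs in $B_\rho$ among Laurent monomials, i.e.\ for any Laurent polynomial $f = \sum d_\alpha \mathbf{Z}^\alpha$, $|\pi_\rho(\phi(f))|_\rho = \max_\alpha |d_\alpha|\,\rho^{\sum_i \alpha_i}$. Taking the maximum over $\rho \in \{r,r'\}$ recovers $|f|_{X^d_{[r,r']}}$, proving isometry.

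For (B), the elements $s^{-1}a$ with $s \in S$ and $a \in D_r(G,K)$ are dense in $D_{[r,r']}(G,K)$ by construction. Given such an element I would construct its Laurent expansion by iterated symbol extraction: by (A) the principal symbol of any element of $D_{[r,r']}(G,K)$ is a finite $gr^\cdot K$-linear combination of the $\sigma(\bb^\alpha)$ (its images in $gr^\cdot B_r$ and $gr^\cdot B_{r'}$ are such combinations), so one may lift to a Laurent polynomial approximation, subtract to obtain a strictly smaller remainder, and iterate. A careful tracking of norms at each stage ensures that the accumulated coefficients satisfy the decay required by $\mathcal{O}_K(X^d_{[r,r']})$. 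Thus the image of $\phi$ is dense, and together with isometry $\phi$ is an isometric isomorphism, proving (ii). Part (i) then follows by transporting the obvious free graded-module structure of $\mathcal{O}_K(X^d_{[r,r']})$ with its spectral norm across $\phi$.

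The principal obstacle is the identification $gr^\cdot B_\rho \cong gr^\cdot K[X_1^{\pm 1},\ldots,X_d^{\pm 1}]$, which requires verifying that the graded localization of $gr^\cdot D_r(G,K) \cong gr^\cdot K[X_1,\ldots,X_d]$ at the monomial images survives the passage to the $|\cdot|_\rho$-completion $B_\rho$, so that the $\sigma_\rho(b_i)^{\pm 1}$ satisfy no further relations beyond commutativity and $X_iX_i^{-1} = 1$. The convergence estimates in the iteration of (B) must also be sharp enough to guarantee that the output lies in $\mathcal{O}_K(X^d_{[r,r']})$ rather than a weaker completion.
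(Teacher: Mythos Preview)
Your route through the single-norm microlocalizations $B_\rho$ is plausible for part (A), but step (B) contains a circularity. After one subtraction the remainder $s^{-1}a - \phi(p_1)$ is a general element of $D_{[r,r']}(G,K)$, no longer of the form $s'^{-1}a'$. Your justification that its principal symbol is a finite $gr^\cdot K$-combination of the $\sigma(\bb^\alpha)$ is precisely statement (i), which you have not yet established. The parenthetical appeal to the images in $gr^\cdot B_r$ and $gr^\cdot B_{r'}$ does not repair this: a monomial $\lambda_\alpha\bb^\alpha$ that is leading for $|\ |_r$ may satisfy $|\lambda_\alpha|\,r'^\alpha > |e|_{r,r'}$ when $\sum_i\alpha_i$ has the wrong sign, so subtracting the lift of the $B_r$-symbol can \emph{increase} the $|\ |_{r,r'}$-norm rather than decrease it. Without already knowing (i), there is no mechanism to reconcile the two symbol pictures into a single Laurent-polynomial approximation.

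The paper proceeds in the opposite order and thereby avoids this difficulty. Using only Lemma~\ref{formulae}.iii it computes directly, for $s=\bb^\beta$ and $\mu=\sum_{\alpha\in\mathbb{N}_0^d} d_\alpha\bb^\alpha$,
\[
|s^{-1}\mu|_{r,r'}=\max\bigl(|s|_r^{-1}|\mu|_r,\ |s|_{r'}^{-1}|\mu|_{r'}\bigr)=\max_{\alpha}|d_\alpha|\,|\bb^{-\beta}\bb^\alpha|_{r,r'}.
\]
Since every nonzero element of $gr^\cdot D_{[r,r']}(G,K)$ equals $\sigma(s^{-1}\mu)$ for some $(s,\mu)$ (density of $P^{(1)}$ forces $\sigma(e)=\sigma(s^{-1}\mu)$ once $|e-s^{-1}\mu|<|e|$), this formula simultaneously gives generation and, for fixed $\beta$, linear independence of the $\sigma(\bb^{-\beta}\bb^\alpha)$; Prop.~\ref{qa} then identifies $\sigma(\bb^{-\beta}\bb^\alpha)=\sigma(\bb^{\alpha-\beta})$, yielding (i). With (i) in hand, (ii) follows immediately by the standard argument that a filtration-preserving map between complete filtered spaces inducing an isomorphism on associated gradeds is an isometric bijection. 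Your detour through $B_\rho$ is unnecessary: the two-norm formula from Lemma~\ref{formulae}.iii already delivers everything at once.
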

\begin{proof}
 Since $\{ s^{-1}\mu
: s \in S , \mu \in D_r(G,K) \}$ is dense in $D_{[r,r']}(G,K)$
every element in the graded ring $gr^\cdot D_{[r,r']}(G,K)$ is of
the form $\sigma(s^{-1}\mu)$. Suppose that $\mu =
\sum_{\alpha\in\mathbb{N}_0^{d}} d_{\alpha}\bb^{\alpha}$ and $s =
\bb^\beta$ for some $\beta \in \mathbb{N}_0^d$. Then $s^{-1}\mu =
\sum_{\alpha\in\mathbb{N}_0^{d}}
d_{\alpha}\bb^{-\beta}\bb^{\alpha}$ and, using Lemma
\ref{formulae}.iii we compute
\begin{align*}
    |s^{-1}\mu|_{r,r'} & = \max( |s|_r^{-1} |\mu|_r,
    |s|_{r'}^{-1}|\mu|_{r'}) \\
    & = \max( \max_{\alpha \in \mathbb{N}_0^d} |d_\alpha| r^{\alpha
    - \beta}, \max_{\alpha \in \mathbb{N}_0^d} |d_\alpha| r'^{\alpha
    - \beta}) \\
    & = \max_{\alpha \in \mathbb{N}_0^d} |d_\alpha| \max(r^{\alpha
    - \beta}, r'^{\alpha - \beta}) \\
    & = \max_{\alpha \in \mathbb{N}_0^d} |d_\alpha|
    |\bb^{-\beta}\bb^\alpha|_{r,r'} \ .
\end{align*}
It follows that $gr^\cdot D_{[r,r']}(G,K)$ as a $gr^\cdot
K$-module is generated by the principal symbols
$\sigma(\bb^{-\beta}\bb^\alpha)$ with $\alpha, \beta \in
\mathbb{N}_0^d$. But it also follows that, for a fixed $\beta \in
\mathbb{N}_0^d$, the principal symbols
$\sigma(\bb^{-\beta}\bb^\alpha)$ with $\alpha$ running over
$\mathbb{N}_0^d$ are linearly independent over $gr^\cdot K$. By
Prop.\ \ref{qa} we may permute the factors in
$\sigma(\bb^{-\beta}\bb^\alpha)$ arbitrarily. Hence $gr^\cdot
D_{[r,r']}(G,K)$ as a $gr^\cdot K$-module also is generated by the
principal symbols $\sigma(\bb^\alpha)$ with $\alpha$ running over
$\mathbb{Z}^d$. Since any given finite number of the latter can be
written in the form $\sigma(\bb^{-\beta}\bb^\alpha)$ with $\alpha
\in \mathbb{N}_0^d$ and a joint $\beta \in \mathbb{N}_0^d$ we in
fact obtain that $gr^\cdot D_{[r,r']}(G,K)$ is a free $gr^\cdot
K$-module with basis $\{ \sigma(\bb^\alpha) : \alpha \in
\mathbb{Z}^d \}$.

On the other hand, we of course have
\begin{align*}
    f(b_1,\ldots,b_d) & \leq \max_{\alpha \in \mathbb{Z}^d}
    |d_\alpha||\bb^\alpha|_{r,r'} \\
    & = \max_{\alpha \in \mathbb{Z}^d} |d_\alpha|
    \max(r^\alpha,r'^\alpha) = \max( \max_{\alpha \in \mathbb{Z}^d}
    |d_\alpha|r^\alpha, \max_{\alpha \in \mathbb{Z}^d}
    |d_\alpha|r'^\alpha) \\
    & = |f|_{X^d_{[r,r']}} \ .
\end{align*}
This means that if we introduce on $\mathcal{O}_K(X^d_{[r,r']})$
the filtration defined by the spectral norm then the asserted map
respects the filtrations, and by the above reasoning it induces an
isomorphism between the associated graded rings. Hence, by
completeness of these filtrations, it is an isometric bijection.
\end{proof}

If $G$ is commutative the map in Prop.\ \ref{expansion}, of
course, is an isometric isomorphism of Banach algebras. But in
general it is very far from being multiplicative. The graded ring
$gr^\cdot D_{[r,r']}(G,K)$ is discussed further in section
\ref{sec:Frechet-Stein} of the main paper.

One useful consequence of Prop.\ \ref{expansion} is that we have
\begin{equation*}
    |\ |_{r,r'} = \max (|\ |_{r,r},|\ |_{r',r'})
\end{equation*}
on $D_{[r,r']}(G,K)$.

The ring $\mathcal{O}_K(X^d_{[r,1)})$ of $K$-analytic functions on
the rigid variety
\begin{equation*}
    X^d_{[r,1)} := \{(z_1,\ldots,z_d) \in \mathbb{C}_p^d : r
    \leq |z_1| = \ldots = |z_d| < 1\}
\end{equation*}
is the Fr\'echet algebra of all Laurent series
\begin{equation*}
    f(Z_1,\ldots,Z_d) = \sum_{\alpha \in \mathbb{Z}^d} d_\alpha
    \mathbf{Z}^\alpha
\end{equation*}
with $d_\alpha \in K$ and such that $\lim_{|\alpha| \rightarrow
\infty} |d_\alpha| \rho^\alpha = 0$ for any $r \leq \rho < 1$. We
also introduce the locally convex $K$-algebra
\begin{equation*}
     \mathcal{R}_K^d := \varinjlim_r
     \mathcal{O}_K(X^d_{[r,1)}) \ .
\end{equation*}
By limit arguments the map in Prop.\ \ref{expansion} extends to
$K$-linear topological isomorphisms
\begin{equation*}
    \mathcal{O}_K(X^d_{[r,1)}) \xrightarrow{\;\cong\;}
    D_{[r,1)}(G,K)
\end{equation*}
and
\begin{equation*}
    \mathcal{R}_K^d \xrightarrow{\;\cong\;} R(G,K) \ .
\end{equation*}

The coefficients $c_{\beta\gamma,\alpha} \in \mathbb{Q}_p$ in the
expansions
\begin{equation*}
    \bb^\beta \bb^\gamma = \sum_{\alpha \in \mathbb{Z}^d}
    c_{\beta\gamma,\alpha} \bb^\alpha \qquad\textrm{for any $\beta, \gamma \in \mathbb{Z}^d$}
\end{equation*}
can be viewed in any of the rings under consideration.

\begin{lemma}\label{coefficients}
$|c_{\beta\gamma,\alpha}| \leq \min(1, p^{\alpha - \beta -
\gamma})$ for any $\alpha \neq \beta + \gamma$, and
$|c_{\beta\gamma,\beta + \gamma} - 1| < 1$.
\end{lemma}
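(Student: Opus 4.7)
The plan is to work entirely inside the ``noncommutative circle'' $D_{[r,r]}(G,K)$ for each $r\in p^{\mathbb{Q}}\cap(p^{-1},1)$, viewing it as the microlocalization $A\langle S;|\cdot|_r\rangle$ of $A = D_r(G,K)$ at $S$ with respect to the single norm $|\cdot|_r$. By Remark \ref{multiplicative} the resulting norm $|\cdot|_{r,r}$ is multiplicative, and Prop.\ \ref{expansion}(ii) specialised to $r'=r$ says that the assignment $\sum c_\alpha \mathbf{Z}^\alpha \mapsto \sum c_\alpha \bb^\alpha$ is a $K$-linear isometric bijection from $\mathcal{O}_K(X^d_{[r,r]})$ onto $D_{[r,r]}(G,K)$. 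Since the spectral norm on $X^d_{[r,r]}$ is simply $\sup_\alpha|c_\alpha|\,r^\alpha$, this means that the expansion $\sum c_\alpha \bb^\alpha$ in $D_{[r,r]}(G,K)$ satisfies $|\sum c_\alpha \bb^\alpha|_{r,r} = \sup_\alpha |c_\alpha|\,r^\alpha$.

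For the first inequality, apply multiplicativity of $|\cdot|_{r,r}$ to the Laurent expansion $\bb^\beta \bb^\gamma=\sum_\alpha c_{\beta\gamma,\alpha}\bb^\alpha$: using $|\bb^\nu|_{r,r}=r^\nu$ for every $\nu\in\mathbb{Z}^d$, we obtain
\[
\sup_\alpha|c_{\beta\gamma,\alpha}|\,r^\alpha \;=\; |\bb^\beta \bb^\gamma|_{r,r} \;=\; |\bb^\beta|_{r,r}\,|\bb^\gamma|_{r,r} \;=\; r^{\beta+\gamma},
\]
so $|c_{\beta\gamma,\alpha}|\leq r^{\beta+\gamma-\alpha}$ for every individual $\alpha$. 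This holds for every $r\in p^{\mathbb{Q}}\cap(p^{-1},1)$; letting $r\to 1^-$ gives $|c_{\beta\gamma,\alpha}|\leq 1$, while letting $r\to(p^{-1})^+$ gives $|c_{\beta\gamma,\alpha}|\leq p^{\alpha-\beta-\gamma}$, and the minimum of the two is the claimed bound.

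For the sharper second claim I would use Prop.\ \ref{qa}. Write $\bb^\beta \bb^\gamma$ as a concatenation of $N:=\sum_i(|\beta_i|+|\gamma_i|)$ elementary factors, each a $b_j$ or a $b_j^{-1}$, and let $\tau$ be the permutation of these $N$ factors that sorts them by subscript while preserving the relative order within each subscript. Because $b_jb_j^{-1}=1$ in $D_{[r,r]}(G,K)$, the block of subscript $j$ collapses to $b_j^{\beta_j}\cdot b_j^{\gamma_j}=b_j^{\beta_j+\gamma_j}$ irrespective of the signs of $\beta_j,\gamma_j$, so the permuted product is exactly $\bb^{\beta+\gamma}$. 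Prop.\ \ref{qa} then gives a constant $\gamma_0<1$ (independent of $\beta,\gamma,r$) with
\[
|\bb^\beta \bb^\gamma - \bb^{\beta+\gamma}|_{r,r} \;\leq\; \gamma_0\,|\bb^\beta \bb^\gamma|_{r,r} \;=\; \gamma_0\,r^{\beta+\gamma}.
\]
Reading off the coefficient of $\bb^{\beta+\gamma}$ on the left via the isometry of Prop.\ \ref{expansion} yields $|c_{\beta\gamma,\beta+\gamma}-1|\,r^{\beta+\gamma}\leq\gamma_0\,r^{\beta+\gamma}$, hence $|c_{\beta\gamma,\beta+\gamma}-1|\leq\gamma_0<1$. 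The one mildly delicate point is the combinatorial check that the sorting permutation really collapses to $\bb^{\beta+\gamma}$ for every sign pattern of the exponents; everything else is a direct application of the microlocalization results (Prop.\ \ref{expansion}, Remark \ref{multiplicative}, Prop.\ \ref{qa}) already established in the appendix.
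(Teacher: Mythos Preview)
Your proof is correct and follows essentially the same approach as the paper's. The paper's argument is slightly more economical: it applies Prop.~\ref{qa} once to the difference $\bb^\beta\bb^\gamma-\bb^{\beta+\gamma}$, obtaining the strict inequality $|c'_{\beta\gamma,\alpha}|\,r^\alpha<r^{\beta+\gamma}$ for \emph{all} $\alpha$ simultaneously, and then reads off both claims (the first by letting $r\to 1$ and $r\to p^{-1}$, the second by setting $\alpha=\beta+\gamma$). You split the argument in two, handling the first bound via multiplicativity alone and reserving Prop.~\ref{qa} for the second; this is a minor expository difference, and your explicit description of the sorting permutation is exactly the combinatorial content the paper leaves implicit.
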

\begin{proof}
By Prop.\ \ref{qa} the coefficients of the expansion
\begin{equation*}
    \bb^\beta \bb^\gamma - \bb^{\beta + \gamma} = \sum_{\alpha \in \mathbb{Z}^d}
    c'_{\beta\gamma,\alpha} \bb^\alpha
\end{equation*}
satisfy
\begin{equation*}
    |c'_{\beta\gamma,\alpha}| r^\alpha < |\bb^{\beta
    + \gamma}|_{r,r} = r^{\beta + \gamma}
\end{equation*}
for any $1/p < r < 1$ in $p^{\mathbb{Q}}$. Hence
$|c'_{\beta\gamma,\alpha}| < r^{\beta + \gamma - \alpha}$. By
letting tend $r$ to $1$ and $p^{-1}$, respectively, we obtain
$|c'_{\beta\gamma,\alpha}| \leq \min(1, p^{\alpha - \beta -
\gamma})$. For $\alpha = \beta +\gamma$ this means
$|c'_{\beta\gamma,\beta + \gamma}| < 1$.
\end{proof}

\subsection{Bounded rings}

In $\mathcal{R}_K^d$ we have the subrings
\begin{equation*}
    \mathcal{R}_K^{d,b} := \{ f = \sum_{\alpha \in \mathbb{Z}^d}
    d_\alpha \mathbf {Z}^\alpha \in \mathcal{R}_K^d : \|f\| :=
    \sup_\alpha |d_\alpha| < \infty \}
\end{equation*}
and
\begin{equation*}
    \mathcal{R}_K^{d,int} := \{ f \in \mathcal{R}_K^d : \|f\| \leq 1
    \} \ .
\end{equation*}
It is well known that the norm $\|\ \|$ on $\mathcal{R}_K^{d,b}$
is multiplicative. We let $\mathcal{E}_K^d$ denote the completion
of $\mathcal{R}_K^{d,b}$ with respect to $\|\ \|$ obtaining a
$K$-Banach algebra.

\begin{lemma}\label{series}
$\mathcal{E}_K^d$ is the ring of all formal series $\sum_{\alpha
\in \mathbb{Z}^d} d_\alpha \mathbf{Z}^\alpha$ such that
$\sup_\alpha |d_\alpha| < \infty$ and $\lim_{\sum \alpha_i \leq m,
|\alpha| \rightarrow \infty} |d_\alpha| = 0$ for any $m \in
\mathbb{N}$.
\end{lemma}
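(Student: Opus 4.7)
Let $E$ denote the $K$-vector space of formal Laurent series $f = \sum_{\alpha \in \mathbb{Z}^d} d_\alpha \mathbf{Z}^\alpha$ satisfying $\|f\| := \sup_\alpha |d_\alpha| < \infty$ together with $\lim_{|\alpha|\to\infty,\,\sum \alpha_i \leq m} |d_\alpha| = 0$ for every $m \in \mathbb{N}$, equipped with the norm $\|\cdot\|$. I plan to identify $\mathcal{E}_K^d$ with $E$ by showing three things: (a) $\mathcal{R}_K^{d,b} \subseteq E$ isometrically, (b) $(E,\|\cdot\|)$ is a $K$-Banach space, and (c) $\mathcal{R}_K^{d,b}$ is $\|\cdot\|$-dense in $E$. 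Together these identify $\mathcal{E}_K^d$ with $E$ as $K$-Banach spaces; the ring structure on $\mathcal{E}_K^d$ then transfers, and by continuity of multiplication coincides with the obvious formal Laurent product on $E$.

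For (a): given $f \in \mathcal{R}_K^{d,b}$, by definition $f$ lies in some $\mathcal{O}_K(X^d_{[r,1)})$, so $|d_\alpha|\rho^{\sum\alpha_i} \to 0$ as $|\alpha|\to\infty$ for every $\rho \in [r,1)$. Fixing $m \in \mathbb{N}$ and one such $\rho$, the elementary bound $\rho^{\sum\alpha_i} \geq \rho^m$ on $\{\sum\alpha_i \leq m\}$ (valid since $\rho<1$ and $m \geq 0$) yields $|d_\alpha| \leq \rho^{-m}\cdot|d_\alpha|\rho^{\sum\alpha_i} \to 0$, so $f \in E$. Step (b) is routine: the defining limit conditions (indexed by $m$) are preserved under uniform convergence of coefficient sequences, so $E$ is a $\|\cdot\|$-closed subspace of the Banach space of bounded $K$-valued sequences on $\mathbb{Z}^d$.

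Step (c) is the main one. Given $f = \sum d_\alpha \mathbf{Z}^\alpha \in E$ with $\|f\| = M$ and $\varepsilon > 0$, set $J := \{\alpha \in \mathbb{Z}^d : |d_\alpha| > \varepsilon\}$ and $g := \sum_{\alpha \in J} d_\alpha \mathbf{Z}^\alpha$. Then trivially $\|f-g\| \leq \varepsilon$, so everything hinges on showing $g \in \mathcal{R}_K^{d,b}$. Because $f \in E$, each slice $J \cap \{\sum\alpha_i \leq m\}$ is finite, hence $\sum\alpha_i \to +\infty$ as $|\alpha|\to\infty$ through $J$. Consequently, for any $\rho \in (0,1)$ we have $|d_\alpha|\rho^{\sum\alpha_i} \leq M\rho^{\sum\alpha_i} \to 0$ on $J$, placing $g$ in $\mathcal{O}_K(X^d_{[r,1)})$ for any $r \in (0,1)$; combined with $\|g\| \leq M$ this gives $g \in \mathcal{R}_K^{d,b}$.

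The main obstacle is step (c): the naive truncations of $f$ by $|\alpha|$ or by degree $\sum\alpha_i$ do not, in general, produce elements of $\mathcal{R}_K^{d,b}$ (the latter because $\rho^{\sum\alpha_i}$ blows up as $\sum\alpha_i \to -\infty$). One must instead truncate by \emph{coefficient size}, and the content of the lemma is exactly that the defining condition on $E$ forces the support of this truncation to escape to $+\infty$ along the direction $\sum\alpha_i$---which is precisely what convergence on a half-open annulus demands.
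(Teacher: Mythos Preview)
Your proof is correct and follows essentially the same approach as the paper's. The paper organizes the density step slightly differently by introducing an intermediate subspace $\widetilde{\mathcal{E}}_0$ (series whose support meets each half-space $\{\sum\alpha_i \leq m\}$ finitely) and separately checking $\widetilde{\mathcal{E}}_0 \subseteq \mathcal{R}_K^{d,b}$ and $\widetilde{\mathcal{E}}_0$ dense in $E$; your truncation $g$ in fact lands in this $\widetilde{\mathcal{E}}_0$, so you have simply merged those two steps into one.
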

\begin{proof}
Let $\widetilde{\mathcal{E}}$ denote the vector space of all these
formal series in the assertion. It is easy to see that
$\widetilde{\mathcal{E}}$ is complete with respect to $\|\ \|$. We
also need the subspace $\widetilde{\mathcal{E}}_0 \subseteq
\widetilde{\mathcal{E}}$ of all formal series $\sum_{\alpha \in
\mathbb{Z}^d} d_\alpha \mathbf{Z}^\alpha$ such that, for any $m
\in \mathbb{N}$, there are only finitely many $\alpha$ with
$\sum_i \alpha_i \leq m$ and $d_\alpha \neq 0$.

In a first step we consider an arbitrary element $\sum_{\alpha \in
\mathbb{Z}^d} d_\alpha \mathbf{Z}^\alpha$ in $\mathcal{R}_K^d$.
Then $\lim_{|\alpha| \rightarrow \infty} |d_\alpha| r^\alpha = 0$
for some $0 < r < 1$. It follows that
\begin{equation*}
    \lim_{\sum \alpha_i \leq 0, |\alpha| \rightarrow \infty}
    |d_\alpha| \leq \lim_{\sum \alpha_i \leq 0, |\alpha| \rightarrow \infty}
    |d_\alpha| r^\alpha = 0
\end{equation*}
and
\begin{equation*}
    r^m \lim_{\sum \alpha_i = m, |\alpha| \rightarrow \infty}
    |d_\alpha| = \lim_{\sum \alpha_i = m, |\alpha| \rightarrow \infty}
    |d_\alpha| r^\alpha = 0 \qquad\text{for any $m \in \mathbb{N}$} \
    .
\end{equation*}
Hence
\begin{equation*}
    \lim_{\sum \alpha_i \leq m, |\alpha| \rightarrow \infty}
    |d_\alpha| = 0 \qquad\text{for any $m \in \mathbb{N}$} \
    .
\end{equation*}
In particular, this shows that $\mathcal{R}_K^{d,b} \subseteq
\widetilde{\mathcal{E}}$.

In a second step we suppose that $\sum_{\alpha \in \mathbb{Z}^d}
d_\alpha \mathbf{Z}^\alpha$ lies in $\widetilde{\mathcal{E}}_0$.
We claim that $\lim_{|\alpha| \rightarrow \infty} |d_\alpha|
r^\alpha = 0$ for any $0 < r < 1$. Let $\epsilon > 0$. We have
show that $|d_\alpha|r^\alpha < \epsilon$ for all but finitely
many $\alpha$. Choose $m \in \mathbb{N}$ in such a way that
$(\sup_\alpha |d_\alpha|)r^m < \epsilon$. Then certainly
$|d_\alpha|r^\alpha < \epsilon$ for any $\alpha$ such that $\sum
\alpha_i \geq m$. But by assumption there are only finitely many
nonzero $d_\alpha$ with $\sum \alpha_i \leq m$. This establishes
that $\widetilde{\mathcal{E}}_0 \subseteq \mathcal{R}_K^{d,b}$.

In a third step we argue that $\widetilde{\mathcal{E}}_0$ is dense
in $\widetilde{\mathcal{E}}$. Let $f = \sum_{\alpha \in
\mathbb{Z}^d} d_\alpha \mathbf{Z}^\alpha$ be an arbitrary element
in $\widetilde{\mathcal{E}}$. For any $\epsilon > 0$ the sets
\begin{equation*}
    N_0(\epsilon) := \{ \alpha : \sum \alpha_i \leq 0, |d_\alpha|
    \geq \epsilon\} \quad\text{and}\quad N_m(\epsilon) :=
    \{ \alpha : \sum \alpha_i = m, |d_\alpha| \geq \epsilon\}\
    \text{for $m \in \mathbb{N}$}
\end{equation*}
are finite. Hence $f_\epsilon := \sum_{\alpha \in N(\epsilon)}
d_\alpha \mathbf{Z}^\alpha$ with $N(\epsilon) := \bigcup_{m \geq
0} N_m(\epsilon)$ lies in $\widetilde{\mathcal{E}}_0$ and $\|f -
f_\epsilon\| < \epsilon$.
\end{proof}

Let $R^b(G,K)$ and $R^{int}(G,K)$ denote the image in $R(G,K)$ of
$\mathcal{R}_K^{d,b}$ and $\mathcal{R}_K^{d,int}$, respectively,
under the above isomorphism. By transport of structure we view
both subspaces as normed spaces with respect to $\|\ \|$.

\begin{lemma}\label{limit}
For any $e \in R(G,K)$ we have
\begin{equation*}
    \lim_{r < 1, r \rightarrow 1} |e|_{r,r} =
    \left\{
    \begin{aligned} \|e\| & \quad \text{if $e \in R^b(G,K)$,} \\
    \infty & \quad \text{otherwise.}
    \end{aligned}\right.
\end{equation*}
\end{lemma}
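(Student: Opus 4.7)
The plan is to transfer the computation to the Laurent-series side via Proposition \ref{expansion}. Writing $e = \sum_{\alpha \in \mathbb{Z}^d} d_\alpha \bb^\alpha$ under the isometric isomorphism $\mathcal{R}_K^d \xrightarrow{\sim} R(G,K)$, and using the identity $|\cdot|_{r,r'} = \max(|\cdot|_{r,r},|\cdot|_{r',r'})$ specialised to $r' = r$, one has
\begin{equation*}
    |e|_{r,r} = \max_{\alpha \in \mathbb{Z}^d} |d_\alpha|\, r^{\alpha}
\end{equation*}
for every $r \in p^{\mathbb{Q}}$ in the convergence range of $e$. The dichotomy $\sup_\alpha |d_\alpha| = \|e\| < \infty$ versus $\sup_\alpha |d_\alpha| = \infty$ is exactly the dichotomy $e \in R^b(G,K)$ versus $e \notin R^b(G,K)$ appearing in the lemma.

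A uniform lower bound handles both cases at once: for each fixed $\alpha_0 \in \mathbb{Z}^d$ the quantity $|d_{\alpha_0}| r^{\alpha_0}$ tends to $|d_{\alpha_0}|$ as $r \to 1^-$, so $\liminf_{r \to 1^-}|e|_{r,r} \geq |d_{\alpha_0}|$, and taking the supremum over $\alpha_0$ yields $\liminf_{r \to 1^-}|e|_{r,r} \geq \sup_\alpha |d_\alpha|$. If $e \notin R^b(G,K)$ this forces $|e|_{r,r} \to +\infty$ and the lemma is proved in that case. If $e \in R^b(G,K)$ the same estimate gives $\liminf_{r \to 1^-}|e|_{r,r} \geq \|e\|$, and it remains to establish the matching upper bound $\limsup_{r \to 1^-}|e|_{r,r} \leq \|e\|$.

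For the upper bound, fix $\epsilon > 0$ and choose $r_0 \in p^{\mathbb{Q}}$ with $e \in D_{[r_0,1)}(G,K)$, setting $M := \sup_\alpha |d_\alpha| r_0^\alpha < \infty$. For $\alpha$ with $\sum_i \alpha_i \geq 0$ the trivial estimate $|d_\alpha| r^\alpha \leq |d_\alpha| \leq \|e\|$ suffices. For $\alpha$ with $\sum_i \alpha_i = -N < 0$, the inequality $|d_\alpha| \leq M r_0^N$ yields $|d_\alpha| r^\alpha \leq M (r_0/r)^N$. The key technical step is to choose the thresholds in the right order: first pick $r_1 \in (r_0,1) \cap p^{\mathbb{Q}}$, then choose $N_0$ so large that $M(r_0/r_1)^{N_0} < \epsilon$; for $r \in [r_1, 1) \cap p^{\mathbb{Q}}$ this handles all $N \geq N_0$ uniformly. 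The finite residual range $1 \leq N < N_0$ is controlled by $|d_\alpha| r^\alpha \leq \|e\| r^{-(N_0-1)}$, which is at most $\|e\| + \epsilon$ once $r$ is so close to $1$ that $r^{-(N_0-1)} \leq 1 + \epsilon/\|e\|$. Combining the three regimes gives $|e|_{r,r} \leq \|e\| + \epsilon$ for all such $r$, completing the proof.

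The main obstacle is purely one of bookkeeping: neither the bound $|d_\alpha| \leq \|e\|$ nor the bound $|d_\alpha| \leq M r_0^N$ controls all negative-degree terms on its own, and one must interleave them at a threshold $N_0$ that is chosen only after an auxiliary radius $r_1 > r_0$ has been fixed, with $r$ finally allowed to approach $1$ last. No additional machinery beyond Proposition \ref{expansion} and the definition of $R^b(G,K)$ is required.
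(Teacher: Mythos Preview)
Your proof is correct. Both you and the paper reduce to the coefficientwise formula $|e|_{r,r} = \max_{\alpha} |d_\alpha| r^{\alpha}$ via Proposition~\ref{expansion}, and your lower-bound argument (take a single term and let $r \to 1$) is essentially the same as the paper's. The difference lies in the upper bound for the bounded case: you give a direct three-regime $\epsilon$-argument (nonnegative degree, large negative degree, small negative degree), whereas the paper passes to the logarithmic scale, observes that $\phi(\rho) = \log|e|_{\exp\rho,\exp\rho} = \max_{\alpha}(\log|d_\alpha| + (\sum_i\alpha_i)\rho)$ is a convex function on $[\rho_0,0)$, extends it to $[\rho_0,0]$ by $\phi(0) = \log\|e\|$, and shows that $\phi$ is in fact \emph{affine} on some interval $[\rho_1,0]$.

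Your approach is more elementary and entirely sufficient for Lemma~\ref{limit} itself. The paper's convexity argument, however, yields the extra information that $\phi$ is eventually affine near $0$; this stronger fact is explicitly invoked later in the proof of Proposition~\ref{b-units}.i (``By Lemma~\ref{limit} and its proof\ldots''). So if your proof were substituted for the paper's, that proposition would need a small supplementary argument. Two minor cosmetic points: the sentence invoking ``$|\cdot|_{r,r'} = \max(|\cdot|_{r,r},|\cdot|_{r',r'})$ specialised to $r'=r$'' is vacuous as written---you really just need Proposition~\ref{expansion} directly---and the final step ``$r^{-(N_0-1)} \leq 1 + \epsilon/\|e\|$'' tacitly assumes $\|e\|>0$, but the case $e=0$ is trivial.
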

\begin{proof}
Let $0 < r_0 < 1$ be such that $e \in D_{[r_0,1)}(G,K)$. We
certainly may assume that $e \neq 0$. Then we may consider the
function
\begin{equation*}
    \phi(\rho) := \log (|e|_{\exp(\rho),\exp(\rho)})
\end{equation*}
on $[\rho_0,0)$ where $\rho_0 := \log(r_0)$. Let $e = \sum_{\alpha
\in \mathbb{Z}^d} d_\alpha \bb^\alpha$ and define $N := \{\alpha
\in \mathbb{Z}^d : d_\alpha \neq 0\}$. Then
\begin{equation*}
    \phi(\rho) = \max_{\alpha \in N} (\log(|d_\alpha|) + (\sum_i
    \alpha_i) \rho ) \ .
\end{equation*}
In particular, as a supremum of affine functions the function
$\phi$ is convex on $[\rho_0,0)$. If $e$ is not in $R^b(G,K)$ then
$\{\log(|d_\alpha |)\}_\alpha$ is unbounded which easily implies
that $\lim_{\rho < 0, \rho \rightarrow 0} \phi(\rho) = \infty$. On
the other hand, if $e \in R^b(G,K)$ then $\phi$ extends by
$\phi(0) := \log(\|e\|)$ to a convex function on $[\rho_0,0]$, and
we have
\begin{equation*}
    \log(|d_\alpha|) + (\sum_i \alpha_i)\rho \leq \phi(\rho) \leq
    \phi(0) + \frac{\phi(\rho_0) - \phi(0)}{\rho_0} \rho
\end{equation*}
for any $\alpha \in N$ and any $\rho \in [\rho_0,0]$. Let
\begin{equation*}
    M := \{\beta \in N : |d_\beta| = \max_\alpha |d_\alpha| =
    \psi(1) \} \ .
\end{equation*}
We have $\sum_i \beta_i \geq \frac{\phi(\rho_0) -
\phi(0)}{\rho_0}$ for any $\beta \in M$. Hence we may choose a
$\gamma \in M$ in such a way that $\sum_i \gamma_i$ is minimal.
Then
\begin{equation*}
    \log(|d_\beta|) + (\sum_i \beta_i)\rho \leq
    \log(|d_\gamma|) + (\sum_i \gamma_i)\rho
\end{equation*}
for any $\beta \in M$ and any $\rho \in [\rho_0,0]$. On the other
hand, if we put
\begin{equation*}
    c := \max \{ \log(|d_\alpha|) : \alpha \in N \setminus M \}
\end{equation*}
then
\begin{equation*}
    \log(|d_\alpha|) + (\sum_i \alpha_i)\rho \leq c +
    \frac{\phi(\rho_0) - c}{\rho_0} \rho
\end{equation*}
for any $\alpha \in N \setminus M$ and any $\rho \in [\rho_0,0]$.
Altogether we obtain
\begin{equation*}
    \phi(\rho) \leq \max ( c + \frac{\phi(\rho_0) - c}{\rho_0} \rho ,
    \log(|d_\gamma|) + (\sum_i \gamma_i)\rho )
\end{equation*}
for any $\rho \in [\rho_0,0]$. We certainly find a $\rho_0 <
\rho_1 < 0$ such that
\begin{equation*}
    c + \frac{\phi(\rho_0) - c}{\rho_0} \rho \leq
    \log(|d_\gamma|) + (\sum_i \gamma_i)\rho
\end{equation*}
for $\rho \in [\rho_1,0]$. We conclude that on $[\rho_1,0]$ the
function $\phi$ coincides with the affine function
$\log(|d_\gamma|) + (\sum_i \gamma_i)\rho$.
\end{proof}

\begin{proposition}\label{bounded}
$R^b(G,K)$ and $R^{int}(G,K)$ are subrings of $R(G,K)$; moreover,
the norm $\|\ \|$ is multiplicative in this ring multiplication.
\end{proposition}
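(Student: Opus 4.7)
The plan is to deduce the proposition from two earlier results: Remark \ref{multiplicative}, which guarantees that the norm $|\cdot|_{r,r}$ on $D_{[r,r]}(G,K)$ (obtained from microlocalization with the single quasi-abelian norm $|\cdot|_r$) is multiplicative, and Lemma \ref{limit}, which identifies $\|e\|$ with $\lim_{r\to 1^-} |e|_{r,r}$ precisely when $e \in R^b(G,K)$.

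First I would observe that $R^b(G,K)$ is visibly closed under addition and scalar multiplication, since the isomorphism of Prop.\ \ref{expansion} is $K$-linear on Laurent coefficients and $\|\cdot\|$ is the coefficient supremum norm. The nontrivial point is closedness under multiplication. Given $e,f \in R(G,K)$, choose $p^{-1}<r_0<1$ in $p^{\mathbb{Q}}$ such that both elements lie in $D_{[r_0,1)}(G,K)$. For any $r_0 \leq r < 1$ in $p^{\mathbb{Q}}$, the composite map $D_{[r_0,1)}(G,K)\to D_{[r_0,r]}(G,K)\to D_{[r,r]}(G,K)$ is a continuous $K$-algebra homomorphism, and by Remark \ref{multiplicative} the norm on its target is multiplicative, so
\begin{equation*}
|ef|_{r,r} = |e|_{r,r}\cdot |f|_{r,r}.
\end{equation*}

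Now I let $r\to 1^-$ along $p^{\mathbb{Q}}$ and apply Lemma \ref{limit}. If $e,f\in R^b(G,K)$, both factors on the right converge to the finite values $\|e\|$ and $\|f\|$, so $|ef|_{r,r}$ stays bounded; by the dichotomy in Lemma \ref{limit} this forces $ef \in R^b(G,K)$ with $\|ef\| = \|e\|\cdot\|f\|$. This simultaneously proves that $R^b(G,K)$ is a subring and that $\|\cdot\|$ is multiplicative on it. Closedness of $R^{int}(G,K)$ under multiplication is then immediate: if $\|e\|,\|f\|\leq 1$, then $\|ef\| = \|e\|\|f\|\leq 1$, and $R^{int}(G,K)$ contains $1$ and is additively closed by the ultrametric inequality for $\|\cdot\|$.

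The only delicate point is ensuring that the multiplicativity of $|\cdot|_{r,r}$ really does apply to the product $ef$ formed in $R(G,K)$ (rather than to a separately-computed product in $D_{[r,r]}(G,K)$); but this is automatic because the maps $D_{[r_0,1)}(G,K)\to D_{[r,r]}(G,K)$ for varying $r$ are $K$-algebra homomorphisms, hence the image of $ef$ is the product of the images. No serious obstacle should arise; the argument is essentially a limit-of-multiplicative-norms computation combined with the asymptotic characterization of $R^b(G,K)$ furnished by Lemma \ref{limit}.
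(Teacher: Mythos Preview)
Your proof is correct and rests on the same two ingredients the paper uses, namely Remark \ref{multiplicative} (multiplicativity of $|\cdot|_{r,r}$) and Lemma \ref{limit}. The organization differs slightly: the paper first shows $\|f_1f_2\|\leq\|f_1\|\|f_2\|$ by an explicit coefficient computation invoking Lemma \ref{coefficients}, thereby establishing that $R^b(G,K)$ is a subring and $\|\cdot\|$ is submultiplicative, and only then appeals to Lemma \ref{limit} to upgrade submultiplicativity to multiplicativity. You bypass Lemma \ref{coefficients} entirely by using the full dichotomy of Lemma \ref{limit}: since $|ef|_{r,r}=|e|_{r,r}|f|_{r,r}$ has a finite limit, the ``otherwise $\infty$'' clause forces $ef\in R^b(G,K)$ and simultaneously identifies that limit as $\|ef\|$. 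This is a legitimate and slightly more economical route; the paper's coefficient argument has the modest advantage of being self-contained at the level of Laurent expansions, but your version shows that once Lemma \ref{limit} is in hand the coefficient estimate is not needed for this proposition.
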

\begin{proof}
For any $f_1 = \sum_{\beta \in \mathbb{Z}^d} c_\beta \bb^\beta$
and $f_2 = \sum_{\gamma \in \mathbb{Z}^d} d_\gamma \bb^\gamma$ in
$R(G,K)$ we have
\begin{align*}
    f_1f_2 & = (\sum_{\beta} c_\beta \bb^\beta) (\sum_{\gamma} d_\gamma
    \bb^\gamma) = \sum_{\beta,\gamma} c_\beta d_\gamma
    \bb^\beta\bb^\gamma \\
    & = \sum_{\beta,\gamma} c_\beta d_\gamma \sum_\alpha
    c_{\beta\gamma,\alpha} \bb^\alpha = \sum_\alpha
    (\sum_{\beta,\gamma} c_\beta d_\gamma
    c_{\beta\gamma,\alpha}) \bb^\alpha \ .
\end{align*}
If $f_1,f_2 \in R^b(G,K)$ we therefore, using Lemma
\ref{coefficients}, obtain
\begin{equation*}
    \|f_1f_2\| = \sup_\alpha |\sum_{\beta,\gamma} c_\beta d_\gamma
    c_{\beta\gamma,\alpha}| \leq \sup_\beta |c_\beta| \cdot
    \sup_\gamma |d_\gamma| = \|f_1\| \cdot \|f_2\| < \infty \ .
\end{equation*}
Hence $R^b(G,K)$ and $R^{int}(G,K)$ are subrings and $\|\ \|$ is
submultiplicative. But $\|\ \|$, according to Lemma \ref{limit}
and Lemma \ref{multiplicative}, is a limit of multiplicative norms
and therefore is, in fact, multiplicative.
\end{proof}

As a consequence of this latter proposition we may complete the
algebra $R^b(G,K)$ with respect to the norm $\|\ \|$ obtaining a
$K$-Banach algebra $E(G,K)$ with multiplicative norm $\|\ \|$. Of
course, as Banach spaces, we have our isometric isomorphism
\begin{equation*}
    \mathcal{E}_K^d \xrightarrow{\;\cong\;} E(G,K) \ .
\end{equation*}
It follows from Lemma \ref{limit} that the rings $R^b(G,K)$,
$R^{int}(G,K)$, and $E(G,K)$ together with their norm $\|\ \|$ are
independent of the ordering of our chosen basis $h_1,\ldots,h_d$
of $G$ (because this is the case for the norms $|\ |_{r,r}$ as we
have argued earlier).

The argument in the proof of Lemma \ref{coefficients} has another
interesting consequence. To formulate it we introduce the
following convention. Any of the rings $D_{[r_0,r]}(G,K)$,
$D_{[r_0,1)}(G,K)$, $R(G,K)$, and $E(G,K)$ has its natural
multiplication which in general is noncommutative and to which we
some times refer as the intrinsic multiplication (always written
as $(e,f) \longmapsto ef$). But using the bijection from Prop.\
\ref{qa} and its extensions these rings carry, by transport of
structure, also a commutative multiplication which we write as
$(e,f) \longmapsto e \circ f$. The notation $e^{-1}$ always will
refer to the inverse with respect to the intrinsic multiplication.

\begin{lemma}\label{comparison}
For any $e,f \in D_{[r_0,r]}(G,K)$ we have
\begin{equation*}
    |ef - e \circ f|_{r_0,r} < |ef|_{r_0,r} = |e \circ f|_{r_0,r} \
    .
\end{equation*}
\end{lemma}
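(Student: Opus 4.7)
The plan is to reduce the bi-norm statement to the two single-norm situations via the identity $|\cdot|_{r_0,r}=\max(|\cdot|_{r_0,r_0},|\cdot|_{r,r})$ recorded right after Proposition \ref{expansion}, in each of which the norm is multiplicative by Remark \ref{multiplicative}, and to support this reduction by a uniform monomial comparison between $\mathbf{b}^\beta\mathbf{b}^\gamma$ and $\mathbf{b}^{\beta+\gamma}$ that is furnished by Proposition \ref{qa}.

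Using Proposition \ref{expansion} I first write $e=\sum_{\beta\in\mathbb{Z}^d} c_\beta\mathbf{b}^\beta$ and $f=\sum_{\gamma\in\mathbb{Z}^d} d_\gamma\mathbf{b}^\gamma$ as Laurent series, so that
\begin{equation*}
ef-e\circ f \;=\; \sum_{\beta,\gamma\in\mathbb{Z}^d} c_\beta d_\gamma\bigl(\mathbf{b}^\beta\mathbf{b}^\gamma-\mathbf{b}^{\beta+\gamma}\bigr),
\end{equation*}
the sum converging in $D_{[r_0,r]}(G,K)$. The crucial uniform estimate I need is
\begin{equation*}
|\mathbf{b}^\beta\mathbf{b}^\gamma-\mathbf{b}^{\beta+\gamma}|_\rho \;\leq\; c_0\cdot|\mathbf{b}^{\beta+\gamma}|_\rho, \qquad (\ast)
\end{equation*}
valid for both $\rho\in\{r_0,r\}$ with a single constant $c_0<1$. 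I obtain $(\ast)$ by viewing $\mathbf{b}^\beta\mathbf{b}^\gamma=b_1^{\beta_1}\cdots b_d^{\beta_d}b_1^{\gamma_1}\cdots b_d^{\gamma_d}$ as a product in the invertible letters $b_i^{\pm 1}$, permuting the letters so that, for each $i$, all occurrences of $b_i^{\pm1}$ sit in one block, and applying Proposition \ref{qa} inside the single-norm ring $D_{[\rho,\rho]}(G,K)$; within each block $b_i$ and $b_i^{-1}$ commute, so after the trivial reductions $b_ib_i^{-1}=1$ the permuted product equals $\mathbf{b}^{\beta+\gamma}$ as an element of the ring. The constant $c_0$ is taken as the maximum of the two quasi-abelian constants of Proposition \ref{qa} for $\rho=r_0$ and $\rho=r$.

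Next, the canonical inclusion $D_{[r_0,r]}(G,K)\hookrightarrow D_{[\rho,\rho]}(G,K)$ visible at the level of Laurent series preserves $|\cdot|_\rho$, and by Remark \ref{multiplicative} the latter is multiplicative, so $|ef|_\rho=|e|_\rho|f|_\rho$. Combining $(\ast)$ with the ultrametric triangle inequality applied to the convergent sum gives, for each $\rho\in\{r_0,r\}$,
\begin{equation*}
|ef-e\circ f|_\rho \;\leq\; c_0\cdot\sup_{\beta,\gamma}|c_\beta||d_\gamma||\mathbf{b}^{\beta+\gamma}|_\rho \;=\; c_0|e|_\rho|f|_\rho \;=\; c_0|ef|_\rho.
\end{equation*}
Taking the maximum over $\rho\in\{r_0,r\}$ yields $|ef-e\circ f|_{r_0,r}\leq c_0|ef|_{r_0,r}<|ef|_{r_0,r}$, and a final application of the ultrametric inequality to $e\circ f=ef-(ef-e\circ f)$ forces $|e\circ f|_{r_0,r}=|ef|_{r_0,r}$, completing the proof.

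The main obstacle is the monomial bound $(\ast)$: Proposition \ref{qa} concerns permutations of a fixed list of $n$ elements, whereas $\mathbf{b}^\beta\mathbf{b}^\gamma$ and $\mathbf{b}^{\beta+\gamma}$ have in general different word lengths once $b_i$ and $b_i^{-1}$ appear together. The resolution is that every length discrepancy is produced only by the commutation-free, and hence norm-neutral, relation $b_ib_i^{-1}=1$, so the passage from the fully permuted long product to its reduced form $\mathbf{b}^{\beta+\gamma}$ does not disturb the bound supplied by Proposition \ref{qa}.
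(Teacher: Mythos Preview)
Your proof is correct and follows essentially the same route as the paper's: both reduce via $|\cdot|_{r_0,r}=\max(|\cdot|_{r_0,r_0},|\cdot|_{r,r})$ to a single multiplicative norm and then exploit the uniform bound $|\mathbf{b}^\beta\mathbf{b}^\gamma-\mathbf{b}^{\beta+\gamma}|_\rho\leq c_0\,\rho^{\beta+\gamma}$ coming from Proposition~\ref{qa}. The only cosmetic difference is that the paper phrases this bound through the Laurent coefficients $c_{\beta\gamma,\alpha}$ of Lemma~\ref{coefficients}, whereas you invoke Proposition~\ref{qa} directly on the monomial level; your worry about word-length discrepancies is unnecessary, since you may simply take the $2d$ factors $b_i^{\beta_i},\,b_i^{\gamma_i}$ (rather than individual letters) as the $e_j$ in Proposition~\ref{qa} and use $b_i^{\beta_i}b_i^{\gamma_i}=b_i^{\beta_i+\gamma_i}$.
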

\begin{proof}
Since $|\ |_{r_0,r} = \max(|\ |_{|r_0,r_0},|\ |_{r,r})$ it
suffices to treat the case $r_0 = r$. Then the norm is
multiplicative so that we have to show that
\begin{equation*}
    |ef - e \circ f|_{r,r} < |e|_{r,r} |f|_{r,r} \ .
\end{equation*}
Let $e = \sum_{\beta \in \mathbb{Z}^d} c_\beta \bb^\beta$ and $f =
\sum_{\gamma \in \mathbb{Z}^d} d_\gamma \bb^\gamma$. We have
\begin{equation*}
    ef = \sum_\alpha (\sum_{\beta,\gamma} c_\beta d_\gamma
    c_{\beta\gamma,\alpha}) \bb^\alpha \quad\text{and}\quad
    e \circ f = \sum_\alpha (\sum_{\beta + \gamma = \alpha}
    c_\beta d_\gamma) \bb^\alpha
\end{equation*}
and hence
\begin{equation*}
    |ef - e \circ f|_{r,r} = \max_\alpha | \sum_{\beta + \gamma \neq
    \alpha} c_\beta d_\gamma c_{\beta\gamma,\alpha} + \sum_{\beta + \gamma
    = \alpha} c_\beta d_\gamma (c_{\beta\gamma,\alpha} - 1)|r^\alpha
    \ .
\end{equation*}
From the proof of Lemma \ref{coefficients} we know that
\begin{equation*}
    |c_{\beta\gamma,\alpha}|r^\alpha < r^{\beta + \gamma}
    \quad\text{for $\beta + \gamma \neq \alpha$ and}\quad
    |c_{\beta\gamma,\beta + \gamma} - 1| < 1 \ .
\end{equation*}
We deduce that
\begin{align*}
    |ef - e \circ f|_{r,r} & < \max_{\beta,\gamma} |c_\beta d_\gamma|
    r^{\beta + \gamma} \\
    & \leq \max_\beta |c_\beta|r^\beta \cdot \max_\gamma
    |d_\gamma|r^\gamma \\
    & = |e|_{r,r} \cdot |f|_{r,r} \ .
\end{align*}
\end{proof}

\begin{proposition}\label{units}
Let $D$ be any of the rings $D_{[r_0,r]}(G,K)$,
$D_{[r_0,1)}(G,K)$, or $R(G,K)$; we then have:
\begin{itemize}
  \item[i.] $e \in D$ is a unit with respect to the intrinsic
  multiplication if and only if it is a unit with respect to the
  commutative multiplication;
  \item[ii.] any left or right unit in $D$ is a unit.
\end{itemize}
\end{proposition}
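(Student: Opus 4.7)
The plan is to leverage Lemma \ref{comparison}, which bounds $|ef - e\circ f|_{r_0,r}$ strictly below $|ef|_{r_0,r}=|e\circ f|_{r_0,r}$, together with the standard geometric series criterion: in any of the three rings $D$ and under either of the two multiplications, an element $x$ with $|1-x|_{r_0,r}<1$ for every relevant norm is a two-sided unit. This criterion holds in the Banach algebra $D_{[r_0,r]}(G,K)$ because both the intrinsic norm (submultiplicative by construction of the microlocalization, cf.\ Prop.\ \ref{qa}) and the $\circ$-norm (inherited from $\mathcal{O}_K(X^d_{[r_0,r]})$ via the isomorphism of Prop.\ \ref{expansion}) are submultiplicative with $|1|=1$, so $\sum_{n\geq 0}(1-x)^n$ converges to a two-sided inverse. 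The criterion extends to $D_{[r_0,1)}(G,K)=\varprojlim D_{[r_0,r']}(G,K)$ because inverses in each Banach completion are unique and hence are compatible with the connecting maps, and to $R(G,K)$ because any two given elements of $R(G,K)$ lie in a common $D_{[r_0,1)}(G,K)$.

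With this in hand, I would prove (ii) first. Given a right intrinsic inverse $ef=1$, Lemma \ref{comparison} gives $|e\circ f-1|_{r_0,r}<1$ for every $r$, so $e\circ f$ is a $\circ$-unit; since $\circ$ is commutative, $e$ itself is then a $\circ$-unit, say $g\circ e = 1$. A second application of Lemma \ref{comparison} to the pair $(g,e)$ yields $|ge-1|_{r_0,r}<1$, so $ge$ admits a two-sided intrinsic inverse $h$, and then $(hg)e=1$ supplies a left intrinsic inverse for $e$. The standard identity
\begin{equation*}
hg=(hg)\cdot 1 = (hg)(ef)=((hg)e)f = f
\end{equation*}
then shows $f$ is in fact a two-sided intrinsic inverse of $e$; the case of a left intrinsic unit is symmetric.

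Part (i) follows quickly. If $e$ is a two-sided intrinsic unit with inverse $f$, then $e\circ f$ is a $\circ$-unit by Lemma \ref{comparison}, and $e\circ \bigl(f\circ (e\circ f)^{-1}\bigr)=1$ exhibits a $\circ$-inverse of $e$. Conversely, if $e\circ f=1$, then $|ef-1|_{r_0,r}<1$ makes $ef$ a two-sided intrinsic unit with some intrinsic inverse $k$; then $e(fk)=1$ shows $e$ has a right intrinsic inverse, and by part (ii) already is a two-sided intrinsic unit.

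The main obstacle I anticipate is purely bookkeeping for the Fr\'echet and inductive limit rings: one must verify carefully that the geometric-series inverses produced inside each Banach completion $D_{[r_0,r']}(G,K)$ glue, via uniqueness and compatibility with the connecting maps, to an honest inverse in $D_{[r_0,1)}(G,K)$ and thence in $R(G,K)$, and also that when $e,f\in R(G,K)$ are given one can shrink to a common $D_{[r_0,1)}(G,K)$ before invoking Lemma \ref{comparison}. Once that technical step is in place, everything reduces mechanically to the two symmetric applications of Lemma \ref{comparison} indicated above.
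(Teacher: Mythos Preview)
Your proposal is correct and follows essentially the same route as the paper: both reduce to the Banach case $D_{[r_0,r]}(G,K)$ by a limit argument, and then use Lemma~\ref{comparison} together with the geometric-series criterion ($|x-1|<1\Rightarrow x$ invertible) to transfer invertibility between the intrinsic and the commutative multiplications. The only cosmetic difference is the order of presentation: you establish (ii) first and deduce (i) from it, whereas the paper first proves the implication ``$\circ$-unit $\Rightarrow$ intrinsic unit'' and then uses it to obtain both the converse of (i) and (ii) simultaneously.
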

\begin{proof}
By limit arguments it suffices to consider the case $D =
D_{[r_0,r]}(G,K)$. Suppose first that $e$ is a commutative unit,
i.e., $e \circ f = 1$ for some $f \in D$. By Lemma
\ref{comparison} we then have $|ef -1|_{r_0,r} < 1$ which implies,
since we are in a Banach algebra, that $ef$ is an intrinsic unit.
Starting from $f \circ e = 1$ we similarly obtain that $fe$ is an
intrinsic unit. Hence $e$ and $f$ are intrinsic units. Now let,
vice versa, $e$ be a left intrinsic unit (the case of a right one
being analogous), say, $ef = 1$ for some $f \in D$. By a totally
analogous reasoning as above we obtain that $e \circ f = f \circ
e$ and hence $e$ and $f$ are commutative units. But then we
actually may apply the first part of the proof to conclude that
$e$ is an intrinsic unit.
\end{proof}

\begin{proposition}\label{b-units}
\begin{itemize}
  \item [i.] $R^b(G,K) \cap R(G,K)^\times = R^b(G,K)^\times$;
  \item [ii.] an element in $R^b(G,K)$ is a unit with respect to the intrinsic
  multiplication if and only if it is a unit with respect to the
  commutative multiplication.
\end{itemize}
\end{proposition}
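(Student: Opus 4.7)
The plan is to reduce both statements to the observation that, on each noncommutative closed annulus $D_{[r_0,r]}(G,K)$, the single-radius norm $|\cdot|_{r,r}$ is multiplicative with respect to both products. For the commutative product $\circ$ this is classical: via the isometric identification in Proposition \ref{expansion}, $|\cdot|_{r,r}$ corresponds to the Gauss norm on the torus $|z_1|=\dots=|z_d|=r$, where commutative Laurent-series multiplication makes the Gauss norm multiplicative. The equality $|ef|_{r,r}=|e\circ f|_{r,r}$ furnished by Lemma \ref{comparison} then transports multiplicativity to the intrinsic product as well. This is the main technical point, because the full norm $|\cdot|_{r_0,r}=\max(|\cdot|_{r_0,r_0},|\cdot|_{r,r})$ on $D_{[r_0,r]}(G,K)$ is only quasi-abelian by Proposition \ref{qa} and in general fails to be multiplicative; one really has to descend to a single-radius component.

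For part (i), I would take $e\in R^b(G,K)\cap R(G,K)^\times$ with intrinsic inverse $g\in R(G,K)$, and choose $r_0\in p^{\mathbb{Q}}$ with $p^{-1}<r_0<1$ so that both $e$ and $g$ lie in $D_{[r_0,1)}(G,K)$. For each rational $r\in[r_0,1)$ the identity $eg=1$ holds in $D_{[r_0,r]}(G,K)$, and multiplicativity of $|\cdot|_{r,r}$ for the intrinsic product yields $|g|_{r,r}=1/|e|_{r,r}$. Since $e\in R^b(G,K)$, Lemma \ref{limit} gives $|e|_{r,r}\to\|e\|$ as $r\to 1$, so $|g|_{r,r}\to 1/\|e\|<\infty$. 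The converse direction of Lemma \ref{limit} then forces $g\in R^b(G,K)$ with $\|g\|=1/\|e\|$, proving (i).

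Part (ii) follows by combining (i) with Proposition \ref{units}(i). If $e\in R^b(G,K)$ is an intrinsic unit in $R^b(G,K)$, it is an intrinsic unit in $R(G,K)$, and hence by Proposition \ref{units}(i) also a commutative unit in $R(G,K)$; applying the multiplicativity argument of the first paragraph directly to the commutative product (where no input from Lemma \ref{comparison} is needed, since the Gauss norm is manifestly multiplicative for $\circ$) places the commutative inverse in $R^b(G,K)$. Conversely, if $e$ is a commutative unit in $R^b(G,K)$, it is a commutative and hence intrinsic unit in $R(G,K)$ by Proposition \ref{units}(i), and part (i) places the intrinsic inverse back in $R^b(G,K)$.
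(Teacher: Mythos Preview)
Your proof is correct and follows essentially the same route as the paper: for (i) you use multiplicativity of $|\cdot|_{r,r}$ together with Lemma \ref{limit} to bound $\|e^{-1}\|$, and for (ii) you combine (i) with Proposition \ref{units}(i) applied to both $G$ and the commutative group $\mathbb{Z}_p^d$. The only cosmetic difference is that you deduce multiplicativity of $|\cdot|_{r,r}$ for the intrinsic product from Lemma \ref{comparison} (via $|ef|_{r,r}=|e\circ f|_{r,r}$), whereas the paper cites Remark \ref{multiplicative} directly; both are valid.
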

\begin{proof}
i. Let $e \in R(G,K)^\times$ such that $\|e\| < \infty$. Suppose
that $e \in D_{[r_0,1)}(G,K)^\times$. By Lemma \ref{limit} and its
proof we know that $\lim_{r < 1, r \rightarrow 1} |e|_{r,r} =
\|e\|$ and that the function $\phi(\rho) := \log
(|e|_{\exp(\rho),\exp(\rho)})$ is an affine function on
$[\rho_1,0]$ for $\rho_1$ sufficiently close to $1$. Since the
norms $|\ |_{\exp(\rho),\exp(\rho)}$, by Lemma
\ref{multiplicative}, are multiplicative on $D_{[r_0,1)}(G,K)$ it
follows that
\begin{equation*}
    \lim_{\rho < 0, \rho \rightarrow 0}
    \log(|e^{-1}|_{\exp(\rho),\exp(\rho)}) =
    - \lim_{\rho < 0, \rho \rightarrow 0} \phi(\rho) = - \log(\|e\|)
    < \infty \ .
\end{equation*}
Hence, again by Lemma \ref{limit}, we have $e^{-1} \in R^b(G,K)$.
ii. This follows from Prop.\ \ref{units}.i by applying the present
assertion i. to $G$ as well as the commutative group
$\mathbb{Z}_p^d$.
\end{proof}

\begin{lemma}\label{specialunits}
We have $h_i^x - 1 \in R^{int}(G,K)^\times$ for any $x \in
\mathbb{Z}_p \setminus \{0\}$ and any $1 \leq i \leq d$.
\end{lemma}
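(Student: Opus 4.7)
By Proposition~\ref{b-units}.ii, to prove $h_i^x - 1 \in R^{int}(G,K)^\times$ with respect to the intrinsic multiplication it suffices to prove it is a unit with respect to the commutative multiplication $\circ$. Let $\Phi\colon \mathcal{R}_K^d \xrightarrow{\sim} R(G,K)$ be the $K$-linear isometric bijection obtained by extending Proposition~\ref{expansion}; by the very definition of $\circ$, $\Phi$ is a ring isomorphism onto $(R(G,K),\circ)$, sends $\mathcal{R}_K^{d,int}$ onto $R^{int}(G,K)$, and satisfies $\Phi(Z_i^k) = b_i^k$ for every $k \geq 0$ (since $b_i$ commutes with itself and its intrinsic $k$-th power has canonical expansion $\bb^{(0,\dots,k,\dots,0)}$). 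Hence $\Phi((1+Z_i)^x - 1) = \sum_{k \geq 1}\binom{x}{k}b_i^k = h_i^x - 1$. Since $(1+Z_i)^x - 1$ only involves the variable $Z_i$, it lies in the subring $\mathcal{R}_K^{1,int} \hookrightarrow \mathcal{R}_K^{d,int}$ (extension by zero in the other variables), so it suffices to invert it in $\mathcal{R}_K^{1,int}$.

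Write $x = p^n y$ with $n \geq 0$ and $y \in \mathbb{Z}_p^\times$, and set $V := (1+Z_i)^y - 1$. Identity~(\ref{3}) gives $V = Z_i \cdot U$ with $U \in \mathbb{Z}_p[[Z_i]]$ and $U(0) = y \in \mathbb{Z}_p^\times$, so $U$ is a unit in $\mathbb{Z}_p[[Z_i]] \subseteq \mathcal{R}_K^{1,int}$; together with the (manifest) unit $Z_i$, this makes $V$ and $V^{p^n}$ units in $\mathcal{R}_K^{1,int}$. Expanding the binomial,
\[
(1+Z_i)^x - 1 \;=\; (1+V)^{p^n} - 1 \;=\; V^{p^n}(1+H), \qquad H := \sum_{k=1}^{p^n-1}\binom{p^n}{k}V^{k-p^n},
\]
with $H \in p\cdot\mathcal{R}_K^{1,int}$ since $\binom{p^n}{k} \in p\mathbb{Z}_p$ for $1 \leq k \leq p^n - 1$ (by Kummer). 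The task therefore reduces to showing that $1 + H$ is a unit in $\mathcal{R}_K^{1,int}$.

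For this, the plan is to use the geometric series $(1+H)^{-1} = \sum_{m \geq 0}(-H)^m$. Fix any $\rho \in (p^{-1/(p^n-1)},1)$ and any $\rho \leq \rho' < 1$. Since $U \in \mathbb{Z}_p[[Z_i]]^\times$ has $|U|_\rho = 1$ for all $\rho < 1$, and the Gauss norms $|\,\cdot\,|_\rho$, $|\,\cdot\,|_{\rho'}$ are multiplicative on Laurent series, one finds $|V^{-1}|_{[\rho,\rho']} = \max(\rho^{-1},\rho'^{-1}) = \rho^{-1}$, whence
\[
|H|_{[\rho,\rho']} \;\leq\; \max_{1 \leq k \leq p^n-1} |p|\cdot\rho^{-(p^n-k)} \;=\; |p|\,\rho^{-(p^n-1)} \;<\; 1
\]
by the choice of $\rho$. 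Thus $\sum_m (-H)^m$ converges in $\mathcal{O}_K(X^1_{[\rho,\rho']})$ for every such $\rho'$, and hence in the Fr\'echet algebra $\mathcal{O}_K(X^1_{[\rho,1)})$. Moreover $H^m \in p^m\cdot\mathcal{R}_K^{1,int}$, so the $Z_i^n$-coefficient of $(-H)^m$ lies in $p^m o_K$; for each fixed $n$, the coefficients of the partial sums form a $p$-adically Cauchy sequence in $o_K$ with integral limit. Hence $(1+H)^{-1}$ has sup-norm $\leq 1$ and lies in $\mathcal{R}_K^{1,int}$. Combining the above, $(1+Z_i)^x - 1$ is a unit in $\mathcal{R}_K^{1,int} \subseteq \mathcal{R}_K^{d,int}$, and applying $\Phi$ together with Proposition~\ref{b-units}.ii yields $h_i^x - 1 \in R^{int}(G,K)^\times$.

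The main subtlety is to maintain overconvergence and integrality of $(1+H)^{-1}$ simultaneously. The divisibility $p \mid \binom{p^n}{k}$ for $0 < k < p^n$ is essential: without this factor of $p$ the geometric series would only converge in the (larger) sup-norm completion $\mathcal{E}_K^1$ of $\mathcal{R}_K^{1,b}$, failing to be overconvergent on any annulus strictly inside $|z|<1$.
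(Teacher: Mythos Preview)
Your proof is correct. Both you and the paper reduce to the commutative one-variable Robba ring via Proposition~\ref{b-units} and identity~\eqref{3}, but the final inversion step differs. The paper factors
\[
(1+Z)^x - 1 \;=\; \bigl[(1+Z)^{p^n} - 1\bigr]\cdot g(Z),\qquad g \in \mathbb{Z}_p[[Z]]^\times,
\]
by substituting $W = (1+Z)^{p^n}-1$ into \eqref{3}, and then disposes of the polynomial factor by observing that all its zeros (the $\zeta-1$ with $\zeta\in\mu_{p^n}$) lie strictly inside a disk of radius $r_n < 1$, so it is a unit on any annulus $r_n \leq |z| < 1$; Proposition~\ref{b-units}.i and multiplicativity of $\|\ \|$ then force the inverse into $R^{int}$. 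You instead factor
\[
(1+Z)^x - 1 \;=\; V^{p^n}(1+H),\qquad V = (1+Z)^y - 1 = Z\cdot U,\ \ U\in\mathbb{Z}_p[[Z]]^\times,
\]
where $V$ is already a unit in $\mathcal{R}_K^{1,int}$, and invert $1+H$ explicitly by a geometric series, using $p\mid\binom{p^n}{k}$ to get $|H|_{[\rho,\rho']}<1$ on a suitable annulus and $H\in p\,\mathcal{R}_K^{1,int}$ for integrality of the limit. The paper's route is shorter and invokes a classical rigid-analytic fact; yours is self-contained and constructive, making the radius $\rho > p^{-1/(p^n-1)}$ and the integrality of the inverse completely explicit. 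One minor quibble: your closing remark that without the factor of $p$ the series ``would only converge in $\mathcal{E}_K^1$'' is not quite right---without that divisibility one would only have $\|H\|\leq 1$, so convergence is not guaranteed even there---but this is commentary, not part of the argument.
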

\begin{proof}
Write $x = p^my$ with $y \in \mathbb{Z}_p^\times$. We know from
\eqref{3} that $(1 + Z)^y - 1 \in Z \cdot
\mathbb{Z}_p[[Z]]^\times$ and hence $(1 + Z)^x - 1 \in [(1 +
Z)^{p^m} - 1] \cdot \mathbb{Z}_p[[Z]]^\times$. Moreover the
leading coefficient of $(1 + Z)^{p^m} - 1$ is equal to $1$. Hence
\begin{equation}\label{f:unit}
    h_i^x - 1 \in (h_i^{p^m} - 1) \cdot R^{int}(G,K)^\times
    \qquad\textrm{and}\qquad \|h_i^x - 1\| = 1 \ .
\end{equation}
By Prop.\ \ref{b-units}.i and the multiplicativity of $\|\ \|$ it
now suffices to show that $h_i^{p^m} - 1$ is invertible in
$R(G,K)$. But the polynomial $(1 + Z)^{p^m} - 1$ has no zeros in
an appropriate annulus $r_m \leq |z| < 1$. This implies that
$h_i^{p^m} - 1 \in D_{[r_0,1)}(G,K)^\times$.
\end{proof}

The formula \eqref{f:unit} implies that the rings $R^{int}(G,K)
\subseteq R^b(G,K) \subseteq E(G,K)$ together with the norm $\|\
\|$ do not change if we replace the generating set $\{h_1, \ldots,
h_d\}$ by $\{h_1^{x_1}, \ldots, h_d^{x_d}\}$ for some $x_1,
\ldots, x_d \in \mathbb{Z}_p^\times$.


\begin{thebibliography}{B-GAL}
\bibitem[Ba]{B} Bass H., $K$-theory and stable algebra, \emph{Publ. Math. I.H.E.S.} \textbf{22} (1964), 5--60.
\bibitem[BHS]{BHS} Bass H., Heller A., Swan R., The Whitehead group of
a polynomial extension, \emph{Publ. Math. I.H.E.S.} \textbf{22}
(1964), 61--80.
\bibitem[B-AC]{Bo} Bourbaki N., \emph{Alg\`ebre commutative}, Hermann,
Paris.
\bibitem[B-GT]{B-GT} Bourbaki N.: \emph{General Topology}. Berlin-Heidelberg-New
York: Springer 1989
\bibitem[DDMS]{DDMS} Dixon J.D., du Sautoy M.P.F., Mann A., Segal D.: \emph{Analytic
Pro-$p$-Groups}.\ Cambridge Univ.\ Press 1999
\bibitem[GaRe]{RG} Gabriel P., Rentschler R., Sur la dimension des anneaux et ensembles ordonn\'es, \emph{C. R. Acad. Sci. Paris S\'er. A-B} \textbf{265} (1967) A712--A715.
\bibitem[GiRo]{R} Gilles C., Robba P., Equations differentielles
$p$-adiques, \emph{Hermann}, Paris.
\bibitem[G]{G} Gruson L., Fibr\'es vectoriels sur un polydisque ultram\'etrique, \emph{Ann. Sci. \'Ecole Norm. Sup. (4)} \textbf{1} (1968), 45--89.
\bibitem[LvO]{HO} Li H., van Oystaeyen F., Zariskian filtrations, \emph{Kluwer Academic
Publishers}, K-monographs in Mathematics, vol.2 (1996).
\bibitem[MR]{MR} McConnell J. C., Robson J. C., Noncommutative Noetherian rings, \emph{AMS
Graduate Studies in Mathematics}, vol. 30.
\bibitem[S1]{S} Schmidt T., Auslander regularity of $p$-adic distribution
  algebras, \emph{Represent. Theory} \textbf{12} (2008), 37--57.
\bibitem[S2]{S1} Schmidt T., Vector bundles over analytic character varieties, preprint, \url{http://arxiv.org/abs/0903.4917}
\bibitem[ST1]{ST3} Schneider P., Teitelbaum J.,
$U(\mathfrak{g})$-finite locally analytic representations,
\emph{Represent. Theory} \textbf{5} (2001), 111--128.
\bibitem[ST2]{ST4} Schneider P., Teitelbaum J., $p$-adic Fourier theory, \emph{Doc. Math.} \textbf{6} (2001), 447--481.
\bibitem[ST3]{ST5} Schneider P., Teitelbaum J., Locally analytic distributions and $p$-adic representation theory, with applications to $\GL_2$, \emph{J. Amer. Math. Soc.} \textbf{15} (2002), 443--468.
\bibitem[ST4]{ST6} Schneider P., Teitelbaum J., Banach space representations and Iwasawa theory, \emph{Israel J. Math.} \textbf{127} (2002), 359--380.
\bibitem[ST5]{foundations} Schneider P., Teitelbaum J., Algebras of $p$-adic distributions and admissible
representations, \emph{Invent. Math.} \textbf{153} (2003), 145--196.
\bibitem[ST6]{ST7} Schneider P., Teitelbaum J., Continuous and locally
analytic representation theory, Notes of lectures held at
Hangzhou, August 2004,
http://wwwmath.uni-muenster.de/u/schneider/publ/lectnotes/index.html
\bibitem[ST7]{ST2} Schneider P., Teitelbaum J., Duality for admissible locally analytic representations, \emph{Representation Theory} \textbf{9} (2005), 297--326.
\bibitem[Spr]{Spr} Springer T.A.: Microlocalisation algebrique, in \emph{S\'em.
d'Alg\`ebre Paul Dubreil et M.-P.\ Malliavin} 1984, Lecture Notes
Math.\ 1146, 299--316. Berlin-Heidelberg-New York: Springer 1985
\bibitem[St]{St} Stafford J. T., On the stable range of right Noetherian rings, \emph{Bull. London Math. Soc.} \textbf{13} (1981), no. 1, 39--41.
\bibitem[vdE]{vdE} van den Essen A.: Algebraic Micro-Localization,
\emph{Communic.\ Alg.\ }\textbf{14} (1986), 971--1000.
\end{thebibliography}
\end{document}